\newcites{SM}{References}
\numberwithin{equation}{section}
\numberwithin{figure}{section}
\numberwithin{table}{section}
\theoremstyle{plain}
\newtheorem{thm}{Theorem}[section]
\newtheorem{lemma}[thm]{Lemma}
\newtheorem{proposition}[thm]{Proposition}
\newtheorem{cor}[thm]{Corollary}
\newtheorem{prop}[thm]{Proposition}
\theoremstyle{remark}
\def\R{\mathbb{R}}
\def\N{\mathbb{N}}
\def\Q{\mathbb{Q}}
\def\Z{\mathbb{Z}}
\def\1{\mathbbm{1}}
\def\E{\mathbb{E}}
\def\Pr{\mathbb{P}}
\def\cF{\mathcal{F}}
\let\c@table\c@figure
\begin{document}

\begin{frontmatter}
%%%%%%%%%%%%%%%%%%%%%%%%%%%%%%%%%%%%%%%%%%%%%%
%%                                          %%
%% Enter the title of your article here     %%
%%                                          %%
%%%%%%%%%%%%%%%%%%%%%%%%%%%%%%%%%%%%%%%%%%%%%%
\title{The level of self-organized criticality in oscillating Brownian motion: $\MakeLowercase{n}$-consistency and stable Poisson-type convergence of the MLE}
%\title{A sample article title with some additional note\thanksref{T1}}
\runtitle{$\MakeLowercase{n}$-consistency and stable Poisson-type convergence of the MLE}
%\thankstext{T1}{A sample of additional note to the title.}

\begin{aug}
%%%%%%%%%%%%%%%%%%%%%%%%%%%%%%%%%%%%%%%%%%%%%%%
%% Only one address is permitted per author. %%
%% Only division, organization and e-mail is %%
%% included in the address.                  %%
%% Additional information can be included in %%
%% the Acknowledgments section if necessary. %%
%% ORCID can be inserted by command:         %%
%% \orcid{0000-0000-0000-0000}               %%
%%%%%%%%%%%%%%%%%%%%%%%%%%%%%%%%%%%%%%%%%%%%%%%
\author[A]{\fnms{Johannes}~\snm{Brutsche}\ead[label=e1]{johannes.brutsche@stochastik.uni-freiburg.de}}
\and
\author[A]{\fnms{Angelika}~\snm{Rohde}\ead[label=e3]{angelika.rohde@stochastik.uni-freiburg.de}}
%%%%%%%%%%%%%%%%%%%%%%%%%%%%%%%%%%%%%%%%%%%%%%
%% Addresses                                %%
%%%%%%%%%%%%%%%%%%%%%%%%%%%%%%%%%%%%%%%%%%%%%%
\address[A]{Mathematical Institute, University of Freiburg\printead[presep={,\ }]{e1}\printead[presep={,\ }]{e3}}
\end{aug}

\begin{abstract}
For some discretely observed path of oscillating Brownian motion with level of self-organized criticality $\rho_0$, we prove in the infill asymptotics that the MLE is $n$-consistent, where $n$ denotes the sample size, and derive its limit distribution with respect to stable convergence. As the transition density of this homogeneous Markov process is not even continuous in $\rho_0$, the analysis is highly non-standard. Therefore,  interesting and somewhat unexpected phenomena occur: The likelihood function splits into several components, each of them contributing very differently depending on how close the argument $\rho$ is to $\rho_0$. Correspondingly, the MLE is successively excluded to lay outside a compact set, a $1/\sqrt{n}$-neighborhood and finally a $1/n$-neighborhood of $\rho_0$ asymptotically.
The crucial argument to derive the stable convergence is to exploit the semimartingale structure of the sequential suitably rescaled local log-likelihood function (as a process in time). Both sequentially and as a process in $\rho$, it exhibits a bivariate Poissonian behavior in the stable limit with its intensity being a multiple of the local time at $\rho_0$.
\end{abstract}

\begin{keyword}[class=MSC]
\kwd[Primary ]{60F05}\kwd{62M05}
\kwd[; secondary ]{62F12}\kwd{62E20}
\end{keyword}

\begin{keyword}
\kwd{Stable Poisson convergence}
\kwd{infill asymptotics}
\kwd{$n$-consistency}
\kwd{MLE}
\end{keyword}

\end{frontmatter}
%%%%%%%%%%%%%%%%%%%%%%%%%%%%%%%%%%%%%%%%%%%%%%
%% Please use \tableofcontents for articles %%
%% with 50 pages and more                   %%
%%%%%%%%%%%%%%%%%%%%%%%%%%%%%%%%%%%%%%%%%%%%%%
%\tableofcontents

%%%%%%%%%%%%%%%%%%%%%%%%%%%%%%%%%%%%%%%%%%%%%%
%%%% Main text entry area:

\numberwithin{equation}{section}
\addtocontents{toc}{\protect\setcounter{tocdepth}{0}}

\section{Introduction}

Let $X=(X_t)_{t\in [0,1]}$ be the Markov process solving the homogeneous stochastic differential equation (SDE)
\begin{align}\label{eq:SDE_OBM}
dX_t = \sigma_{\rho}(X_t) dW_t, \qquad X_0 =x_0\in\R,
\end{align} 
where $W$ is a standard Brownian motion and for different (but arbitrary) numbers $\alpha,\beta>0$, the diffusion coefficient is given by
\begin{align}\label{eq:coefficient_OBM}
\sigma_{\rho}(x) := \begin{cases} \alpha, &\textrm{ if } x< \rho, \\ \beta, &\textrm{ if } x \geq\rho.\end{cases}
\end{align}
As shown in~\cite{LeGall}, the SDE \eqref{eq:SDE_OBM} possesses a unique strong solution. We call the parameter $\rho\in\R$ the {\it level of self-organized criticality}. In physics and biology, such type of process naturally arises when describing diffusive motion in porous or highly inhomogeneous media. In contrast to traditional change-point models where the structural break in volatility occurs in time, its occurrence here depends  purely  on the state of the process $X$ itself. For $\rho=0$, the process described by~\eqref{eq:SDE_OBM} has been named oscillating Brownian motion (OBM), introduced and  first studied in~\cite{Keilson/Wellner}, where  its transition semigroup is provided in particular. By Proposition~$4.2$ in \cite{Blanchard/Roeckner/Russo}, the transition density $p_t^\rho(x,y)$ from state $x$ to state $y$ within time $t$ of the homogeneous Markov process $X$ for general $\rho$ is given by the unique solution of the Kolmogorov forward (or Fokker--Planck) equation
\begin{equation}\label{eq: FP} 
\frac12 \frac{\partial^2}{\partial y^2}\left( \sigma_{\rho}(y)^2 p_t^{\rho}(x,y)\right) = \frac{\partial}{\partial t} p_t^{\rho}(x,y)
\end{equation}
in a distributional sense. %, i.e.
%\begin{align}\label{eq:Fokker-Planck_equation}
%\int_\R p_t^\rho(x,y)\varphi(y) dy = \varphi(x) + \int_0^t\int_\R \frac{1}{2} \sigma_{\alpha,\beta,\rho}(y)^2 p_s^\rho(x,y)\varphi''(y) dy ds
%\end{align}
%for every $t\in [0,T]$ and any $\varphi\in\mathcal{S}(\R)$, where $\mathcal{S}(\R)$ denotes the Schwartz space of rapidly decreasing functions on $\R$.  
Here, we are in one of the rare situations where the partial differential equation has an explicit solution that is given in~\eqref{eq:transition_density}. It is important to anticipate that this transition density  is not continuous in the parameter $\rho$.

In this article, we develop the theory of the maximum likelihood estimator (MLE) for $\rho$ on the basis of discrete  observations 
\begin{align}\label{eq:observations}
X_{i/n}, \ \ i=1,\dots, n, 
\end{align} 
as  $n\rightarrow\infty$. In this infill asymptotics or so-called high-frequency observation scheme, averaged squared estimators for $\alpha^2$ and $\beta^2$ have been studied in~\cite{Lejay/Pigato} when $\rho=0$ is known. They especially establish stable convergence of their estimators towards Gaussian mixtures at $\sqrt{n}$-convergence rate. Likewise,  when the level of self-organized criticality is known to the statistician, \cite{Mazzonetto} proved very recently the convergence of suitable statistics to the local time. For so-called drifted OBM, \cite{Lejay/PigatoII} study maximum likelihood estimators for drift parameters in case $\rho=0$. % For a similar drifted OBM model, \cite{Mazzonetto/Pigato} give a numerical study for estimation of the threshold parameter $\rho$. 
Within the high-frequency literature, our study is complementary in the sense that we treat $\alpha$ and $\beta$ as given while inferring about $\rho$. We also quote the contributions~\cite{Kutoyants}, \cite{Su/Chan_1}, \cite{Su/Chan_2}, and~\cite{Mazzonetto/Pigato} for inference in ergodic threshold diffusions as the time horizon of the observed trajectories goes to infinity, where estimation of the threshold is already an issue for a continuous record of observations if the threshold is solely present in the drift.
By the Markovian structure, the likelihood function for~\eqref{eq:observations} factorizes into a product of transition densities such that the log-likelihood function has the form
\[ L_n(x_0,\dots, x_n;\rho) := \sum_{k=1}^n \log\left(p_{1/n}^\rho(x_{k-1},x_k)\right).\]
This function is not continuous in $\rho$ (indeed it is not even upper semicontinuous), but càdlàg for the version~\eqref{eq:transition_density} of the transition density. We define the MLE $\hat{\rho}_n$ as some arbitrary but measurably selected representative of
\begin{align*}
\underset{\rho\in\R}{\mathrm{Argsup}\ } L_n\left(X_0, X_{1/n}, \dots, X_1; \rho\right),
\end{align*}  
where for any càdlàg function $f:\R\rightarrow\R$ 
\[ \underset{x\in\R}{\mathrm{Argsup}\ } f(x) := \bigg\{x\in\R:\ \max\Big\{\lim_{u\nearrow x, u<x} f(u), f(x)\Big\} = \sup_{y\in\R} f(y)\bigg\}. \] 
Note that $\mathrm{Argsup}_\rho L_n(\rho)$ is always a closed subset of $\R$ but may be empty in which case we define $\hat{\rho}_n=0$. If it is not empty, a measurable selection always exists bei Proposition~$2.8$ in~\cite{Ferger}. We write $\arg\sup$ if $\mathrm{Argsup}$ is a singleton.
As the MLE hinges on the likelihood function, the discontinuity of the latter is precisely the reason why the MLE theory is getting highly non-standard and exhibits fascinating phenomena. For the mathematical analysis it turns out to be purposeful to rewrite $\hat{\rho}_n = \rho_0+\hat{\theta}_n$ for the true parameter $\rho_0$, where
\[ \hat{\theta}_n \in \underset{\theta\in\R}{\mathrm{Argsup}\ } \sum_{k=1}^n \log\left( p_{1/n}^{\rho_0+\theta}(X_{(k-1)/n},X_{k/n})\right).\]
Thus, $\hat{\rho}_n\approx \rho_0$ corresponds to $\hat{\theta}_n\approx 0$. Observe also that $\hat{\theta}_n\in\mathrm{Argsup}_{\theta\in\R} \ell_n(\theta)$, where
\begin{align}\label{def:log_likelihood_normalized}
\ell_n(\theta) := \ell_n(X_0,X_{1/n},\dots, X_1;\theta) := \sum_{k=1}^n \log\Bigg(\frac{p_{1/n}^{\rho_0+\theta}(X_{(k-1)/n},X_{k/n})}{p_{1/n}^{\rho_0}(X_{(k-1)/n},X_{k/n)}}\Bigg).
\end{align}
Due to the four regimes $\{x<\rho, y\leq\rho\}, \{x\geq\rho, y>\rho\}, \{x<\rho<y\}$, and $\{y\leq \rho\leq x\}$ appearing in the transition density~\eqref{eq:transition_density}, every summand in $\ell_n(\theta)$ splits into nine disjoint regimes which are listed in~\eqref{eq:cases_I_jk}. Correspondingly, $\ell_n(\theta)$ itself admits a decomposition into nine sums and, impressively, the nature of the MLE is characterized by their subtle interplay. Which of these sums dominantly impact the shape and 'randomness' of $\ell_n(\theta)$ depends crucially on whether $\theta \gg 1/\sqrt{n}$, $\theta\asymp 1/\sqrt{n}$ or $\theta\ll 1/\sqrt{n}$. Realizations of the log-likelihood function are depicted in Figure~\ref{figure:Likelihood_example}. Two characteristic features immediately catch the eye: 
\begin{itemize}
\item the triangular shape in a neighborhood of $\rho_0$ (corresponding to $\theta=0$),
\item the jumps within the $1/n$-environment of $\rho_0$ with piecewise linear behavior in between.
\end{itemize}

%\begin{figure}[h]
%\centering
%\begin{tabular}{cc}
%\includegraphics[scale=0.33]{global_1path} & \includegraphics[scale=0.33]{sqrtN_1path}\\
%\includegraphics[scale=0.33]{n_1path} & \includegraphics[scale=0.33]{n_expectation}
%\end{tabular}
%\caption{\small Different realizations of the log-likelihood function, simulated for the parameters $\alpha=0.5$, $\beta=0.2$, $\rho_0=0$ and $n=1,000$ on a grid of the interval $[-1,1]$ with gridsize $10^{-6}$. \textbf{(a)} A path in the 'global' environment. \textbf{(b)} A path in the $1/\sqrt{n}$-environment. \textbf{(c)} A path in the $1/n$-environment of the true parameter. \textbf{(d)} An average over $50$ realizations of the likelihood in the $1/n$-environment is given. }\label{figure:Likelihood_example}
%\end{figure}

\begin{figure}[h]
\centering
\includegraphics[scale=0.285]{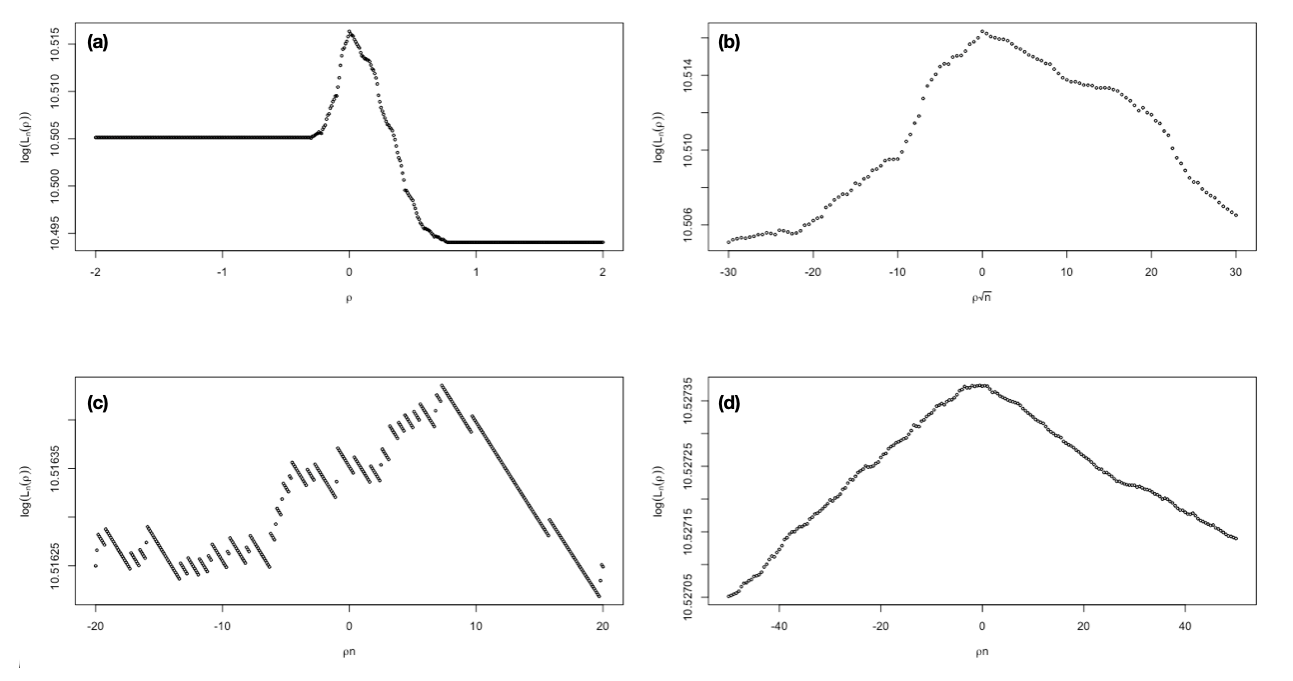}
\caption{\small Different realizations of the log-likelihood function for $n=10,000$, based on a path of $X$ simulated for the parameters $\alpha=0.5$, $\beta=0.7$ and $\rho_0=0$ on a grid of the interval $[0,1]$ with gridsize $10^{-6}$. \textbf{(a)} A path in the 'global' environment. \textbf{(b)} A path in the $1/\sqrt{n}$-neighborhood. \textbf{(c)} A path in the $1/n$-neighborhood. \textbf{(d)} An average over $50$ realizations of the likelihood in the $1/n$-neighborhood.} \label{figure:Likelihood_example}
\end{figure}

\noindent
To formally state the result, define two independent standard Poisson processes $N$ and $N'$ on $\R_{\geq 0}$ on a suitable so-called very good extension of the original probability space $(\Omega,\cF,\Pr)$ that are independent of $W$, see Section~\ref{Section:proof_thm} for details. Here, we restrict attention to the case of $\cF$ being the completed $\sigma$-field generated by $W$. Then we define a process $\ell = (\ell(z))_{z\in\R}$ via
\begin{align}\label{eq:def_ell}
\begin{split}
\ell(z) &:= \left( \1_{\{z\geq 0\}} \left( b_{\alpha,\beta} - \frac{1}{\beta^2}\log\left(\frac{\beta^2}{\alpha^2}\right)\right)+  \1_{\{z< 0\}} \left( b_{\alpha,\beta}' - \frac{1}{\alpha^2}\log\left(\frac{\alpha^2}{\beta^2}\right)\right) \right) |z| \\
&\hspace{1cm} + \left( \1_{\{z\geq 0\}}\log(\beta^2/\alpha^2)N(z/\beta^2) + \1_{\{z<0\}}\log(\alpha^2/\beta^2) N'((-z/\alpha^2)-) \right),
\end{split}
\end{align} 
where the constants $b_{\alpha,\beta},b_{\alpha,\beta}' <0$ are given explicitly in~\eqref{def:constant_b_alpha_beta}. Note that $\ell(z)$ is the sum of a two-sided compensated Poisson process and a negative drift, where we have used the left-continuous version $N'(\bullet -)$ to ensure $\ell$ being càdlàg. For statistical purposes, we even prove $\cF$-stable convergence (denoted as $\cF$-$st$) of the MLE instead of mere convergence in law. This notion of convergence was first introduced in~\cite{Renyi} and further studied in~\cite{Aldous/Eagleson}, see Section~\ref{Section:proof_thm} for a detailed description. Denoting by $L_t^{\rho}(X)$ the local time of $X$ in $\rho$ at time $t$, we are now in a position to state the main result of this article.

\begin{thm}\label{thm:MLE_limiting_distribution}
On the extended probability space, 
\[ n\left(\hat{\rho}_n-\rho_0\right) \1_{\{L_1^{\rho_0}(X)>0 \}}\ \stackrel{\cF-st}{\longrightarrow}\  \underset{z\in\R}{\arg\sup\ } \ell(z L_1^{\rho_0}(X))\1_{\{L_1^{\rho_0}(X)>0 \}},\]
where the right-hand side is a well-defined $\R$-valued random variable almost surely.
\end{thm}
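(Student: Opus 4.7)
The plan is to reduce Theorem~\ref{thm:MLE_limiting_distribution} to a functional stable limit theorem for the rescaled local log-likelihood $Z_n(z):=\ell_n(z/n)$, $z\in\R$, and then apply an argsup continuous mapping theorem. This requires (i) $n$-consistency of $\hat\rho_n$, so that the supremum is attained in a compact $z$-interval with high probability, and (ii) $\cF$-stable convergence of $Z_n$ to $\ell(\cdot\,L_1^{\rho_0}(X))$ on compacts in the Skorohod sense.

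The first pillar is the three-stage localization announced in the abstract. Starting from the splitting of $\ell_n(\theta)$ into the nine regime sums indicated in the introduction together with Gaussian bounds on the transition density, one estimates that the compensator of $\ell_n(\theta)$ in the observation filtration is bounded above by $-c\,n|\theta|$ while the corresponding martingale part is $O_P(\sqrt{n|\theta|})$. This already pushes $\hat\rho_n$ into a compact set and then into a $C/\sqrt n$ neighborhood of $\rho_0$. A finer analysis confines $\hat\rho_n$ to a $C/n$ neighborhood: after the rescaling $z=n\theta$, only those increments whose endpoints separate $\rho_0$ from $\rho_0+z/n$ change regime, and their number is tight on compact $z$-intervals because $L_1^{\rho_0}(X)<\infty$.

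For the second pillar, fixing $z$ and varying $t$, the partial sums
\[
S_n^{(z)}(t) := \sum_{k \le \lfloor nt \rfloor} \log\!\left( \frac{p_{1/n}^{\rho_0+z/n}(X_{(k-1)/n},X_{k/n})}{p_{1/n}^{\rho_0}(X_{(k-1)/n},X_{k/n})} \right)
\]
form a discrete semimartingale. Its Doob decomposition separates the asymptotic linear drift appearing in \eqref{eq:def_ell}---whose coefficient emerges from applying the occupation-time formula to $n^{-1}\sum_k g(X_{(k-1)/n})$ for the regime-specific integrands $g$, producing $|z|\,L_1^{\rho_0}(X)$ times the constants $b_{\alpha,\beta},b_{\alpha,\beta}'$---from a purely discontinuous martingale whose jumps occur exactly at regime-changing indices. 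Jacod's stable convergence theorem for jump measures of triangular arrays of semimartingales then identifies the limiting compensator as a two-sided Poisson random measure with intensity $(L_1^{\rho_0}(X)/\beta^2)\,dz$ on $z>0$ and $(L_1^{\rho_0}(X)/\alpha^2)\,d(-z)$ on $z<0$; combined with the deterministic jump heights $\log(\beta^2/\alpha^2)$ and $\log(\alpha^2/\beta^2)$, this reproduces the compensated two-sided Poisson structure of $\ell$ and yields $\cF$-stable convergence of $Z_n$ to $\ell(\cdot\,L_1^{\rho_0}(X))$ in $D(\R)$.

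To conclude, on $\{L_1^{\rho_0}(X)>0\}$ the limit process has strictly negative expected slope on both sides of $0$ and only finitely many jumps on compact intervals, so it almost surely attains its supremum at a unique point in $\R$, which gives well-definedness of the right-hand side. Combined with the $1/n$-localization and the functional stable convergence, an argmax continuous mapping theorem for càdlàg processes drifting to $-\infty$ (in the spirit of Ferger and Kim--Pollard) delivers the conclusion. The main obstacle I anticipate is the second pillar: since $\rho \mapsto p_{1/n}^\rho$ is not continuous, classical score and Fisher-information expansions are unavailable, and one must exploit the semimartingale structure directly while carefully bookkeeping the nine regime sums so that only the upcrossing and downcrossing contributions survive the $1/n$-rescaling and produce the bivariate Poissonian stable limit tied to the local time.
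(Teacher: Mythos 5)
Your overall strategy mirrors the paper's exactly: establish $n$-consistency (Proposition~\ref{prop:n-consistency}), prove $\cF$-stable convergence of $(\ell_n(z/n))_z$ in the Skorohod space (Proposition~\ref{prop:convergence_ell_n}), and conclude with an argsup continuous mapping theorem (Ferger's Theorem~4.12). However, two of your steps contain genuine gaps.

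First, the stable convergence of $(\ell_n(z/n))_z$ is not a single application of ``Jacod's stable convergence theorem for jump measures of triangular arrays of semimartingales.'' The process $z\mapsto\ell_n(z/n)$ carries no exploitable structure in $z$; the finite-dimensional convergence must first be reduced, via a stable Cram\'er--Wold device, to the convergence of arbitrary linear combinations $\kappa_1\ell_{n,\bullet}(z_1/n)+\dots+\kappa_N\ell_{n,\bullet}(z_N/n)$ as sequential processes in $t$ with respect to the discretized filtration $(\cF_{\lfloor nt\rfloor/n})_t$. Even then, no off-the-shelf Jacod result covers this setting: his PII limit theorem is not formulated for processes on $n$-dependent discretized filtrations, while his infill result produces only continuous conditionally Gaussian limits, whose Lindeberg-type condition fails here precisely because the regime-changing indicators are rare events generating Poissonian fluctuations. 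The paper therefore needs the bespoke hybrid Proposition~\ref{prop:version_Jacod}, after which the limit must be explicitly reconstructed from a bivariate Poisson random measure and matched to $\ell(z L_1^{\rho_0}(X))$ through its semimartingale characteristics, which is the hard content of Subsection~\ref{Subsection:proof_stable_fidi_process}. Your sketch omits both the Cram\'er--Wold reduction and the necessity of proving a new stable limit theorem.

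Second, ``strictly negative expected slope on both sides and only finitely many jumps on compacts'' guarantees that the argsup is attained in a compact set, but it does not exclude ties among candidate maximizers, which is precisely the uniqueness claim and part of the assertion that the right-hand side is a well-defined $\R$-valued random variable. Since between jumps the paths decrease linearly with a deterministic slope, any two candidate maximizers on the same side of $0$ must differ by an element of $\kappa_{\alpha,\beta}\Z$, and candidates on opposite sides must satisfy a countable system of equations coupling the two Poisson paths. Ruling these out requires the argument of Lemma~\ref{lemma:uniqueness_argsup_ell}, which exploits that differences of jump times are exponentially distributed (hence avoid any fixed countable set a.s.) and that the two Poisson components are independent. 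Without this argument, the almost-sure uniqueness stated in the theorem is not established.
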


\paragraph*{Statistical consequence}
Note that both the intensity of the two-sided Poisson process and the drift of the process on the right-hand side in Theorem~\ref{thm:MLE_limiting_distribution} are given as multiples of the local time $L_1^{\rho_0}(X)$. As concerns statistical applications, however, $L_1^{\rho_0}(X)$ is not observable and here the $\cF$-stable convergence is the way out: Due to this stronger mode of convergence as compared to convergence in distribution, both sides in Theorem~\ref{thm:MLE_limiting_distribution} can be multiplied by the $\cF$-measurable local time $L_1^{\rho_0}(X)$ and the convergence is still in place. By further replacing the local time by a local time estimator, we are even able to derive a limit that does not depend on any unknown quantities any longer. To be more precise, let $(\hat{L}_n^\rho)_{\rho\in\R}$ be a family of estimators satisfying
\begin{align}\label{eq_proof:uniform_local_time_estimator}
\hat{L}_n^{\hat{\rho}_n} \longrightarrow_{\Pr_{\rho_0}} L_1^{\rho_0}(X).
\end{align} 
Here, the subscript $\rho_0$ in the probability $\Pr_{\rho_0}$ indicates that $X$ solves~\eqref{eq:SDE_OBM} with $\rho=\rho_0$. Based on~\cite{Mazzonetto}, an example of local time estimators satisfying~\eqref{eq_proof:uniform_local_time_estimator} is given in Section~\ref{Appendix:Statistical_Application} of the supplement. The special structure of the limit $\ell$ reveals the distributional identity
\[ \mathcal{L}_{\rho_0}\left(\left. L_1^{\rho_0}(X)\ \underset{z\in\R}{\arg\sup\ }\ell\big(z L_1^{\rho_0}(X)\big) \right| L_1^{\rho_0}(X)>0\right)  =\mathcal{L}\left( \underset{z\in\R}{\arg\sup\ } \ell(z)\right),\]
and the stable convergence in Theorem~\ref{thm:MLE_limiting_distribution} together with the stochastic convergence in~\eqref{eq_proof:uniform_local_time_estimator} and Theorem~$3.18$(b) in~\cite{Haeusler/Luschgy} yield the joint stable convergence
\[ \Big(\hat{L}_n^{\hat{\rho}_n}, n\left(\hat{\rho}_n-\rho_0\right)\1_{\{L_1^{\rho_0}(X)>0\}} \Big) \stackrel{\cF-st}{\longrightarrow} \Big( L_1^{\rho_0}(X), \underset{z\in\R}{\arg\sup\ } \ell(z L_1^{\rho_0}(X))\1_{\{L_1^{\rho_0}(X)>0\}}\Big). \]
%which together with the stable convergence in Theorem~\ref{thm:MLE_limiting_distribution} 
Together with the (stable) continuous mapping theorem and Theorem~$3.17$(iv) in~\cite{Haeusler/Luschgy}, this implies weak convergence of the conditional laws
\[ \mathcal{L}_{\rho_0}\left(\left. n\hat{L}_n^{\hat{\rho}_n} \big(\hat{\rho}_n-\rho_0\big)\right| L_1^{\rho_0}(X)>0\right) \Longrightarrow \mathcal{L}\left(\underset{z\in\R}{\arg\sup\ } \ell(z)\right).\]
Note that the right-hand side is independent of $X$ and $\rho_0$. Let $\kappa\in (0,1)$ be given and denote by $q_{\kappa/2}$ and $q_{1-\kappa/2}$ the $\kappa/2$- and $(1-\kappa/2)$-quantile of $\arg\sup_{z\in\R} \ell(z)$, respectively. Then 
\begin{align}\label{eq:confidence_interval}
\left[  \hat{\rho}_n -\frac{1}{n\hat{L}_n^{\hat{\rho}_n}}   q_{1-\kappa/2}, \hat{\rho}_n -\frac{1}{n\hat{L}_n^{\hat{\rho}_n}}  q_{\kappa/2} \right]
\end{align} 
is an asymptotic $(1-\kappa)$-confidence interval for $\rho_0$ conditional on $\{L_1^{\rho_0}(X)>0\}$ by means of Theorem~\ref{thm:MLE_limiting_distribution} (which should be read as $\R$ in case $\hat{L}_n^{\hat{\rho}_n}=0$). Note that $\{L_1^{\rho_0}(X) =0\}$ coincides with the event where the process $(X_t)_{0\leq t\leq 1}$ has not crossed the level $\rho_0$ according to Corollary~$29.18$ in~\cite{Kallenberg}. An unconditional confidence set can be obtained by first testing the null hypothesis $L_1^{\rho_0}(X)=0$ and adjusting the confidence level accordingly.\\

\paragraph*{Overview of the proof}
Central for the strategy of proof of Theorem~\ref{thm:MLE_limiting_distribution} is the characteristic property
\[ n(\hat{\rho}_n-\rho_0)\in\underset{z\in\R}{\mathrm{Argsup}\ } \ell_n(z/n). \]
Correspondingly, the proof contains two major steps, which are stated in the following two propositions.

\begin{prop}\label{prop:n-consistency}
The MLE $\hat{\rho}_n$ is $n$-consistent on $\{L_1^{\rho_0}(X)>0\}$, i.e.
\[ \lim_{K\to\infty} \limsup_{n\to\infty} \Pr_{\rho_0}\big( n\left|\hat{\rho}_n - \rho_0\right| >K, L_1^{\rho_0}(X) >0\big) =0.\]
\end{prop}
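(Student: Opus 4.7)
The plan is a three-step nested-exclusion argument mirroring the scales $O(1)$, $O(1/\sqrt n)$ and $O(1/n)$ foreshadowed in the introduction. Setting $\hat\theta_n:=\hat\rho_n-\rho_0$, one has by construction $\ell_n(\hat\theta_n)\ge \ell_n(0)=0$, so it suffices to show that, on $\{L_1^{\rho_0}(X)>0\}$ and with arbitrarily high probability as $M,K,n\to\infty$, $\ell_n(\theta)$ is strictly negative uniformly over each of the successive annular regions $|\theta|>M$, $K/\sqrt n<|\theta|\le M$, and $K/n<|\theta|\le K/\sqrt n$.

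The key structural tool is the nine-regime decomposition of each summand of $\ell_n(\theta)$ into the cases of~\eqref{eq:cases_I_jk}, yielding $\ell_n(\theta)=\sum_{j=1}^9 S_n^{(j)}(\theta)$; which $S_n^{(j)}$ dominates depends on the scale. For $|\theta|$ of order one, the log-density ratios have a large-deviation--type negative conditional mean, producing a Kullback--Leibler gap of order $n$; via the occupation time formula this gap is bounded below by a positive multiple of the occupation time of $X$ in $[\rho_0\wedge(\rho_0+\theta),\rho_0\vee(\rho_0+\theta)]$, which in turn dominates a positive multiple of $(|\theta|\wedge 1)\,L_1^{\rho_0}(X)$. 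For $|\theta|$ of order $1/\sqrt n$, the gap contracts to a deterministic drift of order $\sqrt n\,|\theta|\,L_1^{\rho_0}(X)$ matching the coefficients $b_{\alpha,\beta},b'_{\alpha,\beta}<0$ of~\eqref{eq:def_ell}, and this dominates the centered martingale fluctuations (which are of order $\sqrt{\sqrt n\,|\theta|\,L_1^{\rho_0}(X)}$ in probability). For $|\theta|$ of order $z/n$, the drift is $O(1)$ and the dominant mechanism is discrete: the number of \emph{straddling} indices $k$ --- those for which $X_{(k-1)/n}$ and $X_{k/n}$ lie on opposite sides of $\rho_0+\theta$ but the same side of $\rho_0$, or vice versa --- scales like $|z|\,L_1^{\rho_0}(X)$ via a discrete local-time approximation, and each such summand contributes a negative expected value of order one (the logarithm of a ratio of transition densities evaluated near the discontinuity at $\rho$). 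Chaining these three dominance arguments produces the stated tightness.

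The main obstacle will be uniformity in $\theta$ at each scale. Since $\theta\mapsto\ell_n(\theta)$ is càdlàg with $O(n)$ jumps clustering near $\rho_0$, suprema must be taken at the jump locations $X_{k/n}-\rho_0$; control thereof requires exploiting the semimartingale viewpoint advertised in the abstract, viewing the sequential partial sums in $k$ as a martingale with respect to the filtration generated by $X$ and applying a Doob-type maximal inequality on the discrete set of jump points. A second subtlety, crucial for Steps~2 and~3, is the transfer from discretely observed occupation counts near $\rho_0$ to the continuous local time $L_1^{\rho_0}(X)$: this is precisely where the assumption $L_1^{\rho_0}(X)>0$ enters, and it is handled by classical local-time approximation results in the spirit of~\cite{Mazzonetto}. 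Once these uniform tail bounds at each scale are in place, the three exclusions chain together to yield the proposition.
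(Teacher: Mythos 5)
Your three-scale nested exclusion ($O(1)$, $O(1/\sqrt n)$, $O(1/n)$) matches the paper's architecture (the paper fuses the outer two scales into a single ``$\sqrt n$-consistency'' step built around $\mathcal{I}_5(\theta)$ and a truncated compensated version $N_n^L(\theta)$, but the annular structure is the same), and you correctly identify the semimartingale split $\ell_n=M_n+B_n$, the nine-regime decomposition, and the discrete local-time approximation as the central tools. However, your middle-regime scalings are off by a factor $\sqrt n$: by Proposition~\ref{prop:expansion_of_drift_t} together with Lemma~\ref{lemma:Riemann_approximation} the drift is $B_n(\theta)\approx b_{\alpha,\beta}\,n|\theta|\,L_1^{\rho_0}(X)$, not $\sqrt n|\theta|$, and the variance of the straddling-indicator martingale scales like $n|\theta|$, not $\sqrt n|\theta|$. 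This is not cosmetic: with your scalings the drift-to-fluctuation ratio at the critical inner boundary $|\theta|=K/n$ would be $\sqrt{\sqrt n\,|\theta|}=\sqrt{K}\,n^{-1/4}\to 0$, so dominance never kicks in at exactly the scale the proposition concerns; with the correct scalings the ratio is $\sqrt{n|\theta|}=\sqrt{K}$, uniformly in $n$, which is what makes the slicing close.

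Beyond the scaling error, your sketch glosses over the obstacles that carry the real weight of the proof and are explicitly flagged in the paper. First, $B_n(\theta)$ is \emph{random}, so one cannot apply Markov's inequality on each slice directly; the paper must first establish deterministic lower bounds for $-\sup_{\theta\in S_{n,j}}B_n(\theta)$ on auxiliary events of high probability, which is technically substantial because the drift depends on the sample path. Second, for $\kappa_0/\sqrt n<|\theta|\le K/\sqrt n$ the Taylor remainder in the expansion of $B_n(\theta)$ is of the same order as the leading term, so the expansion is useless there; the paper bypasses this by recognizing $-B_n(\theta)$ as a sum of Kullback--Leibler divergences and invoking Pinsker's inequality to get an $n^{3/2}\theta^2$ bound, a device your proposal omits. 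Third, the chaining must be interrupted once the metric gap drops to $1/(Cn)$ — a generic Doob-type maximal inequality does not handle the indicator-driven jumps; the paper substitutes tailored second-moment bounds (essentially Lemma~\ref{lemma:second_moment_sum_intervals}) for the remainder. Finally, your framing that ``each straddling summand contributes a negative expected value of order one'' is misleading: a single straddling summand contributes $\log(\beta^2/\alpha^2)$, whose sign is not fixed; the negativity of $B_n$ comes from Jensen's inequality for conditional expectation applied to the full nine-case sum, and is spread over the $\sim\sqrt n\,L_1^{\rho_0}(X)$ observations near $\rho_0$.
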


\begin{prop}\label{prop:convergence_ell_n}
For any $K>0$, we have
\begin{align*}
\left( \ell_n(z/n)\right)_{z\in [-K,K]} \stackrel{\cF-st}{\longrightarrow}\ \left( \ell(z L_1^{\rho_0}(X))\right)_{z\in [-K,K]}
\end{align*} 
in the Skorohod space $\mathcal{D}([-K,K])$.
\end{prop}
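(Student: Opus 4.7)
The plan is to exploit the fact that, for fixed $n$, $z\mapsto\ell_n(z/n)$ is a pure-jump process in $z$ that is piecewise linear between its finitely many jump points, and to identify its jump characteristic and drift characteristic separately as $n\to\infty$. Starting from the nine-regime decomposition of each summand in~\eqref{def:log_likelihood_normalized}, induced by the position of the pair $(X_{(k-1)/n},X_{k/n})$ relative to the levels $\rho_0$ and $\rho_0+z/n$, one isolates two types of contributions. The first consists of pairs for which both endpoints lie inside the thin strip between $\rho_0$ and $\rho_0+z/n$: on such an event, switching the parameter from $\rho_0$ to $\rho_0+z/n$ swaps the active diffusion coefficient for both endpoints simultaneously, and a direct computation shows that the log-ratio equals (up to a uniformly negligible $o(1)$ correction) the constant $\log(\beta^2/\alpha^2)$ for $z\geq 0$, respectively $\log(\alpha^2/\beta^2)$ for $z<0$. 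The second type consists of pairs that are near $\rho_0$ but do not both lie in the strip; these contribute smooth summands of size $O(1/n)$ that in aggregate produce a linear drift.

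For the jump part (taking $z\geq 0$ for concreteness), the counting process
\[
N_n^+(z) := \sum_{k=1}^n \1\big\{X_{(k-1)/n},X_{k/n}\in[\rho_0,\rho_0+z/n)\big\},\qquad z\in[0,K],
\]
has to converge $\cF$-stably in $\mathcal{D}([0,K])$ to $N(zL_1^{\rho_0}(X)/\beta^2)$, with the analogous statement on the negative side giving $N'(-zL_1^{\rho_0}(X)/\alpha^2-)$. The heuristic is that, conditional on the past up to time $(k-1)/n$, the probability that this summand contributes a new jump when $z$ increases by $dz$ is asymptotically proportional to $\1\{X_{(k-1)/n}\approx\rho_0\}\cdot n^{-1}\, dz/\beta^2$; summing over $k$ and recognising a Riemann approximation of an occupation integral yields $L_1^{\rho_0}(X)/\beta^2$ as the limiting intensity. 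The Poisson (rather than Gaussian) character is forced because on the $1/n$-scale these are rare events, so a second moment computation on the explicit transition density $p_{1/n}^{\rho_0}$ will show that consecutive strip-hits are asymptotically independent given the local occupation. The asymptotic independence of the positive and negative counting processes, producing the two independent processes $N$ and $N'$, follows similarly because they count excursions into disjoint one-sided strips.

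The drift contribution is obtained by taking conditional expectations of the smooth-regime summands with respect to the filtration generated by $X$ up to time $(k-1)/n$. Using the explicit Gaussian-type form of $p_{1/n}^{\rho_0}$, one shows that on $\{X_{(k-1)/n}\approx\rho_0\}$ the conditional mean of a summand at level $z/n$ equals $\bigl(b_{\alpha,\beta}-\beta^{-2}\log(\beta^2/\alpha^2)\bigr)\cdot z/n + o(1/n)$ for $z\geq 0$ (and its $\alpha\leftrightarrow\beta$ counterpart for $z<0$), while the quadratic variation of the martingale remainder is $o(1)$, uniformly in $z\in[-K,K]$. A further occupation-time Riemann-sum approximation then turns the sum over $k$ into $\bigl(b_{\alpha,\beta}-\beta^{-2}\log(\beta^2/\alpha^2)\bigr)\cdot z\cdot L_1^{\rho_0}(X)$ for $z\geq 0$, matching the linear part of~\eqref{eq:def_ell} after the substitution $z\mapsto zL_1^{\rho_0}(X)$.

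To assemble these ingredients I would apply a functional stable convergence criterion for triangular arrays of semimartingales, with $z$ playing the role of time (in the spirit of Jacod--Shiryaev Theorem IX.7.3, or the stable variants used in~\cite{Haeusler/Luschgy}): the characteristics of $z\mapsto\ell_n(z/n)$ converge in probability to an $\cF$-conditionally deterministic linear drift and to a Lévy measure with two atoms whose intensities are proportional to $L_1^{\rho_0}(X)$, which is precisely the characteristic triple of $z\mapsto\ell(zL_1^{\rho_0}(X))$. Tightness in $\mathcal{D}([-K,K])$ is easy since the jump part is monotone and the drift part Lipschitz with uniformly bounded constant. The hard part will be the Poisson-type stable convergence of $N_n^\pm$ towards $N,N'$ with the $\cF$-measurable random intensity $L_1^{\rho_0}(X)$, and in particular the asymptotic independence of the two sides: this requires a careful excursion argument showing that visits of $X$ to the $1/n$-strips around $\rho_0$ are steered precisely by the local time there, and that left- and right-excursions decouple in the limit to produce genuinely independent Poisson processes.
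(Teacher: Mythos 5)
Your plan contains two genuine gaps, one computational and one structural.

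First, the jump-producing events are misidentified. The event $\{X_{(k-1)/n},X_{k/n}\in[\rho_0,\rho_0+z/n)\}$ on which you base $N_n^+$ is the regime $I_{5,k}^{z/n}$ of~\eqref{eq:cases_I_jk}, and it is asymptotically negligible: given $X_{(k-1)/n}$ in the strip (probability $\asymp z/\sqrt{nk}$), the conditional probability that $X_{k/n}$ also lands in a strip of width $z/n$ is $\asymp z/\sqrt{n}$, so $\E[N_n^+(z)]=O(z^2/\sqrt{n})\to0$. Your counting process converges to zero, not to a Poisson process, and the whole construction collapses. The events that actually carry the Poissonian fluctuation are those of $\mathcal{I}_2$ and $\mathcal{I}_8$, namely $\{X_{(k-1)/n}<\rho_0<X_{k/n}\leq\rho_0+z/n\}$ and $\{\rho_0<X_{k/n}\leq\rho_0+z/n\leq X_{(k-1)/n}\}$: only $X_{k/n}$ falls in the strip while $X_{(k-1)/n}$ sits outside it on a prescribed side; on these the log-ratio is $\log(\beta^2/\alpha^2)$ up to a negligible remainder, and their expected count is $O(z)$ (cf.~\eqref{eq_proof:3char_4}). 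Relatedly, your drift bookkeeping conflates the limit of the compensator $B_{n}$ (which is $b_{\alpha,\beta}\,zL_1^{\rho_0}(X)$) with the additional $-\beta^{-2}\log(\beta^2/\alpha^2)\,zL_1^{\rho_0}(X)$ that arises only from compensating the Poisson jumps.

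Second, and more fundamentally, the proposed semimartingale limit theorem "with $z$ playing the role of time" is not available. Conditionally on $\cF=\sigma(W)$ the entire path $z\mapsto\ell_n(z/n)$ is deterministic, so with respect to any filtration in $z$ that already contains the data there is no nontrivial predictable compensator of its jump measure of the form $L_1^{\rho_0}(X)\,dz/\beta^2$; the jump locations $z=n(X_{k/n}-\rho_0)$ are "known in advance" in the $z$-direction. Your heuristic "conditional on the past up to time $(k-1)/n$" is a conditioning in the observation-time direction, not in $z$, and making it rigorous forces exactly the device the paper uses: introduce the sequential process $\ell_{n,t}(z/n)$ of~\eqref{eq:def_ell_nt}, decompose it as a semimartingale with respect to the discretized filtration $(\cF_{\lfloor nt\rfloor/n})_t$, prove stable convergence of arbitrary linear combinations $\sum_j\kappa_j\ell_{n,\bullet}(z_j/n)$ as processes in $t$ via an adapted Jacod-type theorem (Proposition~\ref{prop:version_Jacod}), identify the limit through a bivariate Poisson construction, and only then project to $t=1$, combine with the stable Cram\'er--Wold device for the fidis in $z$, and add tightness in $z$ via a moment criterion. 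Without this (or an equally workable substitute for computing compensators), your argument cannot be completed; the "careful excursion argument" you defer to is precisely the missing core of the proof.
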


\noindent
Based on those results, the proof of Theorem~\ref{thm:MLE_limiting_distribution} is then completed with argsup-continuous mapping type arguments.

The proof of Proposition~\ref{prop:n-consistency} adopts the M-estimation approach, c.f. Section~$3.2$ in~\cite{Vaart/Wellner}. To this aim, the log-likelihood function $\ell_n$ is decomposed into a drift $B_n$ and a sum of martingale differences $M_n$ by adding and subtracting the sum of conditional expectations of each of its increments with respect to $\cF_{(k-1)/n}$, see~\eqref{eq:decomposition_ell_n}. While the martingale part $M_n$ consists of martingale increments which are convenient to work with, the problem of this natural decomposition is the randomness of the drift term $B_n$. This prevents us from directly applying Markov's inequality in the slicing device, where expressions of the form $\Pr_{\rho_0}\big( \sup_{\theta \in S_{n,j}} M_n(\theta) > -\sup_{\theta\in S_{n,j}} B_n(\theta)|L_1^{\rho_0}(X)>0\big)$ need to be bounded for suitably chosen partitions $S_{n,j}, j\in\N$, of the parameter space. Instead, we first deduce deterministic bounds for $\sup_{\theta\in S_{n,j}} B_n(\theta)\1_{A(n)}$ on certain sets $A(n)$ with high probability that require a precise understanding of the drift term. Here, the particular difficulty is caused by the decomposition of $\ell_n(\theta)$ into nine disjoint regimes which contribute to the drift rather differently depending on how close $\theta$ is to zero, see Figure~\ref{figure:route_of_proof}. Correspondingly, the MLE is successively excluded to lay outside a compact set, a $1/\sqrt{n}$-neighborhood and finally a $1/n$-neighborhood of $\rho_0$ asymptotically. For $\theta$ small enough, Proposition~\ref{prop:expansion_of_drift_t} provides an expansion that mimics the triangular shape of $\ell_n$ close to the true parameter seen in Figure~\ref{figure:Likelihood_example}, provided that $L_1^{\rho_0}(X)>0$. Already for larger values of $\theta$ in the $1/\sqrt{n}$-environment we are facing the problem that due the $n$-dependence of the step size in the transition density, the remainder terms in Taylor expansions within regimes (that Proposition~\ref{prop:expansion_of_drift_t} is built on) are of the same order as the leading terms and are therefore not useful any longer. As Figure~\ref{figure:Likelihood_example} shows, the triangular shape is indeed globally not valid.

The proof of Proposition~\ref{prop:convergence_ell_n} requires to prove $\cF$-stable convergence of finite dimensional distributions (fidis) and tightness. Unlike in many situations, the much more delicate part is to establish stable convergence of fidis, whereas tightness is a consequence of the standard moment criterion for tightness in the Skorohod space, combined with a thorough understanding of $\ell_n$, in particular the moments bounds provided in Proposition~\ref{prop:moment_product}. By means of the Cramér--Wold device, convergence of fidis is a consequence of convergence of arbitrary linear combinations of $\ell_n(z/n)$ for different $z$. As the process $(\ell_n(z/n))_{z\in\R}$ has no specific structure in its parameter $z$, for example being Markovian or a martingale, the crucial idea is to artificially consider the sequential process
\begin{align}\label{eq:def_ell_nt}
\ell_{n,t}(z/n) = \sum_{k=1}^{\lfloor nt\rfloor} \log\left(\frac{p_{1/n}^{\rho_0+z/n}(X_{(k-1)/n},X_{k/n})}{p_{1/n}^{\rho_0}(X_{(k-1)/n},X_{k/n)}}\right), \qquad t\in [0,1], 
\end{align} 
as a process in time and use its specific semimartingale decomposition given in~\eqref{eq:decomposition_ell_n} with respect to the discretized filtration $(\cF_{\lfloor nt\rfloor/n})_{t\in [0,1]}$. \cite{Jacod_stableGaussian} provides a stable convergence result in exactly this setting of infill asymptotics, but only for conditional Gaussian limits. \cite{Jacod_stablePII} describes stable convergence towards a more general class of processes that covers Poisson limits, but in its current formulation, this result does not apply to processes defined on discretized filtrations. We therefore adapt this result by combining it with the first one to bridge the gap between those two, see Proposition~\ref{prop:version_Jacod}. Based on this, proving stable convergence of fidis is traced back to proving (uniform) stochastic convergence of the semimartingale characteristics of the sequential version of any linear combination $\kappa_1\ell_n(z_1/n)+\dots + \kappa_N\ell_n(z_N/n)$. Fascinatingly, only two of the nine regimes in~\eqref{eq:cases_I_jk} contribute to the stochastic fluctuation of $\ell_n(z/n)$ asymptotically. The resulting limit can then be constructed based on a bivariate Poisson process as we observe Poissonian behavior both in time $t$ and location $z$. Remarkably, it turns out that $(\ell(z))_{z\geq 0}$ indeed is given as a martingale plus drift, whereas we did no have a martingale structure of $\ell_n(z/n)$ in $z$.

The article is organized as follows: On the basis of Propositions~\ref{prop:n-consistency} and~\ref{prop:convergence_ell_n}, the proof of Theorem~\ref{thm:MLE_limiting_distribution} is conducted in Section~\ref{Section:proof_thm}. Section~\ref{Section:likelihood} contains important features of the log-likelihood function, including the explicit form of the transition density, the above mentioned nine regimes, the martingale and drift decomposition of $\ell_n$ together with the central expansion of the drift and moment bounds for increments of $\ell_n$. The proof of Proposition~\ref{prop:n-consistency} is given in Section~\ref{Section:Consistency}. Section~\ref{Section:Limiting_distribution} contains the proof of Proposition~\ref{prop:convergence_ell_n}, including a heuristic reasoning for the Poissonian structure of the limit and an explicit construction of $(\ell(z))_{z\in\R}$. Remaining auxiliary results, contained in the Appendix Sections~\ref{App:notation}-\ref{App:conditions_prop_stable}, are deferred to the supplement.

\section{Proof of Theorem~\ref{thm:MLE_limiting_distribution}}\label{Section:proof_thm}

In this section the proof of Theorem~\ref{thm:MLE_limiting_distribution} is conducted on the basis of Proposition~\ref{prop:n-consistency} and~\ref{prop:convergence_ell_n}. The property of the limit is outsourced to Lemma~\ref{lemma:uniqueness_argsup_ell}.

\paragraph*{Preliminaries on stable convergence}
Let $(\Omega,\cF,\Pr)$ be a probability space supporting a standard Brownian motion $W$ with $\mathbb{F}=(\cF_t)_{t\in [0,1]}$ being the augmented filtration induced by $W$ and restrict attention to the case $\cF=\cF_1$. Let $(\Omega',\cF',(\cF_t)_{t\geq 0})$ be the canonical space of all $\R^2$-valued càdlàg functions on $\R_{\geq 0}$ with the canonical process $(N(z), N(z)')(\omega') = \omega'(z)$ and the right-continuous filtration generated by $(N,N')$. Furthermore, let $\Pr'$ be the unique probability measure on $(\Omega',\cF')$ under which the processes $N$ and $N'$ are independent standard Poisson processes. 
We then define a stochastic basis $\tilde{\mathcal{B}}=(\tilde{\Omega}, \tilde{\cF}, (\tilde{\cF}_t)_{t\geq 0}, \tilde{\Pr})$ via
\begin{align}\label{eq:extended_probability_space}
\tilde{\Omega} := \Omega\times \Omega', \quad \tilde{\cF} := \cF\otimes \cF', \quad \tilde{\cF}_t := \bigcap_{s>t} \cF_s\otimes\cF_s', \quad \tilde{\Pr} := \Pr\otimes\Pr'.
\end{align} 
It is clear in this case that the extension is \textit{very good} (in the sense of Definition~$II.7.1$ in~\cite{Jacod/Shiryaev}), meaning that every martingale on $\mathcal{B}$ is also a martingale on $\tilde{\mathcal{B}}$. Finally, we can define $(\ell(z))_{z\in\R}$ as given in~\eqref{eq:def_ell} on the stochastic basis $\tilde{\mathcal{B}}$. \\
Let $(Z_n)_{n\in\N}$ be a sequence of random variables with values in a metric space $E$ and defined on $(\Omega,\cF,\Pr)$ and $Z$ an $E$-valued random variable on $(\tilde{\Omega},\tilde{\cF},\tilde{\Pr})$. Then we say that $(Z_n)_{n\in\N}$ converges \textit{$\cF$-stably in law} to $Z$ if
\[ \E\left[ V f(Z_n)\right] \longrightarrow \tilde{\E}\left[ V f(Z)\right],\]
for all $f:E\longrightarrow\R$ bounded and continuous and all bounded random variables $V$ on $(\Omega,\cF)$, where $\tilde{\E}$ denotes the expectation with respect to $\tilde{\Pr}$. This property is (slightly) stronger than mere convergence in law. It applies in particular for $E$ being the Skorohod space $\mathcal{D}([0,1])$.

\begin{lemma}\label{lemma:uniqueness_argsup_ell}
Provided that $L_1^{\rho_0}(X)>0$, the set $\mathrm{Argsup}_{z\in\R} \ell(z L_1^{\rho_0}(X))$ is $\tilde{\Pr}$-almost surely non-empty and a singleton.
\end{lemma}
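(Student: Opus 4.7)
The plan is to exploit the explicit representation of $\ell$ in \eqref{eq:def_ell} as a two-sided compensated Poisson process plus a negative drift, and to analyse each half-line separately. On $[0,\infty)$ one can rewrite
\[\ell(z)=b_{\alpha,\beta}\,z+\log(\beta^2/\alpha^2)\bigl[N(z/\beta^2)-z/\beta^2\bigr],\]
so that $\ell$ is piecewise affine with slope $(\alpha^2-\beta^2)/(\alpha^2\beta^2)$ between the jumps of $z\mapsto N(z/\beta^2)$ and with jumps of common size $\log(\beta^2/\alpha^2)$; an analogous expression holds on $(-\infty,0)$ in terms of $b_{\alpha,\beta}'$, $\alpha^2$ and $N'$. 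The strong law of large numbers for $N$, combined with the strict negativity of $b_{\alpha,\beta}$ and $b_{\alpha,\beta}'$ noted after \eqref{def:constant_b_alpha_beta}, yields $\ell(z)/|z|\to b_{\alpha,\beta}<0$ as $z\to\infty$ and $\ell(z)/|z|\to b_{\alpha,\beta}'<0$ as $z\to-\infty$ almost surely; in particular $\ell(z)\to-\infty$ as $|z|\to\infty$ almost surely.

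From this I will deduce that $\mathrm{Argsup}_{z\in\R}\ell(z)$ is non-empty almost surely. Indeed, $\sup_{z\in\R}\ell(z)$ is finite because $\ell$ is càdlàg and tends to $-\infty$ at both ends; any maximising sequence is bounded and, along a convergent subsequence with limit $z^\ast$, one has $\max\{\ell(z^\ast-),\ell(z^\ast)\}=\sup\ell$, which places $z^\ast$ in the Argsup by the definition given in the introduction. Depending on the sign of $\alpha-\beta$ the maximiser on each half-line is realised at a jump point (if the slope between jumps is negative and the jumps are positive) or at the left limit of a jump (if the slope is positive and the jumps are negative), but either way the Argsup definition captures it.

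For uniqueness, I will parametrise the jump locations of $z\mapsto N(z/\beta^2)$ on $[0,\infty)$ by $T_k=\beta^2\tau_k$, where $\tau_k$ is the $k$-th arrival of $N$. The only candidate values for the supremum on $[0,\infty)$ then form a countable family of the shape $aT_k+bk+c$ for explicit deterministic constants $a,b,c$, together with $\ell(0)=0$. Since the joint law of $(T_k)_{k\in\N}$ is absolutely continuous on the ordered simplex, for every fixed $v\in\R$ a countable union argument gives that the event that some candidate value equals $v$ has probability zero; consequently the positive-side supremum has a continuous marginal distribution and its Argsup restricted to $[0,\infty)$ is a singleton almost surely. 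The same argument applied to $N'$ delivers uniqueness of the Argsup on $(-\infty,0)$.

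Finally, because $N$ and $N'$ are independent (and independent of $W$), the positive- and negative-side suprema are independent random variables with continuous marginals, so they coincide only on a null set; hence the global Argsup on $\R$ is almost surely a singleton. The deterministic rescaling $z\mapsto zL_1^{\rho_0}(X)$ with $L_1^{\rho_0}(X)>0$ bijectively sends $\mathrm{Argsup}_{z\in\R}\ell(z)$ to $\mathrm{Argsup}_{z\in\R}\ell(zL_1^{\rho_0}(X))$, finishing the claim on $\{L_1^{\rho_0}(X)>0\}$. The main obstacle will be the uniqueness step, which has to handle the case distinction $\alpha\lessgtr\beta$, rule out coincidences among the countably many candidate values on each half-line, and exploit the independence of $N$ and $N'$ to exclude ties across the two half-lines.
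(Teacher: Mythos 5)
Your overall strategy matches the paper's for the uniqueness part (jump points of $N$, $N'$ as the only candidate maximizers, a countable candidate family, absolute continuity of the jump-time laws, and independence of $N$ and $N'$ to handle ties across the two half-lines), while your non-emptiness argument takes a genuinely different and perfectly valid route: the paper uses a slicing argument with Doob's maximal inequality to show $\Pr'(\sup_{z>2^M}\ell(z)\geq 0)\to 0$, whereas you use the strong law for $N$ to get $\ell(z)/|z|\to b_{\alpha,\beta}<0$ and hence coercivity, which is arguably more elementary. The reduction to fixed $c=L_1^{\rho_0}(X)$ via independence of $(N,N')$ and $\cF$ is the same disintegration the paper performs.

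Two steps in your uniqueness argument need repair, though the right ingredients are already on the table. First, the deduction ``for every fixed $v$ the event that some candidate value equals $v$ is null, hence the positive-side supremum has a continuous marginal, hence the one-sided Argsup is a singleton'' is a non sequitur: a supremum of countably many random variables can have an atomless law while still being attained at two points with positive probability (take $V_1=V_2$ uniform). What you must show is $\Pr(V_k=V_j)=0$ for each pair of \emph{candidates} $V_k=aT_k+bk$, $V_j=aT_j+bj$, $k\neq j$, and $\Pr(V_k=0)=0$ against the candidate $z=0$; both follow immediately from $a=(\alpha^2-\beta^2)/(\alpha^2\beta^2)\neq 0$ and the fact that $T_k-T_j$ (resp.\ $T_k$) has a density --- this is exactly the paper's computation with the lattice $\kappa_{\alpha,\beta}\Z$, and you should argue via differences rather than via a fixed level $v$. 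Second, the cross-half-line step ``independent with continuous marginals, so they coincide only on a null set'' is not literally available, because both one-sided suprema include the shared candidate $z=0$ and hence carry an atom at the value $0$ (e.g.\ when $\alpha<\beta$ the supremum over $z>0$ is negative with positive probability). This does not break the conclusion --- a tie at value $0$ corresponds to the single point $z=0$ --- but the clean fix is the paper's: condition on $N$ (hence on the value at a jump $T_k>0$ of the positive side) and use that the jump times of $N'$ have densities to exclude a matching value at a strictly negative candidate.
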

\begin{proof}
The equality $\ell(0)=0$ reveals the upper bound
\begin{align*}
&\tilde{\Pr}\Big(\mathrm{Argsup}_{z\in\R} \ell\big(z L_1^{\rho_0}(X), L_1^{\rho_0}(X)>0 \big) = \emptyset\Big)\\
&\hspace{1cm}\leq \tilde{\Pr} \left( \bigcap_{M\in\N} \bigg\{ \sup_{|z|>2^M} \ell(zL_1^{\rho_0}(X)) \geq 0\bigg\}\cap\big\{ L_1^{\rho_0}(X)>0\big\}\right) \\
&\hspace{1cm}  = \lim_{M\to\infty} \tilde{\Pr}\bigg( \sup_{|z|>2^M} \ell(zL_1^{\rho_0}(X)) \geq 0, L_1^{\rho_0}(X)>0\bigg).
\end{align*} 
By disintegration, construction of $\tilde{\Pr}$ in~\eqref{eq:extended_probability_space}, independence of $N,N'$ and $X$ and monotone convergence,
\begin{align*}
& \lim_{M\to\infty}\tilde{\Pr}\bigg( \sup_{|z|>2^M} \ell(zL_1^{\rho_0}(X)) \geq 0, L_1^{\rho_0}(X)>0\bigg)\\
&\hspace{0.5cm}= \lim_{M\to\infty}\int_{(0,\infty)} \tilde{\Pr}\bigg(\sup_{|z|>2^M} \ell(zL_1^{\rho_0}(X)) \geq 0 \bigg| L_1^{\rho_0}(X)=c\bigg) \tilde{\Pr}^{L_1^{\rho_0}(X)}(dc) \\
&\hspace{0.5cm} = \int_{(0,\infty)}\lim_{M\to\infty} \tilde{\Pr}\bigg(\sup_{|z|>2^M} \ell(cz) \geq 0\bigg) \Pr^{L_1^{\rho_0}(X)}(dc).
\end{align*}  
Recall that under $\Pr'$, both $N$ and $N'$ are independent standard Poisson processes. Then we obtain by the union bound in the first step and  Doob's maximal inequality in the second one,
\begin{align*}
%&\Pr'\left( \sup_{z>2^M} \left( b_{\alpha,\beta} - \frac{1}{\beta^2}\log\left(\frac{\beta^2}{\alpha^2}\right) \right)c|z| + \log\left(\frac{\beta^2}{\alpha^2}\right)N(cz/\beta^2) \geq 0 \right) \\
\Pr'\left( \sup_{z>2^M} \ell(cz) \geq 0\right) & \leq \sum_{j\geq M} \Pr'\left( \sup_{z\in (2^j,2^{j+1}]} \log\left(\frac{\beta^2}{\alpha^2}\right)\left( N(cz/\beta^2)- \frac{cz}{\beta^2}\right) \geq -b_{\alpha,\beta}c 2^j \right) \\
& \leq \log\left(\frac{\beta^2}{\alpha^2}\right)\frac{2}{(-b_{\alpha,\beta})c} \sum_{j\geq M} 2^{-j}\E'\left[\left( N(c2^{j+1}/\beta^2)- \frac{c2^{j+1}}{\beta^2}\right)^2\right]^\frac12 \\
& = \log\left(\frac{\beta^2}{\alpha^2}\right)\frac{2}{(-b_{\alpha,\beta})\beta\sqrt{c}} \sum_{j\geq M} 2^{-j} 2^{(j+1)/2} \stackrel{M\to\infty}{\longrightarrow} 0
\end{align*}
for any $c>0$. The case $z\leq -2^M$ can be dealt with analogously and it follows that the random set $\mathrm{Argsup}_{z\in\R} \ell(z L_1^{\rho_0}(X)) $ is non-empty almost surely.
For the uniqueness we obtain again by disintegration as above
\begin{align*}
\tilde{\Pr}\bigg( \#\underset{z\in\R}{\mathrm{Argsup}\ } \ell(zL_1^{\rho_0}(X)) >1, L_1^{\rho_0}(X)>0 \bigg) \hspace{-0.2mm} =\hspace{-0.2mm} \int_{(0,\infty)} \hspace{-2mm}\Pr'\bigg( \#\underset{z\in\R}{\mathrm{Argsup}\ } \ell(cz) >1 \bigg) \Pr^{L_1^{\rho_0}(X)}(dc)
\end{align*} 
and distinguish cases to prove that the integrand is equal to zero:
\begin{itemize}
\item[$\bullet\ \boldsymbol{\#([0,\infty)\cap \mathrm{Argsup}_z \ell(cz))\geq 2}$.] Let $z_1,z_2\in \mathrm{Argsup}_z \ell(cz)$ with $z_1,z_2\geq 0$. By monotonicity of the (deterministic) first summand in the definition of $\ell(cz)$, it follows that $N(cz_i/\beta^2) - N(cz_i/\beta^2-)\neq 0$ for $i=1,2$. Moreover, as the Poisson process $N$ takes only values in $\N_0$ we obtain that $\ell(cz_1)=\ell(cz_2)$ is only possible for $z_1-z_2\in \kappa_{\alpha,\beta}\Z$ for suitable $\kappa_{\alpha,\beta}$.
Let $T_n$ be the $n$-th jump time of $N(c/\beta^2)$. Then $T_n-T_m$, $m<n$, is gamma distributed with parameters $n-m$ and $c/\beta^2$, and we conclude
\begin{align*}
\Pr'\left( \# \Big( [0,\infty)\cap \underset{z\in\R}{\mathrm{Argsup}\ } \ell(cz) \Big) >1 \right) &\leq \Pr'\left( \bigcup_{m,n\in\N:\ m<n} \left\{ T_n - T_m \in\kappa_{\alpha,\beta}\Z\right\} \right) \\
&\leq \sum_{m,n\in\N:\ m<n} \Pr'\left( T_n - T_m \in\kappa_{\alpha,\beta}\Z\right) =0,
\end{align*}
as the probability of a gamma distributed random variable being in some countable set is zero.\smallskip

\item[$\bullet\ \boldsymbol{\#((-\infty,0]\cap \mathrm{Argsup}_z \ell(cz))\geq 2}$.] This follows as in the previous case.\smallskip

\item[$\bullet\ \boldsymbol{\#(-\infty,0]\cap\mathrm{Argsup}_z \ell(cz))\geq 1, \#([0,\infty)\cap\mathrm{Argsup}_z \ell(cz))\geq 1 }$.] Let $z_1\geq 0$ and $z_2<0$ with $z_1,z_2\in \mathrm{Argsup}_z \ell(cz)$. As in the first case, $N$ jumps at $z_1$ and $N'$ at $z_2$. As both $N$ and $N'$ only take values in $\N_0$, for each $z_1$ there exists a countable set $A(z_1)$ such that $\ell(cz_1)=\ell(cz_2)$ is only valid for $z_2\in A(z_1)$. Denoting by $T_n^1, T_n'$ the $n$-th jump time of $N(c\bullet/\beta^2)$ and $N'(c\bullet/\alpha^2)$, respectively, we obtain from independence of $N$ and $N'$,
\begin{align*}
&\Pr'\left( \#\Big( (-\infty,0]\cap \underset{z\in\R}{\mathrm{Argsup}\ } \ell(cz)\Big) \geq 1, \#\Big( [0,\infty)\cap \underset{z\in\R}{\mathrm{Argsup}\ } \ell(cz)\Big) \geq 1  \right) \\
&\hspace{1cm}\leq \sum_{n=1}^\infty \int_0^\infty \Pr'\left(\left.\bigcup_{m\in\N} \{T_m' \in A(z_1)\} \right| T_n = z_1\right) (\Pr')^{T_n}(dz_1) \\
&\hspace{1cm} \leq \sum_{m,n=1}^\infty \int_0^\infty \Pr'\left( T_m' \in A(z_1)\right) (\Pr')^{T_n}(dz_1) =0.
\end{align*}
\end{itemize}
\end{proof}

%Finally, all preliminary results are in place to prove Theorem~\ref{thm:MLE_limiting_distribution}.

\begin{proof}[Proof of Theorem~\ref{thm:MLE_limiting_distribution}]
Recall $n(\hat{\rho}_n-\rho_0)\in\mathrm{Argsup}_{z\in\R}\ell_n(z/n)$. By Proposition~\ref{prop:n-consistency}, the sequence $(n(\hat{\rho}_n-\rho_0)\1_{\{L_1^{\rho_0}(X)>0\}})_{n\in\N}$ is tight and by Proposition~\ref{prop:convergence_ell_n} and Lemma~$3$ in Section~$16$ of~\cite{Billingsley_2} we have 
\begin{align}\label{eq_proof:main_thm_1}
\left( \ell_n(z/n)\right)_{z\in\R} \stackrel{\cF-st}{\longrightarrow}\ \left( \ell(zL_1^{\rho_0}(X))\right)_{z\in\R}. 
\end{align} 
Here, the Skorohod space $\mathcal{D}(\R)$ is endowed with the topology of Skorohod convergence on compact sets. Let $F\in\cF$ be a set with $\Pr(F)>0$. Recalling that~\eqref{eq_proof:main_thm_1} is given on the extended probability space $(\tilde{\Omega},\tilde{\cF},\tilde{\Pr})$ defined in~\eqref{eq:extended_probability_space} and denoting $A:=\{L_1^{\rho_0}(X)>0\}$, we define $\tilde{\Pr}_{A\cap F}$ via 
\[ \tilde{\Pr}_{A\cap F}(B) := \frac{\tilde{\Pr}((A\cap F)\cap B)}{ \tilde{\Pr}(A\cap F)} \]
for any $B\in\tilde{\cF}$ and the identification of $A\cap F$ with the set $(A\cap F)\times\Omega'\in\tilde{\cF}$. As the convergence~\eqref{eq_proof:main_thm_1} is $\cF$-stable, by Theorem~$3.17$(iv) in~\cite{Haeusler/Luschgy}, we find
\[ \left( \ell_n(z/n)\right)_{z\in\R}\ \stackrel{\tilde{\Pr}_{A\cap F}}{\Longrightarrow}\ \left( \ell(zL_1^{\rho_0}(X))\right)_{z\in\R},\]
where $\stackrel{\tilde{\Pr}_{A\cap F}}{\Longrightarrow}$ denotes weak convergence of random variables with respect to the measure $\tilde{\Pr}_{A\cap F}$. Clearly, $(n(\hat{\rho}_n-\rho_0)\1_{\{L_1^{\rho_0}(X)>0\}})_{n\in\N}$ is also tight with respect to $\tilde{\Pr}_{A\cap F}$ and by Lemma~\ref{lemma:uniqueness_argsup_ell} the Argsup of $(\ell(zL_1^{\rho_0}(X)))_{z\in\R}$ is a singleton in $\R$ almost surely with respect to this measure $\tilde{\Pr}_{A\cap F}$. Hence, Theorem~$3.12$ in~\cite{Ferger} yields for any bounded and continuous function $f:\R\rightarrow\R$,
\[ \tilde{\E}_{\rho_0}\left[ \1_{A\cap F} f\left( \underset{z\in\R}{\arg\sup\ }\ell_n(z/n)\right)\right] \longrightarrow \tilde{\E}_{\rho_0}\left[ \1_{A\cap F} f\left( \underset{z\in\R}{\arg\sup\ }\ell(zL_1^{\rho_0}(X))\right)\right].\]
From this, we obtain
\begin{align*}
\tilde{\E}_{\rho_0}\left[ \1_{F} f\left( \underset{z\in\R}{\arg\sup\ }\ell_n(z/n)\1_{A} \right)\right] \longrightarrow \tilde{\E}_{\rho_0}\left[ \1_{F} f\left( \underset{z\in\R}{\arg\sup\ }\ell(zL_1^{\rho_0}(X))\1_{A}\right)\right].
\end{align*} 
By Theorem~$3.17$(iv) in~\cite{Haeusler/Luschgy}, this implies
\[ \underset{z\in\R}{\arg\sup\ }\ell_n(z/n)\1_{\{L_1^{\rho_0}(X)>0\}} \stackrel{\cF-st}{\longrightarrow}\ \underset{z\in\R}{\arg\sup\ }\ell(zL_1^{\rho_0}(X))\1_{\{L_1^{\rho_0}(X)>0\}} \]
and the claim of Theorem~\ref{thm:MLE_limiting_distribution} follows.
\end{proof}

\section{The log-likelihood function and its fundamental properties}\label{Section:likelihood}

The transition density $p_t^{\rho}(x,y)$ of the Markov process solving the SDE~\eqref{eq:SDE_OBM} is given as 

\begin{align}\label{eq:transition_density}
p_t^\rho(x,y) = \begin{cases}
\frac{1}{\sqrt{2\pi t}\alpha}\left[ \exp\left( -\frac{(y-x)^2}{2t\alpha^2}\right) - \frac{\alpha-\beta}{\alpha+\beta}\exp\left(-\frac{(y-2\rho+x)^2}{2t\alpha^2}\right)\right] &\textrm{ for } x<\rho, y\leq \rho,\vspace{0.2cm} \\
\frac{1}{\sqrt{2\pi t}\beta}\left[ \exp\left( -\frac{(y-x)^2}{2t\beta^2}\right) + \frac{\alpha-\beta}{\alpha+\beta}\exp\left(-\frac{(y-2\rho+x)^2}{2t\beta^2}\right)\right] &\textrm{ for } x\geq\rho, y>\rho, \vspace{0.2cm} \\
\frac{2}{\alpha+\beta}\frac{\alpha}{\beta}\frac{1}{\sqrt{2\pi t}}\exp\left( -\frac{1}{2t}\left(\frac{y-\rho}{\beta} - \frac{x-\rho}{\alpha}\right)^2\right) & \textrm{ for } x<\rho< y, \vspace{0.2cm} \\
\frac{2}{\alpha+\beta}\frac{\beta}{\alpha}\frac{1}{\sqrt{2\pi t}}\exp\left( -\frac{1}{2t}\left(\frac{y-\rho}{\alpha} - \frac{x-\rho}{\beta}\right)^2\right) & \textrm{ for } y\leq\rho\leq x. \vspace{0.2cm} \\
\end{cases}
\end{align}
Note that $p_t^\rho(x,y) = p_t^0(x-\rho, y-\rho)$. It is easily verified that $p_t^{\rho}(x,y)$ solves~\eqref{eq: FP}, i.e. 
\begin{align*}\label{eq:Fokker-Planck_equation}
\int_\R p_t^\rho(x,y)\varphi(y) dy = \varphi(x) + \int_0^t\int_\R \frac{1}{2} \sigma_{\rho}(y)^2 p_s^\rho(x,y)\varphi''(y) dy ds
\end{align*}
for every $t>0$ and any $\varphi\in\mathcal{S}(\R)$, where $\mathcal{S}(\R)$ denotes the Schwartz space of rapidly decreasing functions on $\R$. 
%The four regimes in the transition density~\eqref{eq:transition_density} correspond to nine different regimes for the ratio
Corresponding to the four regimes in the transition density~\eqref{eq:transition_density},
\[ p_{1/n}^{\rho_0+\theta}(X_{(k-1)/n},X_{k/n}) \Big/ p_{1/n}^{\rho_0+\theta'}(X_{(k-1)/n},X_{k/n}) \]
is decomposed according to nine different regimes. For $\theta'\leq \theta$, those are given explicitly as
\begin{align}\label{eq:cases_I_jk}
\begin{split}
&I_{1,k}^{\theta',\theta} := \{X_{(k-1)/n}< \rho_0+\theta', X_{k/n} \leq \rho_0+\theta'\},\\
&I_{2,k}^{\theta',\theta} := \{X_{(k-1)/n}<\rho_0+\theta'< X_{k/n}\leq \rho_0+\theta\},\\
&I_{3,k}^{\theta',\theta} := \{X_{(k-1)/n}<\rho_0+\theta'\leq\rho_0+\theta <X_{k/n}\}, \\
&I_{4,k}^{\theta',\theta} := \{X_{k/n}\leq\rho_0+\theta'\leq X_{(k-1)/n}<\rho_0+\theta \}, \\
&I_{5,k}^{\theta',\theta} :=  \{\rho_0+\theta'\leq X_{(k-1)/n}<\rho_0+\theta, \rho_0+\theta'< X_{k/n}\leq\rho_0+\theta \}, \\
&I_{6,k}^{\theta',\theta} :=  \{ \rho_0+\theta'\leq X_{(k-1)/n} < \rho_0+\theta < X_{k/n} \}, \\
&I_{7,k}^{\theta',\theta} :=  \{ X_{k/n}\leq\rho_0+\theta'\leq \rho_0+\theta\leq X_{(k-1)/n}\}, \\
&I_{8,k}^{\theta',\theta} :=  \{ \rho_0+\theta'< X_{k/n}\leq \rho_0+\theta\leq X_{(k-1)/n}\}, \\
&I_{9,k}^{\theta',\theta} :=  \{ \rho_0+\theta \leq X_{(k-1)/n}, \rho_0+\theta <X_{k/n}\}.
\end{split}
\end{align}
Note that all those regimes are (pairwise) disjoint. If $\theta'=0$, we write $I_{j,k}^{\theta} = I_{j,k}^{0,\theta}$. In particular, 
\begin{align}\label{eq:def_I}
\ell_n(\theta) = \sum_{j=1}^9\sum_{k=1}^n  \log\left(\frac{p_{1/n}^{\rho_0+\theta}(X_{(k-1)/n},X_{k/n})}{p_{1/n}^{\rho_0}(X_{(k-1)/n},X_{k/n)}}\right)\1_{I_{j,k}^{\theta}} =: \sum_{j=1}^9 \mathcal{I}_j(\theta).
\end{align} 
This is the decomposition of $\ell_n(\theta)$ into nine sums as already mentioned in the introduction and constitues the origin of the fascinating phenomena within the analysis of the MLE. Regimes and their correspondences are depicted in Figure~\ref{figure:regimes}. Interestingly, there are five different probabilistic types of contributions.

\begin{figure}[h]
\centering
\includegraphics[scale=0.32]{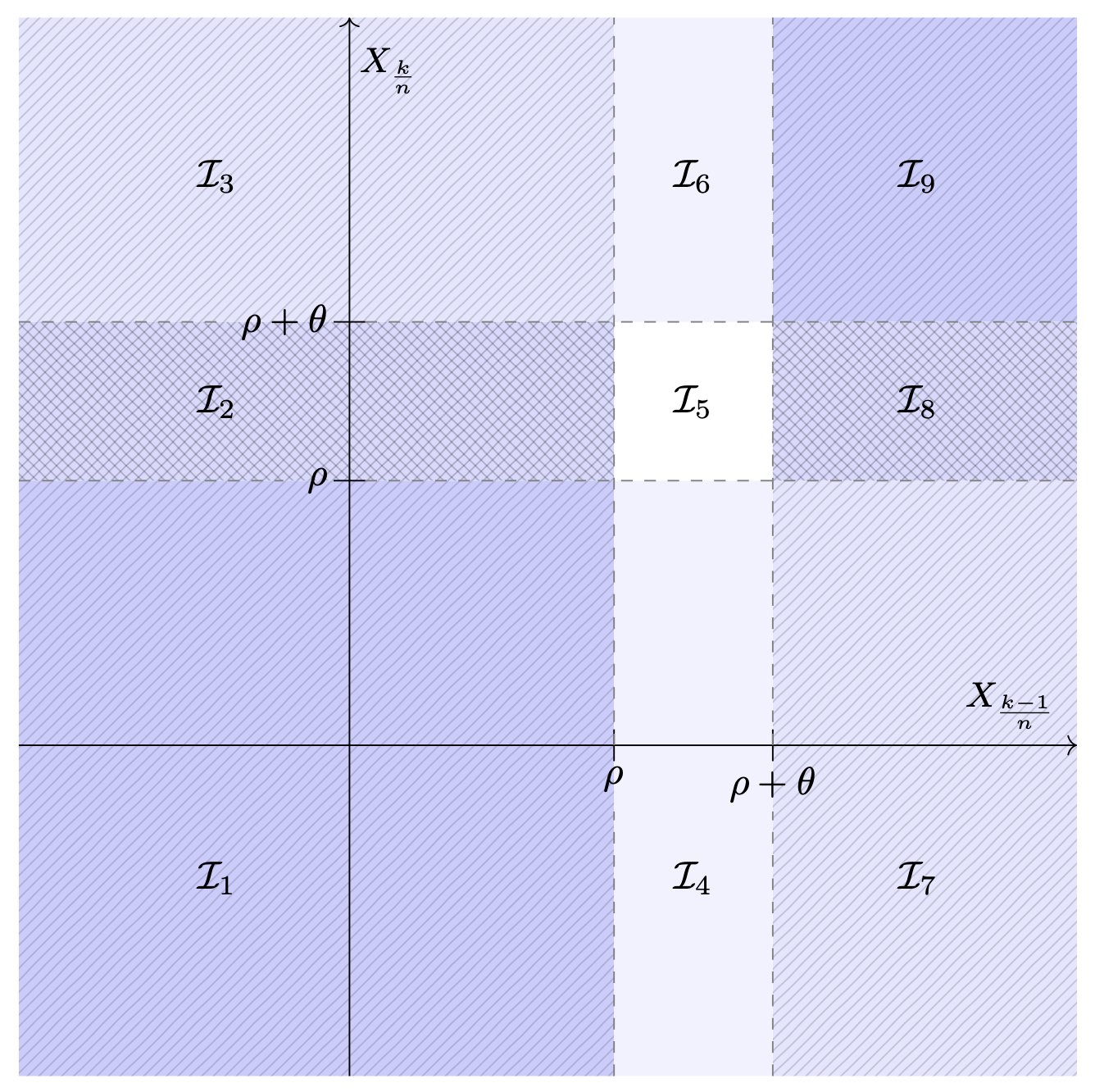}
\caption{\small Visualization of regimes~\eqref{eq:cases_I_jk} corresponding to the respective indicators in the nine summands in~\eqref{eq:def_I}. Regimes which will turn out to be treated analogously are colored the same. Moreover, regimes contributing to the drift and variance in the $1/n$-neighborhood are marked with north-east and north-west lines, respectively. }\label{figure:regimes}
\end{figure}

\noindent
Recall the sequential log-likelihood process $\ell_{n,t}(\theta)$ given in~\eqref{eq:def_ell_nt}, for which we have $\ell_{n,1}(\theta)=\ell_n(\theta)$, where $\ell_n(\theta)$ is given in~\eqref{def:log_likelihood_normalized}. It turns out to be purposeful to decompose 
\begin{align}\label{eq:decomposition_ell_n}
\begin{split}
\ell_{n,t}(\theta) = M_{n,t}(\theta) + B_{n,t}(\theta)
\end{split}
\end{align}
into a martingale term and a drift term by adding and subtracting the sum of the $\cF_{(k-1)/n}$-conditional expectation of each increment of $\ell_{n,t}(\theta)$, i.e. 
\[B_{n,t}(\theta) = \sum_{k=1}^{\lfloor nt\rfloor}\E_{\rho_0}\left[\left.\log\left(\frac{p_{1/n}^{\rho_0+\theta}(X_{(k-1)/n},X_{k/n})}{p_{1/n}^{\rho_0}(X_{(k-1)/n},X_{k/n)}}\right)\right| X_{(k-1)/n}\right]. \]
Note that $(M_{n,t})_{t\in [0,1]}$ is actually a martingale with respect to the discretized filtration $(\cF_{\lfloor nt\rfloor/n})_{t\in [0,1]}$.
The $t$-dependent version of $\ell_n(\theta)$ plays a central role in the study of the limiting distribution, whereas the $n$-consistency only uses $\ell_n(\theta) = M_n(\theta)+B_n(\theta)$, where $M_n(\theta):=M_{n,1}(\theta)$ and $B_n(\theta) :=B_{n,1}(\theta)$, correspondingly. Nevertheless, we will exploit the fact that $M_n(\theta)$ is a sum of martingale differences for the $n$-consistency proof, where the (negative) drift $B_n(\theta)$ is shown to dominate the stochastic fluctuations for $\theta$ being not too close to $0$. Note that $B_n(\theta)\leq 0$ follows direcly by Jensen's inequality for conditional expectation, whereas it will turn out to be challenging to bound it away from zero, and especially sufficiently far away from zero, for general $\theta$. For $\theta$ with $|\theta|\leq K/\sqrt{n}$ with $K>0$ arbitrary, we deduce an expansion for $B_{n,t}(\theta)$. Defining two numerical constants
\begin{align}\label{eq:F_alpha_beta}
F_{\alpha,\beta} := -\left(\frac{2(\alpha-\beta)}{\alpha\beta} + \frac{\alpha}{\beta}\frac{2}{\alpha+\beta}\log\left(\frac{\beta^2}{\alpha^2}\right)\right) 
\end{align}
and 
\begin{align}\label{eq:tildeF_alpha_beta}
\tilde{F}_{\alpha,\beta} := -\left(\frac{2(\beta-\alpha)}{\alpha\beta} + \frac{\beta}{\alpha}\frac{2}{\alpha+\beta}\log\left(\frac{\alpha^2}{\beta^2}\right)\right)
\end{align}
this expansion reads as follows.

\begin{prop}\label{prop:expansion_of_drift_t}
Let $K>0$ and $|\theta|\leq K/\sqrt{n}$. Then we have
\begin{align*}
B_{n,t}(\theta) = -|\theta| \left(\1_{\{\theta\geq 0\}}F_{\alpha,\beta}+\1_{\{\theta< 0\}} \tilde{F}_{\alpha,\beta}\right)\Lambda_{\alpha,\beta}^n\left((X_{(k-1)/n})_{1\leq k\leq \lfloor nt\rfloor}\right) + r_n(t,\theta),
\end{align*} 
where
\begin{align}\label{eq:Lambda_nt(X)}
\begin{split}
\Lambda_{\alpha,\beta}^n\left((X_{(k-1)/n})_{1\leq k\leq \lfloor nt\rfloor}\right) &= \frac{1}{\sqrt{2\pi/n}} \sum_{k=1}^{\lfloor nt\rfloor} \1_{\{X_{(k-1)/n}<\rho_0\}}\exp\left( -\frac{(X_{(k-1)/n}-\rho_0)^2}{2\alpha^2/n}\right)\\
&\hspace{1cm} + \frac{1}{\sqrt{2\pi/n}} \sum_{k=1}^{\lfloor nt\rfloor} \1_{\{X_{(k-1)/n}\geq\rho_0\}}\exp\left( -\frac{(X_{(k-1)/n}-\rho_0)^2}{2 \beta^2/n}\right)
\end{split}
\end{align} 
and 
\[ \E_{\rho_0}\left[ \sup_{|\theta'|\leq |\theta|}\sup_{s\leq t} \frac{|r_n(s,\theta')|}{|\theta'|}\right] \leq C_{\alpha,\beta}(K) |\theta| n^{3/2} \sqrt{t},\]
for some constant $C_{\alpha,\beta}(K)>0$ that is independent of $n,t$ and $\theta$.
\end{prop}

\noindent
The proof, which is deferred to Section~\ref{Appendix_auxiliary} in the supplementary material, reveals that the constants $F_{\alpha,\beta}, \tilde{F}_{\alpha,\beta}$ are composed of the terms belonging to $I_{1,k}^\theta,I_{2,k}^\theta, I_{3,k}^\theta$ and $I_{7,k}^\theta,I_{8,k}^\theta,I_{9,k}^\theta$. Note that these constants are strictly greater than zero for $\alpha\neq\beta$. Moreover, as they are different, the triangular shape described by
\[ -|\theta| \left(\1_{\{\theta\geq 0\}}F_{\alpha,\beta}+\1_{\{\theta< 0\}} \tilde{F}_{\alpha,\beta}\right)\Lambda_{\alpha,\beta}^n\left((X_{(k-1)/n})_{1\leq k\leq \lfloor nt\rfloor}\right) \]
is not symmetric around $\theta=0$, which matches Figure~\ref{figure:Likelihood_example}. The steepness of the triangle depends on the magnitude of $\Lambda_{\alpha,\beta}^n$. The next lemma shows that $\Lambda_{\alpha,\beta}^n/n$ converges in probability to a multiple of $L_t^{\rho_0}(X)$, uniformly in time.

\begin{lemma}\label{lemma:Riemann_approximation}
Let $X=(X_t)_{t\in [0,1]}$ be a solution to~\eqref{eq:SDE_OBM} with diffusion parameter~\eqref{eq:coefficient_OBM} and  let $f:\R\longrightarrow\R$ be a measurable bounded function such that $\int_\R |x|^k f(x)dx<\infty$ for $k=0,1,2$. Then for every $\epsilon>0$ we have
\[ \Pr_{\rho_0}\left( \sup_{t\in [0,1]} \left| \frac{1}{\sqrt{n}}\sum_{k=1}^{\lfloor nt\rfloor} f\left( \sqrt{n}(X_{(k-1)/n}-\rho_0)\right) - \lambda_{\alpha,\beta,\rho}(f)L_t^{\rho_0}(X) \right| >\epsilon\right) \stackrel{n\to\infty}{\longrightarrow}\ 0,\]
where 
\[ \lambda_{\alpha,\beta}(f) := \frac{1}{\alpha^2} \int_{-\infty}^0 f(x)dx + \frac{1}{\beta^2}\int_0^\infty f(x)dx.\]
\end{lemma}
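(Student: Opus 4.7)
My plan is to pass through the continuous-time functional $\tilde{S}_n(t):=\sqrt{n}\int_0^{\lfloor nt\rfloor/n} g_n(X_s)\,ds$ with $g_n(x):=f(\sqrt{n}(x-\rho_0))$, and to identify its limit via the occupation time formula; separately, the discrete Riemann sum $S_n(t)$ appearing in the statement is compared to $\tilde{S}_n(t)$.

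For the continuous-time functional, substituting $u=\sqrt{n}(a-\rho_0)$ in the occupation time formula (using $d\langle X\rangle_s=\sigma_\rho(X_s)^2\,ds$) gives
\[
\tilde{S}_n(t)=\int_{-\infty}^0\frac{f(u)}{\alpha^2}L_{\lfloor nt\rfloor/n}^{\rho_0+u/\sqrt{n}}(X)\,du+\int_0^\infty\frac{f(u)}{\beta^2}L_{\lfloor nt\rfloor/n}^{\rho_0+u/\sqrt{n}}(X)\,du.
\]
Continuity of $a\mapsto L_t^a(X)$ at $\rho_0$ (a classical property of continuous semimartingales at regular points) combined with $\sup_{t\leq 1,a\in\R}L_t^a(X)<\infty$ a.s.\ and dominated convergence (integrable majorant $|f(u)|\sup_a L_1^a(X)(\alpha^{-2}+\beta^{-2})$, using $\int|f|<\infty$) delivers the pointwise limit $\lambda_{\alpha,\beta}(f)L_t^{\rho_0}(X)$. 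Uniformity in $t$ follows from a Dini/Pólya-type argument applied to $f_+$ and $f_-$ separately: each map $t\mapsto\tilde{S}_n(t;f_\pm)$ is nondecreasing in $t$ and converges pointwise to the continuous limit $t\mapsto\lambda_{\alpha,\beta}(f_\pm)L_t^{\rho_0}(X)$, whence uniform convergence. The boundary replacement of $\lfloor nt\rfloor/n$ by $t$ contributes at most $\|f\|_\infty/\sqrt{n}$.

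The main obstacle is the discrete-to-continuous comparison $\sup_{t\in[0,1]}|S_n(t)-\tilde{S}_n(t)|\to 0$ in probability. Because $f$ is only bounded measurable and, after rescaling, the increments $\sqrt{n}(X_s-X_{(k-1)/n})$ for $s\in[(k-1)/n,k/n]$ are of order $1$, pointwise bounds on $|g_n(X_{(k-1)/n})-g_n(X_s)|$ do not suffice---cancellation across $k$ is essential. I would first approximate $f$ by smooth compactly supported functions, with the perturbation controlled in the weighted $L^1$-norm $\int(1+|x|+x^2)|f(x)|\,dx$ (finite by hypothesis) using Fubini and the explicit transition density~\eqref{eq:transition_density}. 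For smooth compactly supported $f$, the Doob decomposition of $\Delta_k:=g_n(X_{(k-1)/n})-n\int_{(k-1)/n}^{k/n}g_n(X_s)\,ds$ into its $\cF_{(k-1)/n}$-predictable drift plus a martingale difference gives, through Taylor expansion and the Fokker--Planck equation~\eqref{eq: FP}, a predictable drift of order $-(4n)^{-1}\sigma_\rho(X_{(k-1)/n})^2 g_n''(X_{(k-1)/n})$. Its contribution to $S_n(t)-\tilde{S}_n(t)$ is itself a Riemann sum of the statement's type, applied to $-\sigma_\rho^2 f''/4$, whose formal coefficient $\lambda_{\alpha,\beta}(-\sigma_\rho^2 f''/4)=-\frac14\int_\R f''(u)\,du$ vanishes for compactly supported $f$, so a direct second-moment bound (or, equivalently, a bootstrap on the statement itself since the limiting coefficient is zero) shows this term is $o(1)$. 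The martingale-difference part is of order $n^{-1/2}$ by a Burkholder--Davis--Gundy estimate on the oscillation of $X$ over intervals of length $1/n$ together with the $O(\sqrt{n})$ count of indices $k$ for which $X_{(k-1)/n}$ lies within an $O(1/\sqrt{n})$-neighbourhood of $\rho_0$.
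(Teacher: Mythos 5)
The paper does not prove this lemma from scratch: it reduces the general case to $\rho_0=0$ by the translation $\tilde X=X-\rho_0$ (using $L_t^{\rho_0}(X)=L_t^0(\tilde X)$ and $\sigma_{\rho_0}(\tilde X+\rho_0)=\sigma_0(\tilde X)$) and then invokes Lemma~4.3 of Lejay--Pigato, which in turn follows Jacod's work on Riemann approximations of local time. What you propose is essentially a self-contained re-derivation of that cited result, and the skeleton — occupation time formula plus continuity of $a\mapsto L_t^a$ for the continuous-time functional, Pólya's theorem applied to $f_\pm$ for uniformity, $L^1$-approximation of $f$ by smooth compactly supported functions with the error controlled through the Gaussian domination of the transition density, and a Doob decomposition of the discrete-minus-continuous remainder — is the correct one.

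Two points in your sketch need repair. First, the ``bootstrap on the statement itself'' for the predictable drift is circular: for $g_n=h(\sqrt n(\cdot-\rho_0))$ one has $(4n)^{-1}\sigma^2g_n''=\tfrac14\sigma^2h''(\sqrt n(\cdot-\rho_0))$, so the drift generated at each level of the recursion is again of order one and the recursion never gains a small factor. You must therefore carry out the direct second-moment bound, and the difficulty you gloss over there is that $p_t(x,\cdot)$ is \emph{discontinuous} at $\rho_0$ (only $\sigma_{\rho_0}(y)^2p_t(x,y)$ is continuous across $\rho_0$); the cancellation coming from $\lambda_{\alpha,\beta}(-\sigma^2f''/4)=0$ must be extracted separately on each side of $\rho_0$, using the one-sided smoothness of the density in $y$ and the matching of the one-sided limits through $\sigma^2p$. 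Second, uniformity in $t$: Doob's inequality controls $\sup_t$ of the martingale part, but a second-moment bound at fixed $t$ does not control $\sup_t$ of the drift part, whose absolute partial sums are only $O(1)$ in $L^1$. The clean fix is to reorder the argument: establish convergence of $S_n(t)$ for each fixed $t$ first, and then apply the Pólya argument directly to the monotone processes $t\mapsto S_n(t;f_\pm)$ rather than to $\tilde S_n$, which removes the need for $\sup_t|S_n-\tilde S_n|$ altogether. (A minor remark: continuity of $a\mapsto L_t^a(X)$ holds here because $X$ is a continuous local martingale; for a general continuous semimartingale with drift the local time is only càdlàg in the space variable.)
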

\begin{proof}
The proof for $\rho_0=0$ follows the proof of Lemma~$4.3$ in~\cite{Lejay/Pigato} (see also~\cite{Jacod_Riemann_approx}). For general $\rho_0$ define $\tilde{X}_t := X_t - \rho_0$.
Then $\tilde{X}_0=x_0-\rho_0$ and 
\[ d\tilde{X}_t = dX_t = \sigma_{\rho_0}(X_t)dW_t = \sigma_{\rho_0}(\tilde{X}_t+\rho_0) dW_t.\]
The statement of the proposition now follows from the case $\rho_0=0$ by noting that $L_t^{\rho_0}(X) = L_t^0(\tilde{X})$, $f(\sqrt{n}(X_t-\rho_0)) = f(\sqrt{n} \tilde{X}_t)$ and
\[ \sigma_{\rho_0}(\tilde{X}_t+\rho_0) = \alpha\1_{\{\tilde{X}_t+\rho_0 <\rho_0\}} + \beta\1_{\{\tilde{X}_t+\rho_0\geq \rho_0 \}} = \alpha\1_{\{\tilde{X}_t <0\}} + \beta\1_{\{\tilde{X}_t\geq 0\}} = \sigma_0(\tilde{X}_t).\]
\end{proof}

\noindent
Proposition~\ref{prop:expansion_of_drift_t} and Lemma~\ref{lemma:Riemann_approximation} applied to $f=f_{\alpha,\beta}$ with 
\[ f_{\alpha,\beta}(x) = \1_{(-\infty,0)}(x) \exp(-x^2/(2\alpha^2)) + \1_{[0,\infty)}(x)\exp(-x^2/(2\beta^2))\] yield the following limiting drift of $(\ell_{n,t}(z/n))_{t\in [0,1]}$:

\begin{cor}\label{cor:limiting_drift}
With
\begin{align}\label{def:constant_b_alpha_beta}
\begin{split}
b_{\alpha,\beta} &:= F_{\alpha,\beta}\lambda_{\alpha,\beta}\big(f_{\alpha,\beta}\big) = \frac{\alpha^2-\beta^2}{\alpha^2\beta^2} + \frac{1}{\beta^2}\log\left(\frac{\beta^2}{\alpha^2}\right),  \\
b_{\alpha,\beta}' &:= \tilde{F}_{\alpha,\beta}\lambda_{\alpha,\beta}\big(f_{\alpha,\beta}\big) = \frac{\beta^2-\alpha^2}{\alpha^2\beta^2} + \frac{1}{\alpha^2}\log\left(\frac{\alpha^2}{\beta^2}\right),
\end{split}
\end{align}
we have for any $K>0$,
\[ \sup_{z\in [-K,K]}\sup_{s\leq t} \left| B_{n,s}(z/n) - |z| \left(\1_{\{z\geq 0\}}b_{\alpha,\beta} +\1_{\{z < 0\}}b_{\alpha,\beta}'\right) L_s^{\rho_0}(X)\right| \longrightarrow_{\Pr_{\rho_0}} 0. \]
\end{cor}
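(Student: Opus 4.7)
The plan is to combine Proposition~\ref{prop:expansion_of_drift_t} with Lemma~\ref{lemma:Riemann_approximation} applied to $f=f_{\alpha,\beta}$. For any $z\in[-K,K]$ we have $|z/n|\leq K/n\leq K/\sqrt{n}$ for every $n\geq 1$, so Proposition~\ref{prop:expansion_of_drift_t} applies with $\theta=z/n$ and yields
\[
B_{n,s}(z/n) = -\frac{|z|}{n}\bigl(\1_{\{z\geq 0\}}F_{\alpha,\beta}+\1_{\{z<0\}}\tilde{F}_{\alpha,\beta}\bigr)\,\Lambda_{\alpha,\beta}^n\bigl((X_{(k-1)/n})_{1\leq k\leq\lfloor ns\rfloor}\bigr)+r_n(s,z/n).
\]
Uniform control of the remainder follows from invoking the moment bound in Proposition~\ref{prop:expansion_of_drift_t} at the single value $\theta=K/n$, which dominates $|z/n|$ simultaneously for all $|z|\leq K$:
\[
\E_{\rho_0}\Bigl[\sup_{|z|\leq K}\sup_{s\leq 1}|r_n(s,z/n)|\Bigr]\leq C_{\alpha,\beta}(K)\,\frac{K^2}{n^2}\,n^{3/2} = O(n^{-1/2}),
\]
whence Markov's inequality delivers $\sup_{|z|\leq K}\sup_{s\leq 1}|r_n(s,z/n)|\longrightarrow_{\Pr_{\rho_0}}0$.

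For the leading term, the substitution $Y_k^n:=\sqrt{n}(X_{(k-1)/n}-\rho_0)$ brings the two Gaussian exponentials in~\eqref{eq:Lambda_nt(X)} into exactly the form $f_{\alpha,\beta}(Y_k^n)$, yielding
\[
\frac{1}{n}\Lambda_{\alpha,\beta}^n\bigl((X_{(k-1)/n})_{1\leq k\leq\lfloor ns\rfloor}\bigr)=\frac{1}{\sqrt{2\pi}}\cdot\frac{1}{\sqrt{n}}\sum_{k=1}^{\lfloor ns\rfloor}f_{\alpha,\beta}(Y_k^n).
\]
Since $f_{\alpha,\beta}$ is bounded with finite moments of order $0,1,2$, Lemma~\ref{lemma:Riemann_approximation} converts this into uniform-in-$s$ convergence in probability to $\bigl(\lambda_{\alpha,\beta}(f_{\alpha,\beta})/\sqrt{2\pi}\bigr)L_s^{\rho_0}(X)$. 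Because the prefactor $-|z|(\1_{\{z\geq 0\}}F_{\alpha,\beta}+\1_{\{z<0\}}\tilde{F}_{\alpha,\beta})$ is continuous and bounded on the compact interval $[-K,K]$, multiplying through promotes this to joint uniform convergence in $(z,s)\in[-K,K]\times[0,1]$. The algebraic identities recorded in~\eqref{def:constant_b_alpha_beta} then identify the limit with $|z|\bigl(\1_{\{z\geq 0\}}b_{\alpha,\beta}+\1_{\{z<0\}}b'_{\alpha,\beta}\bigr)L_s^{\rho_0}(X)$, and combining with the remainder estimate finishes the proof.

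I do not anticipate a genuine obstacle: the corollary is essentially a direct synthesis of the two preceding results. The only care needed lies in making the uniformity in $z$ explicit, which is cleanly handled by evaluating the remainder bound at the single $\theta=K/n$ and by the boundedness of the continuous prefactor on $[-K,K]$; the uniformity in $s$ is already built into Lemma~\ref{lemma:Riemann_approximation}.
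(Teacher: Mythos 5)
Your proof is correct and follows exactly the route the paper takes: the corollary is presented there as an immediate consequence of Proposition~\ref{prop:expansion_of_drift_t} and Lemma~\ref{lemma:Riemann_approximation} applied to $f_{\alpha,\beta}$, and you supply precisely the missing details (evaluating the remainder bound at the single value $\theta=K/n$ and using boundedness of the prefactor on $[-K,K]$ to get joint uniformity). One small point when you "identify the limit with \eqref{def:constant_b_alpha_beta}": since $\Lambda_{\alpha,\beta}^n/n=\tfrac{1}{\sqrt{2\pi}}\cdot\tfrac{1}{\sqrt{n}}\sum_k f_{\alpha,\beta}(\sqrt{n}(X_{(k-1)/n}-\rho_0))$, the limit of $B_{n,s}(z/n)$ is $-|z|F_{\alpha,\beta}\lambda_{\alpha,\beta}(f_{\alpha,\beta})L_s^{\rho_0}(X)/\sqrt{2\pi}$ for $z\geq 0$, and a direct computation shows this equals $|z|$ times the \emph{explicit} expression $\tfrac{\alpha^2-\beta^2}{\alpha^2\beta^2}+\tfrac{1}{\beta^2}\log(\beta^2/\alpha^2)$ times $L_s^{\rho_0}(X)$ — i.e.\ the displayed intermediate identity $b_{\alpha,\beta}=F_{\alpha,\beta}\lambda_{\alpha,\beta}(f_{\alpha,\beta})$ is off by the factor $-1/\sqrt{2\pi}$ (a normalization slip in the paper), so you should match against the explicit formulas rather than the literal product.
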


\noindent
Note that $b_{\alpha,\beta},b_{\alpha,\beta}'<0$ for $\alpha\neq\beta$. Finally, we prove a moment bound on the summands that appear in the definition of $\ell_n(\theta)$ and $B_n(\theta)$. We already incorporate in the statement that parts of $\mathcal{I}_{2}(\theta)$ and $\mathcal{I}_{8}(\theta)$ equal $\pm\log(\beta^2/\alpha^2)$ on their respective interval which are the only contributing expressions to the variance (for $\theta \ll 1/\sqrt{n}$, see Figure~\ref{figure:regimes}) and have to be dealt with seperately. As will be seen later, these are exactly the terms which drive the martingale part in the limiting distribution. Hence, we exclude them in the following result on the moment bound and define for $\theta'<\theta$
\begin{align}\label{eq:def_Z_j}
Z_k^j(\theta',\theta) := \log\left(\frac{p_{1/n}^{\rho_0+\theta}(X_{(k-1)/n},X_{k/n})}{p_{1/n}^{\rho_0+\theta'}(X_{(k-1)/n},X_{k/n})}\right) \1_{I_{j,k}^{\theta',\theta}}
\end{align} 
for $j\in\{1,3,4,5,6,7,9\}$, whereas
\begin{align}\label{eq:def_Z_j28}
\begin{split}
Z_k^2(\theta',\theta) &:= \left[\log\left(\frac{p_{1/n}^{\rho_0+\theta}(X_{(k-1)/n},X_{k/n})}{p_{1/n}^{\rho_0+\theta'}(X_{(k-1)/n},X_{k/n})}\right) - \log\left(\frac{\beta^2}{\alpha^2}\right)\right] \1_{I_{2,k}^{\theta',\theta}}, \\
Z_k^8(\theta',\theta) &:= \left[ \log\left(\frac{p_{1/n}^{\rho_0+\theta}(X_{(k-1)/n},X_{k/n})}{p_{1/n}^{\rho_0+\theta'}(X_{(k-1)/n},X_{k/n})}\right)- \log\left(\frac{\beta^2}{\alpha^2}\right)\right] \1_{I_{8,k}^{\theta',\theta}}.
\end{split}
\end{align} 
Furthermore, for each $j=1,\dots, 9$, we set
\[ \overline{Z}_k^j(\theta',\theta) := \E_{\rho_0}\left[ Z_k^j(\theta',\theta) \mid X_{(k-1)/n}\right]. \]

\begin{prop}\label{prop:moment_product}
Let $m\in\N$, $0<k_1<k_2<\dots < k_m$, $d_i\in\N$ with $d_i\leq D$ and $j_i\in\{1,\dots,9\}$ for all $i=1,\dots, m$. Moreover, let $K>0$, $-K/\sqrt{n}\leq\theta'<\theta\leq K/\sqrt{n}$ and $Y_k^j(\theta',\theta)\in \{Z_k^j(\theta',\theta), \overline{Z}_k^j(\theta',\theta) \}$. Then there exists a constant $C=C(\alpha,\beta,m,D)>0$ such that (denote $k_0=0$)
\[ \E_{\rho_0}\left[ \prod_{i=1}^m \left|Y_{k_i}^{j_i}(\theta',\theta)\right|^{d_i}\right] \leq C n^{\frac12\sum_{i=1}^m d_i} |\theta-\theta'|^{\sum_{i=1}^m d_i} \prod_{i=1}^m \frac{1}{\sqrt{k_i-k_{i-1}}}.\]
\end{prop}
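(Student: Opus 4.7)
The plan is to iterate the Markov property of $X$ and reduce the joint moment to a product of one-step conditional estimates. Setting $k_0 := 0$ and applying the tower property recursively with respect to $(\cF_{k_i/n})_{0 \leq i \leq m}$,
\[ \E_{\rho_0}\!\left[\prod_{i=1}^m |Y_{k_i}^{j_i}(\theta',\theta)|^{d_i}\right] = \E_{\rho_0}\!\left[\prod_{i=1}^{m-1} |Y_{k_i}^{j_i}(\theta',\theta)|^{d_i}\, \E_{\rho_0}\!\bigl[|Y_{k_m}^{j_m}(\theta',\theta)|^{d_m}\,\bigm|\, \cF_{k_{m-1}/n}\bigr]\right], \]
so that it suffices to bound the innermost conditional expectation uniformly by $C (\sqrt{n}|\theta-\theta'|)^{d_m}/\sqrt{k_m-k_{m-1}}$; iterating down to $k_1$ (conditioned on $X_0$) then yields the claim. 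Since $|\overline{Z}^j_k|^d \leq \E_{\rho_0}[|Z^j_k|^d \mid X_{(k-1)/n}]$ by conditional Jensen, it is enough to work with $Y = Z$.

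I would further factor each inner conditional expectation by conditioning on $X_{(k_m-1)/n}$. This splits the bound into two ingredients: a pointwise estimate
\[ \E_{\rho_0}\!\bigl[|Z_{k_m}^{j_m}(\theta',\theta)|^{d_m} \,\bigm|\, X_{(k_m-1)/n}=x\bigr] \leq C \bigl(\sqrt{n}|\theta-\theta'|\bigr)^{d_m}\, h_{j_m}(x), \]
with $h_j \geq 0$, $\|h_j\|_\infty \leq C$ and $\|h_j\|_{L^1(\R)} \leq C/\sqrt{n}$, together with the uniform Gaussian-type upper bound $p_s^{\rho_0}(x,y) \leq C/\sqrt{s}$ which is immediate from~\eqref{eq:transition_density} (each of the four branches is bounded by the free Gaussian density up to a constant factor). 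Applied at $s = (k_m-k_{m-1}-1)/n$ this gives
\[ \E_{\rho_0}\!\bigl[h_{j_m}(X_{(k_m-1)/n}) \,\bigm|\, \cF_{k_{m-1}/n}\bigr] \leq C \sqrt{n/(k_m-k_{m-1})}\, \|h_{j_m}\|_{L^1} \leq C/\sqrt{k_m-k_{m-1}}, \]
as required.

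The hard part is the regime-by-regime verification of the inner pointwise bound. For each $j \in \{1,\ldots,9\}$, this requires rewriting $\log(p_{1/n}^{\rho_0+\theta}/p_{1/n}^{\rho_0+\theta'})$ after substituting the two relevant branches of~\eqref{eq:transition_density}. In the same-sign regimes $j\in\{1,5,9\}$, where both $X_{(k-1)/n}$ and $X_{k/n}$ lie on a common side of $\rho_0+\theta'$ and $\rho_0+\theta$, a Taylor expansion in $\rho$ at $\rho_0+\theta'$ over an interval of length $|\theta-\theta'|$ produces, after the $d$-th power, a factor $|\theta-\theta'|^d$ times polynomials in $\sqrt{n}(X_{(k-1)/n}-\rho_0)$ and $\sqrt{n}(X_{k/n}-\rho_0)$; integrating these against the Gaussian-type conditional density of $X_{k/n}$ yields the missing $\sqrt{n}^d$ and gives $h_j$ its Gaussian envelope. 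In the crossing regimes $j\in\{2,3,4,6,7,8\}$, the indicator restricts at least one of the endpoints to a slab of width $|\theta-\theta'|$; combining this with the uniform density bound $\sqrt{n}$ on the one-step transition density supplies the factor $\sqrt{n}|\theta-\theta'|$ once for free, while the remaining $(\sqrt{n}|\theta-\theta'|)^{d-1}$ comes from a similar expansion within the slab. The subtlest point is the subtraction of $\log(\beta^2/\alpha^2)$ in $Z^2_k$ and $Z^8_k$: one must verify, using the explicit formula~\eqref{eq:transition_density}, that this precise constant equals the $\theta,\theta'\to 0$ limit of $\log(p_{1/n}^{\rho_0+\theta}/p_{1/n}^{\rho_0+\theta'})$ on $I_{2,k}^{\theta',\theta}$ and $I_{8,k}^{\theta',\theta}$ respectively, so that after subtraction the leading $O(1)$ contribution vanishes and an additional factor of $\sqrt{n}|\theta-\theta'|$ is gained. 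The case-by-case bookkeeping inherent in this verification across all nine regimes is the main technical obstacle.
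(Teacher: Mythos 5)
Your proposal follows essentially the same route as the paper: reduce to $Y=Z$ by conditional Jensen, peel off the factors one at a time via the tower/Markov property (the paper phrases this as induction on $m$), establish a one-step pointwise bound of the form $C(\sqrt{n}|\theta-\theta'|)^{d}\,h_j(X_{(k-1)/n})$ where $h_j$ is a sum of a slab indicator of width $|\theta-\theta'|$ and a Gaussian bump (hence $\|h_j\|_{L^1}\lesssim 1/\sqrt{n}$), and convolve with the Gaussian-dominated transition density over the gap to harvest each factor $1/\sqrt{k_i-k_{i-1}}$; the nine-regime case analysis, including the cancellation of $\log(\beta^2/\alpha^2)$ in $Z^2_k,Z^8_k$, is exactly the paper's argument. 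The one point to watch is the consecutive-index case $k_i=k_{i-1}+1$, where your density bound at $s=(k_i-k_{i-1}-1)/n=0$ degenerates and you must instead fall back on $\|h_{j_i}\|_\infty\leq C$ (which you do state), precisely as the paper treats this case separately.
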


The proof is deferred to Section~\ref{Appendix_auxiliary}.

\section{Proof of Proposition~\ref{prop:n-consistency}}\label{Section:Consistency}

The Markov property of the process compels the decomposition~\eqref{eq:decomposition_ell_n} into the martingale part $M_n(\theta)$ and the drift term $B_n(\theta)$. As in classical M-estimation, the overall idea of the proof is to show that the (negative) drift $B_n(\theta)$ dominates the stochastic fluctuation $M_n(\theta)$ outside a $1/n$-neighborhood of the true parameter. While the probabilistic properties of the martingale part are reasonably compatible with this general framework, the randomness of the drift together with its decomposition into the disjoint regimes $\{X_{(k-1)/n}\leq \rho_0\}$, $\{\rho_0<X_{(k-1)/n}\leq \rho_0+\theta\}$, and $\{\rho_0+\theta > X_{(k-1)/n}\}$ make the derivation of sufficiently tight (deterministic) bounds with sufficiently high probability rather involved. Therefore, the proof of $n$-consistency splits into three parts where the reasoning for each of them is quite different. Consecutively, we show:

\begin{itemize}
\item $\boldsymbol{\hat{\rho}_n = \mathcal{O}_{\Pr_{\rho_0}}(1)}$. For $\theta\notin [\rho_0-K,\rho_0+K]$ for some $K>0$, the term $\mathcal{I}_5(\theta)$ (see~\eqref{eq:def_I}) dominates the other parts of the log-likelihood in the following (informal) sense: For every $\epsilon>0$, there exists $K=K(\epsilon)$ such that
\[ \inf_{\theta\notin [\rho_0-K,\rho_0+K]} |\mathcal{I}_5(\theta)|\quad \textrm{ is large as compared to }\quad \sup_{\theta\notin [\rho_0-K,\rho_0+K]} \left| \ell_n(\theta)-\mathcal{I}_5(\theta)\right| \] 
with probability $>1-\epsilon$ eventually. Roughly speaking, the reason is that the stochastic order of $\mathcal{I}_5(\theta)$ scales proportionately in $\theta$, whereas the stochastic order of all remaining terms is driven by the number of observations falling into different, but small intervals, respectively. The uniformity in this argument follows from the H\"{o}lder continuity of the sample path.

\item $\boldsymbol{\sqrt{n}|\hat{\rho}_n - \rho_0| = \mathcal{O}_{\Pr_{\rho_0}}(1)}$. In order to prove that the MLE is not outside a $1/\sqrt{n}$-neighborhood of $\rho_0$, we employ a slicing argument and decompose the still dominant term $\mathcal{I}_5(\theta)$ into its drift and martingale part. As compared to the first step, arguing with the sample path regularity is not sufficiently tight any longer to attain a smaller surrounding than an $n^{-1/2+\epsilon}$-neighborhood of $\rho_0$. Instead, we employ a bracketing argument together with a discrete local time approximation (Lemma~\ref{lemma:L2-norm_occupation_approximation}).

\item $\boldsymbol{n|\hat{\rho}_n -\rho_0| = \mathcal{O}_{\Pr_{\rho_0}}(1)}$. To finally prove $n$-consistency of the MLE, we again employ a slicing argument in combination with a chaining relying on second moment bounds. At this time, it turns out that the thorough control of the drift $B_n(\theta)$ on each slice is analytically highly challenging: 
As soon as $|\theta-\rho_0|\asymp n^{-1/2}$, \textit{all} terms $\mathcal{I}_1(\theta), \dots, \mathcal{I}_9(\theta)$ are of the same stochastic order of magnitude. At the same time, the likelihood function $\ell_n(\theta)$ cannot be expanded into a Taylor series due to its discontinuities in the parameter $\theta$. Although for each $j=1,\dots, 9$ and any $k=1,\dots, n$,
\[ \theta\mapsto \log\left( p_{1/n}^{\rho_0+\theta}(X_{(k-1)/n},X_{k/n}) \Big/ p_{1/n}^{\rho_0+\theta'}(X_{(k-1)/n},X_{k/n}) \right) \1_{I_{j,k}^{0,\theta}}\]\noindent
possesses a Taylor expansion in principle, we are facing the problem that the remainder terms in each of these expansions are of the same order as their leading terms.
\begin{itemize}
\item In order to bridge the regime until the Taylor expansion is helpful, we observe that for each $k=1,\dots, n$,
\[ \E_{\rho_0}\left[\left.\log\left(\frac{p_{1/n}^{\rho_0+\theta}(X_{(k-1)/n},X_{k/n})}{p_{1/n}^{\rho_0}(X_{(k-1)/n},X_{k/n})}\right) \right| X_{(k-1)/n}\right]\]
equals the negative Kullback--Leibler divergence of the conditional distributions $\Pr_{\rho_0+\theta}(X_{k/n}\in\cdot\mid X_{(k-1)/n})$ and $\Pr_{\rho_0}(X_{k/n}\in\cdot\mid X_{(k-1)/n})$. Therefore, Pinsker's inequality provides an upper bound of the negative drift $B_n(\theta)$ in terms of total variation which is further estimated by
\[ B_n(\theta)\1_A \leq -n^{3/2} |\theta|^2 \zeta\1_A,\]
where $\zeta>0$ is a constant and $A$ an event with high probability. However, this estimate is by far too weak to reach the $1/n$-environment of $\rho_0$.

\item Once we enter the neighborhood of $\rho_0$ where the Taylor expansion is meaningful, we get the sharper bound
\[ B_n(\theta)\1_{A'} \leq -n|\theta|\zeta'\1_{A'},\]
where $\zeta'>0$ is a constant and $A'$ an event with high probability. This mimics the triangular shape observed in Figure~\ref{figure:Likelihood_example}. 
\end{itemize}
\end{itemize}

\begin{figure}[h]
\centering
\includegraphics[scale=0.5]{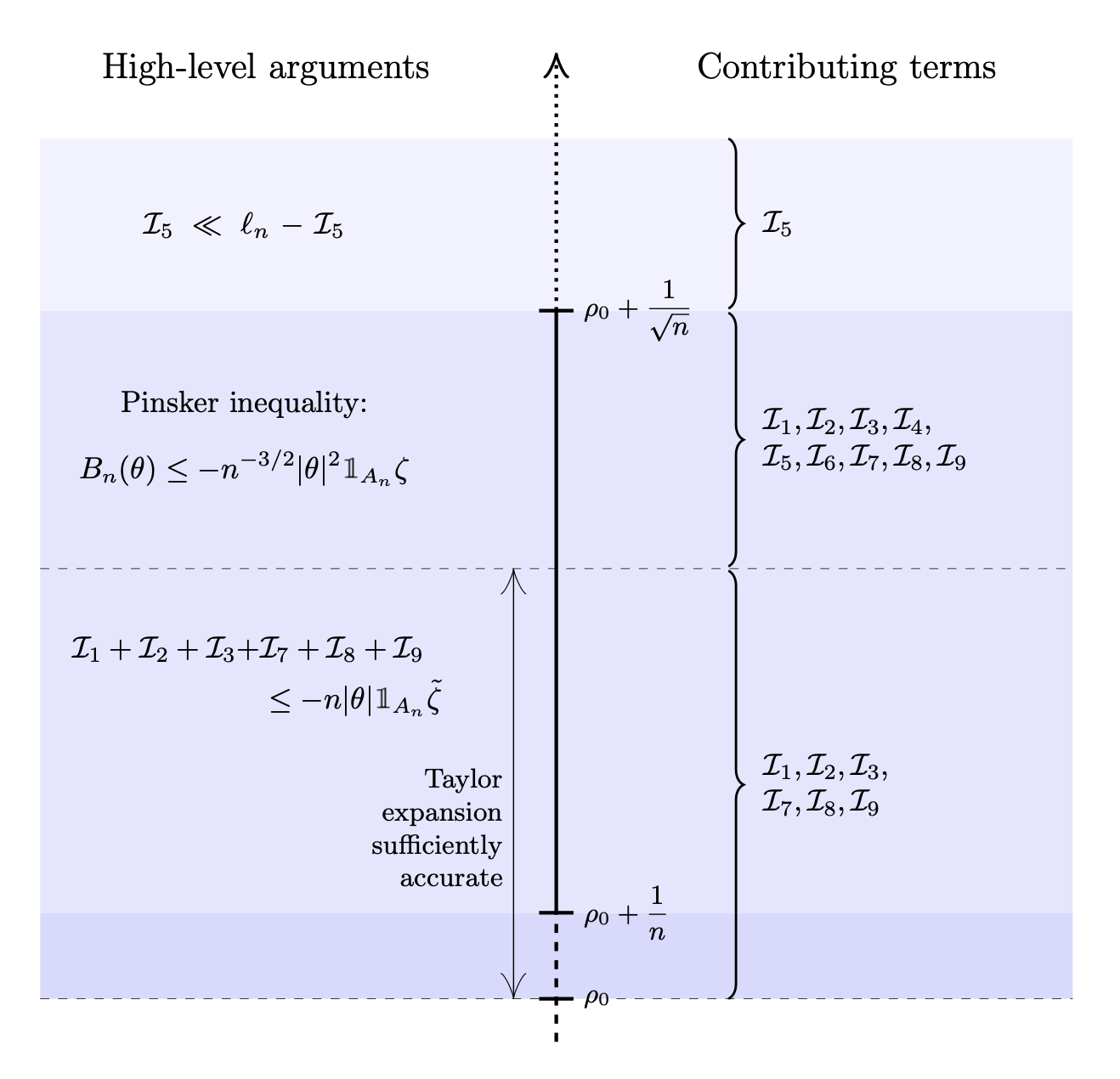}
\caption{Route of proof of the $n$-consistency.}\label{figure:route_of_proof}
\end{figure}

\noindent
Before giving the complete proof, we define parameters $\gamma,\Gamma,\xi$ and specify certain sets. All of those are given for fixed $\epsilon>0$ which is suppressed in the notation. For this, we define $\overline{X} := \sup_{s\leq 1}X_s$, $\underline{X} := \inf_{s\leq 1}X_s$.
\begin{itemize}
\item By the Burkholder-Davis-Gundy inequality, it is easily seen that both $\overline{X}$ and $\underline{X}$ are stochastically bounded. Thus, there exists a constant $\Gamma$ such that $\Pr_{\rho_0}(A_1) >1-\epsilon$ for the event
\begin{align}\label{eq:set_A1}
A_1 := \left\{ \rho_0-\Gamma < \underline{X},\overline{X}<\rho_0+\Gamma \right\}.
\end{align} 
\item Define $B_k = \{ L_1^y(X) >0 \textrm{ for all } y\in [\rho_0-1/k, \rho_0+1/k]\}$. Then $B_{k}\subset B_{k+1}$ for all $k\in\N$ and by continuity of measures from below, we have $\lim_{K\to\infty} \Pr_{\rho_0}(\bigcup_{k=1}^K B_k) = \Pr_{\rho_0}(L_1^{\rho_0}(X)>0)$. Consequently, for every $\epsilon>0$ there exists $K_\epsilon$ such that
\[ \Pr_{\rho_0}\left(\left. \bigcup_{k=1}^{K_\epsilon} B_k\right| L_1^{\rho_0}(X)>0\right) = \Pr_{\rho_0}\left( B_{K_\epsilon}|L_1^{\rho_0}(X)>0\right) >1-\epsilon,\]
and on $B_{K_\epsilon}$ we have $L_1^y(X)>0$ for all $y\in [\rho_0-1/K_\epsilon, \rho_0+1/K_\epsilon]$ which by Corollary~$29.18$ in~\cite{Kallenberg} gives $\overline{X}\geq\rho_0+1/K_\epsilon$ and $\underline{X}\leq\rho_0-1/K_\epsilon$ on $B_{K_\epsilon}$. Thus, defining $\gamma:=1/K_\epsilon$, the probability of 
\begin{align}\label{eq:set_A2}
A_2 := \left\{ \overline{X}\geq\rho_0+\gamma\textrm{ and } \underline{X}\leq\rho_0-\gamma\right\}
\end{align}
satisfies $\Pr_{\rho_0}(A_2|L_1^{\rho_0}(X)>0)>1-\epsilon$.
\item Because $L_1^\cdot(X)$ has a continuous version (as $X$ is a continuous martingale) and every continuous function on a compact set attains its minimum and maximum, Corollary~$29.18$ in~\cite{Kallenberg} about range and support of continuous local martingales reveals
\[ \Pr_{\rho_0}\left( \left.\inf_{y\in [\rho_0-\gamma/2, \rho_0+\gamma/2]} L_1^y(X) >0 \ \right| A_2\right) =1.\]
Again, by continuity of measures from below (analogously to the previous argument), we conclude the existence of $\xi>0$ such that $\Pr_{\rho_0}(A_3\mid A_2)>1-\epsilon$, where
\begin{align}\label{eq:set_A3}
A_3 = \left\{\inf_{y\in [\rho_0-\gamma/2, \rho_0+\gamma/2]} L_1^y(X) >\xi\right\}. 
\end{align} 
\item We define
\begin{align}\label{eq:set_A4}
A_4(n) := \left\{ \sup_{|t-s|<1/n} |X_t - X_s| \leq n^{-4/9} \right\}.
\end{align}
Then by Markov's inequality and Theorem~$1$ in~\cite{Fischer/Nappo} there exists a constant $\tilde{C}>0$ such that
\begin{align*}
\Pr_{\rho_0}\left(\sup_{|t-s|<1/n} |X_t - X_s| > n^{-4/9} \right) &\leq n^{4/9} \E_{\rho_0}\left[ \sup_{|t-s|<1/n} |X_t - X_s|\right] \\
& \leq \tilde{C}n^{-1/18}\sqrt{\log(2n)} \longrightarrow 0.
\end{align*} 
Consequently, there exists $n_0\in\N$ such that for all $n\geq n_0$ we have $\Pr_{\rho_0}(A_4(n))>1-\epsilon$.
\end{itemize}

\subsection{The MLE is not outside a $1/\sqrt{n}$-neighborhood of $\rho_0$}\label{subsection:sqrt(n)_consistency}
In this subsection, we will prove that
\begin{align}\label{eq_proof:sqrt(n)_consistency}
\left| \hat{\rho}_n - \rho_0\right|\1_{\{L_1^{\rho_0}(X)>0\}} = \mathcal{O}_{\Pr_{\rho_0}}\left(\frac{1}{\sqrt{n}}\right).
\end{align}
For the proof of this $\sqrt{n}$-consistency, recall the normalized log-likelihood $\ell_n(\theta)$ given in~\eqref{def:log_likelihood_normalized} and fix an arbitrary $\epsilon>0$. As $\ell_n(0)=0$ and $n(\hat{\rho}_n-\rho_0)\in\mathrm{Argsup}_{z\in\R}\ell_n(z/n)$, we know that $|\hat{\rho}_n -\rho_0|>K/\sqrt{n}$ implies that $\sup_{|\theta|>K/\sqrt{n}}\ell_n(\theta)\geq 0$. Consequently, we are going to show that
\begin{align*}
&\limsup_{K\to\infty} \limsup_{n\to\infty} \Pr_{\rho_0}\left( \sqrt{n}|\hat{\rho}_n - \rho_0|> K, L_1^{\rho_0}(X)>0 \right) \\
&\hspace{2cm} \leq \limsup_{K\to\infty} \limsup_{n\to\infty}\Pr_{\rho_0}\left( \sup_{\sqrt{n}|\theta|>K} \ell_n(\theta) \geq 0, L_1^{\rho_0}(X)>0 \right) < \epsilon.
\end{align*} 
To this aim, we further split
\begin{align*}
&\Pr_{\rho_0}\left( \sup_{\sqrt{n}|\theta|>K} \ell_n(\theta) \geq 0, L_1^{\rho_0}(X)>0 \right)\\
&\hspace{1cm} = \Pr_{\rho_0}\left( \sup_{\sqrt{n}\theta >K} \ell_n(\theta) \geq 0, L_1^{\rho_0}(X)>0 \right) + \Pr_{\rho_0}\left( \sup_{-\sqrt{n}\theta >K} \ell_n(\theta) \geq 0, L_1^{\rho_0}(X)>0 \right)
\end{align*} 
and only discuss the first summand as the second one can be dealt with analogously. The key idea is now to decompose $\ell_n(\theta)$ into a dominant term that is part of $\mathcal{I}_5(\theta)$ and a remainder that includes all other $\mathcal{I}_j(\theta)$, $j\neq 5$, and can be bounded independently of $\theta$ (see Lemma~\ref{lemma:bound_L_sqrt(n)}). To this aim, we define the two quantities
\[ N_n^L(\theta) := \sum_{k=1}^n \left[ \log\left(\frac{\beta}{\alpha}\right)-\frac{(X_{k/n}-X_{(k-1)/n})^2}{2/n}\left(\frac{1}{\alpha^2}-\frac{1}{\beta^2}\right)\right] \1_{\{\rho_0+L/\sqrt{n}\leq X_{(k-1)/n}< \rho_0+\theta\}}\]
and its compensator
\begin{align*}
\overline{N}_n^L(\theta) &= \sum_{k=1}^n \left[ \log\left(\frac{\beta}{\alpha}\right)- \E_{\rho_0}\left[\left. \frac{(X_{k/n}-X_{(k-1)/n})^2}{2/n} \right| X_{(k-1)/n}\right]\left(\frac{1}{\alpha^2}-\frac{1}{\beta^2}\right)\right] \\
&\hspace{1.5cm} \cdot\1_{\{\rho_0+L/\sqrt{n}\leq X_{(k-1)/n}< \rho_0+\theta\}}, 
\end{align*} 
where the parameter $L$ will be specified later. We then have the following result (its proof can be found in Section~\ref{App:Section:proofs_sqrt(n)_consistency} in the Appendix):

\begin{lemma}\label{lemma:bound_L_sqrt(n)}
Let $K,L>1$, $\epsilon>0$ and $\Theta_n^1 := [K/\sqrt{n},n^{-1/4}]$, $\Theta_n^2:=(n^{-1/4},\infty)$. Then there exists a sequence of sets $(A_n)_{n\in\N}$ with $\Pr_{\rho_0}(A_n^c|L_1^{\rho_0}(X)>0)\leq\epsilon$ for $n\geq n_0$, such that for $i=1,2$,
\[ \sup_{\theta\in\Theta_n^{i}} \ell_n(\theta)\1_{A_n} \leq \sup_{\theta \in\Theta_n^{i}} N_n^L(\theta)\1_{A_n} + F_n^{i}(K,L),\]
where $F_n^{i}(K,L)\geq 0$ and $\E_{\rho_0}[F_n^1(K,L)n^{-1/2}] \leq C_{\alpha,\beta}L<\infty$, $\E_{\rho_0}[F_n^2(K,L)n^{-2/3}] \leq C_{\alpha,\beta}L<\infty$ with a constant $C_{\alpha,\beta}$ independent of both $K,L$ and $n$.
\end{lemma}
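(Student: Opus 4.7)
The plan is to decompose $\ell_n(\theta)$ along the nine regimes~\eqref{eq:def_I} and to show that $N_n^L(\theta)$ captures the dominant part of $\mathcal{I}_5(\theta)$, while the $\mathcal{I}_5$-residual together with all other $\mathcal{I}_j(\theta)$, $j\neq 5$, is absorbed into a single $\theta$-independent bound $F_n^i(K,L)$ of the required order. I would take $A_n := A_1\cap A_4(n)$, so that on $A_n$ the process satisfies $X([0,1])\subset[\rho_0-\Gamma,\rho_0+\Gamma]$ and $\sup_{|t-s|\leq 1/n}|X_t-X_s|\leq n^{-4/9}$; by~\eqref{eq:set_A1} and~\eqref{eq:set_A4}, $\Pr_{\rho_0}(A_n^c\mid L_1^{\rho_0}(X)>0)\leq\epsilon$ for $n\geq n_0$ after inflating $\epsilon$ by a harmless factor. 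On $I_{5,k}^\theta$ the numerator density is case~1 of~\eqref{eq:transition_density} (with $\rho=\rho_0+\theta$, coefficient $\alpha$) and the denominator is case~2 (with $\rho=\rho_0$, coefficient $\beta$); the leading Gaussian pieces combine to precisely the integrand of $N_n^L(\theta)$, while the reflection contributions from the $\tfrac{\alpha-\beta}{\alpha+\beta}$-terms are, via the identity $(y-x)^2-(y+x-2\rho)^2=-4(x-\rho)(y-\rho)$, of exponential type in $n(x-\rho)(y-\rho)$ and therefore negligible outside $O(1/\sqrt{n})$-strips about $\rho_0$ and $\rho_0+\theta$.

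The $\mathcal{I}_5$-residual collects the leading summands with $X_{(k-1)/n}<\rho_0+L/\sqrt{n}$ excluded from $N_n^L$, together with the reflection corrections; both are uniformly in $\theta$ bounded by a constant multiple of a $\theta$-free strip-count about $\rho_0$ whose expectation is $O(\sqrt{n}L)$ by Lemma~\ref{lemma:Riemann_approximation} applied to a bounded envelope of $\1_{[0,L]}$. The same-side regimes $\mathcal{I}_1$ (both below $\rho_0$) and $\mathcal{I}_9$ (both above $\rho_0+\theta$) involve only reflection-type contributions; in expectation they are $O(\sqrt{n}L)$ for each fixed $\theta$, and uniformity in $\theta$ follows by covering $\Theta_n^i$ at resolution $1/\sqrt{n}$ and dominating each moving strip about $\rho_0+\theta$ by the nearest cover strip. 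For the one-sided crossing regimes $\mathcal{I}_2,\mathcal{I}_4,\mathcal{I}_6,\mathcal{I}_8$ on $A_4(n)$, each indicator forces $X_{(k-1)/n}$ into a strip of width $\leq 2n^{-4/9}$ around $\rho_0$ or $\rho_0+\theta$, and the pathwise summand bound $C_{\alpha,\beta}(1+n(X_{k/n}-X_{(k-1)/n})^2)\leq C_{\alpha,\beta}(1+n^{1/9})$ combined with $O(n^{5/9}L)$ indices in such a strip yields the $O(n^{2/3}L)$ rate required for $F_n^2$. For $F_n^1$ the pathwise $n^{1/9}$-bound is replaced by the $L^2$-moment bound $\E_{\rho_0}[(Z_k^j)^2\mid X_{(k-1)/n}]=O(1/\sqrt{n})$ from Proposition~\ref{prop:moment_product}, combined with the observation that outside a $O(1/\sqrt{n})$-layer around the relevant boundary the crossing probability is exponentially small, which brings the effective count down to $O(\sqrt{n}L)$ and the total to $O(\sqrt{n}L)$. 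Finally, the two-sided crossings $\mathcal{I}_3,\mathcal{I}_7$ require a single increment exceeding $\theta$; on $A_4(n)$ this forces $\theta\leq n^{-4/9}$, so they vanish identically on $\Theta_n^2$ and are absorbed into the boundary count on $\Theta_n^1$.

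The main obstacle is the uniformity in $\theta$: $F_n^i(K,L)$ must dominate the contribution of each $\mathcal{I}_j(\theta)$ and of the $\mathcal{I}_5$-residual simultaneously for all $\theta\in\Theta_n^i$, although the relevant strips about $\rho_0+\theta$ move with $\theta$. Covering $\Theta_n^i$ at the appropriate resolution and enveloping each moving strip by the nearest cover strip reduces the problem to finitely many strip-counts, whose sum retains the required order. The refinement on $\Theta_n^1$ is the more delicate step, as the crude $n^{1/9}$ pathwise bound on crossing summands is insufficient and must be replaced by the $L^2$-moment bound of Proposition~\ref{prop:moment_product}; the counting argument for the boundary layer then uses that a Gaussian transition with variance $O(1/n)$ crosses a level at distance $c/\sqrt{n}$ only with probability $O(e^{-c^2})$. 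The constants $C_{\alpha,\beta}$ are free of $K$ and $L$ because $K$ serves only as the lower endpoint of the $\theta$-range (unused in the bounds themselves) and $L$ enters exclusively as the width of the excluded $\mathcal{I}_5$-strip.
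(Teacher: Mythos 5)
Your overall architecture is the same as the paper's: split $\ell_n(\theta)$ into the nine regimes of~\eqref{eq:def_I}, identify $N_n^L(\theta)$ as the dominant piece of $\mathcal{I}_5(\theta)$, and absorb the $\mathcal{I}_5$-residual and all $\mathcal{I}_j(\theta)$, $j\neq 5$, into $\theta$-independent envelopes of order $\sqrt{n}L$ on $\Theta_n^1$ and $n^{2/3}L$ on $\Theta_n^2$, using $A_4(n)$ and the pathwise bound $C_{\alpha,\beta}(1+n(X_{k/n}-X_{(k-1)/n})^2)\le C_{\alpha,\beta}(1+n^{1/9})$ for the crude regime. However, there are genuine gaps. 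First, the choice $A_n=A_1\cap A_4(n)$ is too small. Your ``$O(n^{5/9}L)$ indices in such a strip'' on $\Theta_n^2$, and more generally every pathwise strip-count you use, comes from the occupation times formula $n\int_0^1\1_{[\rho_0+\theta-2n^{-4/9},\rho_0+\theta+2n^{-4/9}]}(X_s)\,ds\le C n^{5/9}\sup_y L_1^y(X)$, which is only deterministic on an event where $\sup_y L_1^y(X)\le\xi_u$ (the paper's $A_3'$); the lower bound $\xi_l$ on an interval is also needed later. Without such an event, you can bound each strip count only in expectation, and then the uniformity step fails: $\E[\sup_\theta(\cdot)]$ is not $\sup_\theta\E[(\cdot)]$, covering $\Theta_n^1$ at resolution $1/\sqrt{n}$ produces $O(n^{1/4})$ cover strips, and neither summing their counts (which gives $O(n^{3/4})$) nor taking a naive maximum yields the required $O(\sqrt{n}L)$ without a quantitative concentration input.

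Second, your replacement of the pathwise bound by ``the $L^2$-moment bound of Proposition~\ref{prop:moment_product}'' on $\Theta_n^1$ is out of scope: that proposition is stated (and proved) only for $|\theta|\le K/\sqrt{n}$, and its constants degrade badly (through factors of the form $e^{cK^2}$ with $K$ replaced by $\sqrt{n}\,\theta\asymp n^{1/4}$) as the range grows, so it cannot deliver the $O(\sqrt{n}L)$ bound for $F_n^1$. The paper's route through this bottleneck is different and is the technical core of the lemma: an $L^2(\Pr_{\rho_0})$ bound of order $n^{-1/2}$ on the error of discretely approximating occupation integrals (Lemma~\ref{lemma:L2-norm_occupation_approximation}), a bracketing argument over a grid of mesh $\asymp\epsilon/\sqrt{n}$ exploiting monotonicity of the envelope functions in $\theta$ together with the local-time bounds to control the bracket widths (Lemma~\ref{lemma:uniform_stochastic_occupation_approximation}) --- here the $n^{-1/2}$ rate beats the $O(n^{1/4})$ grid cardinality in the union bound --- and a separate chaining estimate (Lemma~\ref{lemma:supremum_at_rho+theta}) for the martingale fluctuation of the crossing counts appearing in $S_{3,\theta}$ and $\mathcal{I}_6,\mathcal{I}_8$. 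None of these three ingredients appears in your proposal, and without them the claimed orders for $F_n^1$ and the $\theta$-uniformity of $F_n^2$ are unsubstantiated.
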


\noindent
Note that $\Theta_n^1$ and $\Theta_n^2$ in Lemma~\ref{lemma:bound_L_sqrt(n)} are disjoint and $\Theta_n^1\cup\Theta_n^2= \{\theta:\ \theta\geq K/\sqrt{n}\}$. Furthermore, it should be noted that the moment bound on $F_n^1$ of order $n^{1/2}$ is optimal and the technically most involved part of the proof. It is based on a bound in $L^2(\Pr_{\rho_0})$ on the error of discretely approximating an occupation times integral (see Lemma~\ref{lemma:L2-norm_occupation_approximation}) and a bracketing argument (see Lemma~\ref{lemma:uniform_stochastic_occupation_approximation}). The order $n^{2/3}$ for $F_n^2$ is not optimal, but sufficient for our purpose and derived using only knowledge about the expected modulus of continuity of the path $X$. To proceed, we recall the sets $A_1,A_2, A_3$ and $A_4(n)$ given in \eqref{eq:set_A1}-\eqref{eq:set_A4} and let the constants be specified in such a way that the bounds of their respective probabilities are given for $\epsilon/10$ (instead of $\epsilon$). Furthermore, we introduce two additional events:
\begin{itemize}
\item Recall $F_n^1(K,L)$ and $F_n^2(K,L)$ from Lemma~\ref{lemma:bound_L_sqrt(n)} and define
\[ A_5(n) := \left\{ |F_n^1(K,L)| \leq C_F L\sqrt{n},\ |F_n^2(K,L)| \leq C_F Ln^{2/3} \right\},\]
where $C_F>0$ is chosen large enough such that $\Pr_{\rho_0}(A_5(n))>1-\epsilon/10$ for all $n\geq n_1$ and appropriate $n_1\in\N$. This is possible by the tightness of $(F_n^1(K,L)/(Ln^{1/2}))_{n\in\N}$ and $(F_n^2(K,L)/(Ln^{2/3}))_{n\in\N}$ implied by the moment bound in Lemma~\ref{lemma:bound_L_sqrt(n)}.
\item Let $K>L$. Then, by Lemma~\ref{lemma:Riemann_approximation}, the random variable
\[ \frac{1}{\sqrt{n}}\sum_{k=1}^n \1_{\{\rho_0+L/\sqrt{n}\leq X_{(k-1)/n}<\rho_0+K/\sqrt{n}\}} = \frac{1}{\sqrt{n}}\sum_{k=1}^n g\left(\sqrt{n}(X_{(k-1)/n}-\rho_0)\right)\]
with $g(x) = \1_{[L,K)}(x)$ converges in probability to $(K-L)\beta^{-2}L_1^{\rho_0}(X)$. In particular, this random variable is bounded from below by $(K-L)\beta^{-2}\xi$ on the set $A_3$. Consequently, there exists $n_2\in\N$ such that $\Pr_{\rho_0}(A_6(n)|A_3)>1-\epsilon/10$ for $n\geq n_2$, where
\[ A_6(n) := \left\{ \frac{1}{\sqrt{n}}\sum_{k=1}^n \1_{\{\rho_0+L/\sqrt{n}< X_{(k-1)/n}<\rho_0+K/\sqrt{n}\}} > \left(K-L\right)\beta^{-2}\xi/2 \right\}.\]
\end{itemize}
Finally, we define
\[ A(n) = A_1 \cap A_2 \cap A_3\cap A_4(n) \cap A_5(n) \cap A_6(n)\cap A_7(n),\]
where $A_7(n)$ is the set $A_n$ in Lemma~\ref{lemma:bound_L_sqrt(n)} for $\epsilon/10$. Then by the findings above we have for $n\geq \max\{n_0,n_1,n_2\}$ large enough that (suppressing the index $n$ in the events)
\begin{align*}
\Pr_{\rho_0}( \{L_1^{\rho_0}(X)>0\}\cap A(n)^c) &= \Pr_{\rho_0} \left( A(n)^c| L_1^{\rho_0}(X)>0\right) \Pr_{\rho_0}(L_1^{\rho_0}(X)>0) \\
& \leq \Pr_{\rho_0}(L_1^{\rho_0}(X)>0) \sum_{j=1}^7 \Pr(A_j^c|L_1^{\rho_0}(X)>0) \leq \epsilon,
\end{align*}
where the last inequality uses the probability bounds on $\Pr_{\rho_0}(A_j^c)$ for $j=1,4,5$, $\Pr_{\rho_0}(A_3^c|A_2)$, $\Pr_{\rho_0}(A_6^c|A_3)$ and $\Pr(A_k^c|L_1^{\rho_0}(X)>0)$ for $k=2,7$ together with 
\[ \Pr_{\rho_0}\left( A_6(n)^c|L_1^{\rho_0}(X)>0\right) \leq \Pr_{\rho_0}\left( A_6(n)^c|A_3\right) + \Pr_{\rho_0}\left( A_3^c|L_1^{\rho_0}(X)>0\right)\]
and
\[ \Pr_{\rho_0}\left( A_3^c|L_1^{\rho_0}(X)>0\right) \leq \Pr_{\rho_0}\left( A_3^c|A_2\right) + \Pr_{\rho_0}\left( A_2^c|L_1^{\rho_0}(X)>0\right).\] 
Both of the last estimates follow by the law of total probability applied to $\Pr(\cdot\mid L_1^{\rho_0}(X)>0)$ and noting that $A_2,A_3\subset \{L_1^{\rho_0}(X)>0\}$.
After those preliminaries, we now start with the main part of the proof of \eqref{eq_proof:sqrt(n)_consistency}, which is split into three parts.

\section*{Slicing argument}
Subsequently, comma-separated lists within probabilities should be read as the intersection of the corresponding subsets of $\Omega$. For $j\in\Z$, we define sets $S_{n,j}$ via
\[ S_{n,j} := \left\{ \theta: 2^{j}<\sqrt{n}\theta \leq 2^{j+1} \right\}.\]
Then for $n\geq \max\{n_0,n_1\}$ and $K=2^M$,
\begin{align}\label{eq_proof:sqrt(n)_decomposition}
\begin{split}
&\Pr_{\rho_0}\left( \sup_{\sqrt{n}\theta >2^M} \ell_n(\theta) \geq 0, L_1^{\rho_0}(X)>0 \right) \\
&\hspace{0.5cm}\leq \sum_{\substack{j\geq M \\ 2^j \leq \Gamma \sqrt{n}}} \Pr_{\rho_0}\left( \sup_{\theta\in S_{n,j}} \ell_n(\theta) \geq 0, A(n) \right) +\Pr_{\rho_0}\left( \sup_{\theta> \Gamma} \ell_n(\theta) \geq 0, A(n)\right)\\
&\hspace{1.5cm}   + \Pr_{\rho_0}\left( A(n)^c \cap \{L_1^{\rho_0}(X)>0\}\right) \\
&\hspace{0.5cm}\leq \sum_{\substack{j\geq M \\ 2^j \leq \Gamma \sqrt{n}}} \Pr_{\rho_0}\left( \sup_{\theta\in S_{n,j}} \ell_n(\theta) \geq 0, A(n) \right) +\Pr_{\rho_0}\left( N_n^L(\Gamma)+F_n^2(K,L)\geq 0, A(n)\right) +\epsilon,
\end{split}
\end{align}
where the last step uses Lemma~\ref{lemma:bound_L_sqrt(n)} and $N_n(\theta) = N_n(\Gamma)$ for all $\theta\geq \overline{X}-\rho_0$, together with $\overline{X}-\rho_0\leq \Gamma$ on $A(n)$. In what follows, we are going to analyze the first two of these summands and show that they vanish as $M\to\infty$, uniformly in $n$. With Lemma~\ref{lemma:bound_L_sqrt(n)} and
\[ F_{n,j}(K,L):=F_n^1(K,L)\1_{\{2^j\leq n^{-1/4}\}}+ F_n^2(K,L)\1_{\{2^j> n^{-1/4}\}}, \]
we find for the first summand in~\eqref{eq_proof:sqrt(n)_decomposition},
\begin{align}\label{eq_proof:sqrt(n)_decomposition_probability}
\begin{split}
&\Pr_{\rho_0}\left( \sup_{\theta\in S_{n,j}} \ell_n(\theta) \geq 0, A(n) \right)\\
&\hspace{0.5cm} \leq \Pr_{\rho_0}\left( \sup_{\theta\in S_{n,j}} \left(N_n^L(\theta) + F_{n,j}(K,L)\right) \geq 0, A(n) \right) \\
%&\hspace{0.5cm}\leq \Pr_{\rho_0}\left( \sup_{\theta\in S_{n,j}} \left(N_n^L(\theta) - \overline{N}_n^L(\theta) \right) + F_{n,j}(K,L) \geq -\sup_{\theta\in S_{n,j}}\overline{N}_n^L(\theta), A(n) \right) \\
&\hspace{0.5cm}\leq \Pr_{\rho_0}\left( \sup_{\theta\in S_{n,j}} \left(N_n^L(\theta) - \overline{N}_n^L(\theta) \right)  \geq -\sup_{\theta\in S_{n,j}}\overline{N}_n^L(\theta) - |F_{n,j}(K,L)|, A(n) \right).
\end{split}
\end{align}
Next, we will find a lower bound for the right-hand side in this probability, in particular for $\sup_\theta \overline{N}_n^L(\theta)$, on the set $A(n)$. Here, the problem is that $\overline{N}_n^L(\theta)$ is still random and not necessarily negative. However, we will subsequently prove the following:
\begin{itemize}
\item[] Claim I:\ $\overline{N}_n^L(\theta)<0$ for large values of $L$. 
\item[] Claim II:\ The subsequent inequalities~\eqref{eq_proof:sqrt(n)_bound_small_j} and~\eqref{eq_proof:sqrt(n)_bound_large_j}, meaning heuristically that $\overline{N}_n^L(\theta)$ scales (almost) at least proportionately in $\theta$.
\end{itemize}
The heuristic reason for Claim I is that $\E_{\rho_0}[n(X_{k/n}-X_{(k-1)/n})^2\mid X_{(k-1)/n}] \approx \beta^2$ for large values of $L$ (meaning $X_{(k-1)/n}$ being not too close to the change point $\rho_0$).
In this case, each summand appearing in the definition of $\overline{N}_n^L(\theta)$ is negative, in particular
\[ \overline{N}_n^L(\theta) \approx c_{\alpha,\beta} \sum_{k=1}^n \1_{[\rho_0+L/\sqrt{n},\rho_0+\theta)}(X_{(k-1)/n}) \]
for some negative constant $c_{\alpha,\beta}<0$. According to this, $\overline{N}_n^L(\theta)$ is given approximately as a multiple of the number of observations falling into the interval $[\rho_0+L/\sqrt{n},\rho_0+\theta)$, which heuristically explains Claim II.\\
Now we give the details and start with Claim I. Its heuristic is made precise in the following preliminary lemma. The proof is deferred to Section~\ref{App:Section:proofs_sqrt(n)_consistency} in the Appendix.

\begin{lemma}\label{lemma:E_squared_difference}
For every $\epsilon>0$ there exists $L=L(\epsilon)>0$ such that for all $1\leq k\leq n$ and every $\theta>L/\sqrt{n}$,
\[ \left| \E_{\rho_0}\left[\left. n(X_{k/n}-X_{(k-1)/n})^2  \right| X_{(k-1)/n}\right] -\beta^2\right| \1_{\{\rho_0+L/\sqrt{n}\leq X_{(k-1)/n}< \rho_0+\theta\}} \leq \epsilon.\]
\end{lemma}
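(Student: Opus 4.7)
The plan is to use the Markov property to write
\[
\E_{\rho_0}\bigl[n(X_{k/n}-X_{(k-1)/n})^2 \mid X_{(k-1)/n}=x\bigr]
= n\int_{\R}(y-x)^2\, p_{1/n}^{\rho_0}(x,y)\,dy,
\]
and then to exploit the explicit form of the transition density in~\eqref{eq:transition_density}. Since the indicator forces $x := X_{(k-1)/n}\geq \rho_0+L/\sqrt{n}>\rho_0$, only the two regimes $\{x\geq\rho_0,\,y>\rho_0\}$ and $\{y\leq\rho_0\leq x\}$ contribute. Write $\delta := x-\rho_0\geq L/\sqrt{n}$ and $c := \sqrt{n}\,\delta/\beta\geq L/\beta$. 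Note that $\theta$ and $k$ enter only through the indicator; the bound we seek is uniform in $x$ with $x-\rho_0\geq L/\sqrt{n}$, and therefore uniform in $k,n$ and $\theta>L/\sqrt{n}$.

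For the contribution from $y>\rho_0$, I would substitute $u=\sqrt{n}(y-x)/\beta$, which turns $n(y-x)^2$ into $\beta^2 u^2$ and the main Gaussian factor $\tfrac{1}{\sqrt{2\pi/n}\,\beta}\exp(-\tfrac{(y-x)^2}{2\beta^2/n})$ into the standard normal density $\phi(u)$. The range $y>\rho_0$ becomes $u>-c$, so the principal term reduces to $\beta^2\int_{-c}^{\infty} u^2\phi(u)\,du$, which differs from $\beta^2$ only by the tail
\[
\beta^2\int_{-\infty}^{-c} u^2\phi(u)\,du \;\leq\; \beta^2\int_{|u|\geq L/\beta} u^2\phi(u)\,du \;\xrightarrow{L\to\infty}\;0,
\]
uniformly in $c\geq L/\beta$. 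The reflected Gaussian (carrying the prefactor $\tfrac{\alpha-\beta}{\alpha+\beta}$) becomes, after the same substitution, a Gaussian centered at $-c$ integrated against $u^2$ over $u>-c$; since its exponent on this region is bounded below by $c^2/2\geq L^2/(2\beta^2)$, this piece is exponentially small in $L$.

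For the $y\leq\rho_0$ regime I would substitute $w=\sqrt{n}(y-\rho_0)/\alpha$, after which the density factor becomes $\tfrac{2\beta}{\alpha+\beta}\,\tfrac{1}{\sqrt{2\pi}}\exp(-(w-c)^2/2)$ on $w\leq 0$, and $n(y-x)^2$ turns into $(\alpha w-\beta c)^2$. Since $w\leq 0\leq c$, the exponent is at least $c^2/2$ throughout the integration region, so using the elementary bound $(\alpha w-\beta c)^2\leq 2\alpha^2 w^2+2\beta^2 c^2$ and splitting the integral accordingly yields a bound of order $(1+c^2)\exp(-c^2/2)\lesssim (1+L^2/\beta^2)\exp(-L^2/(2\beta^2))$, again uniform in $x$.

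Adding the three contributions and choosing $L=L(\epsilon)$ large enough so that each of them is at most $\epsilon/3$ yields the claim. The proof is essentially a bookkeeping exercise in Gaussian tail estimates; the only step that deserves care is the principal $y>\rho_0$ term, where one must make sure that the Riemann-type integral $\beta^2\int_{-c}^{\infty}u^2\phi(u)\,du$ is close to $\beta^2$ uniformly in $c\geq L/\beta$, rather than pointwise in $x$. No other part of the argument depends on how close $x$ is to $\rho_0+L/\sqrt{n}$, and the bound is manifestly independent of both $k$ and $\theta$.
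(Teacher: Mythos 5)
Your proposal is correct and follows essentially the same route as the paper: plug in the explicit transition density on the two regimes relevant for $x\geq\rho_0+L/\sqrt{n}$, reduce to standard Gaussian integrals via substitution, and control everything by Gaussian tail bounds in the parameter $c=\sqrt{n}(x-\rho_0)/\beta\geq L/\beta$ (the paper merely organizes the bookkeeping by subtracting $\beta^2$ inside each regime and working with $(u^2-1)$ integrands, rather than comparing the total to $\beta^2$ at the end). The only nitpick is that the reflected Gaussian is centered at $-2c$ rather than $-c$, but your lower bound $c^2/2$ for its exponent on $u>-c$ is still valid, so the argument goes through.
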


\noindent
First, take $\epsilon_0>0$ to be small enough such that
\[ d_{\alpha,\beta} := \sup_{|\delta|<\epsilon_0}\left( \log\left(\frac{\beta}{\alpha}\right)- \frac{1+\delta}{2}\left(\frac{\beta^2}{\alpha^2}-1\right)\right) < 0,\]
which is possible since $\log(\beta/\alpha)-(\beta^2/\alpha^2-1)/2<0$ for all $\alpha\neq\beta$. This follows from the fact that $f(x):=\log(x)-x^2/2 + 1/2$ has a unique maximum at $x=1$ with $f(1)=0$. Using Lemma~\ref{lemma:E_squared_difference} we choose $L=L(\epsilon_0)$ large enough such that 
\begin{align*}
&\left( \log\left(\frac{\beta}{\alpha}\right)- \E_{\rho_0}\left[\left. \frac{(X_{k/n}-X_{(k-1)/n})^2}{2/n} \right| X_{(k-1)/n}\right]\frac{\beta^2-\alpha^2}{\alpha^2\beta^2}\right)\1_{\{\rho_0+L/\sqrt{n}\leq X_{(k-1)/n}< \rho_0+\theta\}} \\
&\hspace{1.5cm}  < d_{\alpha,\beta}\1_{\{\rho_0+L/\sqrt{n}\leq X_{(k-1)/n}< \rho_0+\theta\}}
\end{align*} 
for $k=1,\dots, n$. Note that this choice of $L$ is independent of $K$, which is important as in the definition of $A_6(n)$ we require $K>L$. Claim I now follows by summation over $k$.\\

\noindent
Building on these results, we now establish Claim II. Here, we have (recall $d_{\alpha,\beta}<0$)
\begin{align*}
-\sup_{\theta\in S_{n,j}} \overline{N}_n(\theta) &\geq - \sup_{\theta\in S_{n,j}}\sum_{k=1}^n d_{\alpha,\beta} \1_{\{\rho_0+L/\sqrt{n}\leq X_{(k-1)/n}<\rho_0+\theta\}} \\
%&=  -d_{\alpha,\beta} \inf_{\theta\in S_{n,j}} \sum_{k=1}^n \1_{\{\rho_0+L/\sqrt{n}\leq X_{(k-1)/n}< \rho_0+\theta\}} \\
&= -d_{\alpha,\beta} \sum_{k=1}^n \1_{\{\rho_0+L/\sqrt{n}\leq X_{(k-1)/n}< \rho_0+2^j/\sqrt{n}\}}
\end{align*}
To continue bounding $\sup_\theta\overline{N}_n^L(\theta)$ on $A(n)$ and establish Claim II, we distinguish the two cases $2^j\leq n^{1/4}$ and $2^j>n^{1/4}$ when counting the observations in the interval $[\rho_0+L/\sqrt{n}, \rho_0+2^j/\sqrt{n})$. The reason for treating these two cases separately is that for small values of $2^j$, it is enough to simply consider the smaller interval $[\rho_0+L/\sqrt{n}, \rho_0+K/\sqrt{n})$, see~\eqref{eq_proof:sqrt(n)_bound_small_j}. This, however, is not sharp enough for larger values of $2^j$, where we need to consider that the number of observations within a certain interval scales with its length with high probability. Once the order (in $n$) of the length of the interval $[\rho_0+L/\sqrt{n}, \rho_0+2^j/\sqrt{n})$ is larger than the modulus of continuity of the path $X$, we can switch from the sum of indicators (that count observations in the interval) to an occupation integral over a slighly smaller domain (but with length of the same order), see~\eqref{eq_proof:sqrt(n)_bound_large_j_new}. Now we make these heuristics precise: For $2^j\leq n^{1/4}$, we simply use that $j\geq M$, meaning that $ \1_{\{\rho_0+L/\sqrt{n}\leq X_{(k-1)/n}< \rho_0+2^j/\sqrt{n}\}}\geq  \1_{\{\rho_0+L/\sqrt{n}\leq X_{(k-1)/n}< \rho_0+2^M/\sqrt{n}\}}$ and consequently on $A_6(n)$,
\begin{align}\label{eq_proof:sqrt(n)_bound_small_j}
\begin{split}
-\sup_{\theta\in S_{n,j}} \overline{N}_n(\theta) &\geq -d_{\alpha,\beta}\sum_{k=1}^n \1_{\{\rho_0+L/\sqrt{n} \leq X_{(k-1)/n}< \rho_0+2^M/\sqrt{n}\}} \\
&\geq -d_{\alpha,\beta} \sqrt{n} \left( 2^M-L\right)\beta^{-2}\xi/2.
\end{split}
\end{align} 
For $2^j>n^{1/4}$, we make use of $A_4(n)$ being part of $A(n)$. We first observe that on $A(n)$, 
\[ \int_{(k-1)/n}^{k/n} \1_{[\rho_0+L/\sqrt{n} + n^{-4/9}, \rho_0 + 2^j/\sqrt{n} -n^{-4/9}]}(X_s)ds \neq 0 \]
implies $\1_{\{\rho_0+L/\sqrt{n}\leq X_{(k-1)/n}<\rho_0+2^j/\sqrt{n}\}}=1$. Together with the occupation times formula, %this yields the inequality
\begin{align}\label{eq_proof:sqrt(n)_bound_large_j_new}
\begin{split}
-\sup_{\theta\in S_{n,j}} \overline{N}_n(\theta) & \geq (-d_{\alpha,\beta}) n \int_0^1 \1_{[\rho_0+L/\sqrt{n} + n^{-4/9}, \rho_0 + 2^j/\sqrt{n} -n^{-4/9}]}(X_s)ds \\
%&= (-d_{\alpha,\beta}) n \int_0^1 \1_{[\rho_0+L/\sqrt{n} + n^{-4/9}, \rho_0 + 2^j/\sqrt{n} -n^{-4/9}]}(X_s)ds \\
%&\geq \frac{(-d_{\alpha,\beta})n}{\max\{\alpha^2,\beta^2\}} \int_{\rho_0+L/\sqrt{n} + n^{-4/9}}^{\rho_0 + 2^j/\sqrt{n} -n^{-4/9}} L_1^y(X) dy \\
&\geq \frac{(-d_{\alpha,\beta})n}{\max\{\alpha^2,\beta^2\}} \int_{\rho_0+L/\sqrt{n} + n^{-4/9}}^{\rho_0 + (2^j/\sqrt{n} -n^{-4/9})\wedge \gamma/2} L_1^y(X) dy \\
&\geq \frac{(-d_{\alpha,\beta})\xi n}{\max\{\alpha^2,\beta^2\}}\left(\big(2^j/\sqrt{n}-2n^{-4/9}-L/\sqrt{n}\big)\wedge \big(\gamma/2 - L/\sqrt{n}-n^{-4/9}\big)\right),
\end{split}
\end{align}
where the the parameter $\gamma$ appears in the definition of the set $A_2$ and the second-last line follows from the fact that we work on $A_3$. For $2^j>n^{1/4}$ and $n$ large enough,
\[ \frac{2^j}{\sqrt{n}}\left(1- 2^{-(j-1)} n^{1/18}-2^{-j}L\right) \geq  \frac{2^j}{\sqrt{n}}\left(1- 2n^{1/18-1/4}-Ln^{-1/4} \right) \geq 2^{j-1}/\sqrt{n}, \]
such that~\eqref{eq_proof:sqrt(n)_bound_large_j_new} gives
\begin{align}\label{eq_proof:sqrt(n)_bound_large_j}
-\sup_{\theta\in S_{n,j}} \overline{N}_n(\theta) \geq \frac{(-d_{\alpha,\beta})\xi}{\max\{\alpha^2,\beta^2\}´}n\left(2^{j-1}/\sqrt{n}\wedge\gamma/4 \right).
\end{align}
Together with~\eqref{eq_proof:sqrt(n)_bound_small_j}, this establishes Claim II.\\

\noindent
Now we combine everything for the first summand in~\eqref{eq_proof:sqrt(n)_decomposition}. With~\eqref{eq_proof:sqrt(n)_decomposition_probability}, the lower bound~\eqref{eq_proof:sqrt(n)_bound_small_j} for $2^j\leq n^{1/4}$, the bound~\eqref{eq_proof:sqrt(n)_bound_large_j} for $2^j>n^{1/4}$ and the bound on $F_n^i(K,L)$ on $A_5$ we find for $M,n$ large enough with Markov's inequality
\begin{align}\label{eq_proof:sqrt(n)_last_line_first_step}
\begin{split}
&\sum_{\substack{j\geq M \\ 2^j \leq \Gamma \sqrt{n}}} \Pr_{\rho_0}\left( \sup_{\theta\in S_{n,j}} \ell_n(\theta) \geq 0, A(n) \right) \\
&\hspace{1cm} \leq  \sum_{\substack{j\geq M \\ 2^j \leq n^{1/4}}} \E_{\rho_0}\left[ \sup_{\theta\in S_{n,j}} \left|N_n^L(\theta) - \overline{N}_n^L(\theta) \right|\right] \frac{1}{C_1\sqrt{n}(2^M-L)-C_F\sqrt{n}} \\
&\hspace{1.5cm}   + \sum_{\substack{j\geq M \\ n^{1/4} <2^j \leq \Gamma \sqrt{n}}} \E_{\rho_0}\left[ \sup_{\theta\in S_{n,j}} \left| N_n^L(\theta) - \overline{N}_n^L(\theta) \right| \right] \frac{1}{C_1 n (1\wedge 2^j/\sqrt{n}) - C_F Ln^{2/3}}, 
\end{split}
\end{align}
where $C_1=C_1(\alpha,\beta, \gamma,\xi,\Gamma)>0$ is some universal constant.

\section*{Evaluating the expectation: Modified chaining}
It remains to evaluate the expectation of the supremum in the last display which is done with a modified chaining procedure. It should be noted that the chaining will be interrupted when $N_n^L(\theta)-\overline{N}_n^L(\theta)$ is compared for values $\theta,\theta'$ with difference of order $1/n$ and the resulting remainder is treated in essence using variance bounds. This strategy is tailor-made to the special structure of the martingale term $N_n^L(\theta)-\overline{N}_n^L(\theta)$ that allows to deduce tight bounds for the remainder that are not available in general. The details are given in Appendix~\ref{App:Section:proofs_sqrt(n)_consistency} and yield the bound
\begin{align}\label{eq_proof:sqrt(n)_chaining_bound_new}
&\E_{\rho_0}\left[ \sup_{\theta\in S_{n,j}} \left( N_n^L(\theta)-\overline{N}_n^L(\theta)\right)\right] \leq C_2 2^{(j+1)/2}n^{1/4} \left( j\log(2) + \log(n)\right),
\end{align}
where $C_2=C_2(\alpha,\beta)>0$ is a suitable constant.

\section*{Finalizing the proof of~\eqref{eq_proof:sqrt(n)_consistency}}
From~\eqref{eq_proof:sqrt(n)_last_line_first_step}, we obtain the following bound for the first summand in~\eqref{eq_proof:sqrt(n)_decomposition}:
\begin{align*}
&\sum_{\substack{j\geq M \\ 2^j \leq \Gamma \sqrt{n}}} \Pr_{\rho_0}\left( \sup_{\theta\in S_{n,j}} \ell_n(\theta) \geq 0, A(n) \right) \\
&\hspace{1cm} \leq \sum_{\substack{j\geq M \\ 2^j \leq n^{1/4}}} \frac{C_2 2^{(j+1)/2}n^{1/4} \left( j\log(2) + \log(n)\right)}{C_1\sqrt{n}(2^M-L)-C_F\sqrt{n}} \\
&\hspace{2cm} + \sum_{\substack{j\geq M \\ n^{1/4} <2^j \leq \Gamma \sqrt{n}}} \frac{C_2 2^{(j+1)/2}n^{1/4} \left( j\log(2) + \log(n)\right)}{C_1 n (1\wedge 2^j/(2\sqrt{n})) - C_F Ln^{2/3}} \\
%&\hspace{1cm} \leq C_3 n^{-1/8}(\log(n))^2 + C_3\sum_{\substack{j\geq M \\ n^{1/4} <2^j \leq \Gamma \sqrt{n}}}\frac{j2^{(j+1)/2}n^{1/4}\log(n)}{ \sqrt{n} 2^j - C_F Ln^{2/3}} \\
&\hspace{1cm} \leq  C_3 n^{-1/8}(\log(n))^2 + C_3\sum_{j\geq M } j2^{-j/2},
\end{align*}
for a constant $C_3=C_3(\alpha,\beta,\gamma,\xi,\Gamma)>0$ and $n$ large enough. To complete the discussion for the first summand in~\eqref{eq_proof:sqrt(n)_decomposition}, we now make the choice of $M$ by taking it large enough to satisfiy $C_4\sum_{j\geq M}j 2^{-j/2} <\epsilon$ and then to take $n$ large enough to have the first summand also $<\epsilon$.\\

\noindent
It remains to deal with the second summand in~\eqref{eq_proof:sqrt(n)_decomposition}.  By the same argument as used for~\eqref{eq_proof:sqrt(n)_bound_large_j} we find
\[ -\overline{N}_n^L(\Gamma) \geq - nd_{\alpha,\beta}\xi\gamma/2\]
on $A(n)$ and consequently,
\begin{align*}
&\Pr_{\rho_0}\left( N_n^L(\Gamma) + F_n^2(K,L) \geq 0, A(n)\right)\\
&\hspace{1cm} \leq\Pr_{\rho_0}\left( N_n^L(\Gamma) -\overline{N}_n^L(\Gamma) \geq -\overline{N}_n^L(\Gamma)- |F_n^2(L,K)| , A(n)\right) \\
&\hspace{1cm} \leq \frac{\E_{\rho_0}\left[ (N_n^L(\Gamma)-\overline{N}_n^L(\Gamma))^2\right]}{\left[C_{\alpha,\beta}(\xi,\gamma)n - C_{\alpha,\beta}(L,K)n^{2/3}\right]^2}  \leq \frac{Cn\Gamma}{\left[C_{\alpha,\beta}(\xi,\gamma)n - C_{\alpha,\beta}(L,K)n^{2/3}\right]^2},
\end{align*}
which converges to zero for $n\to\infty$. Thus, the first summands in~\eqref{eq_proof:sqrt(n)_decomposition} are shown to converge to zero and~\eqref{eq_proof:sqrt(n)_consistency} follows.

\subsection{The MLE is not outside a $1/n$-neighborhood of $\rho_0$}\label{subsection:n_consistency}
In this subsection, we will complete the proof of Proposition~\ref{prop:n-consistency} by proving $n$-consistency of the MLE $\hat{\rho}_n$. We will show tightness of $n|\hat{\rho}_n-\rho_0|$ by proving that
\begin{align}\label{eq_proof:n_consistency_1}
\lim_{M\to\infty} \limsup_{n\to\infty} \Pr_{\rho_0} \left( n|\hat{\rho}_n-\rho_0| > 2^M, L_1^{\rho_0}(X)>0\right) = 0.
\end{align}
The proof again combines slicing and chaining techniques and will exploit the fact that we already established $\sqrt{n}$-consistency of $\hat{\rho}_n$ which allows to consider only $\theta\leq K/\sqrt{n}$ for suitable $K>0$. At this point it is crucial that (almost) sharp upper bounds on the drift term $B_n(\theta)$ are needed and the next lemma provides those for $|\theta|<K/\sqrt{n}$. Note that the $n|\theta|$ estimate for small $\theta$ is needed to describe the triangular shape observed in Figure~\ref{figure:Likelihood_example} sufficiently accurate to archive the $1/n$ rate of convergence. However, as described in the beginning of Section~\ref{Section:Consistency}, a (Taylor) expansion of the log-likelihood function in each regime is only helpful for all $|\theta| \leq \kappa_0/\sqrt{n}$ with $\kappa_0>0$ small enough. For all other $\kappa_0/\sqrt{n}\leq\theta\leq K/\sqrt{n}$, we rely on the observation that the negative drift $B_n$ is given by the sum over certain Kullback--Leibler divergences which are then bounded using Pinsker's inequality. 

\begin{lemma}\label{lemma:bound_drift}
Let $K>0$. Then for every $\epsilon>0$ there exist constants $\kappa_0,\zeta>0$, $n_0\in\N$ and a sequence of sets $(A_n)_{n\in\N}$ with $\Pr_{\rho_0}(A_n^c|L_1^{\rho_0}(X)>0)\leq\epsilon$ for $n\geq n_0$ such that 
\[ B_n(\theta)\1_{A_n} \leq \begin{cases}
-n|\theta|\zeta \1_{A_n}, & \textrm{ if } |\theta|\leq \frac{\kappa_0}{\sqrt{n}}, \\
-n^{3/2} |\theta|^2 \zeta \1_{A_n}, & \textrm{ if } \frac{\kappa_0}{\sqrt{n}}<|\theta|\leq K/\sqrt{n}.
\end{cases} \]
\end{lemma}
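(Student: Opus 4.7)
The statement decouples into two regimes; in both, $A_n$ will contain the auxiliary sets $A_1 \cap A_2 \cap A_3 \cap A_4(n)$ from \eqref{eq:set_A1}--\eqref{eq:set_A4}, on which in particular $\inf_{y\in [\rho_0-\gamma/2,\rho_0+\gamma/2]}L_1^y(X) > \xi$, together with further regime-specific events detailed below.

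For $|\theta| \leq \kappa_0/\sqrt n$ the plan is to use the expansion from Proposition~\ref{prop:expansion_of_drift_t} at $t=1$ and show that its leading term dominates the remainder. By Corollary~\ref{cor:limiting_drift} (equivalently, Lemma~\ref{lemma:Riemann_approximation} applied to $f_{\alpha,\beta}$) there is an event $\tilde A_n^{(1)}$ with $\Pr_{\rho_0}(\tilde A_n^{(1)}\mid L_1^{\rho_0}(X)>0) > 1-\epsilon/3$ on which $\Lambda_{\alpha,\beta}^n \geq c_1 n$ for $c_1:=\lambda_{\alpha,\beta}(f_{\alpha,\beta})\xi/2$. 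Hence the leading term is bounded above by $-c_\ast n|\theta|$ with $c_\ast := c_1\min(F_{\alpha,\beta},\tilde F_{\alpha,\beta}) > 0$. The remainder $r_n(1,\theta)$ is then controlled uniformly in $\theta$ by dyadic slicing on the shells $T_j:=\{2^{j-1}/n < |\theta| \leq 2^j/n\}$ for $j\leq j_{\max}:=\lceil\log_2(\kappa_0\sqrt n)\rceil$: Proposition~\ref{prop:expansion_of_drift_t} at $\theta=2^j/n$ gives $\E_{\rho_0}[\sup_{|\theta'|\leq 2^j/n}|r_n(1,\theta')|] \leq C\cdot 4^j/\sqrt n$, and Markov's inequality at the level $(c_\ast/2)2^{j-1}$ bounds the bad probability on $T_j$ by $4C\cdot 2^j/(c_\ast \sqrt n)$. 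Summing over $j\leq j_{\max}$ yields the geometric total $O(\kappa_0/c_\ast)$, which becomes $<\epsilon/3$ for $\kappa_0$ small enough. On the resulting good event, $|r_n(1,\theta)| \leq (c_\ast/2)n|\theta|$ uniformly, so $B_n(\theta) \leq -(c_\ast/2)n|\theta|$ and the first bound follows with $\zeta=c_\ast/2$.

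For $\kappa_0/\sqrt n < |\theta| \leq K/\sqrt n$, the remainder in the expansion becomes of the same order as the leading term, so a different approach is needed. The key observation is that each summand in $B_n(\theta)$ is the negative of a conditional Kullback--Leibler divergence of the transition densities, whence Pinsker's inequality yields
\[ -B_n(\theta) \geq 2 \sum_{k=1}^n \mathrm{TV}_k(\theta)^2,\]
where $\mathrm{TV}_k(\theta)$ is the total variation between $p_{1/n}^{\rho_0}(X_{(k-1)/n},\cdot)$ and $p_{1/n}^{\rho_0+\theta}(X_{(k-1)/n},\cdot)$. A direct analysis of \eqref{eq:transition_density} will give constants $c,c'>0$ (depending only on $\alpha,\beta,K$) such that $\mathrm{TV}_k(\theta) \geq c\sqrt n|\theta|$ whenever $|X_{(k-1)/n}-\rho_0| \leq c'/\sqrt n$; heuristically, shifting the interface by $|\theta| \asymp 1/\sqrt n$ produces a macroscopic $L^1$-difference of the two densities on the natural Gaussian scale $\sqrt{1/n}$ of a single step. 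Lemma~\ref{lemma:Riemann_approximation} applied to $\1_{[-c',c']}$ then supplies a further event on which $\#\{k:|X_{(k-1)/n}-\rho_0| \leq c'/\sqrt n\} \geq c_2\sqrt n\xi$ for some $c_2 > 0$, so that $-B_n(\theta) \geq 2c^2 c_2\xi\,n^{3/2}\theta^2$; the second bound follows, and the $\zeta$ in the statement is taken as the smaller of the two regime constants.

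The hardest part will be the total-variation lower bound $\mathrm{TV}_k(\theta) \geq c\sqrt n|\theta|$ underlying the second regime: one has to split according to which of the four cases of \eqref{eq:transition_density} governs the step (i.e.\ the positions of $X_{(k-1)/n}$ relative to $\rho_0$ and $\rho_0+\theta$), and, after the rescaling $y\mapsto\sqrt n(y-\rho_0)$, verify that the shift of the interface by $|\theta| \in (\kappa_0/\sqrt n, K/\sqrt n]$ produces a uniformly positive fraction of $L^1$-mass difference between the resulting (almost-Gaussian) densities. The dyadic bookkeeping in the first regime is in spirit routine, but converting pointwise Markov bounds into the required uniform control over the shrinking range $|\theta|\leq \kappa_0/\sqrt n$ does rely on the explicit geometric summation indicated above.
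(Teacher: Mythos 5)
Your proposal is correct and, in its second regime, follows the paper exactly: $-B_n(\theta)$ is the sum of the conditional Kullback--Leibler divergences, Pinsker plus Scheff\'e reduce it to a total-variation lower bound which is precisely \eqref{eq:bound_TV_distance}, and Lemma~\ref{lemma:Riemann_approximation} applied to a compactly supported indicator (the paper instead uses the Gaussian $f(x)=\exp(-2x^2/\min\{\alpha^2,\beta^2\}-2(C^2_{\alpha,\beta}K)^2)$, which is equivalent up to constants) supplies the event on which enough $X_{(k-1)/n}$ land within $O(1/\sqrt n)$ of $\rho_0$; the counting then gives $-B_n(\theta)\gtrsim\xi\,n^{3/2}\theta^2$.

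The interesting difference is in the first regime. You and the paper both start from Proposition~\ref{prop:expansion_of_drift_t} together with Lemma~\ref{lemma:Riemann_approximation} to make the leading term $-|\theta|(\cdot)\Lambda^n_{\alpha,\beta}$ at least $c_\ast n|\theta|$ on a good event, but the ways of making the remainder uniformly subordinate on the shrinking range $|\theta|\le\kappa_0/\sqrt n$ genuinely diverge. The paper normalizes by $|\theta|$ and applies a single Markov bound to $\sup_{|\theta'|\le\kappa_0/\sqrt n}|r_n(1,\theta')|/|\theta'|$, asserting $\E_{\rho_0}[\sup_{|\theta'|\le|\theta|}|r_n(1,\theta')|/|\theta'|]\le C_{\alpha,\beta}|\theta|n^{3/2}$; this does not follow from the $\theta^2 n^{3/2}$ bound in the \emph{statement} of Proposition~\ref{prop:expansion_of_drift_t} (since $\sup$ of a ratio is not the ratio of the sups when the denominator goes to zero), but can be extracted by re-opening the Proposition's proof and using that each remainder summand is pointwise $\lesssim(\theta')^2\cdot(\text{random, controlled})$. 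Your dyadic slicing over the shells $T_j=\{2^{j-1}/n<|\theta|\le 2^j/n\}$ uses only the \emph{stated} sup-bound of Proposition~\ref{prop:expansion_of_drift_t} on each shell, applies Markov at level $(c_\ast/2)2^{j-1}$ there, and sums geometrically to get a total bad probability $O(\kappa_0/c_\ast)$; this converts the $\theta^2$-vs-$\theta$ disparity directly into a small constant once $\kappa_0$ is chosen small. Your route is therefore more self-contained (it never needs to re-enter the proof of the Proposition), while the paper's one-line Markov bound is shorter but leans on structural information about $r_n$ that its quoted statement does not expose. Both approaches deliver $\Pr(\sup_{\theta\in T_j}|r_n(1,\theta)|\le(c_\ast/2)n|\theta|\ \forall j)$ close to one, hence $B_n(\theta)\le -(c_\ast/2)n|\theta|$ uniformly, with $\kappa_0$ tuned so the drift constant dominates; the final $\zeta$ is the minimum of the two regime constants as in the paper.
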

\begin{proof}
We will only give a brief sketch of the proof, the details can be found in Appendix~\ref{App:Section:proofs_n_consistency}.
First, we specify $\kappa_0$ and $\zeta_1$ such that by the first inequality is valid for $|\theta|\leq \kappa_0/\sqrt{n}$ with $\zeta=\zeta_1$ by using Proposition~\ref{prop:expansion_of_drift_t}. For $\kappa_0/\sqrt{n}\leq |\theta|\leq K/\sqrt{n}$, this expansion is not helpful as the remainder term (which is of the same order as the leading term) is too large. Denoting by $\Pr_{\rho,1/n}^{X_{(k-1)/n}}(\cdot) = \Pr_{\rho}(X_{k/n}\in\cdot\mid X_{(k-1)/n})$ the distribution of $X_{k/n}$ given $X_{(k-1)/n}$ in our model with parameter $\rho$ and by $KL(\Pr_1,\Pr_2)$ the Kullback--Leibler divergence of two probability measures $\Pr_1$, $\Pr_2$, we observe
\[ \E_{\rho_0}\left[\left. \log\left(\frac{p_{1/n}^{\rho_0+\theta}(X_{(k-1)/n},X_{k/n})}{p_{1/n}^{\rho_0}(X_{(k-1)/n},X_{k/n})}\right)\ \right|\ X_{(k-1)/n}\right] = - KL\left( \Pr_{\rho_0,1/n}^{X_{(k-1)/n}}, \Pr_{\rho_0+\theta,1/n}^{X_{(k-1)/n}}\right). \]
An explicit evaluation of this Kullback--Leibler divergence is analytically highly challenging, in particular because expressions built on the first two regimes of the transition density~\eqref{eq:transition_density} contain logarithms of sums. In order to find a form that is more convenient to work with in our setting, we apply the first Pinsker inequality (\cite{Tsybakov}, Lemma~$2.5$) and find an upper bound in terms of total variation. Due to Scheffé's theorem (\cite{Tsybakov}, Lemma~$2.1$) the second inequality in the statement can then be deduced from the inequality
\begin{align}\label{eq:bound_TV_distance}
\begin{split}
&\int_{\R} \left| p_{1/n}^{\rho_0+\theta}(X_{(k-1)/n},y) - p_{1/n}^{\rho_0}(X_{(k-1)/n},y) \right| dy \\
&\hspace{2cm} \geq C_{\alpha,\beta}^1 \theta \sqrt{n} \exp\left( -\frac{(X_{(k-1)/n}-\rho_0)^2}{\min\{\alpha^2,\beta^2\}/n} - C_{\alpha,\beta}^2 K\right),
\end{split}
\end{align}
where $C_{\alpha,\beta}^1, C_{\alpha,\beta}^2>0$ are suitable constants independent of $n$ and $\theta$.
\end{proof}

\noindent
We return to the proof of~\eqref{eq_proof:n_consistency_1}. Let $\epsilon>0$ be arbitrary and $K>0$ large enough such that $\Pr_{\rho_0}(\sqrt{n}|\hat{\rho}_n - \rho_0|>K, L_1^{\rho_0}(X)>0)<\epsilon$. This choice of $K$ is possible by the $\sqrt{n}$-consistency established in Subsection~\ref{subsection:sqrt(n)_consistency} (see~\eqref{eq_proof:sqrt(n)_consistency}). Moreover, let $\kappa_0,\zeta>0$ be the constants and $(A_n)_{n\in\N}$ the sequence of set introduced in Lemma~\ref{lemma:bound_drift}. By the definition of $\hat{\rho}_n$ satisfying $n(\hat{\rho}_n-\rho_0)\in\mathrm{Argsup}_{z\in\R}\ell_n(z/n)$ and using $\ell_n(0)=0$,
\begin{align}\label{eq_proof:n_consistency_9}
\begin{split}
&\Pr_{\rho_0}\left(n|\hat{\rho}_n-\rho_0| >2^M, L_1^{\rho_0}(X)>0\right) \\
&\hspace{1cm}\leq \Pr_{\rho_0}\left(\{n|\hat{\rho}_n-\rho_0| >2^M, |\hat{\rho}_n-\rho_0|\leq K/\sqrt{n}\} \cap A_n\right) + \Pr_{\rho_0}(A_n^c \cap \{L_1^{\rho_0}(X)>0\}) \\
&\hspace{1.5cm} + \Pr_{\rho_0}\left( |\hat{\rho}_n-\rho_0| >K/\sqrt{n},L_1^{\rho_0}(X)>0\right) \\
&\hspace{1cm} \leq \Pr_{\rho_0}\left( \sup_{2^M< n\theta\leq K\sqrt{n}} \ell_n(\theta)\geq 0, A_n\right) + \Pr_{\rho_0}\left( \sup_{2^M< -n\theta\leq K\sqrt{n}} \ell_n(\theta)\geq 0, A_n\right) +2\epsilon.
\end{split}
\end{align}
In what follows, we treat the first probability on the right-hand side, the second one can be dealt with analogously. With the shells
\[ \overline{S}_{n,j} := \left\{ \theta\in\Theta:\ 2^{j} <n\theta\leq 2^{j+1}\right\} \]
for $n\in\N$ and $j\in\Z$, we then bound by subadditivity of $\Pr_{\rho_0}$,
\begin{align}\label{eq_proof:n_consistency_2}
\Pr_{\rho_0}\left( \sup_{2^M< n\theta\leq K\sqrt{n}} \ell_n(\theta)\geq 0, A_n\right) \leq \sum_{\substack{j\geq M \\ 2^j\leq K\sqrt{n}}} \Pr_{\rho_0}\left( \sup_{\theta\in \overline{S}_{n,j}} \ell_n(\theta)\geq 0, A_n\right).
\end{align} 
To proceed, we note that
\begin{align*}
&\left\{\sup_{\theta\in \overline{S}_{n,j}} \ell_n(\theta)\geq 0\right\} = \left\{\sup_{\theta\in \overline{S}_{n,j}} \ell_n(\theta) - \sup_{\theta\in \overline{S}_{n,j}} B_n(\theta)\geq -\sup_{\theta\in \overline{S}_{n,j}} B_n(\theta)\right\} \\
&\hspace{0.5cm} \subset  \left\{\sup_{\theta\in \overline{S}_{n,j}} \left(\ell_n(\theta) - B_n(\theta)\right)\geq -\sup_{\theta\in \overline{S}_{n,j}} B_n(\theta)\right\} =  \left\{\sup_{\theta\in \overline{S}_{n,j}} M_n(\theta)\geq -\sup_{\theta\in \overline{S}_{n,j}} B_n(\theta)\right\}.
\end{align*}
Then~\eqref{eq_proof:n_consistency_2} can be extended to
\[ \Pr_{\rho_0}\left( \sup_{2^M< n\theta\leq K\sqrt{n}} \ell_n(\theta)\geq 0, A_n\right) \leq \sum_{\substack{j\geq M \\ 2^j\leq K\sqrt{n}}} \Pr_{\rho_0}\left( \sup_{\theta\in \overline{S}_{n,j}} M_n(\theta)\geq -\sup_{\theta\in \overline{S}_{n,j}} B_n(\theta), A_n\right).\]
Next, we bound the supremum of $B_n(\theta)$ over $\overline{S}_{n,j}$ with the bounds provided in Lemma~\ref{lemma:bound_drift} where it is necessary to distinguish the cases where $2^j\leq \kappa_0\sqrt{n}$ and $2^j>\kappa_0\sqrt{n}$. This gives on $A_n$ the bounds
\[ -\sup_{\theta\in \overline{S}_{n,j}} B_n(\theta)\1_{A_n} = \inf_{\theta\in \overline{S}_{n,j}} \left(-B_n(\theta)\1_{A_n}\right) \geq \begin{cases}
2^j \zeta & \textrm{ for } 2^j \leq \kappa_0\sqrt{n}, \\ n^{-1/2} 2^{2j}\zeta & \textrm{ for } 2^j\geq \kappa_0\sqrt{n},
\end{cases}\]
and consequently with Markov's inequality,
\begin{align}\label{eq_proof:n_consistency_3}
\begin{split}
\Pr_{\rho_0}\left( \sup_{2^M< n\theta\leq K\sqrt{n}} \ell_n(\theta)\geq 0, A_n\right) & \leq  \sum_{\substack{j\geq M \\ 2^j\leq \kappa_0\sqrt{n}}} \E_{\rho_0}\left[ \sup_{\theta\in \overline{S}_{n,j}} |M_n(\theta)|\1_{A_n}\right] \frac{1}{\zeta 2^j} \\
&\hspace{0.5cm} + \sum_{\substack{j\geq M \\ \kappa_0\sqrt{n} < 2^j\leq K\sqrt{n}}} \E_{\rho_0}\left[\sup_{\theta\in \overline{S}_{n,j}} |M_n(\theta)|\1_{A_n}\right]  \frac{\sqrt{n}}{\zeta 2^{2j}}.
\end{split}
\end{align}
In order to bound the expectations of the suprema appearing in the right-hand side of~\eqref{eq_proof:n_consistency_3}, we apply the modified chaining techique already mentioned in Subsection~\ref{subsection:sqrt(n)_consistency}. To this aim, we split $M_n(\theta) = M_n^1(\theta) + M_n^2(\theta)$ into two parts, where 
\begin{align*}
M_n^1(\theta) &= \sum_{k=1}^n \log\left(\frac{\beta^2}{\alpha^2}\right)\1_{\{X_{(k-1)/n}<\rho_0< X_{k/n}\leq \rho_0+\theta\}} + \log\left(\frac{\beta^2}{\alpha^2}\right)\1_{\{\rho_0< X_{k/n}\leq \rho_0+\theta\leq X_{(k-1)/n}\}} \\
&\hspace{0.5cm} - \sum_{k=1}^n  \E_{\rho_0}\left[\log\left(\frac{\beta^2}{\alpha^2}\right)\1_{\{X_{(k-1)/n}<\rho_0< X_{k/n}\leq \rho_0+\theta\}} \right. \\
&\hspace{3cm} \left. \left. + \log\left(\frac{\beta^2}{\alpha^2}\right)\1_{\{\rho_0< X_{k/n}\leq\rho_0+\theta\leq X_{(k-1)/n}\}} \right| X_{(k-1)/n}\right]
\end{align*}
and $M_n^2(\theta) = M_n(\theta)-M_n^1(\theta)$. The martingale $M_n^1(\theta)$ consists of parts of $\mathcal{I}_{2}(\theta)$ and $\mathcal{I}_{8}(\theta)$ that determine the order of the variance of $M_n(\theta)$, whereas all remaining terms are summarized in $M_n^2(\theta)$, which has a variance of smaller order for $|\theta|\ll n^{-1/2}$. The reason for this is that each summand in $M_n^1(\theta)$ is a multiple of an indicator with a factor independent of $X_{(k-1)/n}, X_{k/n}$ and $\theta$, whereas the summands in $M_n^2(\theta)$ are multiples of indicators where the factor scales (almost) linearly in $\theta$. It should be noted that this is the deeper reason behind the Poisson limit established in Theorem~\ref{thm:MLE_limiting_distribution} and the non-validity of the Lindeberg condition of the classical martingale CLT (see Section~\ref{Section:Limiting_distribution}). The decomposition of $M_n(\theta)$ now yields
\begin{align}\label{eq_proof:n_consistency_4}
\E_{\rho_0}\left[\sup_{\theta\in \overline{S}_{n,j}} |M_n(\theta)|\1_{A_n}\right]  \leq \E_{\rho_0}\left[\sup_{\theta\in \overline{S}_{n,j}} |M_n^1(\theta)|\right]  + \E_{\rho_0}\left[\sup_{\theta\in \overline{S}_{n,j}} |M_n^2(\theta)|\right] 
\end{align} 
and we bound both terms on the right-hand side seperately (and with different techniques). 
\begin{itemize}
\item[$\bullet\ \boldsymbol{M_n^1}$.] By direct evaluation (Appendix~\ref{App:Section:proofs_n_consistency}), we have for some constant $C_1=C_1(\alpha,\beta)>0$,
\begin{align}\label{eq_proof:n_consistency_5}
\E_{\rho_0}\left[ \left( M_n^1(\theta) - M_n^1(\theta')\right)^2\right] \leq C_1 n|\theta-\theta'|.
\end{align}
Note that this bound is the same as in~\eqref{eq_proof:sqrt(n)_consistency_1}. This is somewhat remarkable as $N_n^L$ in~\eqref{eq_proof:sqrt(n)_consistency_1} was given as a part of $\mathcal{I}_5(\theta)$ which was the dominant one outside the $n^{-1/2}$-environment of $\rho_0$, but is no longer part of the dominant term $M_n^1$ within the $n^{-1/2}$-environment. Defining the metric $\rho_n(\theta,\theta')=\sqrt{C_1 n|\theta-\theta'|}$, by the same steps that were used to derive~\eqref{eq_proof:sqrt(n)_chaining_bound} and~\eqref{eq_proof:sqrt(n)_Pisier} we then have 
\begin{align}\label{eq_proof:n_consistency_6}
\begin{split}
\E_{\rho_0}\left[\sup_{\theta\in \overline{S}_{n,j}} |M_n^1(\theta)|\right] &\leq \int_1^{\sqrt{C_1}2^{(j+1)/2}} \sqrt{C_1} 2^{(j+1)/2} u^{-1} du  \\
&\hspace{0.5cm} + \left( \sum_{\theta\in \overline{S}_{n,j}\cap\mathcal{T}_{k_0}^n}  \E_{\rho_0}\left[ \sup_{\theta'\in U_{1/(C_1n)}(\theta)} \left( M_n^1(\theta) - M_n^1(\theta') \right)^2 \right] \right)^\frac12,
\end{split}
\end{align}
where $\mathcal{T}_{k_0}^n$ is the subset of the interval $[0,2^{j+1}/n]$ that remains when stopping the procedure~\eqref{eq_proof:sqrt(n)_chaining_integral} for $\rho_n(\theta,\theta')\leq 1$. From the construction of the chaining sets it follows that $\mathcal{T}_{k_0}^n$ contains at most $4C_1 2^{j+1}$ elements. The crucial observation making this chaining with a remainder term work is that we find a random variable $\overline{M}_n(\theta)$ such that
\begin{align}\label{eq_proof:n_consistency_7}
\sup_{\theta'\in U_{1/(C_1n)}(\theta)} \left( M_n^1(\theta)-M_n^1(\theta')\right)^2 \leq \overline{M}_n^1(\theta) \qquad \textrm{ and }\qquad \E_{\rho_0}\left[ \overline{M}_n^1(\theta)^2\right] \leq C_2
\end{align}
for some $C_2=C_2(\alpha,\beta)>0$ independent of $\theta$ (see Appendix~\ref{App:Section:proofs_n_consistency} for a detailed derivation). As the set $\overline{S}_{n,j}\cap\mathcal{T}_n$ consists of less then $4C_2 2^{j+1}$ elements, we then conclude from~\eqref{eq_proof:n_consistency_6} that
\begin{align*}
\E_{\rho_0}\left[\sup_{\theta\in \overline{S}_{n,j}} |M_n^1(\theta)| \right] \leq \sqrt{C_1}2^{(j+1)/2}\left(\log(\sqrt{C_1}) + \frac{j+1}{2} \log(2)\right) + 4\sqrt{C_1C_2} 2^{(j+1)/2},
\end{align*}
which is bounded from above by $C_3 j 2^{j/2}$ for some constant $C_3=C_3(\alpha,\beta)>0$. \smallskip

\item[$\bullet\ \boldsymbol{M_n^2}$.] Because $M_n^2(\theta) - M_n^2(\theta') = \sum_{k=1}^n d_k$ with martingale increments $d_k$ (meaning that $\E[d_k|X_{(k-1)/n}] = 0$), Proposition~\ref{prop:moment_product} reveals
\begin{align}\label{eq_proof:n_consistency_8}
\E_{\rho_0}\left[ \left( M_n^2(\theta) - M_n^2(\theta')\right)^2\right] & \leq C_4 |\theta-\theta'|^2 n^{3/2} =: \tilde{\rho}_n(\theta,\theta')^2
\end{align} 
for some constant $C_4=C_4(\alpha,\beta)>0$. Denoting 
\[D\left(u, \mathcal{T},\rho_n\right) := \max\{ \#\mathcal{T}_0: \mathcal{T}_0\subset \mathcal{T}, \rho_n(\theta,\theta')\geq u \textrm{ for different } \theta,\theta'\in\mathcal{T}_0\}\]
the chaining step~\eqref{eq_proof:sqrt(n)_chaining_integral} for the function $\psi(x)=x^2$ and metric $\tilde{\rho}_n(\theta,\theta')$ given in~\eqref{eq_proof:n_consistency_8} then reveals (analogously to~\eqref{eq_proof:sqrt(n)_chaining_bound}) 
\begin{align*}
\E_{\rho_0}\left[\sup_{\theta\in \overline{S}_{n,j}} |M_n^2(\theta)|\right] &\leq \int_0^{C_4^{1/2}n^{-1/4} 2^{j+1}} \left( D(u,\overline{S}_{n,j},\rho_n)\right)^{1/2} du \\
&\leq \int_0^{C_4^{1/2}n^{-1/4} 2^{j+1}} C_4^{1/4} 2^{(j+1)/2} n^{-1/8} u^{-1/2} du  =: C_5 2^j n^{-1/4}.
\end{align*} 
\end{itemize}
\noindent
In conclusion, \eqref{eq_proof:n_consistency_3} reveals
\begin{align*}
\Pr_{\rho_0}\left( \sup_{2^M< n\theta\leq K\sqrt{n}} \ell_n(\theta)\geq 0, A_n\right) &\leq  \sum_{\substack{j\geq M \\ 2^j\leq \kappa_0\sqrt{n}}} \left(\frac{C_3 2^{j/2}j}{\zeta 2^j} + \frac{C_5 2^{j}n^{-1/4}}{\zeta 2^j} \right) \\
&\hspace{1cm} +  \sum_{\substack{j\geq M \\ \kappa_0\sqrt{n}<2^j\leq K\sqrt{n}}}  \left( \frac{C_3 2^{j/2}j\sqrt{n}}{\zeta 2^{2j}} + \frac{C_5 2^j n^{1/4}}{\zeta 2^{2j}}\right) \\
&\hspace{-0.5cm}\leq \frac{C_3(1+\kappa_0)}{\zeta\kappa_0} \sum_{j>M} j2^{-j/2} + \frac{C_5(1+\kappa_0)}{\zeta\kappa_0} n^{-1/4}\log_2(K^2 n).
\end{align*}
As the series $\sum_{j\geq 0} j 2^{-j/2}$ is summable, the last term (and thus the right-hand side of~\eqref{eq_proof:n_consistency_2}) tends to zero for $M\to\infty$ and $n\to\infty$. The proof is then finished by~\eqref{eq_proof:n_consistency_9}.

\section{Proof of Proposition~\ref{prop:convergence_ell_n}}\label{Section:Limiting_distribution}

%\subsection{Proof of Proposition~\ref{prop:convergence_ell_n}}

Before starting the proof, we want to draw attention to a particular feature of $M_n(z/n)$ that suggests the Poissonian structure in the limit. It turns out that the only terms of $\ell_n(z/n)$ contibuting to the variance of $M_n(z/n)$ are parts of $\mathcal{I}_{2}(z/n)$ and $\mathcal{I}_8(z/n)$ (see Figure~\ref{figure:regimes}), more precisely, they are given by the expressions
\begin{align}\label{eq:contributing_variance_1}
\log\left(\frac{\beta^2}{\alpha^2}\right) \sum_{k=1}^n \left(\1_{I_{2,k}^{z/n}}-\E_{\rho_0}\left[\left. \1_{I_{2,k}^{z/n}}\right|  X_{(k-1)/n}\right]\right)
\end{align} 
and
\begin{align}\label{eq:contributing_variance_2}
\log\left(\frac{\beta^2}{\alpha^2}\right)\sum_{k=1}^n \left( \1_{I_{8,k}^{z/n}} - \E_{\rho_0}\left[\left. \1_{I_{8,k}^{z/n}}\right| X_{(k-1)/n}\right]\right),
\end{align} 
with $I_{2,k}^{z/n}$ and $I_{8,k}^{z/n}$ given in~\eqref{eq:cases_I_jk}. Note that both of them are rare events. By orthogonality of martingale increments, one can show by means of Lemma~\ref{lemma:upper_bound_transition_density} and Corollary~\ref{cor:bound_exp_X^2} that the second moment of these expressions scales linearly in $z$ with some bound independent of $n$. By Proposition~\ref{prop:moment_product}, all other terms in $M_n(z/n)$ have second moments scaling as $z^2\mathcal{O}(1/\sqrt{n})$. Being sums of indicators over rare events, \eqref{eq:contributing_variance_1} and~\eqref{eq:contributing_variance_2} indeed violate the Lindeberg condition of the classical martingale CLT if $L_1^{\rho_0}(X)>0$, which can be seen as follows:
Setting 
\[ V_{nk}:=\log(\beta^2/\alpha^2)\1_{I_{2,k}^{z/n}}, \]
we obtain with Lemma~\ref{lemma:upper_bound_transition_density} and Corollary~\ref{cor:bound_exp_X^2} that $\sum_{k=1}^n \E_{\rho_0}[V_{nk}\mid X_{(k-1)/n}]^2 \rightarrow 0$ in probability. Using this twice,
\begin{align*}
&\sum_{k=1}^n \E_{\rho_0}\left[\left. (V_{nk}-\E_{\rho_0}[V_{nk}|X_{(k-1)/n}])^2 \1_{\{|V_{nk}-\E_{\rho_0}[V_{nk}|X_{(k-1)/n}]|>\epsilon\}}\right| X_{(k-1)/n}\right]\\
&\hspace{1cm} \geq \sum_{k=1}^n \E_{\rho_0}\left[ \left. V_{nk}^2 \1_{\{|V_{nk}-\E_{\rho_0}[V_{nk}|X_{(k-1)/n}]|>\epsilon\}}\right| X_{(k-1)/n}\right] + o_{\Pr_{\rho_0}}(1)\\
&\hspace{1cm} \geq \sum_{k=1}^n \E_{\rho_0}\left[ \left. V_{nk}^2 \1_{\{|V_{nk}|>\epsilon/2\}}\right| X_{(k-1)/n}\right]+ o_{\Pr_{\rho_0}}(1)\\
&\hspace{1cm}\geq c_{\alpha,\beta} \frac{1}{\sqrt{n}} \sum_{k=1}^n \1_{\{X_{(k-1)/n}<\rho_0\}} \exp\left(-\frac{(X_{(k-1)/n}-\rho_0)^2}{2\min\{\alpha^2,\beta^2\}/n}\right) + o_{\Pr_{\rho_0}}(1)
\end{align*}
where the lower bound on the transition density in~Lemma~\ref{lemma:upper_bound_transition_density} has been used in the last inequality with the corresponding constant $c_{\alpha,\beta}$. Finally, this expression converges to some multiple of the local time $L_T^{\rho_0}(X)$ by Lemma~\ref{lemma:Riemann_approximation}.

\begin{proof}[Proof of Proposition~\ref{prop:convergence_ell_n}]
By the Remark following Proposition~$3.9$ in~\cite{Haeusler/Luschgy}, the statement of the proposition follows if we verify
\begin{itemize}
\item[(i)] \textit{Stable convergence of fidis}: For any $N\in\N$ and $z_1,\dots, z_N\in\R$ we have $\cF$-stable convergence of the finite dimensional distributions, i.e.
\begin{align}\label{eq_proof:fidis}
\big( \ell_n(z_1/n), \dots, \ell_n(z_N/n)\big) \stackrel{\cF-st}{\longrightarrow}\ \big( \ell(z_1L_1^{\rho_0}(X)), \dots, \ell(z_N L_1^{\rho_0}(X))\big).
\end{align} 
\item[(ii)] \textit{Tightness}: The process $(\ell_n(z/n))_{z\in [-K,K]}$ is tight as an element of $\mathcal{D}([-K,K])$.
\end{itemize}
Part (ii) is proven using a moment criterion for tightness in the Skorohod space (\cite{Billingsley_2}, Remark $(13.14)$ after Theorem~$13.5$) and fully given in Lemma~\ref{lemma:tightness}. Compared to many other situations, where the convergence of fidis is easier to derive, the harder part in our setting is to establish (i). By a stable version of the Cramér--Wold device (\cite{Haeusler/Luschgy}, Corollary~$3.19$(iii)), establishing \eqref{eq_proof:fidis} is equivalent to proving
\begin{align}\label{eq_proof:stable_fidi_Cramer-Wold}
\kappa_1\ell_n(z_1/n)+ \dots + \kappa_N \ell_n(z_N/n)\stackrel{\cF-st}{\longrightarrow}\ \kappa_1\ell(z_1 L_1^{\rho_0}(X)) + \dots + \kappa_N\ell(z_N L_1^{\rho_0}(X))
\end{align}  
for any choice of $\kappa_1,\dots, \kappa_N\in\R$.\\

\noindent
\textbf{Stable convergence of linear combinations in~\eqref{eq_proof:stable_fidi_Cramer-Wold}.} Proving~\eqref{eq_proof:stable_fidi_Cramer-Wold} is the most involved part of the proof. In fact, we are going to derive a stable limit of 
\begin{align}\label{eq_proof:linear_combination}
\kappa_1\ell_{n,\bullet}(z_1/n)+ \dots + \kappa_N \ell_{n,\bullet} (z_N/n)
\end{align} 
as stable convergence in the Skorohod space $\mathcal{D}([0,1])$. The reason for this is that arguments used for processes, in particular martingale arguments, can be applied. To the best of our knowledge, Theorem~$2.1$ in~\cite{Jacod_stableGaussian} is the only result that deals with stable convergence in the setting of infill asymptotics and does not require a certain nestedness condition on the filtration (that is not valid in our setup). As the described result in~\cite{Jacod_stableGaussian} only covers a continuous (in time) limit, we have to do some modification of Theorem~$4.1$ in~\cite{Jacod_stablePII} that covers limit processes with jumps but does not allow in its current formulation to treat convergence of processes $X^n$, where each $X^n$ is defined on a different stochastic basis $\mathcal{B}^n$. The resulting Proposition~\ref{prop:version_Jacod} is presented in Section~\ref{App:Jacod_version}. 
In order to properly present its application, we assume without loss of generality that $z_m<\dots <z_1<0< z_{m+1}< \dots< z_N$ and define with $M=N-m$
\[ z_0=0, \qquad z_j^- := z_j,\ j=1,\dots, m\quad\textrm{ and } z_j^+:= z_{m+j},\ j=1,\dots, M.\]
Moreover, we denote the corresponding $\kappa_i$ by $\kappa_i^{\pm}$ such that
\begin{align*}
\sum_{i=1}^n \kappa_i \ell_{n,t}(z_i/n) = \sum_{i=1}^m \kappa_i^- \ell_{n,t}(z_i^-/n) + \sum_{i=1}^M \kappa_i^+ \ell_{n,t}(z_i^+/n).
\end{align*}
With this notation, we are prepared to state the stable convergence result for the process~\eqref{eq_proof:linear_combination} as follows:

\begin{proposition}\label{prop:stable_fidi_process}
Let $(\Omega,\cF, (\cF_t)_{t\in [0,1]}, \Pr)$ be the standard Wiener space. Then there exists a very good extension $(\overline{\Omega},\overline{\cF},(\overline{\cF}_t)_{t\in [0,1]}, \overline{\Pr})$ and a process $\overline{\ell}$ on this extension such that the process
\[ \kappa_1\ell_{n,\bullet}(z_1/n)+ \dots + \kappa_N \ell_{n,\bullet} (z_N/n)\]
converges $\cF$-stably to $\overline{\ell}$ and the characteristics $(B,C,\nu)$ of $(\overline{\ell},W)$ are given by 
\[ B=\begin{pmatrix} b_{\alpha,\beta}' L_\bullet^{\rho_0}(X)\sum_{i=1}^m \kappa_i^- |z_i^-| + b_{\alpha,\beta}L_\bullet^{\rho_0}(X) \sum_{i=1}^M \kappa_i^+ |z_i^+|  \\ 0\end{pmatrix}, \qquad C=\begin{pmatrix} 0 & 0 \\ 0 & \mathrm{id}_{[0,1]}\end{pmatrix},\]
where $\mathrm{id}_{[0,1]}:[0,1]\rightarrow [0,1]$ denotes the identity, and 
\begin{align*}
\nu(dt, dx, dy) &= \frac{1}{\alpha^2} dL_t^{\rho_0}(X) \otimes \sum_{i=1}^m\delta_{\left(\log\left(\frac{\alpha^2}{\beta^2}\right) ( \kappa_i^- + \dots + \kappa_m^-), 0\right)}(dx,dy) \left|z_i^- - z_{i-1}^-\right|  \\
&\hspace{0.5cm} + \frac{1}{\beta^2}dL_t^{\rho_0}(X) \otimes \sum_{i=1}^M \delta_{\left(\log\left(\frac{\beta^2}{\alpha^2}\right) ( \kappa_i^+ + \dots + \kappa_M^+), 0\right)}(dx,dy) \left|z_i^+ - z_{i-1}^+\right|.
\end{align*} 
\end{proposition}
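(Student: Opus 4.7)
The plan is to apply Proposition~\ref{prop:version_Jacod} to the sequence of processes
\[ Y_n(t) := \sum_{i=1}^N \kappa_i \ell_{n,t}(z_i/n), \qquad t\in [0,1], \]
which, by the decomposition~\eqref{eq:decomposition_ell_n}, are special semimartingales with respect to the discretized filtration $(\cF_{\lfloor nt\rfloor/n})_{t\in [0,1]}$ of the form $Y_n = B_{Y_n} + M_{Y_n}$ with predictable drift $B_{Y_n}(t) = \sum_i \kappa_i B_{n,t}(z_i/n)$ and compensated pure-jump martingale $M_{Y_n}$. Since $Y_n$ is piecewise constant between observation times and its jumps are deterministically bounded (being sums of logarithms of bounded density ratios), the identity truncation $h(x)=x$ is admissible, so the first characteristic of $(Y_n, W)$ equals $(B_{Y_n}, 0)^\top$. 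Once the convergence of the three predictable characteristics of $(Y_n, W)$ is established, Proposition~\ref{prop:version_Jacod} produces a very good extension $(\overline{\Omega}, \overline{\cF}, (\overline{\cF}_t), \overline{\Pr})$ carrying a semimartingale $(\overline{\ell}, W)$ with the prescribed characteristics, and delivers $(Y_n, W)\stackrel{\cF-st}{\longrightarrow}(\overline{\ell}, W)$ in $\mathcal{D}([0,1],\R^2)$.

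Convergence of the first two characteristics is straightforward. For the drift, Corollary~\ref{cor:limiting_drift} applied term-by-term yields the uniform-in-$t$ convergence of $B_{Y_n}(t)$ to the first component of $B$ in probability. For the continuous covariation, since $Y_n$ is piecewise constant in $t$ it carries no continuous martingale part, so all entries of $C^n$ involving $Y_n$ vanish identically, while the $(2,2)$ entry is $t$; the $(1,2)$ entry, corresponding to $[Y_n, W]_t = \sum_{k\leq\lfloor nt\rfloor}\Delta_k Y_n\cdot(W_{k/n}-W_{(k-1)/n})$, vanishes in probability by a direct second-moment estimate based on Proposition~\ref{prop:moment_product} and the conditional centering of Brownian increments.

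The heart of the proof is the identification of the compensator $\nu^n$ of the random jump measure of $Y_n$. For fixed $k$ with $X_{(k-1)/n}<\rho_0$, only the indicators $\{\1_{I_{2,k}^{z_j^+/n}}\}_{j}$ in~\eqref{eq:cases_I_jk} are possibly active when $X_{k/n}>\rho_0$; if $X_{k/n}\in(\rho_0+z_{i-1}^+/n,\rho_0+z_i^+/n]$ (with $z_0^+:=0$), they are active for precisely those $j\geq i$, and a Taylor expansion of the regime-1 to regime-3 density ratio in the spirit of Proposition~\ref{prop:expansion_of_drift_t} shows that each such term equals $\log(\beta^2/\alpha^2)+o(1)$, giving $\Delta_k Y_n \approx \log(\beta^2/\alpha^2)(\kappa_i^+ +\dots+\kappa_M^+)$. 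If instead $X_{(k-1)/n}\geq \rho_0+z_M^+/n$, the indicators $\{\1_{I_{8,k}^{z_j^+/n}}\}_j$ produce the same jump value by the symmetric regime-2 to regime-4 expansion. Integrating the explicit transition densities of~\eqref{eq:transition_density} against the interval $(\rho_0+z_{i-1}^+/n,\rho_0+z_i^+/n]$ and summing over $k$ via Lemma~\ref{lemma:Riemann_approximation}, applied to $f(u)=\1_{(-\infty,0)}(u)\exp(-u^2/(2\alpha^2))$ and $f(u)=\1_{[0,\infty)}(u)\exp(-u^2/(2\beta^2))$, combines the two contributions as
\[ \frac{1}{\beta(\alpha+\beta)}L_t^{\rho_0}(X)|z_i^+-z_{i-1}^+| + \frac{\alpha}{\beta^2(\alpha+\beta)}L_t^{\rho_0}(X)|z_i^+-z_{i-1}^+| = \frac{1}{\beta^2}L_t^{\rho_0}(X)|z_i^+-z_{i-1}^+|, \]
reproducing the positive-side Dirac mass in $\nu$; the negative side is analogous by the symmetric role of $\alpha$ and $\beta$ for transitions crossing $\rho_0$ from above.

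The main analytical obstacle is to verify the finer hypotheses of Proposition~\ref{prop:version_Jacod}: uniform tightness of the compensator, a Lindeberg-type condition restricting jump mass to neighborhoods of the limiting Dirac locations, and an asymptotic no-matching condition between the predictable jump times of $Y_n$ and the trajectories of $W$. The last of these is automatic since $Y_n$ is piecewise constant on deterministic time intervals while $W$ is continuous. For the former two, one has to show that the contributions of the correction terms $Z_k^2, Z_k^8$ of~\eqref{eq:def_Z_j28} and of the regimes $I_{j,k}$ for $j\notin\{2,8\}$ to the jump measure of $Y_n$ are negligible in the limit. This is where Proposition~\ref{prop:moment_product} is used crucially: for any bounded continuous $\phi$ vanishing on a neighborhood of the finitely many limiting Dirac locations, the moment bounds imply $\sum_k\E_{\rho_0}[|\phi(\Delta_k Y_n)-\phi(\widetilde{\Delta}_k Y_n)|\mid X_{(k-1)/n}]\to 0$, where $\widetilde{\Delta}_k Y_n$ denotes the pure leading-order approximation of the jump identified above. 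Combining these estimates with the drift and continuous-covariation analyses, Proposition~\ref{prop:version_Jacod} delivers the stable convergence claimed in the proposition with the prescribed characteristics $(B, C, \nu)$.
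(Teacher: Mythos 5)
Your overall plan matches the paper's: write the linear combination as a semimartingale with respect to the discretized filtration $(\cF_{\lfloor nt\rfloor/n})_t$, apply Proposition~\ref{prop:version_Jacod}, identify the jump intensity via the regime indicators $I_{2,k}$ and $I_{8,k}$ combined with Lemma~\ref{lemma:Riemann_approximation}, and let Corollary~\ref{cor:limiting_drift} handle the drift. Your arithmetic combining the $\{X_{(k-1)/n}<\rho_0\}$ and $\{X_{(k-1)/n}\geq\rho_0\}$ contributions into $\beta^{-2}$ is correct.

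However, there is a genuine gap in your treatment of the second characteristic. You assert that since $Y_n$ has no continuous martingale part, ``all entries of $C^n$ involving $Y_n$ vanish identically.'' This confuses the \emph{continuous} covariation of $Y_n$ with the \emph{modified} second characteristic that Proposition~\ref{prop:version_Jacod} actually requires: condition~\eqref{eq_Jacod:2char_1} asks for
\[
\sum_{k=1}^{\lfloor nt\rfloor} \E_{\rho_0}\bigl[(y_{nk}-\E_{\rho_0}[y_{nk}|\cF_{(k-1)/n}])^2\,\big|\,\cF_{(k-1)/n}\bigr] \;\longrightarrow_{\Pr_{\rho_0}}\; (f\star\nu)_t, \quad f(x,y)=x^2,
\]
which is the predictable quadratic variation $\langle Y_n - B_{Y_n}\rangle_t$, not the covariation of a continuous part. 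For a pure-jump limit this limit is positive — precisely $\log(\alpha^2/\beta^2)^2\alpha^{-2}L_t\sum(\kappa_i^-+\dots+\kappa_m^-)^2|z_i^--z_{i-1}^-| + \log(\beta^2/\alpha^2)^2\beta^{-2}L_t\sum(\kappa_i^++\dots+\kappa_M^+)^2|z_i^+-z_{i-1}^+|$ — and its verification (condition~\eqref{condition_2char_1}) requires a separate argument, not a dismissal as zero. The entry $C_{11}=0$ in the statement refers to the true (continuous) covariance of the \emph{limit}, not to the modified second characteristic of the approximants.

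A second, smaller gap: you claim the jumps $y_{nk}$ are ``deterministically bounded, being sums of logarithms of bounded density ratios,'' making identity truncation admissible. This is false — on regimes such as $I_{3,k}$, expression~\eqref{eq:expression_case3} contains the factor $\sqrt{n}(X_{k/n}-X_{(k-1)/n})$, which has Gaussian tails and is not uniformly bounded. The applicability of Proposition~\ref{prop:version_Jacod} instead rests on the Lindeberg-type bound~\eqref{eq_Jacod:large_jumps}, verified in the paper as~\eqref{condition_second_moment} via the moment bounds of Proposition~\ref{prop:moment_product}. Your "no-matching condition between jump times of $Y_n$ and $W$" is not one of the hypotheses of Proposition~\ref{prop:version_Jacod}; it is not needed because the jump measure of the joint process $(Y_n,W^n)$ automatically has vanishing $W$-component in the limit, which is captured in verifying~\eqref{condition_3char} directly.
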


\noindent
The proof of this result is deferred to the next Subsection~\ref{Subsection:proof_stable_fidi_process}.\\

\noindent
\textbf{Construction of the limit $\overline{\ell}$.}
To explicitly construct the limiting process $\overline{\ell}$ in Proposition~\ref{prop:stable_fidi_process}, it is necessary to capture its nature in both the variables $t$ and $z$. To this aim, it is constructed based on two independent bivariate Poisson processes (instead of an univariate Poisson process that is sufficient to describe the limit of the MLE).
For the technical details, let 
\[ \Omega'' =\big\{\N_0\textrm{-valued measure on } ([0,1]\times\R, \mathcal{B}([0,1]\times\R))\big\}^2, \]
\[\cF'' = \sigma\Big((\mu_1,\mu_2)\in\Omega: \mu_1(B_1)=k_1, \mu_2(B_2)=k_2:  B_1,B_2\in\mathcal{B}([0,1]\times\R), k_1,k_2\in\N_0\Big)\]
with the process 
\[ N(t,z) = \begin{cases} \omega_1''([0,t]\times [0,-z)), &\textrm{ if } z< 0,\\
\omega_2''([0,t]\times [0,z]), &\textrm{ if } z\geq 0. \end{cases} \]
Furthermore, set
\[ \cF_s'' = \bigcap_{t>s} \sigma(N(u,z): u\leq t, z\in\R)\]
and define a measure $\hat{\Pr}(d\omega,d\omega'') = \Pr(d\omega)Q_\omega(d\omega'')$ on $(\Omega\times \Omega'', \cF\otimes\cF'')$ such that for the identity process on $\Omega''$ the first component is a Poisson point process with intensity measure $\nu_1^\omega$ given by
\[ \nu_1^\omega([s,t], [z_1,z_2]) := \big( L_t^{\rho_0}(X)(\omega) - L_s^{\rho_0}(X)(\omega)\big) |z_1-z_2| \alpha^{-2}\]
and the second component of the identity process is an independent Poisson point process with intensity measure $\nu_2^\omega$ given by
\[ \nu_2^\omega([s,t], [z_1,z_2]) := \big( L_t^{\rho_0}(X)(\omega) - L_s^{\rho_0}(X)(\omega)\big) |z_1-z_2| \beta^{-2}.\]
Endowing $(\Omega\times \Omega'', \cF\otimes\cF'',\hat{\Pr})$ with the filtration $(\bigcap_{t>s}\cF_t\otimes\cF_t'')_{s\in [0,1]}$, the resulting stochastic basis is a very good extension of $(\Omega,\cF, (\cF_s)_{s\in [0,1]},\Pr)$, because the mapping $\omega\mapsto Q_\omega(N(u,z)=k) = |z|L_u^{\rho_0}(X)(\omega)$ is $\cF_u$-measurable, $L_\bullet^{\rho_0}(X)$ is continuous and $(\cF_s'')_{s\in [0,1]}$ is right-continuous.
Building on these definitions, we set
\begin{align*}
\overline{\ell}(t,z) &:= \left( \1_{\{z\geq 0\}} \left( b_{\alpha,\beta} - \frac{1}{\beta^2}\log\left(\frac{\beta^2}{\alpha^2}\right)\right)+  \1_{\{z< 0\}} \left( b_{\alpha,\beta}' - \frac{1}{\alpha^2}\log\left(\frac{\alpha^2}{\beta^2}\right)\right)\right) |z|L_t^{\rho_0}(X) \\
&\hspace{1cm} + \left( \1_{\{z\geq 0\}}\log(\beta^2/\alpha^2)N(t,z) + \1_{\{z<0\}}\log(\alpha^2/\beta^2) N(t,z) \right).
\end{align*}
In what follows, we prove the characteristics of 
\begin{align}\label{eq:linear_combination_limit}
\left( \sum_{j=1}^m \kappa_j^- \overline{\ell}(\bullet, z_j^-) + \sum_{j=1}^M \kappa_j^+ \overline{\ell}(\bullet, z_j^+), W\right)
\end{align} 
to be equal to the characteristics given in Proposition~\ref{prop:stable_fidi_process}. As they are clearly $(\cF_t)_{t\in [0,1]}$-predictable, Theorem~$3.2$ in~\cite{Jacod_stablePII} reveals that the $\cF$-conditional law of the processes $\ell$ in Proposition~\ref{prop:stable_fidi_process} and the first coordinate of~\eqref{eq:linear_combination_limit} are the same.\\

In order to derive the characteristics of~\eqref{eq:linear_combination_limit}, we need a preliminary result. Here, we slightly abuse notation and passagewise explicitly highlight the dependence of random variables on their respective argument $\omega$ and $\omega'$ within conditional expectation.

\begin{lemma}\label{lemma:conditional_exp_extension}
Let
\[ \hat{\Omega}=\Omega\times\Omega',\quad \hat{\cF}=\cF\otimes\cF',\quad \hat{\cF}_t = \bigcap_{s>t} \cF_s\otimes\cF_s',\quad \hat{\Pr}(d\omega,d\omega') = \Pr(d\omega)Q_\omega(d\omega')\]
be a very good extension of $(\Omega, \cF, (\cF_t)_{t\in [0,1]}, \Pr)$ in the sense of Definition~$II.7.1$ in~\cite{Jacod/Shiryaev}. In particular, for any $A'\in\cF_t'$, the map $\omega\mapsto Q_\omega(A')$ is $\cF_t$-measurable. Let $X:\Omega\longrightarrow\R$ be a random variables in $L^1(\Pr)$. Then for any $s\in [0,1]$,
\[ \hat{\E}\left[ X | \cF_s\otimes\cF_s'\right] = \E\left[ X|\cF_s\right]\quad a.s. \]
and for $Y_s:\Omega'\longrightarrow\R$ being in $L^1(\Pr')$ and independent of $\cF_s'$,
\[ \hat{\E}\left[ Y_s|\cF_s\otimes\cF_s'\right] = \E\left[\left. \E_{Q_\omega}\left[ Y_s \right] \right| \cF_s\right] \quad a.s.\]
\end{lemma}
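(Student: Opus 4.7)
The plan is to establish both identities by verifying the defining property of conditional expectation on the $\pi$-system of rectangles $A\times A'$ with $A\in\cF_s$ and $A'\in\cF_s'$, then extending via a monotone class / Dynkin argument. Throughout, I would disintegrate $\hat{\Pr}(d\omega,d\omega')=\Pr(d\omega)Q_\omega(d\omega')$ and invoke the very good extension property that $\omega\mapsto Q_\omega(A')$ is $\cF_s$-measurable for every $A'\in\cF_s'$.

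For the first identity, I would fix such a rectangle $A\times A'$ and, since $X$ depends only on $\omega$, apply Fubini to obtain
\[
\hat{\E}[X\1_{A\times A'}] = \int_\Omega X(\omega)\1_A(\omega)\,Q_\omega(A')\,\Pr(d\omega).
\]
The weight $\omega\mapsto \1_A(\omega)Q_\omega(A')$ is $\cF_s$-measurable, hence by the defining property of $\E[X\mid\cF_s]$, replacing $X$ by $\E[X\mid\cF_s]$ leaves the integral unchanged. Reassembling gives $\hat{\E}[X\1_{A\times A'}] = \hat{\E}[\E[X\mid\cF_s]\1_{A\times A'}]$, and since $\E[X\mid\cF_s]$ is a fortiori $\cF_s\otimes\cF_s'$-measurable, a routine Dynkin argument extends the identity from rectangles to the whole product $\sigma$-algebra, proving the first claim.

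For the second identity, the strategy is structurally identical. On a rectangle $A\times A'$ I would write
\[
\hat{\E}[Y_s\1_{A\times A'}] = \int_\Omega \1_A(\omega)\int_{\Omega'} Y_s(\omega')\1_{A'}(\omega')\,Q_\omega(d\omega')\,\Pr(d\omega),
\]
and the decisive step is to exploit independence of $Y_s$ from $\cF_s'$ (under the kernel $Q_\omega$, which for the very good extensions under consideration reduces to the fixed measure $\Pr'$) to factor the inner integral as $\E_{Q_\omega}[Y_s]\cdot Q_\omega(A')$. The function $\omega\mapsto\1_A(\omega)Q_\omega(A')$ is again $\cF_s$-measurable, so the defining property of $\E[\,\cdot\mid\cF_s]$ applied to $\omega\mapsto\E_{Q_\omega}[Y_s]$ yields $\hat{\E}[Y_s\1_{A\times A'}] = \hat{\E}[\E[\E_{Q_\omega}[Y_s]\mid\cF_s]\1_{A\times A'}]$, with an $\cF_s\otimes\cF_s'$-measurable representative, and a monotone class argument concludes.

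The main obstacle is really only conceptual: one must interpret the independence hypothesis at the level of the kernel $Q_\omega$ so that the crucial factorization $\int Y_s\1_{A'}\,dQ_\omega = \E_{Q_\omega}[Y_s]\,Q_\omega(A')$ is available for $\Pr$-a.e. $\omega$. Once this is in place, the proof is a routine combination of Fubini (valid thanks to the $L^1$-hypotheses), the very good extension property, and the characterizing property of conditional expectation; no sharp estimates or delicate analysis are needed, and the passage from rectangles to $\cF_s\otimes\cF_s'$ is standard.
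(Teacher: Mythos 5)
Your proposal is correct and follows essentially the same route as the paper's proof: verifying the defining property of conditional expectation on rectangles $A\times A'$ using Fubini, the $\cF_s$-measurability of $\omega\mapsto Q_\omega(A')$, and (for the second identity) the factorization $\E_{Q_\omega}[\1_{A'}Y_s]=\E_{Q_\omega}[Y_s]Q_\omega(A')$ coming from independence under the kernel, followed by the standard extension from the generating $\pi$-system to $\cF_s\otimes\cF_s'$. Your remark that the independence hypothesis must be read at the level of $Q_\omega$ is exactly the interpretation the paper uses.
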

\begin{proof}
Let $A\in\cF_s$ and $A'\in\cF_s'$. Then
\begin{align*}
&\hat{\E}\left[ \1_{A\times A'} \E[X|\cF_s] \right] = \hat{\E}\left[ \1_{A'}\E[\1_A X|\cF_s]\right] = \E\left[ Q_\omega(A') \E[\1_AX|\cF_s]\right] \\
&\hspace{0.5cm} =\E\left[ \E[\1_A Q_\omega(A')X|\cF_s]\right] = \E\left[\1_A Q_\omega(A')X\right] = \E\left[ \E_{Q_\omega}[\1_A\1_{A'} X]\right] = \hat{\E}\left[ \1_{A\times A'} X\right].
\end{align*}
Here, the third step follows as $Q_\omega(A')$ is $\cF_s$ measurable. Now the first claim follows from the definition of conditional expectation and $\cF_s\otimes\cF_s'=\sigma(A\times A': A\in\cF_s, A'\in\cF_s')$.\\
For the second claim, let $A,A'$ be as above. Then,
\begin{align*}
\hat{\E}\left[ \1_{A\times A'} \E[\E_{Q_\omega}[Y_s]|\cF_s]\right] &= \E\big[ \1_A \E_{Q_\omega}\left[ \1_{A'}\E[\E_{Q_\omega}[Y_s]|\cF_s]\right]\big]\\
& =\E\big[ \1_A Q_\omega(A')\E[\E_{Q_\omega}[Y_s]|\cF_s]\big]  \\
& = \E\big[ \E[\1_A Q_\omega(A')\E_{Q_\omega}[Y_s]|\cF_s]\big]\\
& =  \E\big[ \1_A Q_\omega(A')\E_{Q_\omega}[Y_s]\big] \\
&= \E\big[ \1_A \E_{Q_\omega}[\1_{A'}Y_s]\big]\\
& = \hat{\E}\left[\1_{A\times A'} Y_s\right]
\end{align*}
and the second claim follows.
\end{proof}

We now derive the characteristics of the process in~\eqref{eq:linear_combination_limit} and decompose
\begin{align*}
&\begin{pmatrix} \sum_{i=1}^m \kappa_i^- \overline{\ell}(\bullet,z_i^-) + \sum_{i=1}^M \kappa_i^+ \overline{\ell}(\bullet,z_i^+) \\ W\end{pmatrix} = B + M, 
\end{align*}
where $M=(M_t)_{t\in [0,1]}$ is given by
\begin{align*}
M &= \begin{pmatrix}0\\W\end{pmatrix} + \begin{pmatrix} \log\left(\frac{\alpha^2}{\beta^2}\right)\sum_{j=1}^m \kappa_j^- \left( N(\bullet,z_i^-) - |z_i^-|L_\bullet^{\rho_0}(X)/\alpha^2 \right) \\ 0 \end{pmatrix} \\
&\hspace{1cm} +  \begin{pmatrix} \log\left(\frac{\beta^2}{\alpha^2}\right)\sum_{j=1}^M \kappa_j^+ \left( N(\bullet,z_i^+) - |z_i^+|L_\bullet^{\rho_0}(X)/\beta^2 \right) \\ 0 \end{pmatrix}.
\end{align*}
We establish the statement for each characteristic seperately:\smallskip
\begin{itemize}
\item[$\boldsymbol{B}$:] First, note that as $L_\bullet^{\rho_0}(X)$ is continuous, $B$ is predictable. Then, the statement about the first characteristic $B$ follows if we can show that $M$ is a $\hat{\Pr}$-martingale with respect to the filtration $(\hat{\cF}_t)_{t\in [0,1]}$. We prove this for $m=0$, $M=1$, the general case follows easily by linearity of conditional expectation. We have for $0\leq s\leq t\leq 1$, using Lemma~\ref{lemma:conditional_exp_extension}
\[ \hat{\E}\left[\left.  \begin{pmatrix} 0 \\ W_t\end{pmatrix}\right| \cF_s\right]= \E\left[\left.  \begin{pmatrix} 0 \\ W_t\end{pmatrix}\right| \cF_s\right]=\begin{pmatrix} 0 \\ W_s\end{pmatrix}\]
and consequently
\begin{align*}
&\hat{\E}\left[ M_t|\cF_s\otimes \cF_s''\right] \\
&\hspace{1cm} =\begin{pmatrix} 0 \\ W_s\end{pmatrix} +  \log\left(\frac{\beta^2}{\alpha^2}\right)\begin{pmatrix} \hat{\E}\left[\left. \kappa_1^+\left( N(t,z_1^+) - z_1^+L_t^{\rho_0}(X)/\beta^2\right)\right| \cF_s\otimes \cF_s''\right] \\ 0\end{pmatrix}.
\end{align*}
To proceed, we use that by Lemma~\ref{lemma:conditional_exp_extension},
\begin{align*}
&\hat{\E}\left[\left. z_1^+L_t^{\rho_0}(X)/\beta^2\right| \cF_s\otimes\cF_s''\right] = \E\Big[\left. z_1^+L_t^{\rho_0}(X)/\beta^2 \right| \cF_s\Big].
\end{align*}
As $N(s,z_1^+)$ is $\cF_s''$-measurable and $N(t,z_1^+)-N(s,z_1^+) = \omega_2''((s,t]\times [0,z_1^+])$ is independent of $\cF_s''$, Lemma~\ref{lemma:conditional_exp_extension} reveals
\begin{align*}
\hat{\E}\left[ N(t,z_1^+)| \cF_s\otimes \cF_s''\right] &= \hat{\E}\left[ N(t,z_1^+) - N(s,z_1^+) | \cF_s\otimes \cF_s''\right] +\hat{\E}\left[ N(s,z_1^+)| \cF_s\otimes \cF_s''\right] \\
&=\E\left[ \E_{Q_\omega}[ N(t,z_1^+) - N(s,z_1^+)]\big| \cF_s\right] + N(s,z_1^+).
\end{align*}
Here and in what follows, by slight abuse of notation, we explicitly indicate the dependence of $Q_\omega$ on $\omega$ to highlight that the conditional expectation affects the corresponding expression. Recalling that the second component $\omega_2''$ of the identity on $\Omega''$ is a Poisson point process with intensity measure $\nu_2^\omega([s,t]\times[z_1,z_2]) = \left( L_t^{\rho_0}(X)(\omega)-L_s^{\rho_0}(X)(\omega)\right) |z_1-z_2|/\beta^2$ under $Q_\omega$,
\begin{align*}
\E_{Q_\omega}[ N(t,z_1^+) - N(s,z_1^+)] = \E_{Q_\omega}[ \omega_2''((s,t]\times [0,z_1^+])] = \left(L_t^{\rho_0}(X)-L_s^{\rho_0}(X)\right)z_1^+/\beta^2.
\end{align*} 
Consequently,
\begin{align*}
&\hat{\E}\left[\left. \kappa_1^+\left( N(t,z_1^+) - z_1^+L_t^{\rho_0}(X)/\beta^2\right)\right| \cF_s\otimes\cF_s''\right] \\
&\hspace{1cm} = \E\left[\left. \kappa_1^+  N(s,z_1^+) + \kappa_1^+\left(L_t^{\rho_0}(X)-L_s^{\rho_0}(X)\right)z_1^+/\beta^2 -\kappa_1^+L_t^{\rho_0}(X)z_1^+/\beta^2 \right|\cF_s\right] \\
&\hspace{1cm} =  \kappa_1^+ N(s,z_1^+)-\kappa_1^+L_s^{\rho_0}(X)z_1^+/\beta^2
\end{align*}
which yields
\[ \hat{\E}\left[ M_t|\cF_s\otimes\cF_s''\right] = M_s\quad a.s.\]
Let $(s_n)_{n\in\N}$ be a decreasing sequence with $s_n\searrow s$. Then by right-continuity of $M$ and the tower property of conditional expectation, together with dominated convergece that is applicable by the Burkholder-Davis-Gundy inequality as $M$ is square-integrable,
\[ \hat{\E}[M_t|\hat{\cF}_s]=\hat{\E}\big[\hat{\E}[M_t|\cF_{s_n}\otimes\cF_{s_n}'']\big|\hat{\cF}_s\big] = \hat{\E}\big[ M_{s_n}|\hat{\cF}_s\big] \longrightarrow  \hat{\E}[M_s|\hat{\cF}_s] = M_s.  \]

\item[$\boldsymbol{C}$:] The continuous martingale part of $M$ is $M^c=(0,W)^t$. Hence, it is clear that
\[ C=\begin{pmatrix} 0 & 0 \\ 0 & \mathrm{id}_{[0,1]}\end{pmatrix}.\]

\item[$\boldsymbol{\nu}$:] The third characteristic $\nu$ is given as the predictable compensator of the jump measure associated with the process $(Z,W)$, where $Z_s=\sum_{i=1}^m \kappa_i^- \overline{\ell}(s,z_i^-) + \sum_{i=1}^M \kappa_i^+ \overline{\ell}(s,z_i^+)$, i.e. of
\begin{align*}
\hat{\mu} &= \sum_{s\geq 0} \1_{\{\Delta((Z,W)_s)\neq (0,0)\}} \delta_{(s,\Delta Z_s, \Delta W_s)} = \sum_{s\geq 0} \1_{\{\Delta Z_s\neq 0\}}\delta_{(s,\Delta Z_s,0)}.
\end{align*} 
To proceed, we first note that $\Delta Z_s = \log\left(\frac{\alpha^2}{\beta^2}\right)\sum_{i=1}^m \Delta N(s,z_i^-) + \log\left(\frac{\beta^2}{\alpha^2}\right) \sum_{i=1}^M \Delta N(s,z_i^+)$. Morover, we observe:\smallskip
\begin{itemize}
\item[(a)] For $i<j$, we have $z_i^->z_j^-$ and thus
\[ N(t,z_j^-) = \omega_1''([0,t]\times [0,-z_j^-)) = \omega_1''([0,t]\times [0,-z_i^-)) + \omega_1''([0,t]\times [-z_i^-,-z_j^-)). \]
From this, it follows that $N(\bullet,-z_j^-)$ jumps every time the process $N(\bullet,-z_i^-)$ jumps.
\item[(b)] Analogously, for $i<j$, we have $z_i^+<z_j^+$ and thus
\[ N(t,z_j^+) = \omega_2''([0,t]\times [0,z_j^+]) = \omega_2''([0,t]\times [0,z_i^+]) + \omega_2''([0,t]\times [z_i^+,z_j^+]). \]
From this, it follows that $N(\bullet,z_j^+)$ jumps every time the process $N(\bullet,z_i^+)$ jumps.
\end{itemize}\smallskip
From these considerations and the fact that $\omega_1''([0,\bullet]\times [0,-z])$ and $\omega_2''([0,\bullet]\times [0,z'])$ jump at the same time with probability zero (due to their independence), it follows that
\begin{align*}
(g\star \hat{\mu})_t &= \sum_{u\leq t}\sum_{i=1}^m \1_{\{ \Delta Z_u = \log\left(\frac{\alpha^2}{\beta^2}\right)\left(\kappa_i^- + \dots + \kappa_m^-\right)\}} g\left(\log\left(\frac{\alpha^2}{\beta^2}\right)\left( \kappa_i^- + \dots + \kappa_m^-\right),0\right) \\
&\hspace{1cm} + \sum_{u\leq t}\sum_{i=1}^M \1_{\{ \Delta Z_u =\log\left(\frac{\beta^2}{\alpha^2}\right)\left( \kappa_i^+ + \dots + \kappa_M^+\right)\}} g\left(\log\left(\frac{\beta^2}{\alpha^2}\right)\left( \kappa_i^+ + \dots + \kappa_M^+\right) ,0\right).
\end{align*}
By continuity of $L_\bullet^{\rho_0}(X)$, $(g\star\nu)$ is predictable with respect to $(\hat{\cF}_s)_{s\in [0,1]}$. It remains to show that $(g\star\hat{\mu})_\bullet - (g\star\nu)_\bullet$ is an $(\hat{\cF}_s)_{s\in [0,1]}$-martingale. From (a) and (b) above, we obtain
\begin{align*}
\sum_{s<u\leq t} \1_{\{ \Delta Z_u = \kappa_i^- + \dots + \kappa_m^-\}} &= \sum_{s<u\leq t} \1_{\{ \Delta\left( \kappa_1^- N(\bullet, z_1^-) + \dots +\kappa_m^- N(\bullet, z_m^-)\right)_u  = \kappa_i^- + \dots + \kappa_m^-\}} \\
&=  \omega_1''((s,t]\times [0,-z_i^-)) - \omega_1''((s,t]\times [0, -z_{i-1}^-))\\
& = \omega_1''((s,t]\times [-z_{i-1}^-,-z_i^-)).
\end{align*}
Analogously, 
\[ \sum_{s<u\leq t} \1_{\{ \Delta Z_u = \kappa_i^+ + \dots + \kappa_M^+\}} = \omega_2''((s,t]\times (z_{i-1}^+,z_i^+]). \]
Then,
\begin{align*}
&\hat{\E}\left[ (g\star\hat{\mu})_t-(g\star\nu)_t -(g\star\hat{\mu})_s+(g\star\nu)_s \Big| \hat{\cF}_s\right] \\
&\hspace{0.5cm} = \sum_{i=1}^m g\left( \log\left(\frac{\alpha^2}{\beta^2}\right)\left(\kappa_i^- + \dots + \kappa_m^-\right) ,0\right) \\
&\hspace{1.5cm} \cdot \hat{\E}\left[\left. \omega_1''((s,t]\times[-z_{i-1}^-,-z_i^-)) - \frac{L_t^{\rho_0}(X) - L_s^{\rho_0}(X)}{\alpha^2} |z_i^- - z_{i-1}^-| \right| \hat{\cF}_s \right] \\
&\hspace{1cm} + \sum_{i=1}^M g\left(\log\left(\frac{\beta^2}{\alpha^2}\right) \left( \kappa_i^+ + \dots + \kappa_M^+\right), 0\right) \\
&\hspace{1.5cm} \cdot\hat{\E}\left[\left. \omega_2''((s,t]\times(z_{i-1}^+,z_i^+]) - \frac{L_t^{\rho_0}(X) - L_s^{\rho_0}(X)}{\beta^2} |z_i^+ - z_{i-1}^+| \right| \hat{\cF}_s \right]
\end{align*}
and by the same arguments used for $B$, both conditional expectations vanish.
\end{itemize}

\noindent
\textbf{Finalizing the proof of~\eqref{eq_proof:stable_fidi_Cramer-Wold}.}
From Proposition~\ref{prop:stable_fidi_process} and the explicit construction of $\overline{\ell}$ in the preceding step, we obtain
\begin{align}\label{eq_proof:stable_fidi_process}
\kappa_1\ell_{n,\bullet}(z_1/n)+ \dots + \kappa_N \ell_{n,\bullet} (z_N/n)\stackrel{\cF-st}{\longrightarrow}\ \kappa_1\overline{\ell} (\bullet, z_1) + \dots + \kappa_N \overline{\ell}(\bullet, z_N).
\end{align}
as stable convergence in the Skorohod space $\mathcal{D}([0,1])$. Then, \eqref{eq_proof:stable_fidi_Cramer-Wold} follows from this by using that the projection onto the endpoint $1$ is continuous in $\mathcal{D}([0,1])$, combined with a stable version of the continuous mapping theorem (\cite{Haeusler/Luschgy}, Theorem~$3.18$(c)) and the fact that 
\[  \left(L_1^{\rho_0}(X), \left(\overline{\ell}(1,z)\right)_{z\in\R}\right) \stackrel{\mathcal{L}}{=} \left( L_1^{\rho_0}(X), \left(\ell(z L_1^{\rho_0}(X))\right)_{z\in\R}\right).  \]
\end{proof}

\subsection{Proof of Proposition~\ref{prop:stable_fidi_process}}\label{Subsection:proof_stable_fidi_process}
Recall that
\[ \kappa_1\ell_{n,\bullet}(z_1/n) + \dots + \kappa_N\ell_{n,\bullet}(z_N/n) = \sum_{j=1}^N \kappa_j B_{n,\bullet}(z_j/n) + \sum_{j=1}^N \kappa_j M_{n,\bullet}(z_j/n), \]
which is the canonical semimartingale decomposition of the process on the left-hand side with respect to the filtration $(\cF_t^n)_{t\in [0,1]}$ with $\cF_t^n := \cF_{\lfloor nt\rfloor/n}$ on $(\Omega,\cF,\Pr)$. Furthermore, note that 
\[\kappa_1\ell_{n,t}(z_1/n) + \dots + \kappa_N\ell_{n,t}(z_N/n) = \sum_{k=1}^{\lfloor nt\rfloor} y_{nk}\]
for the random variables
\[  y_{nk} = \sum_{j=1}^N \kappa_j \log\left(\frac{p_{1/n}^{\rho_0+z_j/n}(X_{(k-1)/n},X_{k/n})}{p_{1/n}^{\rho_0}(X_{(k-1)/n},X_{k/n})}\right).\]
In order to apply Proposition~\ref{prop:version_Jacod} we have to establish the following for every $t\in [0,1]$:
\begin{align}\label{condition_1char}
\sup_{s\leq t}\left| \sum_{j=1}^N \kappa_j B_{n,s}(z_j/n) - \sum_{j=1}^N \kappa_j\left(\1_{\{z_j\geq 0\}}b_{\alpha,\beta} + \1_{\{z_j<0\}}b_{\alpha,\beta}'\right)|z_j| L_s^{\rho_0}(X)\right| \longrightarrow_{\Pr_{\rho_0}} 0,
\end{align}
\begin{align}\label{condition_2char_1}
\begin{split}
&\sum_{k=1}^{\lfloor nt\rfloor} \E_{\rho_0}\left[ (y_{nk} - \E_{\rho_0}[y_{nk}|\cF_{(k-1)/n}])^2 |\cF_{(k-1)/n}\right] \\
&\hspace{0.5cm} \longrightarrow_{\Pr_{\rho_0}}  \log\left(\frac{\alpha^2}{\beta^2}\right)^2 \frac{1}{\alpha^2} L_t^{\rho_0}(X) \sum_{i=1}^m \left( \kappa_i^- + \dots + \kappa_m^-\right)^2 \left|z_i^- - z_{i-1}^-\right|  \\
&\hspace{2cm} + \log\left(\frac{\beta^2}{\alpha^2}\right)^2 \frac{1}{\beta^2}L_t^{\rho_0}(X) \sum_{i=1}^M \left( \kappa_i^+ + \dots + \kappa_M^+\right)^2 \left|z_i^+ - z_{i-1}^+\right|,
\end{split}
\end{align}
\begin{align}\label{condition_2char_2}
&\sum_{k=1}^{\lfloor nt\rfloor} \E_{\rho_0}\left[ (y_{nk} - \E_{\rho_0}[y_{nk}|\cF_{(k-1)/n}])(W_{k/n}-W_{(k-1)/n}) |\cF_{(k-1)/n}\right] \longrightarrow_{\Pr_0}\ 0,
\end{align}
and for each $g:\R^2\longrightarrow\R$ measurable, non-negative, bounded, Lipschitz-continuous and vanishing in a neighborhood of zero,
\begin{align}\label{condition_3char}
\sum_{k=1}^{\lfloor nt\rfloor} \E_{\rho_0}\left[\left. g\left(y_{nk},W_{k/n}-W_{(k-1)/n}\right)\right| \cF_{(k-1)/n}\right] \longrightarrow_{\Pr_{\rho_0}}\ (g\star \nu)_t.
\end{align}
Finally, we also show for suitable $\kappa>0$ that
\begin{align}\label{condition_second_moment}
\E_{\rho_0}\left[ \sum_{k=1}^n y_{nk}^2 \1_{\{|y_{nk}|>\kappa\}}\right] \longrightarrow\ 0.
\end{align}
Condition~\eqref{condition_1char} follows from Corollary~\ref{cor:limiting_drift} and conditions~\eqref{condition_2char_1}, \eqref{condition_2char_2} and~\eqref{condition_second_moment} are proven in Appendix~\ref{App:conditions_prop_stable}. We are going to show the most interesting one, namely~\eqref{condition_3char} on the jump characteristic that brings out the bivariate Poissonion nature of the limit $\overline{\ell}$. To this aim, it is necessary to separate the terms that contribute to the jumps of the process by rewriting
\begin{align*}
y_{nk} &=   \sum_{l=1}^m  \kappa_l^- \left(-\sum_{j=1}^9 Z_k^j(z_l^-/n,0) + \log\left(\frac{\alpha^2}{\beta^2}\right)\left( \1_{I_{2,k}^{z_l^-/n,0}} + \1_{I_{8,k}^{z_l^-/n,0}}\right)\right) \\
&\hspace{1cm} +  \sum_{l=1}^M  \kappa_l^+ \left(\sum_{j=1}^9Z_k^j(0,z_l^+/n) + \log\left(\frac{\beta^2}{\alpha^2}\right)\left( \1_{I_{2,k}^{0,z_{l}^+/n}} + \1_{I_{8,k}^{0,z_l^+/n}}\right)\right).
\end{align*}
Let $\delta>0$ be chosen such that $g(x)=0$ for $\|x\|_2<\delta$ which is possible since $g$ vanishes in a neighborhood of zero. Furthermore, we define
\[ J_k :=  \bigcup_{l=1}^m \left( I_{2,k}^{z_l^-/n,0}\cup I_{8,k}^{z_l^-/n,0}\right) \cup\bigcup_{l=1}^M \left(I_{2,k}^{0,z_{l}^+/n} \cup I_{8,k}^{0,z_l^+/n} \right)\]
which corresponds to those cases for $X_{(k-1)/n}, X_{k/n}$ that contribute to the variance of at least one of $\ell_n(z_m^-/n),\dots, \ell_n(z_M^+/n)$. All other cases then are summarized in $J_k^c$ and we rewrite with $w_{nk}=W_{k/n}-W_{(k-1)/n}$,
\begin{align}\label{eq_proof:3char_1}
\begin{split}
&\sum_{k=1}^{\lfloor nt\rfloor} \E_{\rho_0}\left[ g(y_{nk}, w_{nk})|\cF_{(k-1)/n}\right] \\
&\hspace{0.7cm}= \sum_{k=1}^{\lfloor nt\rfloor} \E_{\rho_0}\left[ g(y_{nk}, w_{nk})\1_{J_k}|\cF_{(k-1)/n}\right] + \sum_{k=1}^{\lfloor nt\rfloor} \E_{\rho_0}\left[ g(y_{nk}, w_{nk})\1_{J_k^c}|\cF_{(k-1)/n}\right].
\end{split}
\end{align}
It will turn out that the first summand is the contributing term whereas the second one is of smaller order which is discussed in the following: First, by our choice of $\delta$ and the boundedness assumption on $g$,
\begin{align*}
\sum_{k=1}^{\lfloor nt\rfloor} \E_{\rho_0}\left[ g(y_{nk}, w_{nk})\1_{J_k^c}|\cF_{(k-1)/n}\right]  &= \sum_{k=1}^{\lfloor nt\rfloor} \E_{\rho_0}\left[ g(y_{nk},w_{nk})\1_{J_k^c}\1_{\{\|(y_{nk},w_{nk})\|_2\leq \delta\}}|\cF_{(k-1)/n}\right]\\
&\hspace{0.2cm} +\sum_{k=1}^{\lfloor nt\rfloor} \E_{\rho_0}\left[ g(y_{nk},w_{nk})\1_{J_k^c} \1_{\{\|(y_{nk},w_{nk})\|_2>\delta\}}|\cF_{(k-1)/n}\right] \\
&\leq \|g\|_\textrm{sup} \sum_{k=1}^{\lfloor nt\rfloor} \E_{\rho_0}\left[\1_{J_k^c} \1_{\{\|(y_{nk},w_{nk})\|_2>\delta\}}|\cF_{(k-1)/n}\right].
\end{align*}
For the rest of the proof, we will denote $C=C(\alpha,\beta,m,M,z_m^-,\dots, z_M^+, \kappa_m^+,\dots, \kappa_M^+)$ for a constant that depends on both $\alpha,\beta$ and the parameters of the linear combination $\kappa_1\ell_{n,t}(z_1/n)+\dots, \kappa_N\ell_{n,t}(z_N/n)$. By Proposition~\ref{prop:moment_product} we then find the bound
\begin{align*}
&\E_{\rho_0}\left[ \sum_{k=1}^{\lfloor nt\rfloor} \E_{\rho_0}\left[\1_{J_k^c} \1_{\{\|(y_{nk},w_{nk})\|_2>\delta\}}|\cF_{(k-1)/n}\right] \right]  \leq \frac{1}{\delta^4} \sum_{k=1}^{\lfloor nt\rfloor}  \E_{\rho_0}\left[ \1_{J_k^c}\|(y_{nk},w_{nk})\|_2^4 \right] \\
&\hspace{0.1cm} \leq \frac{C}{\delta^4} \sum_{k=1}^{\lfloor nt\rfloor}  \E_{\rho_0}\left[  \left(|w_{nk}|^2+ \sum_{j=1,3,4,5,6,7,9}\left( \sum_{l=1}^m  \kappa_l^- Z_k^j(z_l^-/n,0) +  \sum_{l=1}^M  \kappa_l^+ Z_k^j(0,z_l^+/n)\right)^2 \right)^2 \right] \\
&\hspace{0.1cm} \leq \frac{C}{\delta^4}\sum_{k=1}^{\lfloor nt\rfloor}\sum_{j=1,3,4,5,6,7,9} \left( \sum_{l=1}^m |\kappa_l^-|^4 \E_{\rho_0}\left[ |Z_k^j(z_l^-/n,0)|^4\right] + \sum_{l=1}^M |\kappa_l^+|^4 \E_{\rho_0}\left[ |Z_k^j(0,z_l^+/n)|^4\right]\right) \\
&\hspace{1cm} + \frac{C}{\delta^4}\sum_{k=1}^{\lfloor nt\rfloor} \E_{\rho_0}\left[ |w_{nk}|^4\right] \leq \frac{C}{\delta^4}\sum_{k=1}^{\lfloor nt\rfloor} \left( \frac{1}{n\sqrt{k}} + \frac{1}{n^2}\right) \leq \frac{C}{\delta^4} \left( \frac{1}{\sqrt{n}} + \frac1n\right)\longrightarrow 0.
\end{align*}\smallskip

\noindent
It remains to study the first summand on the right-hand side of~\eqref{eq_proof:3char_1} that is further split as
\begin{align*}
&\sum_{k=1}^{\lfloor nt\rfloor} \E_{\rho_0}\left[ g(y_{nk}, w_{nk})\1_{J_k}|\cF_{(k-1)/n}\right] \\
&\hspace{0.5cm} = \sum_{k=1}^{\lfloor nt\rfloor} \E_{\rho_0}\left[ g(y_{nk},0)\1_{J_k}|\cF_{(k-1)/n}\right] +\sum_{k=1}^{\lfloor nt\rfloor}\E_{\rho_0}\left[ \left(g(y_{nk}, w_{nk})-g(y_{nk},0)\right)\1_{J_k}|\cF_{(k-1)/n}\right].
\end{align*}
By Lemma~\ref{lemma:upper_bound_transition_density} and Corollary~\ref{cor:bound_exp_X^2}, we exemplarily obtain 
\begin{align*}
\begin{split}
&\E_{\rho_0}\left[\1_{I_{2,k}^{0,z_l^-/n}}\right] \leq C\sqrt{n}\E_{\rho_0}\left[\1_{\{X_{(k-1)/n}<\rho_0+z_{l-1}^-/n \}}\int_{\rho_0+z_l^-/n}^{\rho_0+z_{l-1}^-/n} \exp\left(-\frac{(y-X_{(k-1)/n})^2}{2\max\{\alpha^2,\beta^2\}/n}\right)dy \right]  \\
&\hspace{0.5cm}\leq C\frac{|z_l^- - z_{l-1}^-|}{\sqrt{n}}\E_{\rho_0}\left[ \exp\left(-\frac{(X_{(k-1)/n}-\rho_0-z_{l-1}^-/n)^2}{2\max\{\alpha^2,\beta^2\}/n}\right) \right] \leq  C\frac{|z_l^- - z_{l-1}^-|}{\sqrt{nk}},
\end{split}
\end{align*}
and by similar estimations, we get
\begin{align}\label{eq_proof:3char_4}
\E_{\rho_0}\left[ \1_{J_k}\right] \leq C_{\alpha,\beta}(z_m^-,\dots, z_M^+) \frac{1}{\sqrt{nk}}.
\end{align}
From this, the boundedness and Lipschitz assumption on $g$ (where $L_g$ denotes the Lipschitz constant of $g$) and Markov's inequality, it follows for any $\epsilon>0$ that
\begin{align*}
&\E_{\rho_0}\left[ \left| \sum_{k=1}^{\lfloor nt\rfloor} \E_{\rho_0}\left[ \left(g(y_{nk}, w_{nk})-g(y_{nk},0)\right)\1_{J_k}|\cF_{(k-1)/n}\right]\right| \right] \\
&\hspace{1cm} \leq \sum_{k=1}^{\lfloor nt\rfloor} \epsilon\E_{\rho_0}\left[ \1_{J_k}\right] + \|g\|_{\sup} \Pr_{\rho_0}\left( |w_{nk}|>\epsilon/L_g\right) \leq C\epsilon + L_g^4 \epsilon^{-4} \frac{nt}{n^2}.
\end{align*}
Thus, 
\[ \sum_{k=1}^{\lfloor nt\rfloor} \E_{\rho_0}\left[ \left(g(y_{nk}, w_{nk})-g(y_{nk},0)\right)\1_{J_k}|\cF_{(k-1)/n}\right] = o_{\Pr_{\rho_0}}(1)\]
and it suffices to investigate the limit in probability of
\[  \sum_{k=1}^{\lfloor nt\rfloor} \E_{\rho_0}\left[ g(y_{nk},0)\1_{J_k}|\cF_{(k-1)/n}\right].\]
For this study, we abbreviate $\tilde{g}(x) := g(x,0)$ and note that $\tilde{g}$ is bounded, Lipschitz-continuous with Lipschitz constant $L_g$ and vanishes in a neighborhood of zero (by our assumption on $g$). Suitably rearranging the indicators in the definition of $J_k$ into indicators of disjoint sets yields
\begin{align*}
\1_{J_k} &= \sum_{l=1}^m \left( \1_{\{X_{(k-1)/n}< \rho_0+z_l^-/n<X_{k/n}\leq \rho_0+z_{l-1}^-/n \}} + \1_{\{\rho_0+z_l^-/n<X_{k/n}\leq \rho_0+z_{l-1}^-/n \leq \rho_0\leq X_{(k-1)/n} \}} \right) \\
&\hspace{0.1cm} + \sum_{l=1}^M \left(\1_{\{ X_{(k-1)/n} < \rho_0 \leq \rho_0 + z_i^+/n< X_{k/n}\leq \rho_0+z_{i+1}^+/n\}} + \1_{\{\rho_0 + z_i^+/n< X_{k/n}\leq \rho_0+z_{i+1}^+/n \leq X_{(k-1)/n}\}}\right),
\end{align*}
and we obtain a corresponding decomposition
\begin{align}\label{eq_proof:T_1-T4}
\sum_{k=1}^{\lfloor nt\rfloor} \E_{\rho_0}\left[ \tilde{g}(y_{nk}) \1_{J_k}| \cF_{(k-1)/n}\right] = T_1 + T_2 + T_3 + T_4, 
\end{align}
where
\begin{align*}
T_1 &:= \sum_{k=1}^{\lfloor nt\rfloor}\sum_{l=1}^m \E_{\rho_0}\left[\left. \tilde{g}\left( y_{nk}\right)\1_{\{X_{(k-1)/n}< \rho_0+z_l^-/n<X_{k/n}\leq \rho_0+z_{l-1}^-/n \}} \right| \cF_{(k-1)/n}\right]\\
T_2 &:= \sum_{k=1}^{\lfloor nt\rfloor}\sum_{l=1}^m \E_{\rho_0}\left[\left. \tilde{g}\left( y_{nk}\right)\1_{\{\rho_0+z_l^-/n<X_{k/n}\leq \rho_0+z_{l-1}^-/n \leq \rho_0\leq X_{(k-1)/n} \}}  \right| \cF_{(k-1)/n}\right]\\
T_3 &:=\sum_{k=1}^{\lfloor nt\rfloor}\sum_{l=1}^M \E_{\rho_0}\left[\left. \tilde{g}\left( y_{nk}\right)\1_{\{ X_{(k-1)/n} < \rho_0 \leq \rho_0 + z_i^+/n< X_{k/n}\leq \rho_0+z_{i+1}^+/n\}} \right| \cF_{(k-1)/n}\right]\\
T_4 &:= \sum_{k=1}^{\lfloor nt\rfloor}\sum_{l=1}^M \E_{\rho_0}\left[\left. \tilde{g}\left( y_{nk}\right) \1_{\{\rho_0 + z_i^+/n< X_{k/n}\leq \rho_0+z_{i+1}^+/n \leq X_{(k-1)/n}\}} \right| \cF_{(k-1)/n}\right].
\end{align*}
In the following, we are going to evaluate these four summands seperately.
\begin{itemize}
\item[$\bullet\ \boldsymbol{T_1}.$] With $A_{kl}=\{X_{(k-1)/n}< \rho_0+z_l^-/n<X_{k/n}\leq \rho_0+z_{l-1}^-/n \}$ we have
\[ A_{kl}\cap I_{j,k}^{0,z_{u}^+/n} =\emptyset\qquad \textrm{ and }\qquad A_{kl}\cap I_{8,k}^{z_v^-/n,0}=\emptyset\]
for all $j=2,8$, $1\leq l,v\leq m$, $1\leq u\leq M$ and $1\leq k\leq n$, and obtain
\begin{align*}
&\tilde{g}\left( y_{nk}\right)\1_{\{X_{(k-1)/n}< \rho_0+z_l^-/n<X_{k/n}\leq \rho_0+z_{l-1}^-/n \}} \\
&\hspace{0.5cm} = \tilde{g}\left( \sum_{u=1}^m  \kappa_u^- \left(-\sum_{j=1}^9 Z_k^j(z_u^-/n,0)\1_{A_{kl}} + \log\left(\frac{\alpha^2}{\beta^2}\right) \left(\1_{I_{2,k}^{z_u^-/n,0}}+\1_{I_{8,k}^{z_u^-/n,0}}\right)\1_{A_{kl}}\right) \right. \\
&\hspace{1cm} \left. +\sum_{u=1}^M  \kappa_u^+ \left(\sum_{j=1}^9 Z_k^j(0,z_u^+/n)\1_{A_{kl}} + \log\left(\frac{\beta^2}{\alpha^2}\right)\left( \1_{I_{2,k}^{0,z_u^+/n}}+\1_{I_{8,k}^{0,z_u^+/n}}\right)\1_{A_{kl}} \right)\right)\\
&\hspace{0.5cm} = \tilde{g}\left( \sum_{u=1}^m\sum_{j=1}^9 \kappa_u^- Z_k^j(z_u^-/n,0)\1_{A_{kl}} + \sum_{u=1}^M\sum_{j=1}^9 \kappa_u^+ Z_k^j(0,z_u^+/n)\1_{A_{kl}} \right. \\
&\hspace{1cm}  + \log\left(\frac{\alpha^2}{\beta^2}\right)\left( \sum_{u=l}^m\kappa_u^-\1_{A_{kl}} -\sum_{u=l}^m\sum_{v=u}^m \kappa_v^- \1_{\{\rho_0+z_v^-/n \leq X_{(k-1)/n}< \rho_0\}} \right. \\
&\hspace{6.5cm} \cdot\1_{\{\rho_0+z_l^-/n<X_{k/n}\leq \rho_0+z_{l-1}^-/n\}}\1_{A_{kl}} \Big)\Bigg)\1_{A_{kl}}. 
\end{align*}
Consequently,
\begin{align*}
T_1  = \sum_{k=1}^{\lfloor nt\rfloor}\sum_{l=1}^m \E_{\rho_0}\left[\left.  \tilde{g}\left( \left(\kappa_l^-+\dots + \kappa_m^-\right) \log\left(\frac{\alpha^2}{\beta^2}\right) \right) \1_{A_{kl}} \right| \cF_{(k-1)/n}\right] + r_{1,n},
\end{align*}
where for any $\epsilon>0$,
\begin{align*}
&\E_{\rho_0}\left[|r_{1,n}|\right] \leq \epsilon \sum_{k=1}^{\lfloor nt\rfloor}\sum_{l=1}^m \E_{\rho_0}\left[\1_{A_{kl}}\right] + 2\|g\|_{\sup}\sum_{k=1}^{\lfloor nt\rfloor}\sum_{l=1}^m \Pr_{\rho_0}\left( \left|\sum_{u=1}^m\sum_{j=1}^9 \kappa_u^- Z_k^j(z_u^-/n,0)\1_{A_{kl}} \right. \right. \\
&\hspace{0.5cm} \left.\left.  + \sum_{u=1}^M\sum_{j=1}^9 \kappa_u^+ Z_k^j(0,z_u^+/n)\1_{A_{kl}}\right| + \sum_{u=l}^m\sum_{v=u}^m |\kappa_v^-|\1_{\{\rho_0+z_m^-/n\leq X_{(k-1)/n},X_{k/n}\leq \rho_0\}}  >\frac{\epsilon}{L_g}\right).
\end{align*}
By the same estimation as used for~\eqref{eq_proof:3char_4}, we obtain by Lemma~\ref{lemma:upper_bound_transition_density} and Corollary~\ref{cor:bound_exp_X^2}
\[ \E_{\rho_0}\left[ \1_{\{\rho_0+z_m^-/n\leq X_{(k-1)/n},X_{k/n}\leq \rho_0\}}\right] \leq \frac{C}{n\sqrt{k}}.\]
Then, by Markov's inequality together with Proposition~\ref{prop:moment_product},
\begin{align*}
&\Pr_{\rho_0}\left( \left|\sum_{u=1}^m\sum_{j=1}^9 \kappa_u^- Z_k^j(z_u^-/n,0)\1_{A_{kl}} + \sum_{u=1}^M\sum_{j=1}^9 \kappa_u^+ Z_k^j(0,z_u^+/n)\1_{A_{kl}}\right|\right. \\
&\hspace{2cm} \left.   + \sum_{u=l}^m\sum_{v=u}^m |\kappa_v^-|\1_{\{\rho_0+z_m^-/n\leq X_{(k-1)/n},X_{k/n}\leq \rho_0\}}  >\frac{\epsilon}{L_g} \right)\\
&\hspace{1cm} \leq \frac{2L_g^2}{\epsilon^2}\E_{\rho_0}\left[\left|\sum_{u=1}^m\sum_{j=1}^9 \kappa_u^- Z_k^j(z_u^-/n,0)\1_{A_{kl}} + \sum_{u=1}^M\sum_{j=1}^9 \kappa_u^+ Z_k^j(0,z_u^+/n)\1_{A_{kl}}\right|^2\right]\\
&\hspace{2cm} + \frac{2L_g^2}{\epsilon^2}\E_{\rho_0}\left[\left(\sum_{u=l}^m\sum_{v=u}^m |\kappa_v^-|\1_{\{\rho_0+z_m^-/n\leq X_{(k-1)/n},X_{k/n}\leq \rho_0\}}\right)^2\right]\\
&\hspace{1cm} \leq \frac{CL_g^2}{\epsilon^2} \left(\sum_{u=1}^m \E_{\rho_0}\left[|Z_k^j(z_u^-/n,0)|^2\right] + \sum_{u=1}^M \E_{\rho_0}\left[ |Z_k^j(0,z_u^+/n)|^2\right] \right)\\
&\hspace{2cm} + \frac{CL_g^2}{\epsilon^2}\sum_{u=l}^m\sum_{v=u}^m \E_{\rho_0}\left[\1_{\{\rho_0+z_m^-/n\leq X_{(k-1)/n},X_{k/n}\leq \rho_0\}}\right]\\
&\hspace{1cm} \leq \frac{CL_g^2}{\epsilon^2} \frac{1}{n\sqrt{k}}
\end{align*}
such that with~\eqref{eq_proof:3char_4},
\begin{align}\label{eq_proof:3char_5}
\begin{split}
\E_{\rho_0}\left[|r_{1,n}|\right] &\leq \sum_{k=1}^{\lfloor nt\rfloor}\sum_{l=1}^m  \left( \frac{C\epsilon}{\sqrt{nk}} + \frac{CL_g^2}{\epsilon^2}\frac{1}{n\sqrt{k}}\right) \leq C\epsilon + \frac{CL_g^2}{\epsilon^2}\frac{\sqrt{nt}}{n}.
\end{split}
\end{align}
Note that $\limsup_{n\to\infty} \E_{\rho_0}[|r_{1,n}|]\leq C\epsilon$ and as $\epsilon>0$ was chosen arbitrary, this implies $r_{1,n}=o_{\Pr_{\rho_0}}(1)$. Next, by directly inserting the transition density for the corresponding regime,
\begin{align*}
&\sum_{k=1}^{\lfloor nt\rfloor} \E_{\rho_0}\left[\1_{A_{kl}}|\cF_{(k-1)/n}\right]\\
&\hspace{0.1cm}= \frac{2}{\alpha+\beta}\frac{\beta}{\alpha}\frac{|z_l^- - z_{l-1}^-|}{n \sqrt{2\pi/n}}\sum_{k=1}^{\lfloor nt\rfloor} \1_{\{X_{(k-1)/n}<\rho_0+z_m^-/n\}}\exp\left(-\frac{(X_{(k-1)/n}-\rho_0)^2}{2\alpha^2/n}\right) + o_{\Pr_{\rho_0}}(1)\\
 &\hspace{0.1cm}= \frac{2}{\alpha+\beta}\frac{\beta}{\alpha}\frac{|z_l^- - z_{l-1}^-|}{n \sqrt{2\pi/n}}\sum_{k=1}^{\lfloor nt\rfloor} \1_{\{X_{(k-1)/n}<\rho_0\}}\exp\left(-\frac{(X_{(k-1)/n}-\rho_0)^2}{2\alpha^2/n}\right) + o_{\Pr_{\rho_0}}(1),
\end{align*} 
where the first step follows by a Taylor expansion and is given in detail for the treatment of $S_{31}(k)$ within the verification of~\eqref{condition_2char_1} in Appendix~\ref{App:conditions_prop_stable} and the second one follows as
\[ \sum_{k=1}^{\lfloor nt\rfloor}  \E_{\rho_0}\left[ \left|\1_{\{X_{(k-1)/n}<\rho_0+z_m^-/n\}}-\1_{\{X_{(k-1)/n}<\rho_0\}}\right|\exp\left(-\frac{(X_{(k-1)/n}-\rho_0)^2}{2\alpha^2/n}\right)\right] \longrightarrow 0, \]
which is shown analogously to $r_n^{7,2}$ in the proof of Proposition~\ref{prop:expansion_of_drift_t}. Then, with~\eqref{eq_proof:3char_5},
\begin{align*}
T_1 &= \frac{2}{\alpha+\beta}\frac{\beta}{\alpha}\frac{1}{\sqrt{2\pi}}\frac{1}{\sqrt{n}}\sum_{k=1}^{\lfloor nt\rfloor} \1_{\{X_{(k-1)/n}<\rho_0\}}\exp\left(-\frac{(X_{(k-1)/n}-\rho_0)^2}{2\alpha^2/n}\right) \\
&\hspace{1cm} \cdot \sum_{l=1}^m  \tilde{g}\left( \left(\kappa_l^-+\dots + \kappa_m^-\right) \log\left(\frac{\alpha^2}{\beta^2}\right)\right)|z_l^- - z_{l-1}^-| +  o_{\Pr_{\rho_0}}(1).
\end{align*}\smallskip

\item[$\bullet\ \boldsymbol{T_2}.$] With $A_{kl}=\{\rho_0+z_l^-/n<X_{k/n}\leq \rho_0+z_{l-1}^-/n \leq \rho_0<X_{(k-1)/n} \}$ we obtain
\[ A_{kl}\cap I_{j,k}^{0,z_{u}^+/n} =\emptyset,\qquad A_{kl}\cap I_{2,k}^{z_v^-/n,0}=\emptyset,\qquad \textrm{ and } \qquad A_{kl} \cap I_{8,k}^{z_w^-/n,0} = \emptyset\]
for all $j=2,8$, $1\leq l,v\leq m$, $1\leq u\leq M$, $w\leq l-1$ and $1\leq k\leq n$. Then,
\begin{align}\label{eq_proof:3char_2}
\begin{split}
T_2 &= \sum_{k=1}^{\lfloor nt\rfloor}\sum_{l=1}^m \E_{\rho_0}\left[\left. \tilde{g}\left( y_{nk}\right)\1_{\{\rho_0+z_l^-/n<X_{k/n}\leq \rho_0+z_{l-1}^-/n \leq \rho_0\leq X_{(k-1)/n} \}} \right| \cF_{(k-1)/n}\right] \\
&= \sum_{k=1}^{\lfloor nt\rfloor}\sum_{l=1}^m \E_{\rho_0}\left[ \tilde{g}\left( \sum_{u=1}^m\sum_{j=1}^9 \kappa_u^- Z_k^j(z_u^-/n,0)\1_{A_{kl}} + \sum_{u=l}^m\kappa_u^-\log\left(\frac{\alpha^2}{\beta^2}\right)\1_{A_{kl}} \right.\right.\\
&\hspace{3.5cm} \left.\left.\left. + \sum_{u=1}^M\sum_{j=1}^9 \kappa_u^+ Z_k^j(0,z_u^+/n)\1_{A_{kl}}\right)\1_{A_{kl}}\right|\cF_{(k-1)/n} \right] \\
& = \sum_{k=1}^{\lfloor nt\rfloor}\sum_{l=1}^m \E_{\rho_0}\left[\left.  \tilde{g}\left( \left(\kappa_l^-+\dots + \kappa_m^-\right) \log\left(\frac{\alpha^2}{\beta^2}\right) \right) \1_{A_{kl}} \right| \cF_{(k-1)/n}\right] + r_{2,n},
\end{split}
\end{align} 
where for any $\epsilon>0$,
\begin{align*}
&\E_{\rho_0}\left[|r_{2,n}|\right] \leq \epsilon \sum_{k=1}^{\lfloor nt\rfloor}\sum_{l=1}^m \E_{\rho_0}\left[\1_{A_{kl}}\right] \\
&\hspace{0.5cm}  + \sum_{k=1}^{\lfloor nt\rfloor}\sum_{l=1}^m \Pr_{\rho_0}\left( \left|\sum_{u=1}^m\sum_{j=1}^9 \kappa_u^- Z_k^j(z_u^-/n,0)\1_{A_{kl}} + \sum_{u=1}^M\sum_{j=1}^9 \kappa_u^+ Z_k^j(0,z_u^+/n)\1_{A_{kl}}\right| >\frac{\epsilon}{L_g}\right).
\end{align*} 
By the same arguments used for $r_{1,n}$ in the treatment of $T_1$ before, we obtain
\begin{align}\label{eq_proof:3char_3}
\begin{split}
\E_{\rho_0}\left[|r_{2,n}|\right] &\leq C\epsilon + \frac{C L_g^2}{\epsilon^2}\frac{\sqrt{nt}}{n}
\end{split}
\end{align} 
and consequently $r_{2,n}=o_{\Pr_{\rho_0}}(1)$. Next, 
\begin{align*}
&\sum_{k=1}^{\lfloor nt\rfloor} \E_{\rho_0}\left[\1_{A_{kl}}|\cF_{(k-1)/n}\right]\\
&\hspace{0.5cm}= \frac{2}{\alpha+\beta}\frac{\beta}{\alpha}\frac{|z_l^- - z_{l-1}^-|}{n \sqrt{2\pi/n}}\sum_{k=1}^{\lfloor nt\rfloor} \1_{\{X_{(k-1)/n}\geq\rho_0\}}\exp\left(-\frac{(X_{(k-1)/n}-\rho_0)^2}{2\beta^2/n}\right) + o_{\Pr_{\rho_0}}(1), 
\end{align*} 
as was already proven in the treatment of $S_{32}$ in the proof of~\eqref{condition_2char_1}. Combining this with~\eqref{eq_proof:3char_2} and~\eqref{eq_proof:3char_3} then gives
\begin{align*}
T_2 &= \frac{2}{\alpha+\beta}\frac{\beta}{\alpha}\frac{1}{\sqrt{2\pi}}\frac{1}{\sqrt{n}}\sum_{k=1}^{\lfloor nt\rfloor} \1_{\{X_{(k-1)/n}\geq\rho_0\}}\exp\left(-\frac{(X_{(k-1)/n}-\rho_0)^2}{2\beta^2/n}\right)\\
&\hspace{2cm} \cdot  \sum_{l=1}^m \tilde{g}\left( \left(\kappa_l^-+\dots + \kappa_m^-\right) \log\left(\frac{\alpha^2}{\beta^2}\right) \right) |z_l^- - z_{l-1}^-|  + o_{\Pr_{\rho_0}}(1).
\end{align*}

\item[$\bullet\ \boldsymbol{T_3}.$] By similar arguments as used for $T_2$ we obtain
\begin{align*}
T_3 &= \frac{2}{\alpha+\beta}\frac{\alpha}{\beta}\frac{1}{\sqrt{2\pi}}\frac{1}{\sqrt{n}}\sum_{k=1}^{\lfloor nt\rfloor} \1_{\{X_{(k-1)/n}<\rho_0\}}\exp\left(-\frac{(X_{(k-1)/n}-\rho_0)^2}{2\alpha^2/n}\right) \\
&\hspace{1cm} \cdot \sum_{l=1}^M  \tilde{g}\left( \left(\kappa_l^+ +\dots + \kappa_M^+\right) \log\left(\frac{\beta^2}{\alpha^2}\right)\right)|z_l^+ - z_{l-1}^+| + o_{\Pr_{\rho_0}}(1).
\end{align*}

\item[$\bullet\ \boldsymbol{T_4}.$] By similar arguments as used for $T_1$ we obtain
\begin{align*}
T_4 &= \frac{2}{\alpha+\beta}\frac{\alpha}{\beta}\frac{1}{\sqrt{2\pi}}\frac{1}{\sqrt{n}}\sum_{k=1}^{\lfloor nt\rfloor} \1_{\{X_{(k-1)/n}\geq\rho_0\}}\exp\left(-\frac{(X_{(k-1)/n}-\rho_0)^2}{2\beta^2/n}\right) \\
&\hspace{1cm} \cdot \sum_{l=1}^M  \tilde{g}\left( \left(\kappa_l^+ +\dots + \kappa_M^+\right) \log\left(\frac{\beta^2}{\alpha^2}\right)\right)|z_l^+ - z_{l-1}^+| + o_{\Pr_{\rho_0}}(1).
\end{align*}
\end{itemize}
Recalling the definition~\eqref{eq:Lambda_nt(X)} for $\Lambda_{\alpha,\beta}^n$, summing up our expansions and then applying Lemma~\ref{lemma:Riemann_approximation} finally gives with~\eqref{eq_proof:T_1-T4}
\begin{align*}
&\sum_{k=1}^{\lfloor nt\rfloor} \E_{\rho_0}\left[ \tilde{g}(y_{nk}) \1_{J_k}| \cF_{(k-1)/n}\right]  \\
&\hspace{0.2cm} = \frac{2}{\alpha+\beta}\frac{\beta}{\alpha}\sum_{l=1}^m \tilde{g}\left( \left(\kappa_l^-+\dots + \kappa_m^-\right) \log\left(\frac{\alpha^2}{\beta^2}\right) \right) |z_l^- - z_{l-1}^-| \frac1n \Lambda_{\alpha,\beta}^n\left((X_{(k-1)/n})_{1\leq k\leq \lfloor nt\rfloor}\right)\\
&\hspace{0.5cm} +  \frac{2}{\alpha+\beta}\frac{\alpha}{\beta}\sum_{l=1}^M \tilde{g}\left( \left(\kappa_l^+ +\dots + \kappa_M^+\right) \log\left(\frac{\beta^2}{\alpha^2}\right) \right) |z_l^+ - z_{l-1}^+| \frac1n \Lambda_{\alpha,\beta}^n\left((X_{(k-1)/n})_{1\leq k\leq \lfloor nt\rfloor}\right) \\
&\hspace{0.5cm}+ o_{\Pr_{\rho_0}}(1) \\
&\hspace{0.2cm} \longrightarrow_{\Pr_{\rho_0}} L_t^{\rho_0}(X) \frac{1}{\alpha^2 }\sum_{l=1}^m \tilde{g}\left( \left(\kappa_l^-+\dots + \kappa_m^-\right) \log\left(\frac{\alpha^2}{\beta^2}\right) \right) |z_l^- - z_{l-1}^-| \\
&\hspace{0.5cm} +  L_t^{\rho_0}(X) \frac{1}{\beta^2}\sum_{l=1}^M \tilde{g}\left( \left(\kappa_l^+ +\dots + \kappa_M^+\right)\log\left(\frac{\beta^2}{\alpha^2}\right) \right) |z_l^+ - z_{l-1}^+|. 
\end{align*}

%% Proofs
%\input{Sections/Preliminaries}

%%%%%%%%%%%%%%%%%%%%%%%%%%%%%%%%%%%%%%%%%%%%%%
%% Single Appendix:                         %%
%%%%%%%%%%%%%%%%%%%%%%%%%%%%%%%%%%%%%%%%%%%%%%
%\begin{appendix}
%\section*{???}%% if no title is needed, leave empty \section*{}.
%\end{appendix}
%%%%%%%%%%%%%%%%%%%%%%%%%%%%%%%%%%%%%%%%%%%%%%
%% Multiple Appendixes:                     %%
%%%%%%%%%%%%%%%%%%%%%%%%%%%%%%%%%%%%%%%%%%%%%%
%\begin{appendix}
%\section{???}
%
%\section{???}
%
%\end{appendix}

%%%%%%%%%%%%%%%%%%%%%%%%%%%%%%%%%%%%%%%%%%%%%%
%% Support information, if any,             %%
%% should be provided in the                %%
%% Acknowledgements section.                %%
%%%%%%%%%%%%%%%%%%%%%%%%%%%%%%%%%%%%%%%%%%%%%%
\begin{acks}[Acknowledgments]
The authors would like to thank two anonymous referees for their valuable comments and suggestions improving the representation of the article and Nils Kober for his help producing Figure~\ref{figure:route_of_proof}.
\end{acks}
%%%%%%%%%%%%%%%%%%%%%%%%%%%%%%%%%%%%%%%%%%%%%%
%% Funding information, if any,             %%
%% should be provided in the                %%
%% funding section.                         %%
%%%%%%%%%%%%%%%%%%%%%%%%%%%%%%%%%%%%%%%%%%%%%%
\begin{funding}
This work was supported by the DFG Research Unit $5381$, RO $3766/8$-$1$ and CRC $1597$, Project-ID $499552394$.
%The second author was supported in part by ...
\end{funding}

%%%%%%%%%%%%%%%%%%%%%%%%%%%%%%%%%%%%%%%%%%%%%%
%% Supplementary Material, including data   %%
%% sets and code, should be provided in     %%
%% {supplement} environment with title      %%
%% and short description. It cannot be      %%
%% available exclusively as external link.  %%
%% All Supplementary Material must be       %%
%% available to the reader on Project       %%
%% Euclid with the published article.       %%
%%%%%%%%%%%%%%%%%%%%%%%%%%%%%%%%%%%%%%%%%%%%%%
%\begin{supplement}
%\stitle{???}
%\sdescription{???.}
%\end{supplement}

\bibliographystyle{imsart-nameyear} % Style BST file (imsart-number.bst or imsart-nameyear.bst)
\bibliography{Bibliography}       % Bibliography file (usually '*.bib')

\newpage
\setcounter{section}{0}
\setcounter{page}{1}
\renewcommand*\thesection{\Alph{section}}

\begin{frontmatter}

% "Title of the paper"
\title{\vspace{-0.3cm} Supplement to "The level of self-organized criticality in oscillating Brownian motion: $\MakeLowercase{n}$-consistency and stable Poisson-type convergence of the MLE"}
\runtitle{Supplement to "Stable limit theory for the MLE of OBM"}

%\author{\fnms{Johannes} \snm{Brutsche}\ead[label=e1]{johannes.brutsche@stochastik.uni-freiburg.de}}
%\address{\printead{e1}}
%\and
%\author{\fnms{Angelika} \snm{Rohde}\ead[label=e2]{???}}
%\address{\printead{e2}}
%\affiliation{???}

\begin{aug}
%%%%%%%%%%%%%%%%%%%%%%%%%%%%%%%%%%%%%%%%%%%%%%%
%% Only one address is permitted per author. %%
%% Only division, organization and e-mail is %%
%% included in the address.                  %%
%% Additional information can be included in %%
%% the Acknowledgments section if necessary. %%
%%%%%%%%%%%%%%%%%%%%%%%%%%%%%%%%%%%%%%%%%%%%%%%
\author[A]{\fnms{Johannes}~\snm{Brutsche}}
\and
\author[A]{\fnms{Angelika}~\snm{Rohde}}
%%%%%%%%%%%%%%%%%%%%%%%%%%%%%%%%%%%%%%%%%%%%%%
%% Addresses                                %%
%%%%%%%%%%%%%%%%%%%%%%%%%%%%%%%%%%%%%%%%%%%%%%
\address[A]{Mathematical Institute, University of Freiburg\printead[presep={,\ }]{e1}\printead[presep={,\ }]{e3}}
\end{aug}

\runauthor{J. Brutsche and A. Rohde}

%\affiliation{Albert-Ludwigs-Universit\"at Freiburg}

\vspace{-0.5cm}
\begin{keyword}[class=MSC]
\kwd[Primary ]{60F05}\kwd{62M05}
\kwd[; secondary ]{62F12}\kwd{62E20}
\end{keyword}

\begin{keyword}
\kwd{Stable Poisson convergence}
\kwd{infill asymptotics}
\kwd{$n$-consistency}
\kwd{MLE}
\end{keyword}

\end{frontmatter}

\vspace{-0.1cm}
This supplementary material is organized as follows:\\
\vspace{-0.2cm}

{\small
\tableofcontents
\addtocontents{toc}{\protect\setcounter{tocdepth}{3}} 
}

\section{Notation}\label{App:notation}
Throughout the whole technical supplement, $C_{\alpha,\beta}$ denotes a real and positive constant that only depends on $\alpha$ and $\beta$ but may change from line to line. Any other dependencies are highlighed explicitly, for example by writing $C_{\alpha,\beta}(K)$ in case the constant additionally depends on $K$.
We frequently use the notation
\[ \Pr( A,B ) := \Pr(A\cap B).\]

We denote the four regimes of the transition density~\eqref{eq:transition_density} with $P_i^\rho(x,y;t)$, $i=1,2,3,4$, i.e. 
\begin{align*}
P_1^\rho(x,y;t) &:= \frac{1}{\sqrt{2\pi t}\alpha}\left[ \exp\left( -\frac{(y-x)^2}{2t\alpha^2}\right) - \frac{\alpha-\beta}{\alpha+\beta}\exp\left(-\frac{(y-2\rho+x)^2}{2t\alpha^2}\right)\right], \\
P_2^\rho(x,y;t) &:= \frac{1}{\sqrt{2\pi t}\beta}\left[ \exp\left( -\frac{(y-x)^2}{2t\beta^2}\right) + \frac{\alpha-\beta}{\alpha+\beta}\exp\left(-\frac{(y-2\rho+x)^2}{2t\beta^2}\right)\right], \\
P_3^\rho(x,y;t) &:= \frac{2}{\alpha+\beta}\frac{\alpha}{\beta}\frac{1}{\sqrt{2\pi t}}\exp\left( -\frac{1}{2t}\left(\frac{y-\rho}{\beta} - \frac{x-\rho}{\alpha}\right)^2\right), \\
P_4^\rho(x,y;t) &:= \frac{2}{\alpha+\beta}\frac{\beta}{\alpha}\frac{1}{\sqrt{2\pi t}}\exp\left( -\frac{1}{2t}\left(\frac{y-\rho}{\alpha} - \frac{x-\rho}{\beta}\right)^2\right).
\end{align*}
Note that in constrast to $p_t^\rho(x,y)$, each $P_i^\rho(x,y;t)$ is continuous (and differentiable) in the parameter $\rho$.

\section{Preliminary results}

\begin{lemma}\label{lemma:upper_bound_transition_density}
The transition density \eqref{eq:transition_density} is dominated by a Gaussian density, i.e.
\[ p_t^\rho(x,y)\leq \frac{2}{\alpha+\beta}\frac{1}{\sqrt{2\pi t}}\frac{\max\{\alpha,\beta\}}{\min\{\alpha,\beta\}} \exp\left( - \frac{(y-x)^2}{2t\max\{\alpha^2,\beta^2\}}\right). \]
Moreover, 
\[p_t^\rho(x,y) \geq \frac{2}{\alpha+\beta}\frac{1}{\sqrt{2\pi t}}\frac{\min\{\alpha,\beta\}}{\max\{\alpha,\beta\}} \exp\left( - \frac{(y-x)^2}{2t\min\{\alpha^2,\beta^2\}}\right). \]
\end{lemma}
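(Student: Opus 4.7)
The natural strategy is to verify both inequalities separately within each of the four regimes displayed in \eqref{eq:transition_density}, since the transition density is patchwise explicit. Throughout, denote $M=\max\{\alpha,\beta\}$ and $m=\min\{\alpha,\beta\}$, and observe the identity $|(\alpha-\beta)/(\alpha+\beta)|<1$, which will absorb the sign of the reflection term.

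The two ``crossing'' regimes $x<\rho<y$ and $y\le\rho\le x$ are the easiest: writing $u=|y-\rho|$, $v=|\rho-x|$, the exponent is $-\tfrac{1}{2t}\bigl(u/\beta+v/\alpha\bigr)^2$ (and the analogue in the fourth regime). Using $1/M\le 1/\alpha,1/\beta\le 1/m$ directly yields
\[
\frac{u+v}{M}\;\le\;\frac{u}{\beta}+\frac{v}{\alpha}\;\le\;\frac{u+v}{m},
\]
and since $u+v=|y-x|$ in each of these regimes, this pins the quadratic form between $(y-x)^2/M^2$ and $(y-x)^2/m^2$. The prefactors $\tfrac{2}{\alpha+\beta}\tfrac{\alpha}{\beta}$ and $\tfrac{2}{\alpha+\beta}\tfrac{\beta}{\alpha}$ are automatically bounded in $[\tfrac{2m}{M(\alpha+\beta)},\tfrac{2M}{m(\alpha+\beta)}]$, which gives both claimed bounds in these two regimes.

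For the ``same-sided'' regimes $\{x<\rho,y\le\rho\}$ and $\{x\ge\rho,y>\rho\}$, the key observation is that on these sets $y-\rho$ and $x-\rho$ have the same sign, so expanding $(y-2\rho+x)^2=((y-\rho)+(x-\rho))^2$ and $(y-x)^2=((y-\rho)-(x-\rho))^2$ yields
\[
(y-2\rho+x)^2-(y-x)^2=4(y-\rho)(x-\rho)\;\ge\;0.
\]
Consequently the reflected Gaussian $\exp\!\bigl(-(y-2\rho+x)^2/(2t\alpha^2)\bigr)$ is dominated by $\exp\!\bigl(-(y-x)^2/(2t\alpha^2)\bigr)$ (and analogously with $\beta$ in regime~2). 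Factoring out the leading Gaussian, the bracket in \eqref{eq:transition_density} is bounded between $1-|(\alpha-\beta)/(\alpha+\beta)|=2m/(\alpha+\beta)/?$ and $1+|(\alpha-\beta)/(\alpha+\beta)|=2M/(\alpha+\beta)/?$ — more precisely, the appropriate identity gives $\{2\beta/(\alpha+\beta),\,2\alpha/(\alpha+\beta)\}$ according to the sign of $\alpha-\beta$. A short case distinction ($\alpha\gtrless\beta$) then checks that in every case the resulting prefactor lies in $[\tfrac{2m}{M(\alpha+\beta)},\tfrac{2M}{m(\alpha+\beta)}]$, while the variance parameter $\alpha^2$ (resp.\ $\beta^2$) in the exponent lies in $[m^2,M^2]$; monotonicity of $c\mapsto \exp(-(y-x)^2/(2tc^2))$ then yields both desired inequalities.

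I do not expect any genuine obstacle here — the whole argument is bookkeeping on the four regimes, with the only mildly delicate point being the two elementary inequalities $\alpha(\alpha+\beta)\le 2\beta^2$ for $\alpha\le\beta$ and $\beta(\alpha+\beta)\le 2\alpha^2$ for $\alpha\ge\beta$, both of which follow by checking equality at $\alpha=\beta$ and comparing one-sided derivatives. Once these are in place, the four cases close without further input.
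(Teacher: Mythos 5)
Your proof is correct and follows exactly the route the paper indicates: the paper's own proof consists of the single remark that the claim "is straightforward and done by a case-by-case study for the four regimes," which is precisely the bookkeeping you carry out. The only blemish is the garbled line with the two question marks, but the identity you then state ($1\mp\tfrac{\alpha-\beta}{\alpha+\beta}\in\{\tfrac{2\beta}{\alpha+\beta},\tfrac{2\alpha}{\alpha+\beta}\}$) is the correct one and the case distinction closes as you describe.
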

\begin{proof}
This is straightforward and done by a case-by-case study for the four regimes.
\end{proof}

\begin{cor}\label{cor:bound_exp_X^2}
Let $a\in\R$, $b\in\R_{>0}$ and $l<k$. Then there exists a constant $c_{\alpha,\beta}>0$ such that
\begin{align*}
&\E_{\rho_0}\left[\left. \exp\left( - \frac{(X_{k/n} - a)^2}{2b}\right)\right| X_{l/n}\right] \\
&\hspace{1cm} \leq c_{\alpha,\beta} \sqrt{\frac{b}{b+(k-l)\max\{\alpha^2,\beta^2\}/n}} \exp\left(-\frac12 \frac{(a-x_0)^2}{b+(k-l)\max\{\alpha^2,\beta^2\}/n}\right).
\end{align*}
In particular, we have
\[ \E_{\rho_0}\left[\left. \exp\left( - \frac{(X_{k/n} - a)^2}{2b/n}\right)\right| X_{l/n}\right] \leq c_{\alpha,\beta}(b)\frac{1}{\sqrt{k-l}}\]
for some constant $c_{\alpha,\beta}(b)>0$ not depending on $n$ and $a$.
\end{cor}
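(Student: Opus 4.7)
The strategy is to exploit the Markov property of $X$ together with the Gaussian upper bound on the transition density provided in Lemma~\ref{lemma:upper_bound_transition_density}, and then to reduce the claim to a one-dimensional Gaussian convolution. Writing $t_{k,l} := (k-l)/n$ and using the Markov property, I would rewrite
\[
\E_{\rho_0}\!\left[\left.\exp\!\left(-\tfrac{(X_{k/n}-a)^2}{2b}\right)\,\right|\,X_{l/n}\right]
= \int_\R \exp\!\left(-\tfrac{(y-a)^2}{2b}\right) p_{t_{k,l}}^{\rho_0}\!\left(X_{l/n},y\right) dy.
\]
Next, I would bound the transition density from above by the Gaussian kernel from Lemma~\ref{lemma:upper_bound_transition_density}, which (with variance $\sigma^2 := t_{k,l}\max\{\alpha^2,\beta^2\}$) yields
\[
p_{t_{k,l}}^{\rho_0}\!\left(X_{l/n},y\right)
\leq C_{\alpha,\beta}\,\frac{1}{\sqrt{2\pi t_{k,l}}}\exp\!\left(-\tfrac{(y-X_{l/n})^2}{2\sigma^2}\right).
\]

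I would then perform the standard computation for the product of two Gaussian exponentials: completing the square in $y$ gives
\[
\int_\R \exp\!\left(-\tfrac{(y-a)^2}{2b}-\tfrac{(y-X_{l/n})^2}{2\sigma^2}\right) dy
= \sqrt{\tfrac{2\pi b\sigma^2}{b+\sigma^2}}\exp\!\left(-\tfrac{(a-X_{l/n})^2}{2(b+\sigma^2)}\right).
\]
Combining the two displays and simplifying
$\frac{1}{\sqrt{2\pi t_{k,l}}}\sqrt{\frac{2\pi b\sigma^2}{b+\sigma^2}}
= \max\{\alpha,\beta\}\sqrt{\frac{b}{b+\sigma^2}}$
yields the first stated bound (with $X_{l/n}$ in place of $x_0$ in the exponent, which appears to be a typographical slip since the conditioning is on $X_{l/n}$; if one conditions instead on $X_0=x_0$ one iterates the argument, or alternatively one uses the trivial bound $\exp(-(a-X_{l/n})^2/(2(b+\sigma^2)))\leq 1$, which still suffices for all later applications).

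For the second claim, I would simply substitute $b\leadsto b/n$ in the first bound: then $b/n+\sigma^2 = (b+(k-l)\max\{\alpha^2,\beta^2\})/n$, so
\[
\sqrt{\tfrac{b/n}{b/n+\sigma^2}}
= \sqrt{\tfrac{b}{b+(k-l)\max\{\alpha^2,\beta^2\}}}
\leq \tfrac{\sqrt{b}}{\max\{\alpha,\beta\}\sqrt{k-l}}\quad\text{for } k>l,
\]
and discarding the (bounded by one) exponential factor absorbs the remaining constants into $c_{\alpha,\beta}(b)$. The only mildly delicate point is bookkeeping of the constants from Lemma~\ref{lemma:upper_bound_transition_density} and the Gaussian normalization; there is no genuine obstacle, as the argument reduces entirely to a one-line convolution identity.
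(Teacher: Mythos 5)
Your proposal is correct and follows essentially the same route as the paper's proof: write the conditional expectation as an integral against the transition density, dominate the density by a Gaussian kernel via Lemma~\ref{lemma:upper_bound_transition_density}, compute the resulting Gaussian convolution by completing the square, and replace $b$ by $b/n$ to obtain the $1/\sqrt{k-l}$ bound. You also correctly spotted that the paper writes $x_0$ where the Markov property naturally produces $X_{l/n}$ in the exponent, and, as you observe, this slip is harmless for all downstream applications because only the $1/\sqrt{k-l}$ factor is retained and the exponential factor is bounded by one.
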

\begin{proof}
By Lemma~\ref{lemma:upper_bound_transition_density}, the left-hand side of the statement is bounded from above by
\begin{align*}
\frac{2}{\alpha+\beta}\frac{1}{\sqrt{2\pi (k-l)/n}}\frac{\max\{\alpha,\beta\}}{\min\{\alpha,\beta\}} \int_\R  \exp\left( - \frac{(z - a)^2}{2b}\right)\exp\left( - \frac{(z-x_0)^2}{2(k-l)\max\{\alpha^2,\beta^2\}/n}\right) dz.
\end{align*}
Set $d:=(k-l)\max\{\alpha^2,\beta^2\}/n$. For the integral, we find by completing the square,
\begin{align*}
&\int_\R  \exp\left( - \frac{(z - a)^2}{2b}\right)\exp\left( - \frac{(z-x_0)^2}{2d}\right) dz\\
&\hspace{1cm} = \exp\left( -\frac{a^2d +x_0^2 b}{2bd} + \left( \frac{ad+x_0 b}{b+d}\right)^2\frac{b+d}{2bd}\right) \\
&\hspace{3cm} \cdot \int_\R \exp\left( - \frac{b+d}{2bd}\left( z- \frac{ad+x_0 b}{b+d}\right)^2 \right) dz \\
&\hspace{1cm} = \exp\left( -\frac12 \frac{(a-x_0)^2}{b+d}\right) \sqrt{2\pi \frac{bd}{b+d}}.
\end{align*}
Then, the desired upper bound follows for the constant
\[ c_{\alpha,\beta} = \frac{2}{\alpha+\beta}\frac{\max\{\alpha^2,\beta^2\}}{\min\{\alpha,\beta\}}.\]
\end{proof}

Corollary~\ref{cor:bound_exp_X^2} is often used in combination with the inequalities
\[ \sum_{k=1}^n \frac{1}{\sqrt{k}} \leq 2\sqrt{n}, \quad \textrm{ and }\quad \sum_{k=1}^n \sum_{l=1}^{k-1} \frac{1}{\sqrt{k(k-l)}}\leq 4n.\]
Both of them are easily obtained by comparison with corresponding integral and applied without further notice.\\

%\begin{lemma}\label{lemma:sum_1/sqrt}
%Set $k_0=0$. Then for all $m\in\N$,
%\[ \sum_{1\leq k_1 <k_2<\dots <k_m\leq n} \prod_{i=1}^m \frac{1}{\sqrt{k_i-k_{i-1}}} \leq 2^m n^{m/2}.\]
%\end{lemma}
%\begin{proof}
%The proof is done by induction. For $m=1$, 
%\[\sum_{k_1=1}^n \frac{1}{\sqrt{k_1}} \leq \int_0^n \frac{1}{\sqrt{x}} dx = \left[ 2\sqrt{x}\right]_0^n =2\sqrt{n}.\]
%For $m\mapsto m+1$, we note that for given $k_m$,
%\[ \sum_{k_m<k_{m+1}\leq n}\frac{1}{\sqrt{k_{m+1}-k_m}} \leq \int_{k_m}^n \frac{1}{\sqrt{x-k_m}}dx = \left[2\sqrt{x-k_m}\right]_{k_m}^n \leq 2\sqrt{n}.\]
%Then, by the induction hypothesis
%\begin{align*}
%&\sum_{1\leq k_1 <k_2<\dots <k_{m+1}\leq n} \prod_{i=1}^{m+1} \frac{1}{\sqrt{k_i-k_{i-1}}} \\
%&\hspace{1.5cm} = \sum_{1\leq k_1 <k_2<\dots <k_m\leq n} \prod_{i=1}^m \frac{1}{\sqrt{k_i-k_{i-1}}} \sum_{k_m<k_{m+1}\leq n}\frac{1}{\sqrt{k_{m+1}-k_m}}\\
%&\hspace{1.5cm} \leq 2\sqrt{n} \sum_{1\leq k_1 <k_2<\dots <k_m\leq n} \prod_{i=1}^m \frac{1}{\sqrt{k_i-k_{i-1}}} \leq 2\sqrt{n} 2^m n^{m/2} = 2^{m+1} n^{(m+1)/2}.
%\end{align*}
%\end{proof}

Within the next lemma and its proof, we use the notation $a\wedge b := \min\{a,b\}$ for real numbers $a,b\in\R$.

\begin{lemma}\label{lemma:second_moment_sum_intervals}
Let $f_n:\R^2\longrightarrow\R$ be functions such that for every $c>0$ we have the bound $|f_n(x,y)|^m \exp(-n(x-y)^2/c)\leq C(m,c)$ for some constant $C=C(m,c)>0$, where $m\in\{1,2\}$ and $C$ being independent of $n$. Moreover, let $J_1,J_2\subset\R$ be two intervals (possibly infinite). Then for some constant $C(\alpha,\beta,c)>0$ that depends only on $\alpha,\beta$ and $c$,
\begin{align*}
&\E_{\rho_0}\left[ \left(\sum_{k=1}^n f_n(X_{(k-1)/n},X_{k/n})\1_{J_1}(X_{(k-1)/n})\1_{J_2}(X_{k/n}) \right)^2\right]\\
&\hspace{1.5cm} \leq C(\alpha,\beta,c)n\left( 1\wedge \lambda(J_1)\wedge\lambda(J_2)\right) + C(\alpha,\beta,c)n^2\left(1\wedge \lambda(J_1)^2\wedge \lambda(J_2)^2\right)
\end{align*} 
and
\begin{align*}
&\E_{\rho_0}\left[ \left(\sum_{k=1}^n \E_{\rho_0}\left[f_n(X_{(k-1)/n},X_{k/n})\1_{J_1}(X_{(k-1)/n})\1_{J_2}(X_{k/n})\mid X_{(k-1)/n}\right] \right)^2\right] \\
&\hspace{1.5cm}\leq C(\alpha,\beta,c)n\left( 1\wedge \lambda(J_1)\wedge\lambda(J_2)\right) + C(\alpha,\beta,c) n^2\left(1\wedge \lambda(J_1)^2\wedge \lambda(J_2)^2\right).
\end{align*} 
\end{lemma}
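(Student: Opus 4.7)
The plan is to fix $c>2\max\{\alpha^2,\beta^2\}$ in the hypothesis on $f_n$; combined with the Gaussian upper bound of Lemma~\ref{lemma:upper_bound_transition_density}, this yields the absorption estimate
\[
|f_n(x,y)|^m\,p_{1/n}^{\rho_0}(x,y)\leq C\sqrt{n}\exp\!\big(-n(y-x)^2/c'\big),\qquad m\in\{1,2\},
\]
for some $c'>0$, so every integral of $|f_n|^m$ against the one-step transition density reduces to a Gaussian one. Writing $X_k:=f_n(X_{(k-1)/n},X_{k/n})\1_{J_1}(X_{(k-1)/n})\1_{J_2}(X_{k/n})$, I would expand $\E_{\rho_0}[(\sum_k X_k)^2]=\sum_k\E_{\rho_0}[X_k^2]+2\sum_{k<l}\E_{\rho_0}[X_kX_l]$ and treat the diagonal and off-diagonal pieces separately.

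For the diagonal, $\E_{\rho_0}[X_k^2]$ integrates $f_n^2$ against the joint density $\pi_k(x,y)=p_{(k-1)/n}^{\rho_0}(x_0,x)p_{1/n}^{\rho_0}(x,y)$ on $J_1\times J_2$. Absorbing $f_n^2$ into $p_{1/n}^{\rho_0}$ and integrating $y$ over $J_2$ first produces the factor $C\min\{1,\sqrt n\lambda(J_2)\}$; integrating $x$ over $J_1$ against the marginal $p_{(k-1)/n}^{\rho_0}(x_0,\cdot)$ then yields $C\min\{1,\sqrt{n/(k-1)}\lambda(J_1)\}$, and $\sum_{k=1}^n\min\{1,\sqrt{n/k}\lambda\}\leq Cn\min\{1,\lambda\}$ gives
\[
\sum_k \E_{\rho_0}[X_k^2]\;\leq\;Cn\min\{1,\sqrt n\lambda(J_2)\}\min\{1,\lambda(J_1)\}.
\]
A mirrored bound is obtained by swapping the order of integration: completing the square in $x$ in $\int_{J_1}p_{(k-1)/n}^{\rho_0}(x_0,x)p_{1/n}^{\rho_0}(x,y)\,dx$ shows this integral is bounded by the product of $\min\{1,\sqrt n\lambda(J_1)\}$ and a Gaussian in $y$ centred at $x_0$ of variance $\asymp k/n$. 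Integrating in $y$ over $J_2$ then produces the factor $\min\{1,\sqrt{n/k}\lambda(J_2)\}$, and summing yields $Cn\min\{1,\sqrt n\lambda(J_1)\}\min\{1,\lambda(J_2)\}$. A short case analysis in $\lambda(J_i)$ versus $1/\sqrt n$ and $1$ then shows that the minimum of the two bounds is dominated by the target $Cn(1\wedge\lambda(J_1)\wedge\lambda(J_2))+Cn^2(1\wedge\lambda(J_1)^2\wedge\lambda(J_2)^2)$.

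For the off-diagonal, the Markov property gives $\E_{\rho_0}[X_kX_l]=\E_{\rho_0}\big[X_k\,\E_{\rho_0}[X_l\mid\cF_{k/n}]\big]$ for $k<l$, and exactly the same two integration orders yield the pointwise bound
\[
\big|\E_{\rho_0}[X_l\mid\cF_{k/n}]\big|\leq C\min\{1,\sqrt n\lambda(J_2)\}\min\{1,\sqrt{n/(l-k-1)}\lambda(J_1)\}
\]
together with its mirror, uniformly in the conditioning value. Multiplying by the analogous pointwise bound on $\E_{\rho_0}[|X_k|]$, summing over $k<l$, and taking the minimum of the two mirrored estimates produces the off-diagonal contribution $Cn^2(1\wedge\lambda(J_1)^2\wedge\lambda(J_2)^2)$. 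The second assertion of the lemma follows from the same scheme: $\overline Y_k=\E_{\rho_0}[X_k\mid\cF_{(k-1)/n}]$ is $\cF_{(k-1)/n}$-measurable and satisfies $|\overline Y_k|\leq C\min\{1,\sqrt n\lambda(J_2)\}\1_{J_1}(X_{(k-1)/n})$ (and a mirrored variant), so the same Markov factorisation and summation reproduce identical bounds.

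The principal technical obstacle lies in the mirrored integration estimate: because the OBM transition density is not symmetric in $(x,y)$, no direct reverse-time argument is available, and the mirror bound must be produced by completion of the square in $x$ using only the Gaussian upper bound of Lemma~\ref{lemma:upper_bound_transition_density}. Without this mirror, the bound $Cn\min\{1,\sqrt n\lambda(J_2)\}\min\{1,\lambda(J_1)\}$ alone fails to match the claim, for instance in the regime $\lambda(J_1)\geq 1$, $\lambda(J_2)\asymp 1/n$ where it gives $C\sqrt n$ whereas the target is $O(1)$.
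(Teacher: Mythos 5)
Your proposal is correct, and the overall architecture --- absorb $f_n^m$ into the one-step transition density using the freedom in the choice of $c$, expand the square, bound diagonal and cross terms via iterated conditioning, and produce \emph{two} estimates (one for each order of integration over $J_1$ and $J_2$) whose minimum is then shown to be dominated by the target --- is the same as the paper's. The place where you diverge is in how the ``mirror'' estimate is obtained. The paper does not complete the square in the $(k-1)/n\to k/n$ step at that point; instead it uses the bracket
\[
\int_{J_i}\sqrt{n}\exp\!\Big(-\tfrac{(y-X_{(k-1)/n})^2}{d/n}\Big)dy\;\leq\;\sqrt{\pi d}\wedge\Big(\1_{J_i}(X_{(k-1)/n})\sqrt{\pi d}+\1_{J_i^c}(X_{(k-1)/n})\lambda(J_i)\sqrt{n}\,e^{-\nu(J_i,X_{(k-1)/n})/(d/n)}\Big),
\]
with $\nu(J_i,x)$ the squared distance from $x$ to the endpoints of $J_i$, and then applies Corollary~\ref{cor:bound_exp_X^2} to convert $\E_{\rho_0}[\exp(-\nu/(d/n))]$ into a $1/\sqrt{k-l}$ decay. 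This is the paper's version of your ``mirror'': in both arguments the key point is to transfer the $\lambda(J_2)$-factor into a term that still carries the temporal $k^{-1/2}$ decay, and both arguments rest on the same Gaussian convolution computation (the paper's is hidden inside Corollary~\ref{cor:bound_exp_X^2}). Your completion-of-squares is marginally more explicit but produces per-$k$ bounds of the slightly asymmetric form $\min\{1,\sqrt{n}\lambda(J_2)\}\min\{1,\sqrt{n/k}\,\lambda(J_1)\}$ rather than the paper's symmetric $1\wedge\sqrt{n/k}\lambda(J_1)\wedge\sqrt{n/k}\lambda(J_2)$; both sum to the same thing, and the case analysis you allude to reduces to assuming $\lambda(J_1)\leq\lambda(J_2)$ by symmetry and then discarding the $\min\{1,\sqrt{n}\lambda(J_2)\}\leq 1$ factor. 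One small imprecision worth noting: for the second assertion your stated pointwise bound on $\overline Y_k$ has no genuine pointwise mirror (the $x$-integral has been replaced by conditioning on $X_{(k-1)/n}$), so the cleanest reduction, which the paper uses, is Jensen's inequality $\E_{\rho_0}[\overline Y_k^2]\leq\E_{\rho_0}[X_k^2]$ for the diagonal plus $\E_{\rho_0}[|\overline Y_k|]\leq\E_{\rho_0}[|X_k|]$ for the cross terms; this is what your ``same scheme'' phrase is implicitly relying on.
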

\begin{proof}
We will repeatedly use that for $l<k$ and $i=1,2$ by Lemma~\ref{lemma:upper_bound_transition_density}
\begin{align}\label{eq_proof:expectation_indicator}
\begin{split}
\E_{\rho_0}\left[ \1_{J_i}(X_{k/n})\mid X_{l/n}\right] &\leq C_{\alpha,\beta}\int_{J_i}\frac{1}{\sqrt{(k-l)/n}} \exp\left(-\frac{(y-X_{l/n})^2}{2\max\{\alpha^2,\beta\}(k-l)/n}\right) dy \\
&\leq C_{\alpha,\beta}\left( 1\wedge \frac{\lambda(J_i)}{\sqrt{(k-l)/n}}\right).
\end{split}
\end{align}
Moreover, for every interval $J_i$, there exist constants $c_1(J_i), c_2(J_i)$ (the endpoints of the interval, possibly $\pm\infty$) such that for any $d>0$ and with the shorthand notation
\[ \nu(J_i, X_{(k-1)/n}) := \min\{(X_{(k-1)/n}-c_1(J_i))^2,(X_{(k-1)/n}-c_2(J_i))^2\}\]
we have
\begin{align}\label{eq_proof:bound_integral_J_i}
\begin{split}
&\int_{J_i} \sqrt{n} \exp\left(-\frac{(y-X_{(k-1)/n})^2}{d/n}\right) dy  \\
&\hspace{0.2cm}\leq \sqrt{\pi d}\wedge\left(\1_{J_i}(X_{(k-1)/n})\sqrt{\pi d}+\1_{J_i^c}(X_{(k-1)/n})\lambda(J_i)\sqrt{n}\exp\left(-\frac{\nu(J_i,X_{(k-1)/n})}{d/n}\right)\right).
\end{split}
\end{align}
By Corollary~\ref{cor:bound_exp_X^2}, we find for $l<k$
\begin{align}\label{eq_proof:bound_exp_nu}
\begin{split}
&\E_{\rho_0}\left[\left. \exp\left(-\frac{\nu(J_i,X_{(k-1)/n}}{d/n}\right)\right| X_{l/n}\right] \\
&\hspace{0.2cm}\leq \E_{\rho_0}\left[\left. \exp\left(-\frac{(X_{(k-1)/n}-c_1(J_i))^2}{d/n}\right)\right| X_{l/n}\right] + \E_{\rho_0}\left[\left.\exp\left(-\frac{(X_{(k-1)/n}-c_2(J_i))^2}{d/n}\right)\right| X_{l/n}\right] \\
&\hspace{0.2cm} \leq C_{\alpha,\beta} \sqrt{\frac{d/n}{d/n+(k-l)/n}} \left( \exp\left(-\frac{(c_1(J_i)-X_{l/n})^2}{d/n}\right) + \exp\left(-\frac{(c_2(J_i)-X_{l/n})^2}{d/n}\right) \right) \\
&\hspace{0.2cm} \leq C_{\alpha,\beta}(d) \frac{1}{\sqrt{k-l}},
\end{split}
\end{align}
where it is important to note that the final constant does not depend on $J_i$. We now start with the first assertion of the lemma. Here, the squared terms are bounded with Lemma~\ref{lemma:upper_bound_transition_density} and the inequalities~\eqref{eq_proof:expectation_indicator}, \eqref{eq_proof:bound_integral_J_i} and~\eqref{eq_proof:bound_exp_nu} by
\begin{align}\label{eq_proof:estimation_squared_term}
\begin{split}
&\E_{\rho_0}\left[ f_n(X_{(k-1)/n},X_{k/n})^2\1_{J_1}(X_{(k-1)/n})\1_{J_2}(X_{k/n}) \right] \\
&\hspace{0.2cm} \leq C_{\alpha,\beta}\E_{\rho_0}\left[\1_{J_1}(X_{(k-1)/n}) \int_{J_2} f_n(X_{(k-1)/n},y)^2 \sqrt{n} \exp\left(-\frac{(y-X_{(k-1)/n})^2}{2\max\{\alpha^2,\beta^2\}/n}\right) dy \right] \\
&\hspace{0.2cm} \leq C_{\alpha,\beta}(c)\E_{\rho_0}\left[\1_{J_1}(X_{(k-1)/n}) \int_{J_2} \sqrt{n} \exp\left(-\frac{(y-X_{(k-1)/n})^2}{4\max\{\alpha^2,\beta^2\}/n}\right) dy \right] \\
&\hspace{0.2cm} \leq C_{\alpha,\beta}(c)\E_{\rho_0}\Bigg[\1_{J_1}(X_{(k-1)/n}) \Big( 1\wedge \Big(\1_{J_2}(X_{(k-1)/n}) \\
&\hspace{3cm} \left.\left.\left.+ \1_{J_2^c}(X_{(k-1)/n}) \lambda(J_2) \sqrt{n}\exp\left(-\frac{\nu(J_2,X_{(k-1)/n})}{4\max\{\alpha^2,\beta^2\}/n}\right)\right)\right) \right] \\
&\hspace{0.2cm} \leq C_{\alpha,\beta}(c)\Bigg( \E_{\rho_0}\left[\1_{J_1}(X_{(k-1)/n})\right] \wedge \E_{\rho_0}\Big[\1_{J_1\cap J_2}(X_{(k-1)/n}) \\
&\hspace{3cm} \left.\left. + \1_{J_1\cap J_2^c}(X_{(k-1)/n})\lambda(J_2)\sqrt{n}\exp\left(-\frac{\nu(J_2,X_{(k-1)/n})}{4\max\{\alpha^2,\beta^2\}/n}\right)\right] \right) \\
&\hspace{0.2cm} \leq C_{\alpha,\beta}(c)\Bigg( \E_{\rho_0}\left[\1_{J_1}(X_{(k-1)/n})\right] \wedge \Big(\E_{\rho_0}\left[\1_{J_2}(X_{(k-1)/n})\right] \\
&\hspace{3cm} \left.\left. + \lambda(J_2)\sqrt{n}\E_{\rho_0}\left[\exp\left(-\frac{\nu(J_2,X_{(k-1)/n})}{4\max\{\alpha^2,\beta^2\}/n}\right)\right] \right)\right) \\
&\hspace{0.2cm} \leq C_{\alpha,\beta}(c)\left( 1\wedge \frac{\lambda(J_1)}{\sqrt{k/n}} \wedge \frac{\lambda(J_2)}{\sqrt{k/n}}\right).
\end{split}
\end{align}
Using the Cauchy-Schwarz inequality, 
\begin{align*}
&\E_{\rho_0}\left[ f_n(X_{(k-2)/n},X_{(k-1)/n})\1_{J_1}(X_{(k-2)/n})\1_{J_2}(X_{(k-1)/n})f_n(X_{(k-1)/n},X_{k/n})\1_{J_1}(X_{(k-1)/n})\1_{J_2}(X_{k/n}) \right]\\
&\hspace{1cm} \leq C_{\alpha,\beta}(c)\left( 1\wedge \frac{\lambda(J_1)}{\sqrt{k/n}} \wedge \frac{\lambda(J_2)}{\sqrt{k/n}}\right).
\end{align*}
For the cross terms with $l<k-1$, we find by iterative conditioning, again Lemma~\ref{lemma:upper_bound_transition_density} as well as the inequalities~\eqref{eq_proof:expectation_indicator}, \eqref{eq_proof:bound_integral_J_i} and~\eqref{eq_proof:bound_exp_nu},
\begin{align*}
&\E_{\rho_0}\left[ f_n(X_{(l-1)/n},X_{l/n})\1_{J_1}(X_{(l-1)/n})\1_{J_2}(X_{l/n})f_n(X_{(k-1)/n},X_{k/n})\1_{J_1}(X_{(k-1)/n})\1_{J_2}(X_{k/n}) \right]\\
&\hspace{0.2cm} = \E_{\rho_0}\Big[ f_n(X_{(l-1)/n},X_{l/n})\1_{J_1}(X_{(l-1)/n})\1_{J_2}(X_{l/n})\1_{J_1}(X_{(k-1)/n})\\
&\hspace{3.2cm} \cdot \E_{\rho_0}\left[f_n(X_{(k-1)/n},X_{k/n})\1_{J_2}(X_{k/n})\mid \cF_{(k-1)/n}\right] \Big]\\
&\hspace{0.2cm} \leq C_{\alpha,\beta}\E_{\rho_0}\left[ f_n(X_{(l-1)/n},X_{l/n})\1_{J_1}(X_{(l-1)/n})\1_{J_2}(X_{l/n})\1_{J_1}(X_{(k-1)/n}) \right. \\
&\hspace{3cm} \left.\cdot\int_{J_2} f_n(X_{(k-1)/n},y) \sqrt{n} \exp\left(-\frac{(y-X_{(k-1)/n})^2}{2\max\{\alpha^2,\beta^2\}/n}\right) dy \right]\\
&\hspace{0.2cm} \leq C_{\alpha,\beta}(c)\E_{\rho_0}\Big[ f_n(X_{(l-1)/n},X_{l/n})\1_{J_1}(X_{(l-1)/n})\1_{J_2}(X_{l/n})\1_{J_1}(X_{(k-1)/n})  \\
&\hspace{1cm} \cdot\left.  \left( 1\wedge \left(\1_{J_2}(X_{(k-1)/n}) + \1_{J_2^c}(X_{(k-1)/n})\lambda(J_2)\sqrt{n} \exp\left(-\frac{\nu(J_2,X_{(k-1)/n})}{4\max\{\alpha^2,\beta^2\}/n}\right)\right)\right)\right] \\
&\hspace{0.2cm} \leq C_{\alpha,\beta}(c)\E_{\rho_0}\Big[ f_n(X_{(l-1)/n},X_{l/n})\1_{J_1}(X_{(l-1)/n})\1_{J_2}(X_{l/n}) \Big( \E_{\rho_0}\left[\1_{J_1}(X_{(k-1)/n})\mid\cF_{l/n}\right]  \\
&\hspace{0.5cm} \left.\left. \wedge \left( \E_{\rho_0}\left[\1_{J_2}(X_{(k-1)/n})\mid \cF_{l/n}\right] + \lambda(J_2)\sqrt{n} \E_{\rho_0}\left[\left. \exp\left(-\frac{\nu(J_2,X_{(k-1)/n})}{4\max\{\alpha^2,\beta^2\}/n}\right)\right| \cF_{l/n}\right]\right)\right)\right] \\
&\hspace{0.2cm} \leq  C_{\alpha,\beta}(c)\left( 1\wedge \frac{\lambda(J_1)}{\sqrt{(k-l)/n}} \wedge \frac{\lambda(J_2)}{\sqrt{(k-l)/n}}\right)\E_{\rho_0}\Big[ f_n(X_{(l-1)/n},X_{l/n})\1_{J_1}(X_{(l-1)/n})\1_{J_2}(X_{l/n}) \Big] \\
&\hspace{0.2cm} \leq  C_{\alpha,\beta}(c)\left( 1\wedge \frac{\lambda(J_1)}{\sqrt{(k-l)/n}} \wedge \frac{\lambda(J_2)}{\sqrt{(k-l)/n}}\right)\left( 1\wedge \frac{\lambda(J_1)}{\sqrt{l/n}} \wedge \frac{\lambda(J_2)}{\sqrt{l/n}}\right) \\
&\hspace{0.2cm} = C_{\alpha,\beta}(c)\left( 1 \wedge \frac{\lambda(J_1)\wedge\lambda(J_2)}{\sqrt{(k-l)/n}}\wedge \frac{\lambda(J_1)\wedge\lambda(J_2)}{\sqrt{l/n}} \wedge\frac{\lambda(J_1)^2\wedge\lambda(J_2)^2\wedge\lambda(J_1)(\lambda(J_2)}{\sqrt{(k-l)l}/n}\right),
\end{align*}
where the last inequality follows as in~\eqref{eq_proof:estimation_squared_term}. In particular, we obtain
\begin{align*}
&\E_{\rho_0}\left[ f_n(X_{(l-1)/n},X_{l/n})\1_{J_1}(X_{(l-1)/n})\1_{J_2}(X_{l/n})f_n(X_{(k-1)/n},X_{k/n})\1_{J_1}(X_{(k-1)/n})\1_{J_2}(X_{k/n}) \right]\\
&\hspace{1cm} \leq C_{\alpha,\beta}(c)\left( 1 \wedge\frac{\lambda(J_1)^2\wedge\lambda(J_2)^2}{\sqrt{(k-l)l}/n}\right) 
\end{align*}
and thus,
\begin{align*}
&\E_{\rho_0}\left[ \left(\sum_{k=1}^n f_n(X_{(k-1)/n},X_{k/n})\1_{J_1}(X_{(k-1)/n})\1_{J_2}(X_{k/n}) \right)^2\right] \\
&\hspace{0.2cm} \leq 3C_{\alpha,\beta}(c) \sum_{k=1}^n \left( 1\wedge \frac{\lambda(J_1)}{\sqrt{k/n}} \wedge \frac{\lambda(J_2)}{\sqrt{k/n}}\right) + 2C_{\alpha,\beta}(c)\sum_{k=1}^n \sum_{l=1}^{k-2} \left( 1 \wedge\frac{\lambda(J_1)^2\wedge\lambda(J_2)^2}{\sqrt{(k-l)l}/n}\right) \\
&\hspace{0.2cm} \leq C_{\alpha,\beta}(c) \left( n\wedge \left( \lambda(J_1)\wedge \lambda(J_2)\right)\sqrt{n}\sum_{k=1}^n \frac{1}{\sqrt{k}}\right) \\
&\hspace{2cm} + C_{\alpha,\beta}(c) \left( n^2\wedge \left( \lambda(J_1)^2\wedge \lambda(J_2)^2\right)n \sum_{k=1}^n \sum_{l=1}^{k-2} \frac{1}{\sqrt{(k-l)l}}\right) \\
&\hspace{0.2cm} \leq C_{\alpha,\beta}(c)n \left( 1\wedge \lambda(J_1)\wedge \lambda(J_2)\right) + C_{\alpha,\beta}(c)n^2 \left(1\wedge \lambda(J_1)^2\wedge \lambda(J_2)^2\right).
\end{align*}
This gives the first assertion of the lemma. The second one is proven similarly. Here, we have for the squared terms using Jensen's inequality and~\eqref{eq_proof:estimation_squared_term}
\begin{align*}
&\E_{\rho_0}\left[ \E_{\rho_0}\left[f_n(X_{(k-1)/n},X_{k/n})\1_{J_1}(X_{(k-1)/n})\1_{J_2}(X_{k/n})\mid X_{(k-1)/n}\right]^2 \right] \\
&\hspace{0.2cm} \leq \E_{\rho_0}\left[ f_n(X_{(k-1)/n},X_{k/n})^2\1_{J_1}(X_{(k-1)/n})\1_{J_2}(X_{k/n}) \right] \leq  C_{\alpha,\beta}(c)\left( 1\wedge \frac{\lambda(J_1)}{\sqrt{k/n}} \wedge \frac{\lambda(J_2)}{\sqrt{k/n}}\right).
\end{align*}
Again, the cross terms for $l=k-1$ give the same bound by Cauchy-Schwarz' inequality. Up to slight modification, the case $l<k-1$ works analogously as above using Lemma~\ref{lemma:upper_bound_transition_density} and the inequalities~\eqref{eq_proof:expectation_indicator}, \eqref{eq_proof:bound_integral_J_i} and~\eqref{eq_proof:bound_exp_nu}.
\end{proof}

\begin{lemma}\label{lemma:bound_log(C+exp)}
Let $C>0$ and $(a_n)_{n\in\N}$ be a sequence with $a_n\geq 0$ for all $n\in\N$. Then we have
\[\left| \log\left(C+ (1-C)\exp(-a_n)\right)\right|\leq a_n\max\left\{ |1-C|, \frac{|1-C|}{1\wedge C}\right\}.   \]
\end{lemma}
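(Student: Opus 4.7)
Set $f(a) := \log\bigl(C + (1-C)e^{-a}\bigr)$ for $a \ge 0$. The strategy is to show that $|f(a)| \le a\, M$ with $M := \max\{|1-C|,\ |1-C|/(1\wedge C)\}$ by writing $f(a) = \int_0^a f'(s)\,ds$ (note $f(0) = \log 1 = 0$) and controlling the derivative uniformly in $a \ge 0$. A direct computation gives
\[
f'(a) \;=\; -\,\frac{(1-C)\,e^{-a}}{C + (1-C)\,e^{-a}},
\]
so the task reduces to bounding the denominator from below, case by case.

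First I would verify that the argument of the logarithm stays positive on $[0,\infty)$, which ensures $f$ is well defined and differentiable: for $0 < C < 1$ we have $C+(1-C)e^{-a} \ge C > 0$, and for $C \ge 1$ we have $C+(1-C)e^{-a} = C - (C-1)e^{-a} \ge C - (C-1) = 1 > 0$, using $e^{-a} \le 1$. Next I would bound $|f'(a)|$ in each regime. In the regime $C \ge 1$, the denominator is $\ge 1$, so
\[
|f'(a)| \;\le\; (C-1)\,e^{-a} \;\le\; |1-C| \;=\; \frac{|1-C|}{1\wedge C},
\]
since $1 \wedge C = 1$. In the regime $0 < C < 1$, the denominator is $\ge C = 1\wedge C$, so
\[
|f'(a)| \;\le\; \frac{(1-C)\,e^{-a}}{C} \;\le\; \frac{|1-C|}{1\wedge C}.
\]
In both cases $|f'(a)| \le M$ uniformly in $a \ge 0$.

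Finally, the claim follows from
\[
|f(a_n)| \;=\; \left|\int_0^{a_n} f'(s)\,ds\right| \;\le\; a_n\, \sup_{s\ge 0}|f'(s)| \;\le\; a_n\, M,
\]
which is precisely the assertion. There is no genuine obstacle here; the only point requiring a moment of care is the uniform lower bound on the denominator $C + (1-C)e^{-a}$, which is why the bound splits into the two cases encoded by $\max\{|1-C|, |1-C|/(1\wedge C)\}$.
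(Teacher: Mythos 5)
Your proof is correct. It differs from the paper's in the basic mechanism: the paper writes $\log(C+(1-C)e^{-a_n}) = \log(1+x)$ with $x = (1-C)(e^{-a_n}-1)$ and invokes the two-sided inequality $x/(1+x)\le\log(1+x)\le x$ together with $1-e^{-a}\le a$, whereas you represent $f(a_n)=\int_0^{a_n}f'(s)\,ds$ and bound $|f'|$ uniformly. Both routes rest on the same underlying estimate, the lower bound $C+(1-C)e^{-a}\ge 1\wedge C$; in the paper this controls the denominator coming from $x/(1+x)$, in your version it controls the denominator of $f'$. Your approach is perhaps more systematic, since it reduces everything to a single uniform derivative bound and avoids having to juggle the two one-sided logarithm inequalities and the auxiliary inequality $1-e^{-a}\le a$; the paper's version avoids differentiation entirely. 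Both give exactly the stated constant, and both case splits (on whether $C\ge 1$ or $C<1$) coincide.
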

\begin{proof}
By the basic inequality $x/(1+x)\leq \log(1+x)\leq x$ for all $x>-1$ we find
\[ \left|\log\left(C+ (1-C)\exp(-a_n)\right)\right| \leq \max\left\{|1-C|\left|\exp(-a_n)-1\right|, \frac{|1-C|\left|\exp(-a_n)-1\right|}{|C+(1-C)\exp(-a_n)|}\right\}.  \]
Furthermore, $|C+(1-C)\exp(-a_n)|\geq 1\wedge C$ and because $1-x\leq e^{-x}$ for $x\geq 0$, we have $|\exp(-a_n)-1|\leq a_n$. This gives the desired result.
\end{proof}

\section{Local time estimator for OBM}\label{Appendix:Statistical_Application}

In Theorem~$2$ in~\citeSM{App:Mazzonetto} it is shown that the local time estimator $\hat{L}_n^\rho$ given by
\begin{align}\label{eq:hat_L}
\hat{L}_n^{\rho} := \frac{\alpha+\beta}{2}\sqrt{\frac{\pi}{2n}} \sum_{k=1}^n \1_{\{(X_{(k-1)/n}-\rho)(X_{k/n}-\rho) <0\}}
\end{align} 
is a consistent estimator of the local time $L_1^\rho(X)$. From this, it follows (details are given below) that for any $K>0$,
\begin{align}\label{eq:uniform_local_time_estimator}
\sup_{\rho\in U_{K/n}(\rho_0)} \left| \hat{L}_n^\rho - L_1^{\rho_0}(X)\right| \longrightarrow_{\Pr_{\rho_0}} 0.
\end{align} 
Then, Proposition~\ref{prop:n-consistency} reveals that the local time estimator evaluated in the MLE $\hat{\rho}_n$ is again a consistent estimator for $L_1^{\rho_0}(X)$, i.e.
\begin{align*}
\hat{L}_n^{\hat{\rho}_n} \longrightarrow_{\Pr_{\rho_0}} L_1^{\rho_0}(X).
\end{align*}

\begin{proof}[Proof of~\eqref{eq:uniform_local_time_estimator}]
First, we estimate
\[ \sup_{\rho\in U_{K/n}(\rho_0)} \left| \hat{L}_n^\rho - L_1^{\rho_0}(X)\right| \leq \sup_{\rho\in U_{K/n}(\rho_0)} \left| \hat{L}_n^\rho - \hat{L}_n^{\rho_0} \right| + \left| \hat{L}_n^{\rho_0} - L_1^{\rho_0}(X)\right|,\]
where the second term converges to zero in probability by Theorem~$2$ in~\citeSM{App:Mazzonetto}. For the other one, we have by definition of $\hat{L}_n^{\rho}$ that for $\rho\leq \rho_0$,
\begin{align*}
\left| \hat{L}_n^\rho - \hat{L}_n^{\rho_0} \right| &= \frac{1}{\sqrt{n}} \left| \sum_{k=1}^n \1_{\{(X_{(k-1)/n}-\rho)(X_{k/n}-\rho) <0\}}-\1_{\{(X_{(k-1)/n}-\rho_0)(X_{k/n}-\rho_0) <0\}} \right| \\
&= \frac{1}{\sqrt{n}}\sum_{k=1}^n \left( \1_{\{X_{(k-1)/n}< \rho, \rho<X_{k/n}\leq \rho_0\}} + \1_{\{\rho<X_{(k-1)/n}<\rho_0, X_{k/n}<\rho\}} \right. \\
&\hspace{2.5cm} \left. +\1_{\{\rho<X_{(k-1)/n}<\rho_0, X_{k/n}>\rho_0\}}+\1_{\{X_{(k-1)/n}>\rho_0, \rho<X_{k/n}<\rho_0\}} \right)
\end{align*}
Hence,
\begin{align*}
\sup_{\rho_0-K/n\leq \rho\leq \rho_0} \left| \hat{L}_n^\rho - \hat{L}_n^{\rho_0} \right| &\leq \frac{2}{\sqrt{n}}\sum_{k=1}^n \left( \1_{\{\rho_0-K/n<X_{k/n}\leq \rho_0\}} + \1_{\{\rho_0-K/n<X_{(k-1)/n}<\rho_0\}} \right)
\end{align*}
By Lemma~\ref{lemma:second_moment_sum_intervals},
\[ \E_{\rho_0}\left[\left(\frac{2}{\sqrt{n}}\sum_{k=1}^n \left( \1_{\{\rho_0-K/n<X_{k/n}\leq \rho_0\}} + \1_{\{\rho_0-K/n<X_{(k-1)/n}<\rho_0\}} \right)\right)^2\right] \leq C_{\alpha,\beta}(K)\frac1n \]
and thus,
\[ \sup_{\rho_0-K/n\leq \rho\leq \rho_0} \left| \hat{L}_n^\rho - \hat{L}_n^{\rho_0} \right| \longrightarrow_{\Pr_{\rho_0}} 0.\]
The same argument works for $\sup_{\rho_0\leq \rho\leq \rho_0+K/n} | \hat{L}_n^\rho - \hat{L}_n^{\rho_0} |$ and~\eqref{eq:uniform_local_time_estimator} follows.
\end{proof}

\section{Simulation study}

This section contains a simulation of the distribution of the limit $\mathrm{argsup}_{z\in\R}\ell(z)$ together with a study on coverage of the asymptotic confidence set~\eqref{eq:confidence_interval}. In all simulations, we used the smallest member of $\mathrm{Argsup}_{z\in\R}f(z)$ in case of a non-unique maximizer. Moreover, we use the family 
 \[ \big(\hat{L}_n^\rho\big)_{\rho\in\R}\] given in~\eqref{eq:hat_L} and introduced in~\citeSM{App:Mazzonetto} to estimate the local time.

In Figure~\ref{figure:histogram}, the Lebesgue density of $\mathrm{argsup}_{z\in\R}\ell(z)$ is visualized together with the histogram density estimator of 
\[ n\hat{L}_n^{\hat{\rho}_n}(\hat{\rho}_n-\rho). \]
One can observe that this distribution is not symmetric around zero. It is skewed to the left in case $\alpha<\beta$ and skewed ot the right if $\alpha>\beta$. This is reasonable in view of the definition of $\ell(z)$ given in~\eqref{eq:def_ell}.
Moreover, we can observe that the variance of $\mathrm{argsup}_{z\in\R}\ell(z)$ decreases the larger $|\alpha-\beta|$. This is reasonable as in this case $|b_{\alpha,\beta}|$ and $|b_{\alpha,\beta}'|$ increase, corresponding to a steeper (negative) drift for the function $\ell(z)$. This confirms the heuristics that estimation of $\rho_0$ is easier the more $\alpha$ and $\beta$ differ from each other.

\begin{figure}[h]
\centering
\includegraphics[scale=0.285]{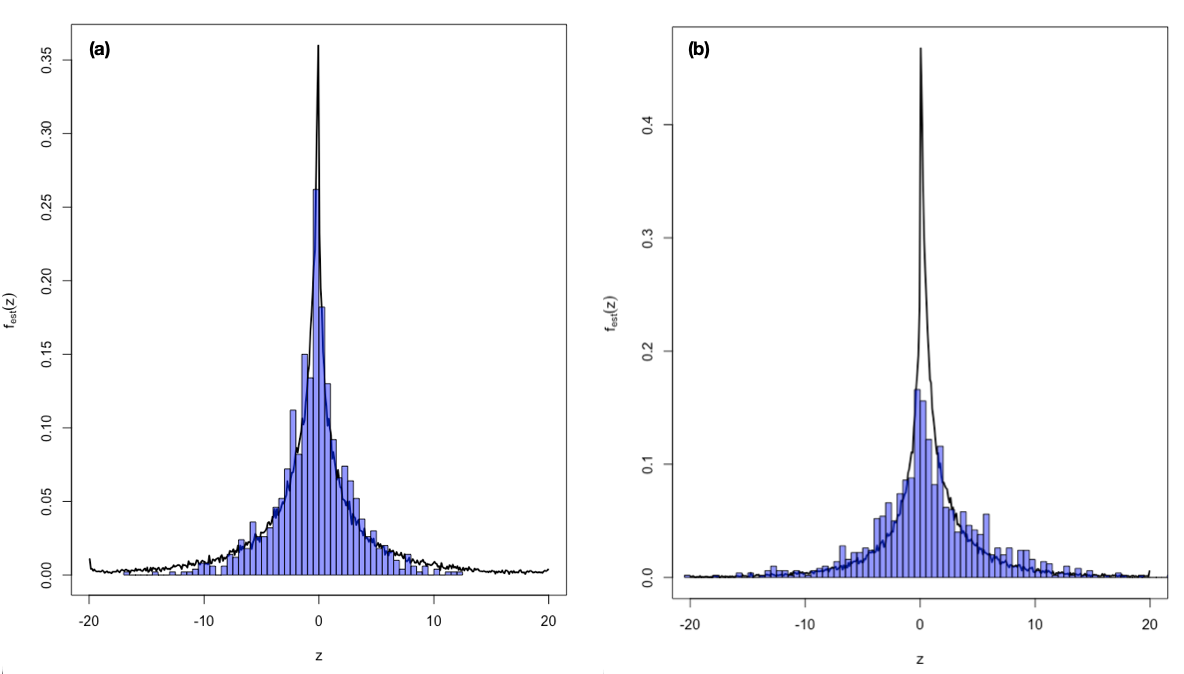}
\caption{\small The black solid line visualizes the Lebesgue density $f_{\mathrm{est}}$ of $\mathrm{argsup}_{z\in\R}\ell(z)$ for $n=1,000$ based on $M=50,000$ samples. The histogram density estimator is based on $N=1,000$ samples of {\footnotesize $n\hat{L}_n^{\hat{\rho}_n}(\hat{\rho}_n-\rho)$}, where the underlying processes were generated on a grid of size $10^{-5}$. In (a), we chose $\alpha=0.5$, $\beta=0.65$ and $\rho_0=0$. In (b) we set $\alpha=0.65$, $\beta=1$ and $\rho_0=0$.}\label{figure:histogram}
\end{figure}

In Table~\ref{Table_prop_rejection} we gave the proportion of samples lying in the $(1-\kappa)$-confidence interval~\eqref{eq:confidence_interval}. One can observe that the coverage is already good for small sample sizes $n$. For this simulation, all sample paths were started in the true parameter $x_0=\rho_0$ to ensure that the condition $\{L_1^{\rho_0}(X)>0\}$ is satisfied with probability one (which allows to take into account every sample path, keeping the computational cost low).

\begin{center}
\begin{table}[h]
\begin{tabular}{c|cccc}
\textbf{$n$} & \textbf{$\kappa=0.2$} & \textbf{$\kappa=0.1$} & \textbf{$\kappa=0.05$} & \textbf{$\kappa = 0.01$} \\
\hline
$100$  & $0.745$ & $0.842$ & $0.908$ & $0.965$ \\
$500$ & $0.789$ & $0.882$ & $0.934$ & $0.979$ \\
$1000$ & $0.802$ & $0.899$ & $0.949$ & $0.986$ \\
\end{tabular}\vspace{0.2cm}
\quad 
\begin{tabular}{c|cccc}
\textbf{$n$} & \textbf{$\kappa=0.2$} & \textbf{$\kappa=0.1$} & \textbf{$\kappa=0.05$} & \textbf{$\kappa = 0.01$} \\
\hline
$100$  & $0.780$ & $0.862$ & $0.917$ & $0.963$ \\
$500$ & $0.774$ & $0.876$ & $0.927$ & $0.979$ \\
$1000$ & $0.801$ & $0.897$ & $0.944$ & $0.983$ \\
\end{tabular}\smallskip
\caption{\small{Proportions of samples lying in the $(1-\kappa)$-confidence interval~\eqref{eq:confidence_interval} for different values of $n$ when sampling $N=2,000$ path with parameter $\rho_0=x_0=0$ on a grid of size $1/(100n)$. In the left table we set $(\alpha,\beta)=(0.5, 0.7)$ and in the right one $(\alpha,\beta)=(1,0.65)$. }}\label{Table_prop_rejection}
\end{table}
\end{center}

\section{Expansion of the normalized log-likelihood function}

In this section, we give some expansions of the normalized log-likelhood expressions
\[ \log\left(\frac{p_{1/n}^{\rho_0+\theta}(X_{(k-1)/n},X_{k/n})}{p_{1/n}^{\rho_0+\theta'}(X_{(k-1)/n},X_{k/n})}\right) = \sum_{j=1}^9 \log\left(\frac{p_{1/n}^{\rho_0+\theta}(X_{(k-1)/n},X_{k/n})}{p_{1/n}^{\rho_0+\theta'}(X_{(k-1)/n},X_{k/n})}\right) \1_{I_{j,k}^{\theta',\theta}}\]
depending on the regime $I_{j,k}^{\theta',\theta}$. Those are used in the proof of Propositions~\ref{prop:expansion_of_drift_t} and~\ref{prop:moment_product} and thus contribute to both the $n$-consistency and the limiting distribution. Some expansions are given as a leading term and a remainder, for which the following observation is crucial:
\begin{itemize}
\item The order of the remainders of the case $I_{j,k}$ for different $j$ may be different as expressions of $n$ and $\theta$. Remarkably, however, their conditional expectations given $X_{(k-1)/n}$ are of the same order for all $j=1,\dots, 9$ (as used in Propositions~\ref{prop:expansion_of_drift_t} and~\ref{prop:moment_product}).
\item The remainder terms are of the same order as the leading terms for $|\theta-\theta'|\asymp n^{-1/2}$, whereas they are of smaller order for $|\theta-\theta'|\ll n^{-1/2}$.
\end{itemize}
For the presentation, we always assume $\theta'\leq\theta$, use the notation $I_{j,k}^{\theta',\theta}$ as given in~\eqref{eq:cases_I_jk} and denote by $\xi_k$ an intermediate value of the corresponding Lagrange form in the Taylor expansion. We write $\xi_k$ to highlight its (possible) dependence on $X_{(k-1)/n}$ and $X_{k/n}$. In particular, $\theta'\leq\xi_k\leq\theta$ and $\xi_k$ may be different in different expressions.

\begin{itemize}
\item[$\bullet\ \boldsymbol{I_{1,k}^{\theta',\theta}}$.] Here, we have $X_{(k-1)/n}<\rho_0+\theta',X_{k/n}\leq\rho_0+\theta'$ and present both a first and second order Taylor expansion. The first order expansion is given by
\begin{align}\label{eq:expression_case1_first_order}
\begin{split}
&\log\left(\frac{p_{1/n}^{\rho_0+\theta}(X_{(k-1)/n},X_{k/n})}{p_{1/n}^{\rho_0+\theta'}(X_{(k-1)/n},X_{k/n})}\right) \1_{\{X_{(k-1)/n}<\rho_0+\theta',X_{k/n}\leq\rho_0+\theta'\}} \\
&\hspace{0.2cm} = - \left(\theta-\theta'\right)\frac{\alpha-\beta}{\alpha+\beta} \frac{1}{P_1^{\rho_0+\xi_k}(X_{(k-1)/n},X_{k/n};1/n)} \frac{2(X_{k/n}-2\rho_0-2\xi_k+X_{(k-1)/n})}{(\alpha^2/n)\sqrt{2\pi\alpha^2/n}}\\
&\hspace{1.2cm}\cdot\exp\left(-\frac{(X_{k/n}-2\rho_0-2\xi_k+X_{(k-1)/n})^2}{2\alpha^2/n}\right)\1_{\{X_{(k-1)/n}<\rho_0+\theta',X_{k/n}\leq\rho_0+\theta'\}}
\end{split}
\end{align}
and the second order expansion by
\begin{align}\label{eq:expression_case1_second_order}
\begin{split}
&\log\left(\frac{p_{1/n}^{\rho_0+\theta}(X_{(k-1)/n},X_{k/n})}{p_{1/n}^{\rho_0+\theta'}(X_{(k-1)/n},X_{k/n})}\right) \1_{\{X_{(k-1)/n}<\rho_0+\theta',X_{k/n}\leq\rho_0+\theta'\}} \\
&\hspace{0.2cm} = -\left(\theta-\theta'\right)\frac{\alpha-\beta}{\alpha+\beta}\frac{1}{P_1^{\rho_0+\theta'}(X_{(k-1)/n},X_{k/n};1/n)} \frac{2(X_{k/n}-2\rho_0-2\theta'+X_{(k-1)/n})}{(\alpha^2/n)\sqrt{2\pi\alpha^2/n}} \\
&\hspace{2cm}\cdot\exp\left(-\frac{(X_{k/n}-2\rho_0-2\theta'+X_{(k-1)/n})^2}{2\alpha^2/n}\right)\1_{\{X_{(k-1)/n}<\rho_0+\theta',X_{k/n}\leq\rho_0+\theta'\}} \\
&\hspace{1cm} + \left(\theta-\theta'\right)^2\left[ \frac{\alpha-\beta}{\alpha+\beta}\frac{4}{\alpha^2/n} - \frac{\alpha-\beta}{\alpha+\beta}\frac{4(X_{k/n}-2\rho_0-2\xi_k+X_{(k-1)/n})^2}{\alpha^4/n^2}\right] \\
&\hspace{1.5cm} \cdot \frac{1}{P_1^{\rho_0+\xi_k}(X_{(k-1)/n},X_{k/n};1/n)}\exp\left(-\frac{(X_{k/n}-2\rho_0-2\xi_k+X_{(k-1)/n})^2}{2\alpha^2/n}\right)\\
&\hspace{1.5cm}\cdot\frac{1}{\sqrt{2\pi\alpha^2/n}}\1_{\{X_{(k-1)/n}<\rho_0+\theta',X_{k/n}\leq\rho_0+\theta'\}} \\
&\hspace{1cm} - \left(\theta-\theta'\right)^2 \left(\frac{\alpha-\beta}{\alpha+\beta}\right)^2 \frac{4(X_{k/n}-2\rho_0-2\xi_k+X_{(k-1)/n})^2}{[P_1^{\rho_0+\xi_k}(X_{(k-1)/n},X_{k/n};1/n)]^2} \frac{1}{2\pi\alpha^6/n^3}\\
&\hspace{2cm} \cdot \exp\left(-\frac{(X_{k/n}-2\rho_0-2\xi_k+X_{(k-1)/n})^2}{\alpha^2/n}\right)\1_{\{X_{(k-1)/n}<\rho_0+\theta',X_{k/n}\leq\rho_0+\theta'\}} \\
&\hspace{0.2cm} =: -\left(\theta-\theta'\right)\frac{\alpha-\beta}{\alpha+\beta}\frac{1}{P_1^{\rho_0+\theta'}(X_{(k-1)/n},X_{k/n};1/n)} \frac{2(X_{k/n}-2\rho_0-2\theta'+X_{(k-1)/n})}{(\alpha^2/n)\sqrt{2\pi\alpha^2/n}} \\
&\hspace{1cm}\cdot\exp\left(-\frac{(X_{k/n}-2\rho_0-2\theta'+X_{(k-1)/n})^2}{2\alpha^2/n}\right)\1_{\{X_{(k-1)/n}<\rho_0+\theta',X_{k/n}\leq\rho_0+\theta'\}} + R_1(k,\theta',\theta).
\end{split}
\end{align}
Using boundedness of $x\mapsto (1+x^2)\exp(-x^2/4)$ and that fact that
\begin{align}\label{eq_bound:exp/P1}
\begin{split}
&\left|\frac{\exp\left(-\frac{(X_{(k/n}-2\rho_0-2\xi_k+X_{(k-1)/n})^2}{2\alpha^2/n}\right)}{P_1^{\rho_0+\xi_k}(X_{(k-1)/n},X_{k/n};1/n)n^{-1/2}}\right|\\
&\hspace{1cm} =\sqrt{2\pi}\alpha \left|\frac{1}{\exp\left(\frac{2}{\alpha^2/n}(X_{(k/n}-\rho_0-\xi_k)(X_{(k-1)/n}-\rho_0-\xi_k)\right) - \frac{\alpha-\beta}{\alpha+\beta}}\right| \leq C_{\alpha,\beta}
\end{split}
\end{align}
for $X_{(k-1)/n},X_{k/n}\leq \rho_0+\xi_k$, we obtain
\begin{align}\label{eq:expression_case1_second_order_bound}
\begin{split}
\left| R_1(k,\theta',\theta)\right| &\leq C_{\alpha,\beta} \frac{|\theta-\theta'|^2 n^{3/2}}{P_1^{\rho_0+\xi_k}(X_{(k-1)/n},X_{k/n};1/n)}\1_{\{X_{(k-1)/n}<\rho_0+\theta',X_{k/n}\leq\rho_0+\theta'\}}\\
&\hspace{1cm}\cdot\exp\left(-\frac{(X_{k/n}-2\rho_0-2\xi_k +X_{(k-1)/n})^2}{4\alpha^2/n}\right).
\end{split}
\end{align}

\item[$\bullet\ \boldsymbol{I_{2,k}^{\theta',\theta}}$.] Here, $X_{(k-1)/n}<\rho_0+\theta'<X_{k/n}\leq\rho_0+\theta$ and 
\begin{align}\label{eq:expression_case2}
\begin{split}
&\log\left(\frac{p_{1/n}^{\rho_0+\theta}(X_{(k-1)/n},X_{k/n})}{p_{1/n}^{\rho_0+\theta'}(X_{(k-1)/n},X_{k/n})}\right) \1_{\{X_{(k-1)/n}<\rho_0+\theta'<X_{k/n}\leq\rho_0+\theta\}} \\
&\hspace{0.2cm} = \log\left(\frac{\beta^2}{\alpha^2}\right) \1_{\{X_{(k-1)/n}<\rho_0+\theta'<X_{k/n}\leq\rho_0+\theta\}} \\
&\hspace{0.5cm} + \left[\frac{1}{2/n}\left(\frac{X_{k/n}-\rho_0-\theta'}{\beta}-\frac{X_{(k-1)/n}-\rho_0-\theta'}{\alpha}\right)^2 - \frac{(X_{k/n}-X_{(k-1)/n})^2}{2\alpha^2/n}\right] \1_{I_{2,k}^{\theta',\theta}} \\
&\hspace{0.5cm} + \log\left( \frac{\alpha+\beta}{2\beta}-\frac{\alpha-\beta}{2\beta}\exp\left(-\frac{2}{\alpha^2/n}\left(X_{k/n}-\rho_0-\theta\right)\left(X_{(k-1)/n}-\rho_0-\theta\right)\right)\right) \1_{I_{2,k}^{\theta',\theta}} \\
&\hspace{0.2cm} =: \log\left(\frac{\beta^2}{\alpha^2}\right) \1_{\{X_{(k-1)/n}<\rho_0+\theta'<X_{k/n}\leq\rho_0+\theta\}} + R_2(k,\theta',\theta),
\end{split}
\end{align}
where by a direct evaluation and Lemma~\ref{lemma:bound_log(C+exp)},
\begin{align}\label{eq:expression_case2_bound}
\begin{split}
\left| R_2(k,\theta',\theta)\right| \leq C_{\alpha,\beta} n |\theta-\theta'| \left[ |\theta-\theta'| + |X_{(k-1)/n}-\rho_0-\theta|\right]\1_{\{X_{(k-1)/n}<\rho_0+\theta'<X_{k/n}\leq\rho_0+\theta\}}.
\end{split}
\end{align}

\item[$\bullet\ \boldsymbol{I_{3,k}^{\theta',\theta}}$.] Now we deal with $X_{(k-1)/n}<\rho_0+\theta' \leq\rho_0+\theta<X_{k/n}$. Here, 
\begin{align}\label{eq:expression_case3}
\begin{split}
&\log\left(\frac{p_{1/n}^{\rho_0+\theta}(X_{(k-1)/n},X_{k/n})}{p_{1/n}^{\rho_0+\theta'}(X_{(k-1)/n},X_{k/n})}\right) \1_{\{X_{(k-1)/n}<\rho_0+\theta' \leq\rho_0+\theta<X_{k/n}\}} \\
&\hspace{1cm} = \left[ -\left(\frac{1}{\alpha}-\frac{1}{\beta}\right)n(\theta-\theta')\left(\frac{X_{k/n}-\rho_0-\theta'}{\beta}-\frac{X_{(k-1)/n}-\rho_0-\theta'}{\alpha}\right)\right. \\
&\hspace{2cm}\left. - \frac12\left(\frac{1}{\alpha}-\frac{1}{\beta}\right)^2 n(\theta-\theta')^2\right] \1_{\{X_{(k-1)/n}<\rho_0+\theta' \leq\rho_0+\theta<X_{k/n}\}}.
\end{split}
\end{align}

\item[$\bullet\ \boldsymbol{I_{4,k}^{\theta',\theta}}$.] For $X_{k/n}\leq\rho_0+\theta'\leq X_{(k-1)/n}<\rho_0+\theta$, we find
\begin{align}\label{eq:expression_case4}
\begin{split}
&\log\left(\frac{p_{1/n}^{\rho_0+\theta}(X_{(k-1)/n},X_{k/n})}{p_{1/n}^{\rho_0+\theta'}(X_{(k-1)/n},X_{k/n})}\right) \1_{\{X_{k/n}\leq\rho_0+\theta'\leq X_{(k-1)/n}<\rho_0+\theta\}} \\
&\hspace{0.2cm} = \left[\frac{1}{2/n}\left(\frac{X_{k/n}-\rho_0-\theta'}{\alpha} - \frac{X_{(k-1)/n}-\rho_0-\theta'}{\beta}\right)^2 - \frac{(X_{k/n}-X_{(k-1)/n})^2}{2\alpha^2/n}\right] \1_{I_{4,k}^{\theta',\theta}} \\
&\hspace{0.5cm} + \log\left(\frac{\alpha+\beta}{2\beta} - \frac{\alpha-\beta}{2\beta}\exp\left(-\frac{2}{\alpha^2/n}\left(X_{k/n}-\rho_0-\theta\right)\left(X_{(k-1)/n}-\rho_0-\theta\right)\right)\right) \1_{I_{4,k}^{\theta',\theta}}.
\end{split}
\end{align}
Consequently, by a direct evaluation and Lemma~\ref{lemma:bound_log(C+exp)},
\begin{align}\label{eq:expression_case4_bound}
\begin{split}
& \left|\log\left(\frac{p_{1/n}^{\rho_0+\theta}(X_{(k-1)/n},X_{k/n})}{p_{1/n}^{\rho_0+\theta'}(X_{(k-1)/n},X_{k/n})}\right)\right| \1_{\{X_{k/n}\leq\rho_0+\theta'\leq X_{(k-1)/n}<\rho_0+\theta\}} \\
&\hspace{1cm} \leq C_{\alpha,\beta} n |\theta-\theta'| \left[ |\theta-\theta'| + |X_{k/n}-\rho_0-\theta|\right] \1_{\{X_{k/n}\leq\rho_0+\theta'\leq X_{(k-1)/n}<\rho_0+\theta\}}.
\end{split}
\end{align}

\item[$\bullet\ \boldsymbol{I_{5,k}^{\theta',\theta}}$.] We treat $\rho_0+\theta' \leq X_{(k-1)/n}<\rho_0+\theta, \rho_0+\theta'< X_{k/n}\leq\rho_0+\theta$ and find
\begin{align}\label{eq:expression_case5}
\begin{split}
&\log\left(\frac{p_{1/n}^{\rho_0+\theta}(X_{(k-1)/n},X_{k/n})}{p_{1/n}^{\rho_0+\theta'}(X_{(k-1)/n},X_{k/n})}\right) \1_{\{\rho_0+\theta' \leq X_{(k-1)/n}<\rho_0+\theta, \rho_0+\theta'< X_{k/n}\leq\rho_0+\theta\}} \\
&\hspace{0.2cm} = \left[ \log\left(\frac{\beta}{\alpha}\right) + \frac{(X_{k/n}-X_{(k-1)/n})^2}{2/n}\left(\frac{1}{\beta^2}-\frac{1}{\alpha^2}\right) \right]\\
&\hspace{1cm} \cdot\1_{\{\rho_0+\theta' \leq X_{(k-1)/n}<\rho_0+\theta, \rho_0+\theta'< X_{k/n}\leq\rho_0+\theta\}} \\
&\hspace{1.2cm} + \log\left(\frac{1-\frac{\alpha-\beta}{\alpha+\beta}\exp\left(-\frac{2}{\alpha^2/n}(X_{k/n}-\rho_0-\theta)(X_{(k-1)/n}-\rho_0-\theta)\right)}{1+\frac{\alpha-\beta}{\alpha+\beta}\exp\left(-\frac{2}{\beta^2/n}(X_{k/n}-\rho_0-\theta')(X_{(k-1)/n}-\rho_0-\theta')\right)}\right)\\
&\hspace{2.2cm}\cdot\1_{\{\rho_0+\theta' \leq X_{(k-1)/n}<\rho_0+\theta, \rho_0+\theta'< X_{k/n}\leq\rho_0+\theta\}}.
\end{split}
\end{align}
Using a Taylor expansion, we get the alternative expression
\begin{align}\label{eq:expression_case5_Taylor}
\begin{split}
&\log\left(\frac{p_{1/n}^{\rho_0+\theta}(X_{(k-1)/n},X_{k/n})}{p_{1/n}^{\rho_0+\theta'}(X_{(k-1)/n},X_{k/n})}\right) \1_{\{\rho_0+\theta' \leq X_{(k-1)/n}<\rho_0+\theta, \rho_0+\theta'< X_{k/n}\leq\rho_0+\theta\}} \\
&\hspace{0.2cm} = - \left(\theta-\theta'\right)\frac{\alpha-\beta}{\alpha+\beta} \frac{1}{P_1^{\rho_0+\xi_k}(X_{(k-1)/n},X_{k/n};1/n)} \frac{2(X_{k/n}-2\rho_0-2\xi_k+X_{(k-1)/n})}{(\alpha^2/n)\sqrt{2\pi\alpha^2/n}}\\
&\hspace{0.5cm}\cdot\exp\left(-\frac{(X_{k/n}-2\rho_0-2\xi_k+X_{(k-1)/n})^2}{2\alpha^2/n}\right)\1_{\{\rho_0+\theta' \leq X_{(k-1)/n}<\rho_0+\theta, \rho_0+\theta'< X_{k/n}\leq\rho_0+\theta\}}.
\end{split}
\end{align}

\item[$\bullet\ \boldsymbol{I_{6,k}^{\theta',\theta}}$.] Here, $\rho_0+\theta'\leq X_{(k-1)/n}< \rho_0+\theta<X_{k/n}$ and
\begin{align}\label{eq:expression_case6}
\begin{split}
&\log\left(\frac{p_{1/n}^{\rho_0+\theta}(X_{(k-1)/n},X_{k/n})}{p_{1/n}^{\rho_0+\theta'}(X_{(k-1)/n},X_{k/n})}\right) \1_{\{\rho_0+\theta'\leq X_{(k-1)/n}< \rho_0+\theta<X_{k/n}\}} \\
&\hspace{0.2cm} = \left[ \frac{(X_{k/n}-X_{(k-1)/n})^2}{2\beta^2/n}-\frac{1}{2/n}\left(\frac{X_{k/n}-\rho_0-\theta}{\beta} - \frac{X_{(k-1)/n}-\rho_0-\theta}{\alpha}\right)^2 \right] \1_{I_{6,k}^{\theta',\theta}} \\
&\hspace{0.4cm} - \log\left(\frac{\alpha+\beta}{2\alpha} + \frac{\alpha-\beta}{2\alpha}\exp\left(-\frac{2}{\alpha^2/n}\left(X_{k/n}-\rho_0-\theta'\right)\left(X_{(k-1)/n}-\rho_0-\theta'\right)\right)\right)  \1_{I_{6,k}^{\theta',\theta}}.
\end{split}
\end{align}
Consequently, by a direct evaluation and Lemma~\ref{lemma:bound_log(C+exp)},
\begin{align}\label{eq:expression_case6_bound}
\begin{split}
& \left|\log\left(\frac{p_{1/n}^{\rho_0+\theta}(X_{(k-1)/n},X_{k/n})}{p_{1/n}^{\rho_0+\theta'}(X_{(k-1)/n},X_{k/n})}\right)\right| \1_{\{\rho_0+\theta'\leq X_{(k-1)/n}< \rho_0+\theta<X_{k/n}\}} \\
&\hspace{1cm} \leq C_{\alpha,\beta} n |\theta-\theta'| \left[ |\theta-\theta'| + |X_{k/n}-\rho_0-\theta'|\right] \1_{\{\rho_0+\theta'\leq X_{(k-1)/n}< \rho_0+\theta<X_{k/n}\}}.
\end{split}
\end{align}

\item[$\bullet\ \boldsymbol{I_{7,k}^{\theta',\theta}}$.] In this case, $X_{k/n}\leq\rho_0+\theta' \leq\rho_0+\theta\leq X_{(k-1)/n}$ and a direct evaluation gives
\begin{align}\label{eq:expression_case7}
\begin{split}
&\log\left(\frac{p_{1/n}^{\rho_0+\theta}(X_{(k-1)/n},X_{k/n})}{p_{1/n}^{\rho_0+\theta'}(X_{(k-1)/n},X_{k/n})}\right) \1_{\{X_{k/n}\leq\rho_0+\theta' \leq\rho_0+\theta\leq X_{(k-1)/n}\}} \\
&\hspace{0.5cm} = \left[ \left(\frac{1}{\alpha}-\frac{1}{\beta}\right)n(\theta-\theta')\left(\frac{X_{k/n}-\rho_0-\theta'}{\alpha}-\frac{X_{(k-1)/n}-\rho_0-\theta'}{\beta}\right) \right.  \\
&\hspace{1.5cm} \left. - \frac12\left(\frac{1}{\alpha}-\frac{1}{\beta}\right)^2 n(\theta-\theta')^2\right] \1_{\{X_{k/n}\leq\rho_0+\theta' \leq\rho_0+\theta\leq X_{(k-1)/n}\}}.
\end{split}
\end{align}

\item[$\bullet\ \boldsymbol{I_{8,k}^{\theta',\theta}}$.] Here, we deal with $\rho_0+\theta'<X_{k/n} \leq\rho_0+\theta\leq X_{(k-1)/n}$ and get the expansion
\begin{align}\label{eq:expression_case8}
\begin{split}
&\log\left(\frac{p_{1/n}^{\rho_0+\theta}(X_{(k-1)/n},X_{k/n})}{p_{1/n}^{\rho_0+\theta'}(X_{(k-1)/n},X_{k/n})}\right) \1_{\{\rho_0+\theta'<X_{k/n} \leq\rho_0+\theta\leq X_{(k-1)/n}\}} \\
&\hspace{0.1cm} = \log\left(\frac{\beta^2}{\alpha^2}\right) \1_{\{\rho_0 +\theta'<X_{k/n} <\rho_0 +\theta<X_{(k-1)/n}\}} \\
&\hspace{0.3cm} + \left[\frac{(X_{k/n}-X_{(k-1)/n})^2}{2\beta^2/n}-\frac{1}{2/n}\left(\frac{X_{k/n}-\rho_0-\theta}{\alpha}-\frac{X_{(k-1)/n}-\rho_0-\theta}{\beta}\right)^2\right] \1_{I_{8,k}^{\theta',\theta}} \\
&\hspace{0.3cm} - \log\left( \frac{\alpha+\beta}{2\alpha}+\frac{\alpha-\beta}{2\alpha}\exp\left(-\frac{2}{\beta^2/n}\left(X_{k/n}-\rho_0-\theta'\right)\left(X_{(k-1)/n}-\rho_0-\theta'\right)\right)\right) \1_{I_{8,k}^{\theta',\theta}} \\
&\hspace{0.2cm} =: \log\left(\frac{\beta^2}{\alpha^2}\right) \1_{\{\rho_0+\theta'<X_{k/n} \leq\rho_0+\theta\leq X_{(k-1)/n}\}} + R_8(k,\theta',\theta),
\end{split}
\end{align}
where by a direct evaluation and Lemma~\ref{lemma:bound_log(C+exp)},
\begin{align}\label{eq:expression_case8_bound}
\begin{split}
\left| R_8(k,\theta',\theta)\right| \leq C_{\alpha,\beta} n|\theta-\theta'| \left[ |\theta-\theta'| + |X_{(k-1)/n}-\rho_0-\theta'|\right]\1_{\{\rho_0+\theta'<X_{k/n} \leq\rho_0+\theta\leq X_{(k-1)/n}\}}.
\end{split}
\end{align}

\item[$\bullet\ \boldsymbol{I_{9,k}^{\theta',\theta}}$.] In this case, $\rho_0+\theta\leq X_{(k-1)/n},\rho_0+\theta< X_{k/n}$. As for the first case, we present both first and second order Taylor expansion. The first order expansion is given by
\begin{align}\label{eq:expression_case9_first_order}
\begin{split}
&\log\left(\frac{p_{1/n}^{\rho_0+\theta}(X_{(k-1)/n},X_{k/n})}{p_{1/n}^{\rho_0+\theta'}(X_{(k-1)/n},X_{k/n})}\right) \1_{\{\rho_0+\theta\leq X_{(k-1)/n},\rho_0+\theta< X_{k/n}\}} \\
&\hspace{0.2cm} = \left(\theta-\theta'\right)\frac{\alpha-\beta}{\alpha+\beta} \frac{1}{P_2^{\rho_0+\xi_k}(X_{(k-1)/n},X_{k/n};1/n)}\frac{2(X_{k/n}-2\rho_0-2\xi_k+X_{(k-1)/n})}{(\beta^2/n)\sqrt{2\pi\beta^2/n}}\\
&\hspace{1.2cm}\cdot\exp\left(-\frac{(X_{k/n}-2\rho_0-2\xi_k +X_{(k-1)/n})^2}{2\beta^2/n}\right)\1_{\{\rho_0+\theta\leq X_{(k-1)/n},\rho_0+\theta< X_{k/n}\}}
\end{split}
\end{align}
and the second order expansion by
\begin{align}\label{eq:expression_case9_second_order}
\begin{split}
&\log\left(\frac{p_{1/n}^{\rho_0+\theta}(X_{(k-1)/n},X_{k/n})}{p_{1/n}^{\rho_0+\theta'}(X_{(k-1)/n},X_{k/n})}\right) \1_{\{\rho_0+\theta\leq X_{(k-1)/n},\rho_0+\theta< X_{k/n}\}} \\
&\hspace{0.2cm} = \left(\theta-\theta'\right)\frac{\alpha-\beta}{\alpha+\beta}\frac{1}{P_2^{\rho_0+\theta'}(X_{(k-1)/n},X_{k/n};1/n)} \frac{2(X_{k/n}-2\rho_0-2\theta'+X_{(k-1)/n})}{(\beta^2/n)\sqrt{2\pi\beta^2/n}} \\
&\hspace{2.2cm}\cdot\exp\left(-\frac{(X_{k/n}-2\rho_0-2\theta'+X_{(k-1)/n})^2}{2\beta^2/n}\right)\1_{\{\rho_0+\theta\leq X_{(k-1)/n},\rho_0+\theta< X_{k/n}\}} \\
&\hspace{1.2cm} - \left(\theta-\theta'\right)^2\left[ \frac{\alpha-\beta}{\alpha+\beta}\frac{4}{\beta^2/n} - \frac{\alpha-\beta}{\alpha+\beta}\frac{4(X_{k/n}-2\rho_0-2\xi_k+X_{(k-1)/n})^2}{\beta^4/n^2}\right] \\
&\hspace{1.5cm} \cdot\frac{1}{P_2^{\rho_0+\xi_k}(X_{(k-1)/n},X_{k/n};1/n)}\exp\left(-\frac{(X_{k/n}-2\rho_0-2\xi_k+X_{(k-1)/n})^2}{2\beta^2/n}\right)\\
&\hspace{1.5cm}\cdot  \frac{1}{\sqrt{2\pi\beta^2/n}}\1_{\{\rho_0+\theta\leq X_{(k-1)/n},\rho_0+\theta< X_{k/n}\}} \\
&\hspace{1cm} - \left(\theta-\theta'\right)^2 \left(\frac{\alpha-\beta}{\alpha+\beta}\right)^2 \frac{4(X_{k/n}-2\rho_0-2\xi_k+X_{(k-1)/n})^2}{[P_2^{\rho_0+\xi_k}(X_{(k-1)/n},X_{k/n};1/n)]^2} \frac{1}{2\pi\beta^6/n^3}\\
&\hspace{1.5cm} \cdot \exp\left(-\frac{(X_{k/n}-2\rho_0-2\xi_k+X_{(k-1)/n})^2}{\beta^2/n}\right)\1_{\{\rho_0+\theta\leq X_{(k-1)/n},\rho_0+\theta< X_{k/n}\}} \\
&\hspace{0.2cm} =: \left(\theta-\theta'\right)\frac{\alpha-\beta}{\alpha+\beta}\frac{1}{P_2^{\rho_0+\theta'}(X_{(k-1)/n},X_{k/n};1/n)}\frac{2(X_{k/n}-2\rho_0-2\theta'+X_{(k-1)/n})}{(\beta^2/n)\sqrt{2\pi\beta^2/n}} \\
&\hspace{0.5cm}\cdot\exp\left(-\frac{(X_{k/n}-2\rho_0-2\theta'+X_{(k-1)/n})^2}{2\beta^2/n}\right)\1_{\{\rho_0+\theta\leq X_{(k-1)/n},\rho_0+\theta< X_{k/n}\}} + R_9(k,\theta',\theta).
\end{split}
\end{align}
Using boundedness of $x\mapsto (1+x^2)\exp(-x^2/4)$ and that
\begin{align*}
&\left|\frac{\exp\left(-\frac{(X_{(k/n}-2\rho_0-2\xi_k+X_{(k-1)/n})^2}{2\beta^2/n}\right)}{P_2^{\rho_0+\xi_k}(X_{(k-1)/n},X_{k/n};1/n)\sqrt{1/n}}\right| \\
&\hspace{1cm}= \sqrt{2\pi} \left|\frac{\exp\left(-\frac{(X_{(k/n}-2\rho_0-2\xi_k+X_{(k-1)/n})^2}{2\beta^2/n}\right)}{\exp\left(-\frac{(X_{(k/n}-X_{(k-1)/n})^2}{2\beta^2/n}\right) + \frac{\alpha-\beta}{\alpha+\beta}\exp\left(-\frac{(X_{(k/n}-2\rho_0-2\xi_k+X_{(k-1)/n})^2}{2\beta^2/n}\right)}\right| \\
&\hspace{1cm} =\sqrt{2\pi} \left|\frac{1}{\exp\left(\frac{2}{\beta^2/n}(X_{(k/n}-\rho_0-\xi_k)(X_{(k-1)/n}-\rho_0-\xi_k)\right) + \frac{\alpha-\beta}{\alpha+\beta}}\right|
\end{align*}
can be bounded independently of $n$ for $X_{(k-1)/n},X_{k/n}\geq \rho_0+\xi_k$, we obtain
\begin{align}\label{eq:expression_case9_second_order_bound}
\begin{split}
\left| R_9(k,\theta',\theta)\right| &\leq C_{\alpha,\beta}  \frac{|\theta-\theta'|^2 n^{3/2}}{P_2^{\rho_0+\xi_k}(X_{(k-1)/n},X_{k/n};1/n)}\1_{\{\rho_0+\theta\leq X_{(k-1)/n},\rho_0+\theta< X_{k/n}\}} \\
&\hspace{1cm} \cdot \exp\left(-\frac{(X_{k/n}-2\rho_0-2\xi_k+X_{(k-1)/n})^2}{4\beta^2/n}\right).
\end{split}
\end{align}
\end{itemize}

\section{Proofs of the results of Section~\ref{Section:likelihood}}\label{Appendix_auxiliary}

In this section, we give the proofs of Proposition~\ref{prop:expansion_of_drift_t} and~\ref{prop:moment_product} in the respective order.

\begin{proof}[Proof of Proposition~\ref{prop:expansion_of_drift_t}]
We give a complete description for the case $\theta\geq 0$. The case $\theta<0$ can then be dealt with analogously. By linearity of conditional expectation,
\begin{align*}
&\sum_{k=1}^{\lfloor nt\rfloor} \E_{\rho_0}\left[\left. \log\left( \frac{p_{1/n}^{\rho_0+\theta}(X_{(k-1)/n},X_{k/n})}{p_{1/n}^{\rho_0}(X_{(k-1)/n},X_{k/n})}\right) \right| X_{(k-1)/n}\right]\\
&\hspace{1cm} = \sum_{j=1}^9\sum_{k=1}^{\lfloor nt\rfloor} \E_{\rho_0}\left[\left. \log\left( \frac{p_{1/n}^{\rho_0+\theta}(X_{(k-1)/n},X_{k/n})}{p_{1/n}^{\rho_0}(X_{(k-1)/n},X_{k/n})}\right)\1_{I_{j,k}^\theta} \right| X_{(k-1)/n}\right].
\end{align*} 
In what follows, we evaluate each of the cases $j=1,\dots, 9$ on the right-hand side seperately. It will turn out that the first-order terms in the expansions \eqref{eq:expression_case1_first_order}-\eqref{eq:expression_case9_second_order_bound} for $j\in\{1,2,3,7,8,9\}$ contribute to $-|\theta|F_{\alpha,\beta}\Lambda_{\alpha,\beta}^n((X_{(k-1)/n})_{1\leq k\leq\lfloor nt\rfloor})$, whereas $j=4,5,6$ are absorbed in the remainder term $r_n(t,\theta)$. In what follows, we denote with $\xi_k$ an intermediate point $0\leq \xi_k\leq\theta$ that occurs in the Lagrange form of Taylor's theorem and may vary from line to line.\smallskip

\begin{itemize}
\item[$\bullet\ \boldsymbol{j=1}$.] By the identity~\eqref{eq:expression_case1_second_order} (with $\theta'=0$),
\begin{align*}
&\E_{\rho_0}\left[\left. \log\left( \frac{p_{1/n}^{\rho_0+\theta}(X_{(k-1)/n},X_{k/n})}{p_{1/n}^{\rho_0}(X_{(k-1)/n},X_{k/n})}\right)\1_{I_{1,k}^\theta} \right| X_{(k-1)/n}\right] \\
&\hspace{0.2cm} = -\1_{\{X_{(k-1)/n}<\rho_0\}}\theta \int_{-\infty}^{\rho_0} \frac{\alpha-\beta}{\alpha+\beta} \frac{4(y-2\rho_0+X_{(k-1)/n})}{(2\alpha^2/n)\sqrt{2\pi\alpha^2/n}}\exp\left( - \frac{(y-2\rho_0+X_{(k-1)/n})^2}{2\alpha^2/n}\right) dy\\
&\hspace{1.2cm} + \E_{\rho_0}\left[\left. R_1(k,0,\theta)\right| X_{(k-1)/n}\right].
\end{align*}
Evaluation of the integral reveals
\begin{align*}
&-\theta \int_{-\infty}^{\rho_0} \frac{\alpha-\beta}{\alpha+\beta} \frac{4(y-2\rho_0+X_{(k-1)/n})}{2\alpha^2/n} \frac{1}{\sqrt{2\pi\alpha^2/n}}\exp\left( - \frac{(y-2\rho_0+X_{(k-1)/n})^2}{2\alpha^2/n}\right) dy \\
&\hspace{1cm} = \theta\frac{\alpha-\beta}{\alpha+\beta} \frac{2}{\sqrt{2\pi\alpha^2/n}}\exp\left( - \frac{(X_{(k-1)/n}-\rho_0)^2}{2\alpha^2/n}\right).
\end{align*} 
Thus,
\begin{align*}
&\sum_{k=1}^{\lfloor nt\rfloor} \E_{\rho_0}\left[\left. \log\left( \frac{p_{1/n}^{\rho_0+\theta}(X_{(k-1)/n},X_{k/n})}{p_{1/n}^{\rho_0}(X_{(k-1)/n},X_{k/n})}\right)\1_{I_{1,k}^\theta} \right| X_{(k-1)/n}\right] \\
&\hspace{0.5cm} =\theta\frac{\alpha-\beta}{\alpha+\beta} \frac{2}{\sqrt{2\pi\alpha^2/n}}\sum_{k=1}^{\lfloor nt\rfloor}  \1_{\{X_{(k-1)/n}<\rho_0\}} \exp\left( - \frac{(X_{(k-1)/n}-\rho_0)^2}{2\alpha^2/n}\right) + r_n^1(t,\theta),
\end{align*}
where
\[ r_n^1(t,\theta) := \sum_{k=1}^{\lfloor nt\rfloor} \E_{\rho_0}\left[\left. R_1(k,0,\theta)\right| X_{(k-1)/n}\right]. \]
It remains to estimate the $L^1(\Pr_{\rho_0})$-norm of $\sup_{0\leq \theta'\leq\theta}\sup_{s\leq t} |r_n^1(s,\theta')|$. With~\eqref{eq:expression_case1_second_order_bound} and
\begin{align}\label{eq_bound:P1/P1}
\begin{split}
&\frac{P_1^{\rho_0}(X_{(k-1)/n},X_{k/n};1/n)}{P_1^{\rho_0+\xi_k}(X_{(k-1)/n},X_{k/n};1/n)}\\
&\hspace{1cm} = \frac{1-\frac{\alpha-\beta}{\alpha+\beta}\exp\left(-\frac{2}{\alpha^2/n}(X_{k/n}-\rho_0)(X_{(k-1)/n}-\rho_0)\right)}{1-\frac{\alpha-\beta}{\alpha+\beta}\exp\left(-\frac{2}{\alpha^2/n}(X_{k/n}-\rho_0-\xi_k)(X_{(k-1)/n}-\rho_0-\xi_k)\right)}\leq C_{\alpha,\beta}
\end{split}
\end{align} 
for $X_{(k-1)/n},X_{k/n}\leq\rho_0$, we find with the Gaussian tail inequality
\begin{align*}
&\E_{\rho_0}\left[\left. |R_1(k,0,\theta)|\right| X_{(k-1)/n}\right] \\
&\hspace{0.2cm} \leq C_{\alpha,\beta} n^{3/2}\theta^2 \1_{\{X_{(k-1)/n}< \rho_0\}} \int_{-\infty}^{\rho_0} \frac{P_1^{\rho_0}(X_{(k-1)/n},y;1/n)}{P_1^{\rho_0+\xi_k}(X_{(k-1)/n},y;1/n)} \\
&\hspace{5cm}\cdot \exp\left(-\frac{(y-2\rho_0-2\xi_k+X_{(k-1)/n})^2}{4\alpha^2/n}\right) dy \\
&\hspace{0.2cm} \leq C_{\alpha,\beta}n\theta^2 \1_{\{X_{(k-1)/n}< \rho_0\}} \int_{-\infty}^{(X_{(k-1)/n}-\rho_0-2\xi_k)/\sqrt{2\alpha^2/n}}  \exp\left(-\frac{y^2}{2}\right) dy \\
&\hspace{0.2cm} \leq C_{\alpha,\beta} n\theta^2\1_{\{X_{(k-1)/n}< \rho_0\}} \exp\left(-\frac{(X_{(k-1)/n}-\rho_0)^2}{4\alpha^2/n}\right).
\end{align*}
Consequently, with Corollary~\ref{cor:bound_exp_X^2},
\begin{align*}
\E_{\rho_0}\left[ \sup_{0\leq \theta'\leq\theta}\sup_{s\leq t} \frac{|r_n^1(s,\theta')|}{\theta'}\right] &\leq C_{\alpha,\beta}n\theta \E_{\rho_0}\left[ \sup_{s\leq t}\sum_{k=1}^{\lfloor ns \rfloor} \exp\left(-\frac{(X_{(k-1)/n}-\rho_0)^2}{4\alpha^2/n}\right)\right] \\
&= C_{\alpha,\beta} n\theta  \sum_{k=1}^{\lfloor nt\rfloor} \E_{\rho_0}\left[\exp\left(-\frac{(X_{(k-1)/n}-\rho_0)^2}{4\alpha^2/n}\right)\right] \\
& \leq C_{\alpha,\beta} n\theta \sum_{k=1}^{\lfloor nt\rfloor} \frac{1}{\sqrt{k}} \leq C_{\alpha,\beta} n^{3/2}\theta^2 \sqrt{t}.
\end{align*} \smallskip

\item[$\bullet\ \boldsymbol{j=2}$.] Using the decomposition~\eqref{eq:expression_case2} we find
\begin{align*}
&\sum_{k=1}^{\lfloor nt\rfloor} \E_{\rho_0}\left[\left. \log\left( \frac{p_{1/n}^{\rho_0+\theta}(X_{(k-1)/n},X_{k/n})}{p_{1/n}^{\rho_0}(X_{(k-1)/n},X_{k/n})}\right)\1_{I_{2,k}^\theta} \right| X_{(k-1)/n}\right] \\ 
&\hspace{0.2cm} = \log\left(\frac{\beta^2}{\alpha^2}\right)\sum_{k=1}^{\lfloor nt\rfloor} \1_{\{X_{(k-1)/n}<\rho_0\}}\int_{\rho_0}^{\rho_0+\theta} P_3^{\rho_0}(X_{(k-1)/n},y;1/n)dy + \sum_{k=1}^{\lfloor nt\rfloor} \E_{\rho_0}\left[\left. R_2(k,0,\theta)\right| X_{(k-1)/n}\right] \\
&\hspace{0.2cm} = \theta \frac{2}{\alpha+\beta}\frac{\alpha}{\beta}\frac{1}{\sqrt{2\pi/n}}\log\left(\frac{\beta^2}{\alpha^2}\right)\sum_{k=1}^{\lfloor nt\rfloor} \1_{\{X_{(k-1)/n}<\rho_0\}}\exp\left(-\frac{(X_{(k-1)/n}-\rho_0)^2}{2\alpha^2/n}\right) + r_n^2(t,\theta),
\end{align*}
where $r_n^2(t,\theta)=r_n^{2,1}(t,\theta) + r_n^{2,2}(t,\theta)$ with
\begin{align*}
r_n^{2,1}(t,\theta) &:= \sum_{k=1}^{\lfloor nt\rfloor} \E_{\rho_0}\left[\left. R_2(k,0,\theta)\right| X_{(k-1)/n}\right], \\
 r_n^{2,2}(t,\theta) &:=\sum_{k=1}^{\lfloor nt\rfloor} \1_{\{X_{(k-1)/n}<\rho_0\}}\int_{\rho_0}^{\rho_0+\theta} \left( P_3^{\rho_0}(X_{(k-1)/n},y;1/n)- P_3^{\rho_0}(X_{(k-1)/n},\rho_0;1/n)\right)dy.
\end{align*}
In what follows, we are going to prove the uniform $L^1(\Pr_{\rho_0})$-bound on both $r_n^{2,1}(t,\theta), r_n^{2,2}(t,\theta)$ seperately.\bigskip
\begin{itemize}
\item[$\boldsymbol{- r_n^{2,1}}$.] From the bound~\eqref{eq:expression_case2_bound} for $R_2(k,0,\theta)$, Lemma~\ref{lemma:upper_bound_transition_density} and boundedness of the function $x\mapsto (1+x^2)\exp(-x^2/2)$, we find
\begin{align*}
&\E_{\rho_0}\left[\left. |R_2(k,0,\theta)|\right| X_{(k-1)/n}\right] \\
&\hspace{0.5cm} \leq C_{\alpha,\beta}\theta \E_{\rho_0}\left[\left. \left( \sqrt{n}K + n|X_{(k-1)/n}-\rho_0-\theta|\right) \1_{\{X_{(k-1)/n}<\rho_0<X_{k/n}\leq\rho_0+\theta\}}\right| X_{(k-1)/n}\right] \\
&\hspace{0.5cm} \leq C_{\alpha,\beta}\theta \1_{\{X_{(k-1)/n}<\rho_0\}} \left( \sqrt{n}K + n|X_{(k-1)/n}-\rho_0-\theta|\right) \\
&\hspace{5cm} \cdot\int_{\rho_0}^{\rho_0+\theta} \sqrt{n}\exp\left(-\frac{(y-X_{(k-1)/n})^2}{2\max\{\alpha^2,\beta^2\}/n}\right) dy \\
&\hspace{0.5cm} \leq C_{\alpha,\beta}\theta \sqrt{n}\theta \left( \sqrt{n}K + n|X_{(k-1)/n}-\rho_0-\theta|\right) \exp\left(-\frac{(X_{(k-1)/n}-\rho_0)^2}{2\max\{\alpha^2,\beta^2\}/n}\right) \\
&\hspace{0.5cm} \leq C_{\alpha,\beta} n\theta^2 \left( K + \sqrt{n}|X_{(k-1)/n}-\rho_0-\theta|\right) \exp\left(-\frac{(X_{(k-1)/n}-\rho_0)^2}{2\max\{\alpha^2,\beta^2\}/n}\right) \\
&\hspace{0.5cm} \leq C_{\alpha,\beta} n\theta^2 \left( 2K + \sqrt{n}|X_{(k-1)/n}-\rho_0|\right) \exp\left(-\frac{(X_{(k-1)/n}-\rho_0)^2}{2\max\{\alpha^2,\beta^2\}/n}\right) \\
&\hspace{0.5cm} \leq C_{\alpha,\beta}(K) n\theta^2  \exp\left(-\frac{(X_{(k-1)/n}-\rho_0)^2}{4\max\{\alpha^2,\beta^2\}/n}\right).
\end{align*}
Consequently, by Corollary~\ref{cor:bound_exp_X^2},
\begin{align*}
\E_{\rho_0}\left[ \sup_{0\leq \theta'\leq\theta}\sup_{s\leq t} \frac{|r_n^{2,1}(s,\theta')|}{\theta'}\right] &\leq C_{\alpha,\beta}(K) n\theta \E_{\rho_0}\left[\sup_{s\leq t} \sum_{k=1}^{\lfloor ns\rfloor} \exp\left(-\frac{(X_{(k-1)/n}-\rho_0)^2}{4\max\{\alpha^2,\beta^2\}/n}\right) \right] \\
&= C_{\alpha,\beta}(K) n\theta \sum_{k=1}^{\lfloor nt\rfloor} \E_{\rho_0}\left[\exp\left(-\frac{(X_{(k-1)/n}-\rho_0)^2}{4\max\{\alpha^2,\beta^2\}/n}\right) \right] \\
&\leq C_{\alpha,\beta}(K) n^{3/2}\theta \sqrt{t}.
\end{align*} \smallskip

\item[$\boldsymbol{- r_n^{2,2}}$.] By a first order Taylor expansion of $P_3^{\rho_0}(X_{(k-1)/n},y;1/n)$ in the variable $y$ around $\rho_0$, we get
\begin{align*}
&P_3^{\rho_0}(X_{(k-1)/n},y;1/n)-P_3^{\rho_0}(X_{(k-1)/n},\rho_0;1/n)\\
&\hspace{0.5cm} = -\frac{2}{\alpha+\beta}\frac{\alpha}{\beta} \frac{1}{\sqrt{2\pi/n}}\frac{1}{\beta/n}\left(\frac{\xi_y-\rho_0}{\beta} - \frac{X_{(k-1)/n}-\rho_0}{\alpha}\right) \\
&\hspace{2.5cm} \cdot\exp\left( -\frac{1}{2/n}\left(\frac{\xi_y-\rho_0}{\beta} - \frac{X_{(k-1)/n}-\rho_0}{\alpha}\right)^2\right) (y-\rho_0),
\end{align*}
where for every $y$ we denote with $\xi_y$ the intermediate value appearing in the Lagrange version of the remainder in the Taylor expansion. In particular, $\rho_0\leq\xi_y\leq\rho_0+\theta$. Using boundedness of $x\mapsto x\exp(-x^2/4)$ then yields
\begin{align*}
&\1_{\{X_{(k-1)/n}<\rho_0\}} \left|\int_{\rho_0}^{\rho_0+\theta} \left( P_3^{\rho_0}(X_{(k-1)/n},y;1/n)- P_3^{\rho_0}(X_{(k-1)/n},\rho_0;1/n)\right)dy\right| \\
&\hspace{0.5cm} \leq C_{\alpha,\beta} \1_{\{X_{(k-1)/n}<\rho_0\}} n\theta\int_{\rho_0}^{\rho_0+\theta} \exp\left( -\frac{1}{4/n}\left(\frac{\xi_y-\rho_0}{\beta} - \frac{X_{(k-1)/n}-\rho_0}{\alpha}\right)^2\right) dy \\
&\hspace{0.5cm} \leq C_{\alpha,\beta} \1_{\{X_{(k-1)/n}<\rho_0\}} n\theta^2\exp\left( -\frac{(X_{(k-1)/n}-\rho_0)^2}{4\alpha^2/n}\right).
\end{align*}
Then, with Corollary~\ref{cor:bound_exp_X^2},
\begin{align*}
\E_{\rho_0}\left[\sup_{0\leq\theta'\leq\theta}\sup_{s\leq t} \frac{| r_n^{2,2}(s,\theta')|}{\theta'}\right] \leq C_{\alpha,\beta} n^{3/2} \theta \sqrt{t}.
\end{align*}
\end{itemize}\smallskip

\item[$\bullet\ \boldsymbol{j=3}$.] By~\eqref{eq:expression_case3}, 
\begin{align*}
&\E_{\rho_0}\left[\left. \log\left( \frac{p_{1/n}^{\rho_0+\theta}(X_{(k-1)/n},X_{k/n})}{p_{1/n}^{\rho_0}(X_{(k-1)/n},X_{k/n})}\right)\1_{I_{3,k}^\theta} \right| X_{(k-1)/n}\right] \\
&\hspace{0.5cm} = -\1_{\{X_{(k-1)/n}<\rho_0\}} n\theta \left(\frac{1}{\alpha}-\frac{1}{\beta}\right) \int_{\rho_0+\theta}^{\infty} \left(\frac{(y-\rho_0}{\beta} - \frac{X_{(k-1)/n}-\rho_0}{\alpha}\right)\\
&\hspace{8cm}\cdot P_3^{\rho_0}(X_{(k-1)/n},y;1/n) dy \\
&\hspace{1.5cm}  -\1_{\{X_{(k-1)/n}<\rho_0\}} \frac{n\theta^2}{2}\left(\frac{1}{\alpha}-\frac{1}{\beta}\right)^2 \E_{\rho_0}\left[\1_{\{X_{k/n}> \rho_0+\theta\}}\mid X_{(k-1)/n}\right].
\end{align*}
Evaluation of the integral gives
\begin{align*}
& n\int_{\rho_0+\theta}^{\infty} \left(\frac{(y-\rho_0}{\beta} - \frac{X_{(k-1)/n}-\rho_0}{\alpha}\right) P_3^{\rho_0}(X_{(k-1)/n};1/n) dy \\
&\hspace{2cm} = \frac{2\alpha}{\alpha+\beta}\frac{1}{\sqrt{2\pi/n}}\exp\left( - \frac{(\theta-\frac{\beta}{\alpha}(X_{(k-1)/n}-\rho_0))^2}{2\beta^2/n}\right),
\end{align*}
such that
\begin{align*}
&\sum_{k=1}^{\lfloor nt\rfloor} \E_{\rho_0}\left[\left. \log\left( \frac{p_{1/n}^{\rho_0+\theta}(X_{(k-1)/n},X_{k/n})}{p_{1/n}^{\rho_0}(X_{(k-1)/n},X_{k/n})}\right)\1_{I_{3,k}^\theta} \right| X_{(k-1)/n}\right] \\
&\hspace{0.1cm} =- \theta\left(\frac{1}{\alpha}-\frac{1}{\beta}\right)\frac{2\alpha}{\alpha+\beta}\frac{1}{\sqrt{2\pi/n}} \sum_{k=1}^{\lfloor nt\rfloor} \1_{\{X_{(k-1)/n}<\rho_0\}}\exp\left( - \frac{(X_{(k-1)/n}-\rho_0)^2}{2\alpha^2/n}\right) + r_n^3(t,\theta),
\end{align*}
with $r_n^3(t,\theta) := r_n^{3,1}(t,\theta) + r_n^{3,2}(t,\theta)$, where
\begin{align*}
r_n^{3,1}(t,\theta) &:= -\1_{\{X_{(k-1)/n}<\rho_0\}} \theta\left(\frac{1}{\alpha}-\frac{1}{\beta}\right)\frac{2\alpha}{\alpha+\beta}\frac{1}{\sqrt{2\pi/n}} \\
&\hspace{0.5cm} \cdot\sum_{k=1}^{\lfloor nt\rfloor} \left[\exp\left( - \frac{(\theta-\frac{\beta}{\alpha}(X_{(k-1)/n}-\rho_0))^2}{2\beta^2/n}\right)-\exp\left( - \frac{(X_{(k-1)/n}-\rho_0)^2}{2\alpha^2/n}\right)\right], \\
r_n^{3,2}(t,\theta) &:=  - \frac{n\theta^2}{2}\left(\frac{1}{\alpha}-\frac{1}{\beta}\right)^2\sum_{k=1}^{\lfloor nt\rfloor}\1_{\{X_{(k-1)/n}<\rho_0\}}\E_{\rho_0}\left[\1_{\{X_{k/n}> \rho_0+\theta\}}\mid X_{(k-1)/n}\right].
\end{align*}
In what follows, we are going to prove the moment condition on both $r_n^{3,1}(t,\theta), r_n^{3,2}(t,\theta)$ seperately.\bigskip
\begin{itemize}
\item[$- \boldsymbol{r_n^{3,1}}$.] Using a first order Taylor expansion of the $\exp$-term, we find with boundedness of $x\mapsto |x|\exp(-x^2/4)$ that
\begin{align*}
&\left|\exp\left( - \frac{(\theta-\frac{\beta}{\alpha}(X_{(k-1)/n}-\rho_0))^2}{2\beta^2/n}\right)-\exp\left( - \frac{(X_{(k-1)/n}-\rho_0)^2}{2\alpha^2/n}\right)\right| \1_{\{X_{(k-1)/n}< \rho_0\}} \\
&\hspace{0.2cm} = \theta \frac{|\xi_k-\frac{\beta}{\alpha}(X_{(k-1)/n}-\rho_0)|}{\beta^2/n}\exp\left( - \frac{(\xi_k-\frac{\beta}{\alpha}(X_{(k-1)/n}-\rho_0))^2}{2\beta^2/n}\right)\1_{\{X_{(k-1)/n}< \rho_0\}} \\
&\hspace{0.2cm} \leq C_{\alpha,\beta} \sqrt{n}\theta \exp\left( - \frac{(\xi_k-\frac{\beta}{\alpha}(X_{(k-1)/n}-\rho_0))^2}{4\beta^2/n}\right)\1_{\{X_{(k-1)/n}< \rho_0\}} \\
&\hspace{0.2cm} \leq C_{\alpha,\beta} \sqrt{n}\theta \exp\left( - \frac{(X_{(k-1)/n}-\rho_0)^2}{4\alpha^2/n}\right)\1_{\{X_{(k-1)/n}< \rho_0\}},
\end{align*}
for a suitable intermediate value $0\leq \xi_k\leq \theta$ for every $k=1,\dots, n$. Then, with Corollary~\ref{cor:bound_exp_X^2},
\begin{align*}
\E_{\rho_0}\left[\sup_{0\leq\theta'\leq\theta}\sup_{s\leq t} \frac{|r_n^{3,1}(s,\theta')|}{\theta'}\right]&\leq C_{\alpha,\beta} n\theta \sum_{k=1}^{\lfloor nt\rfloor} \E_{\rho_0}\left[ \exp\left( - \frac{(X_{(k-1)/n}-\rho_0)^2}{4\alpha^2/n}\right) \right]\\
& \leq C_{\alpha,\beta} n^{3/2}\theta \sqrt{t}.
\end{align*}\smallskip
\item[$- \boldsymbol{r_n^{3,2}}$.] Here, we observe that with Lemma~\ref{lemma:upper_bound_transition_density} and the Gaussian tail inequality,
\begin{align*}
& \1_{\{X_{(k-1)/n}<\rho_0\}}\E_{\rho_0}\left[\1_{\{X_{k/n}> \rho_0+\theta\}}\mid X_{(k-1)/n}\right] \\
&\hspace{0.2cm}\leq C_{\alpha,\beta}\1_{\{X_{(k-1)/n}<\rho_0\}} \int_{\rho_0}^\infty \sqrt{n} \exp\left(-\frac{(y-X_{(k-1)/n})^2}{2\max\{\alpha^2,\beta^2\}/n}\right) dy \\
&\hspace{0.2cm}\leq C_{\alpha,\beta}\1_{\{X_{(k-1)/n}<\rho_0\}} \int_{-(X_{(k-1)/n}-\rho_0)/\sqrt{1/n}}^\infty \exp\left(-\frac{y^2}{2\max\{\alpha^2,\beta^2\}}\right) dy \\
&\hspace{0.2cm}\leq C_{\alpha,\beta} \exp\left(-\frac{(X_{(k-1)/n}-\rho_0)^2}{2\max\{\alpha^2,\beta^2\}/n}\right).
\end{align*}
Then, with Corollary~\ref{cor:bound_exp_X^2},
\begin{align*}
\E_{\rho_0}\left[ \sup_{0\leq\theta'\leq\theta}\sup_{s\leq t} \frac{|r_n^{3,2}(s,\theta')|}{\theta'}\right] &\leq C_{\alpha,\beta} n\theta \sum_{k=1}^{\lfloor nt\rfloor} \E_{\rho_0}\left[ \exp\left(-\frac{(X_{(k-1)/n}-\rho_0)^2}{2\max\{\alpha^2,\beta^2\}/n}\right) \right] \\
& \leq C_{\alpha,\beta} n^{3/2}\theta \sqrt{t}.
\end{align*} 
\end{itemize}\smallskip

\item[$\bullet\ \boldsymbol{j=4}$.] With the expansion~\eqref{eq:expression_case4} and~\eqref{eq:expression_case4_bound} we find using $|\theta|\leq 2K/\sqrt{n}$,
\begin{align*}
&\E_{\rho_0}\left[\left. \left|\log\left( \frac{p_{1/n}^{\rho_0+\theta}(X_{(k-1)/n},X_{k/n})}{p_{1/n}^{\rho_0}(X_{(k-1)/n},X_{k/n})}\right)\right|\1_{I_{4,k}^\theta} \right| X_{(k-1)/n}\right] \\
&\hspace{0.2cm} \leq C_{\alpha,\beta}\theta \E_{\rho_0}\left[\left. \left( \sqrt{n}K + n|X_{k/n}-\rho_0-\theta|\right) \1_{\{X_{k/n}\leq\rho_0 \leq X_{(k-1)/n}<\rho_0+\theta\}}\right| X_{(k-1)/n}\right] \\
&\hspace{0.2cm} \leq C_{\alpha,\beta}\sqrt{n}\theta \1_{\{\rho_0\leq X_{(k-1)/n}<\rho_0+\theta\}} \int_{-\infty}^{\rho_0} \left( K + \sqrt{n}|y-\rho_0-\theta|\right)  \\
&\hspace{7cm} \cdot\sqrt{n}\exp\left(-\frac{(y-X_{(k-1)/n})^2}{2\max\{\alpha^2,\beta^2\}/n}\right) dy \\
&\hspace{0.2cm} \leq C_{\alpha,\beta} \sqrt{n}\theta \1_{\{\rho_0\leq X_{(k-1)/n}<\rho_0+\theta\}} \int_{-\infty}^{\rho_0} \left( 2K + \sqrt{n}|y-\rho_0|\right) \\
&\hspace{7cm} \cdot\sqrt{n}\exp\left(-\frac{(y-\rho_0)^2}{2\max\{\alpha^2,\beta^2\}/n}\right) dy \\
&\hspace{0.2cm} \leq C_{\alpha,\beta}(K)\sqrt{n}\theta \1_{\{\rho_0\leq X_{(k-1)/n}<\rho_0+\theta\}}\int_{-\infty}^0 (1+|y|)\exp\left(-\frac{y^2}{2\max\{\alpha^2,\beta^2\}}\right)dy \\
&\hspace{0.2cm} \leq C_{\alpha,\beta}(K) \sqrt{n}\theta \1_{\{\rho_0\leq X_{(k-1)/n}<\rho_0+\theta\}}.
\end{align*}
As $\1_{\{\rho_0\leq X_{(k-1)/n}<\rho_0+\theta'\}}\leq \1_{\{\rho_0\leq X_{(k-1)/n}<\rho_0+\theta\}}$, Lemma~\ref{lemma:upper_bound_transition_density} reveals
\begin{align*}
&\E_{\rho_0}\left[ \sup_{0\leq\theta'\leq\theta} \sup_{s\leq t} \frac{1}{\theta'}\left| \sum_{k=1}^{\lfloor ns\rfloor}\E_{\rho_0}\left[\left. \log\left( \frac{p_{1/n}^{\rho_0+\theta'}(X_{(k-1)/n},X_{k/n})}{p_{1/n}^{\rho_0}(X_{(k-1)/n},X_{k/n})}\right)\1_{I_{4,k}^\theta} \right| X_{(k-1)/n}\right]\right|\right] \\
&\hspace{1cm} \leq \E_{\rho_0}\left[ \sup_{\theta'\leq\theta}  \frac{1}{\theta'}\sum_{k=1}^{\lfloor nt\rfloor}\E_{\rho_0}\left[\left.\left| \log\left( \frac{p_{1/n}^{\rho_0+\theta'}(X_{(k-1)/n},X_{k/n})}{p_{1/n}^{\rho_0}(X_{(k-1)/n},X_{k/n})}\right)\right|\1_{I_{4,k}^\theta} \right| X_{(k-1)/n}\right]\right] \\
&\hspace{1cm} \leq C_{\alpha,\beta}(K)  \sum_{k=1}^{\lfloor nt\rfloor} \E_{\rho_0}\left[ \sup_{\theta'\leq\theta}  \sqrt{n} \1_{\{\rho_0\leq X_{(k-1)/n}<\rho_0+\theta'\}} \right] \\
&\hspace{1cm} \leq C_{\alpha,\beta}(K) \sqrt{n} \sum_{k=1}^{\lfloor nt\rfloor} \E_{\rho_0}\left[\1_{\{\rho_0\leq X_{(k-1)/n}<\rho_0+\theta\}} \right] \\
&\hspace{1cm} \leq C_{\alpha,\beta}(K) \sqrt{n} \sum_{k=1}^{\lfloor nt\rfloor}  \int_{\rho_0}^{\rho_0+\theta} \frac{1}{\sqrt{(k-1)/n}}\exp\left(-\frac{(y-x_0)^2}{2\max\{\alpha^2,\beta^2\}(k-1)/n}\right) dy \\
&\hspace{1cm} \leq C_{\alpha,\beta}(K) n\theta \sum_{k=1}^{\lfloor nt\rfloor} \frac{1}{\sqrt{k}} = C_{\alpha,\beta}(K) n^{3/2}\theta \sqrt{t}.
\end{align*}\smallskip

\item[$\bullet\ \boldsymbol{j=5}$.] First, we note that for $\rho_0\leq X_{(k-1)/n},X_{k/n}\leq\rho_0+\theta$ and $\theta\leq K/\sqrt{n}$, Lemma~\ref{lemma:upper_bound_transition_density} reveals
\begin{align}\label{eq_bound:P2/P1}
\begin{split}
&\frac{P_2^{\rho_0}(X_{(k-1)/n},X_{k/n};1/n)}{P_1^{\rho_0+\xi_k}(X_{(k-1)/n},X_{k/n};1/n)}\\
&\hspace{1cm} \leq \frac{\max\{\alpha,\beta\}^2}{\min\{\alpha,\beta\}^2} \exp\left(\frac{(X_{k/n}-X_{(k-1)/n})^2}{2\min\{\alpha^2,\beta^2\}/n}-\frac{(X_{k/n}-X_{(k-1)/n})^2}{2\max\{\alpha^2,\beta^2\}/n}\right)\leq C_{\alpha,\beta}(K).
\end{split}
\end{align}
Then, by the expression~\eqref{eq:expression_case5_Taylor} and boundedness of $x\mapsto |x|\exp(-x^2/4)$,
\begin{align*}
&\E_{\rho_0}\left[\left. \left|\log\left( \frac{p_{1/n}^{\rho_0+\theta}(X_{(k-1)/n},X_{k/n})}{p_{1/n}^{\rho_0}(X_{(k-1)/n},X_{k/n})}\right)\right|\1_{I_{5,k}^\theta} \right| X_{(k-1)/n}\right]  \\
&\hspace{0.2cm} \leq C_{\alpha,\beta}(K)\theta \1_{\{\rho_0\leq X_{(k-1)/n}<\rho_0+\theta\}} \int_{\rho_0}^{\rho_0+\theta} \sqrt{n} \frac{2|y-2\rho_0-2\xi+X_{(k-1)/n}|}{\alpha^2/n}\\
&\hspace{6.5cm} \cdot \exp\left(-\frac{(y-2\rho_0-2\xi+X_{(k-1)/n})^2}{2\alpha^2/n}\right) dy \\
&\hspace{0.2cm} \leq C_{\alpha,\beta}(K) n\theta \1_{\{\rho_0\leq X_{(k-1)/n}<\rho_0+\theta\}} \int_{\rho_0}^{\rho_0+\theta}  \exp\left(-\frac{(y-2\rho_0-2\xi+X_{(k-1)/n})^2}{4\alpha^2/n}\right) dy \\
&\hspace{0.2cm} \leq C_{\alpha,\beta}(K) n\theta^2 \1_{\{\rho_0\leq X_{(k-1)/n}<\rho_0+\theta\}}.
\end{align*}
As $\1_{\{\rho_0\leq X_{(k-1)/n}<\rho_0+\theta'\}}\leq \1_{\{\rho_0\leq X_{(k-1)/n}<\rho_0+\theta\}}$ and $\theta\leq K/\sqrt{n}$, Lemma~\ref{lemma:upper_bound_transition_density} reveals
\begin{align*}
&\E_{\rho_0}\left[ \sup_{0\leq\theta'\leq\theta} \sup_{s\leq t}\left| \frac{1}{\theta'}\sum_{k=1}^{\lfloor ns\rfloor}\E_{\rho_0}\left[\left. \log\left( \frac{p_{1/n}^{\rho_0+\theta'}(X_{(k-1)/n},X_{k/n})}{p_{1/n}^{\rho_0}(X_{(k-1)/n},X_{k/n})}\right)\1_{I_{5,k}^\theta} \right| X_{(k-1)/n}\right]\right|\right] \\
&\hspace{0.5cm} \leq \E_{\rho_0}\left[ \sup_{0\leq\theta'\leq\theta}  \frac{1}{\theta'}\sum_{k=1}^{\lfloor nt\rfloor}\E_{\rho_0}\left[\left.\left| \log\left( \frac{p_{1/n}^{\rho_0+\theta'}(X_{(k-1)/n},X_{k/n})}{p_{1/n}^{\rho_0}(X_{(k-1)/n},X_{k/n})}\right)\right|\1_{I_{5,k}^\theta} \right| X_{(k-1)/n}\right]\right] \\
&\hspace{0.5cm} \leq C_{\alpha,\beta}(K)  \sum_{k=1}^{\lfloor nt\rfloor} \E_{\rho_0}\left[ \sup_{\theta'\leq\theta} n\theta' \1_{\{\rho_0\leq X_{(k-1)/n}<\rho_0+\theta'\}} \right] \\
&\hspace{0.5cm} \leq C_{\alpha,\beta}(K) n\theta \sum_{k=1}^{\lfloor nt\rfloor} \E_{\rho_0}\left[\1_{\{\rho_0\leq X_{(k-1)/n}<\rho_0+\theta\}} \right] \\
&\hspace{0.5cm} \leq C_{\alpha,\beta}(K) n\theta \sum_{k=1}^{\lfloor nt\rfloor}  \int_{\rho_0}^{\rho_0+\theta} \frac{1}{\sqrt{(k-1)/n}}\exp\left(-\frac{(y-x_0)^2}{2\max\{\alpha^2,\beta^2\}(k-1)/n}\right) dy \\
&\hspace{0.5cm} \leq C_{\alpha,\beta}(K) n\theta \sum_{k=1}^{\lfloor nt\rfloor} \sqrt{n}\theta\frac{1}{\sqrt{k}} \leq C_{\alpha,\beta}(K) n^{3/2}\theta \sqrt{t}.
\end{align*}\smallskip

\item[$\bullet\ \boldsymbol{j=6}$.] The same argument as for $j=4$ gives with~\eqref{eq:expression_case6_bound}
\begin{align*}
&\E_{\rho_0}\left[ \sup_{0\leq\theta'\leq\theta}\sup_{s\leq t} \frac{1}{\theta'}\left| \sum_{k=1}^{\lfloor ns\rfloor}\E_{\rho_0}\left[\left. \log\left( \frac{p_{1/n}^{\rho_0+\theta}(X_{(k-1)/n},X_{k/n})}{p_{1/n}^{\rho_0}(X_{(k-1)/n},X_{k/n})}\right)\1_{I_{6,k}^\theta} \right| X_{(k-1)/n}\right]\right|\right] \leq C_{\alpha,\beta} n^{3/2} \theta \sqrt{t}.
\end{align*} \smallskip

\item[$\bullet\ \boldsymbol{j=7}$.] Arguing as for $j=3$, we find with~\eqref{eq:expression_case7}
\begin{align*}
&\sum_{k=1}^{\lfloor nt\rfloor} \E_{\rho_0}\left[\left. \log\left( \frac{p_{1/n}^{\rho_0+\theta}(X_{(k-1)/n},X_{k/n})}{p_{1/n}^{\rho_0}(X_{(k-1)/n},X_{k/n})}\right)\1_{I_{7,k}^\theta} \right| X_{(k-1)/n}\right] \\
&\hspace{0.1cm} =- \theta\left(\frac{1}{\alpha}-\frac{1}{\beta}\right)\frac{2\beta}{\alpha+\beta}\frac{1}{\sqrt{2\pi/n}} \sum_{k=1}^{\lfloor nt\rfloor}\1_{\{X_{(k-1)/n}\geq\rho_0\}} \exp\left( - \frac{(X_{(k-1)/n}-\rho_0)^2}{2\beta^2/n}\right) + r_n^{7}(t,\theta).
\end{align*}
where $r_n^7(t,\theta) := r_n^{7,1}(t,\theta)+r_n^{7,2}(t,\theta)$ with
\begin{align*}
r_n^{7,1}(t,\theta) &:= \frac12\left(\frac{1}{\alpha}-\frac{1}{\beta}\right)^2 n\theta^2 \sum_{k=1}^{\lfloor nt\rfloor} \1_{\{X_{(k-1)/n}\geq \rho_0+\theta\}}\E_{\rho_0}\left[ \1_{\{X_{k/n}\leq\rho_0\}}\mid X_{(k-1)/n}\right] \\
r_n^{7,2}(t,\theta) &:= - \theta\left(\frac{1}{\alpha}-\frac{1}{\beta}\right)\frac{2\beta}{\alpha+\beta}\frac{1}{\sqrt{2\pi/n}}\sum_{k=1}^{\lfloor nt\rfloor} \left(\1_{\{X_{(k-1)/n}\geq\rho_0+\theta\}} -\1_{\{X_{(k-1)/n}\geq\rho_0\}}\right)\\
&\hspace{7.5cm} \cdot \exp\left( - \frac{(X_{(k-1)/n}-\rho_0)^2}{2\beta^2/n}\right). 
\end{align*}
We discuss these terms seperately:\smallskip
\begin{itemize}
\item[$\boldsymbol{- r_n^{7,1}}$.] Because $\1_{\{X_{(k-1)/n}\geq \rho_0+\theta'\}}\leq \1_{\{X_{(k-1)/n}\geq \rho_0\}}$ for $\theta'\geq 0$, we get along the lines of the evaluation of $r_n^{3,2}$ that
\begin{align*}
\E_{\rho_0}\left[ \sup_{0\leq\theta'\leq\theta}\sup_{s\leq t} \frac{|r_n^{7,1}(s,\theta')|}{\theta'}\right] &\leq C_{\alpha,\beta} n\theta \sum_{k=1}^{\lfloor nt\rfloor} \E_{\rho_0}\left[ \1_{\{X_{k/n}\leq \rho_0\leq X_{(k-1)/n}\}}\right] \leq C_{\alpha,\beta} n^{3/2}\theta \sqrt{t}.
\end{align*}\smallskip

\item[$\boldsymbol{- r_n^{7,2}}$.]Since $\1_{\{X_{(k-1)/n}\geq\rho_0+\theta\}} -\1_{\{X_{(k-1)/n}\geq\rho_0\}} = -\1_{\{\rho_0\leq X_{(k-1)/n}<\rho_0+\theta\}}$ and $\1_{\{\rho_0\leq X_{(k-1)/n}<\rho_0+\theta'\}}\leq \1_{\{\rho_0\leq X_{(k-1)/n}<\rho_0+\theta\}}$, by Lemma~\ref{lemma:upper_bound_transition_density},
\begin{align*}
&\E_{\rho_0}\left[\sup_{0\leq\theta'\leq\theta}\sup_{s\leq t} \frac{| r_n^{7,2}(s,\theta')|}{\theta'}\right] \\
&\hspace{1cm}\leq C_{\alpha,\beta} \sqrt{n} \E_{\rho_0}\left[\sup_{\theta'\leq\theta} \sum_{k=1}^{\lfloor nt\rfloor} \1_{\{\rho_0\leq X_{(k-1)/n}<\rho_0+\theta'\}}\exp\left( - \frac{(X_{(k-1)/n}-\rho_0)^2}{2\beta^2/n}\right) \right]\\
&\hspace{1cm}\leq C_{\alpha,\beta}\sqrt{n}\sum_{k=1}^{\lfloor nt\rfloor} \frac{1}{\sqrt{(k-1)/n}}\int_{\rho_0}^{\rho_0+\theta}  \exp\left(-\frac{(y-x_0)^2}{2(k-1)\max\{\alpha^2,\beta^2\}/n}\right) dy \\
&\hspace{1cm}\leq C_{\alpha,\beta} n\theta \sum_{k=1}^{\lfloor nt\rfloor} \frac{1}{\sqrt{k}} \leq C_{\alpha,\beta} n^{3/2}\theta \sqrt{t}.
\end{align*}
\end{itemize}
\smallskip

\item[$\bullet\ \boldsymbol{j=8}$.] By the same arguments used for $j=2$ we find with~\eqref{eq:expression_case8}
\begin{align*}
&\sum_{k=1}^{\lfloor nt\rfloor} \E_{\rho_0}\left[\left. \log\left( \frac{p_{1/n}^{\rho_0+\theta}(X_{(k-1)/n},X_{k/n})}{p_{1/n}^{\rho_0}(X_{(k-1)/n},X_{k/n})}\right)\1_{I_{8,k}^\theta} \right| X_{(k-1)/n}\right] \\
&\hspace{0.1cm} =\theta \frac{2}{\alpha+\beta}\frac{\alpha}{\beta}\frac{1}{\sqrt{2\pi/n}}\log\left(\frac{\beta^2}{\alpha^2}\right)\sum_{k=1}^{\lfloor nt\rfloor} \1_{\{X_{(k-1)/n}\geq\rho_0\}}\exp\left(-\frac{(X_{(k-1)/n}-\rho_0)^2}{2\beta^2/n}\right) + r_n^{8}(t,\theta),
\end{align*}
where $r_n^8(t,\theta)=r_n^{8,1}(t,\theta)+ r_n^{8,2}(t,\theta)+r_n^{8,3}(t,\theta)$ with
\begin{align*}
r_n^{8,1}(t,\theta) & := \sum_{k=1}^{\lfloor nt\rfloor} \E_{\rho_0}\left[ R_8(k,0,\theta)\mid X_{(k-1)/n}\right], \\
r_n^{8,2}(t,\theta) & := \sum_{k=1}^{\lfloor nt\rfloor} \1_{\{X_{(k-1)/n}\geq \rho_0+\theta\}}\int_{\rho_0}^{\rho_0+\theta} \left( P_2^{\rho_0}(X_{(k-1)/n},y;1/n) - P_2^{\rho_0}(X_{(k-1)/n},\rho_0;1/n)\right)dy, \\
r_n^{8,3}(t,\theta) & := \theta \frac{2}{\alpha+\beta}\frac{\alpha}{\beta}\frac{1}{\sqrt{2\pi/n}}\log\left(\frac{\beta^2}{\alpha^2}\right)\sum_{k=1}^{\lfloor nt\rfloor} \left(\1_{\{X_{(k-1)/n}\geq\rho_0+\theta\}}-\1_{\{X_{(k-1)/n}\geq\rho_0\}}\right)\\
&\hspace{7.5cm} \cdot\exp\left(-\frac{(X_{(k-1)/n}-\rho_0)^2}{2\beta^2/n}\right).
\end{align*}
We discuss these terms seperately:\smallskip
\begin{itemize}
\item[$\boldsymbol{- r_n^{8,1}}$.] From the bound~\eqref{eq:expression_case8_bound} for $R_8(k,0,\theta)$, Lemma~\ref{lemma:upper_bound_transition_density} and boundedness of the function $x\mapsto (1+x^2)\exp(-x^2/2)$, we find
\begin{align*}
&\E_{\rho_0}\left[\left. |R_8(k,0,\theta)|\right| X_{(k-1)/n}\right] \\
&\hspace{0.2cm} \leq C_{\alpha,\beta}\theta \E_{\rho_0}\left[\left. \left( \sqrt{n}K + n|X_{(k-1)/n}-\rho_0-\theta|\right) \1_{\{\rho_0<X_{k/n}\leq\rho_0+\theta\leq X_{(k-1)/n}\}}\right| X_{(k-1)/n}\right] \\
&\hspace{0.2cm} \leq C_{\alpha,\beta}\theta \1_{\{X_{(k-1)/n}\geq\rho_0+\theta\}} \left( \sqrt{n}K + n|X_{(k-1)/n}-\rho_0-\theta|\right)\\
&\hspace{5cm} \cdot\int_{\rho_0}^{\rho_0+\theta} \sqrt{n}\exp\left(-\frac{(y-X_{(k-1)/n})^2}{2\max\{\alpha^2,\beta^2\}/n}\right) dy \\
&\hspace{0.2cm} \leq C_{\alpha,\beta} n\theta^2 \left( K + \sqrt{n}|X_{(k-1)/n}-\rho_0-\theta|\right) \1_{\{X_{(k-1)/n}\geq\rho_0+\theta\}}\\
&\hspace{5cm} \cdot\exp\left(-\frac{(X_{(k-1)/n}-\rho_0-\theta)^2}{2\max\{\alpha^2,\beta^2\}/n}\right) \\
&\hspace{0.2cm} \leq C_{\alpha,\beta}(K) n\theta^2 \1_{\{X_{(k-1)/n}\geq\rho_0+\theta\}}  \exp\left(-\frac{(X_{(k-1)/n}-\rho_0-\theta)^2}{4\max\{\alpha^2,\beta^2\}/n}\right) \\
&\hspace{0.2cm} \leq C_{\alpha,\beta}(K) n\theta^2 \1_{\{X_{(k-1)/n}\geq\rho_0+\theta\}} \exp\left(-\frac{(X_{(k-1)/n}-\rho_0)^2}{4\max\{\alpha^2,\beta^2\}/n}\right)\exp\left(\frac{(X_{(k-1)/n}-\rho_0)\theta}{2\max\{\alpha^2,\beta^2\}/n}\right).
\end{align*}
Now we have, using $X_{(k-1)/n}\geq\rho_0+\theta$ and $0\leq\theta\leq K/\sqrt{n}$ that
\begin{align}\label{eq_proof:r81}
n(X_{(k-1)/n}-\rho_0)\theta \leq 
\begin{cases}
8K^2, &\quad \textrm{ if } X_{(k-1)/n}-\rho_0\leq 8K/\sqrt{n}, \\
\frac{(X_{(k-1)/n}-\rho_0)^2}{8/n}, &\quad \textrm{ if } X_{(k-1)/n}-\rho_0 >8K/\sqrt{n}.
\end{cases}
\end{align} 
In particular
\[ \E_{\rho_0}\left[\left. |R_8(k,0,\theta)|\right| X_{(k-1)/n}\right] \leq C_{\alpha,\beta}(K) n\theta^2 \exp\left(-\frac{(X_{(k-1)/n}-\rho_0)^2}{8\max\{\alpha^2,\beta^2\}/n}\right).\]
As the random exp-part of this bound is independent of $\theta$, we easily deduce with Corollary~\ref{cor:bound_exp_X^2} that
\[ \E_{\rho_0}\left[\sup_{0\leq\theta'\leq\theta}\sup_{s\leq t}\frac{|r_n^{8,1}(s,\theta')|}{\theta'}\right]\leq C_{\alpha,\beta} n^{3/2}\theta \sqrt{t}.\]\smallskip

\item[$\boldsymbol{- r_n^{8,2}}$.] By a first order Taylor expansion and boundedness of $x\mapsto x\exp(-x^2/4)$, we find for intermediate values $\rho_0\leq \xi_y\leq \rho_0+y$ appearing in the Lagrange version of the remainder term that
\begin{align*}
&\left| P_2^{\rho_0}(X_{(k-1)/n},y;1/n) - P_2^{\rho_0}(X_{(k-1)/n},\rho_0;1/n)\right|\1_{\{X_{(k-1)/n}\geq \rho_0+\theta\}} \\
&\hspace{0.2cm} \leq C_{\alpha,\beta}\sqrt{n}(y-\rho_0) \left[ \frac{X_{(k-1)/n}-\xi_y}{\beta^2/n}\exp\left(-\frac{(X_{(k-1)/n}-\xi_y)^2}{2\beta^2/n}\right) \right. \\
&\hspace{1cm} \left. + \frac{X_{(k-1)/n}-2\rho_0+\xi_y}{\beta^2/n} \exp\left(-\frac{(X_{(k-1)/n}-2\rho_0+\xi_y)^2}{2\beta^2/n}\right) \right]\1_{\{X_{(k-1)/n}\geq \rho_0+\theta\}} \\
&\hspace{0.2cm} \leq C_{\alpha,\beta} n\theta \left[ \exp\left(-\frac{(X_{(k-1)/n}-\xi_y)^2}{4\beta^2/n}\right)+\exp\left(-\frac{(X_{(k-1)/n}-2\rho_0+\xi_y)^2}{4\beta^2/n}\right)\right]\\
&\hspace{1cm} \cdot\1_{\{X_{(k-1)/n}\geq \rho_0+\theta\}}.
\end{align*}
Thus, 
\begin{align*}
&\1_{\{X_{(k-1)/n}\geq \rho_0+\theta\}}\left|\int_{\rho_0}^{\rho_0+\theta} \left( P_2^{\rho_0}(X_{(k-1)/n},y;1/n) - P_2^{\rho_0}(X_{(k-1)/n},\rho_0;1/n)\right)dy \right| \\
&\hspace{0.2cm} \leq C_{\alpha,\beta} n\theta \1_{\{X_{(k-1)/n}\geq \rho_0+\theta\}} \int_{\rho_0}^{\rho_0+\theta}\left(\exp\left(-\frac{(X_{(k-1)/n}-\xi_y)^2}{4\beta^2/n}\right) \right. \\
&\hspace{6cm} \left. +\exp\left(-\frac{(X_{(k-1)/n}-2\rho_0+\xi_y)^2}{4\beta^2/n}\right)\right) dy \\
&\hspace{0.2cm} \leq C_{\alpha,\beta} n\theta^2 \1_{\{X_{(k-1)/n}\geq \rho_0+\theta\}}\left( \exp\left(-\frac{(X_{(k-1)/n}-\rho_0-\theta)^2}{4\beta^2/n}\right)\right. \\
&\hspace{6cm} \left. + \exp\left(-\frac{(X_{(k-1)/n}-\rho_0)^2}{4\beta^2/n}\right)\right) \\
&\hspace{0.2cm} \leq C_{\alpha,\beta}(K) n\theta^2\exp\left(-\frac{(X_{(k-1)/n}-\rho_0)^2}{8\beta^2/n}\right),
\end{align*}
where the second step follows by uniformly bounding the integrand on the integration domain and the last step follows from~\eqref{eq_proof:r81} as in the treatment of $r_n^{8,1}(t,\theta)$. Then, as the random exp-part of this bound is independent of $\theta$, with Corollary~\ref{cor:bound_exp_X^2},
\[ \E_{\rho_0}\left[\sup_{0\leq\theta'\leq\theta}\sup_{s\leq t}\frac{|r_n^{8,2}(s,\theta')|}{\theta'}\right]\leq C_{\alpha,\beta} n^{3/2}\theta \sqrt{t}.\]\smallskip

\item[$\boldsymbol{- r_n^{8,3}}$.] Here, the evaluation is done analogously to that of $r_n^{7,2}(t,\theta)$ as $r_n^{8,3}(t,\theta)$ and $r_n^{7,2}(t,\theta)$ differ only in a factor $C_{\alpha,\beta}$.\smallskip
\end{itemize}

\item[$\bullet\ \boldsymbol{j=9}$.] By the same arguments used for $j=1$, the remainder $r_n^{3,1}(t,\theta)$ and the remainder term $r_n^{6,2}(t,\theta)$, we find
\begin{align*}
&\sum_{k=1}^{\lfloor nt\rfloor} \E_{\rho_0}\left[\left. \log\left( \frac{p_{1/n}^{\rho_0+\theta}(X_{(k-1)/n},X_{k/n})}{p_{1/n}^{\rho_0}(X_{(k-1)/n},X_{k/n})}\right)\1_{I_{9,k}^\theta} \right| X_{(k-1)/n}\right] \\
&\hspace{0.5cm} = \theta \frac{\alpha-\beta}{\alpha+\beta}\frac{2}{\sqrt{2\pi\beta^2/n}}\sum_{k=1}^{\lfloor nt\rfloor} \1_{\{X_{(k-1)/n}\geq \rho_0\}}\exp\left(-\frac{(X_{(k-1)/n}-\rho_0)^2}{2\beta^2/n}\right) +r_n^9(t,\theta),
\end{align*}
where $r_n^9(t,\theta):=r_n^{9,1}(t,\theta)+r_n^{9,2}(t,\theta)+r_n^{9,3}(t,\theta)$ with
\begin{align*}
r_n^{9,1}(t,\theta) &:= \theta \frac{\alpha-\beta}{\alpha+\beta}\frac{2}{\sqrt{2\pi\beta^2/n}}\sum_{k=1}^{\lfloor nt\rfloor} \1_{\{X_{(k-1)/n}\geq \rho_0+\theta\}} \left[\exp\left(-\frac{(X_{(k-1)/n}-\rho_0+\theta)^2}{2\beta^2/n}\right) \right. \\
&\hspace{6cm} \left. -\exp\left(-\frac{(X_{(k-1)/n}-\rho_0)^2}{2\beta^2/n}\right)\right], \\
r_n^{9,2}(t,\theta) &:= \theta \frac{\alpha-\beta}{\alpha+\beta}\frac{2}{\sqrt{2\pi\beta^2/n}}\sum_{k=1}^{\lfloor nt\rfloor} \left(\1_{\{X_{(k-1)/n}\geq \rho_0+\theta\}}- \1_{\{X_{(k-1)/n}\geq \rho_0\}}\right)\\
&\hspace{6cm} \cdot\exp\left(-\frac{(X_{(k-1)/n}-\rho_0)^2}{2\beta^2/n}\right), \\
r_n^{9,3}(t,\theta) &:= \sum_{k=1}^{\lfloor nt\rfloor} \E_{\rho_0}\left[R_9(k,0,\theta)\mid X_{(k-1)/n}\right].
\end{align*}
We discuss these terms seperately:\smallskip
\begin{itemize}
\item[$\boldsymbol{- r_n^{9,1}}$.] By a first order Taylor expansion we find with the intermediate value $0\leq \xi_k\leq \theta$ in the Lagrange remainder,
\begin{align*}
&\1_{\{X_{(k-1)/n}\geq \rho_0+\theta\}}\left|\exp\left(-\frac{(X_{(k-1)/n}-\rho_0+\theta)^2}{2\beta^2/n}\right)  -\exp\left(-\frac{(X_{(k-1)/n}-\rho_0)^2}{2\beta^2/n}\right)\right| \\
&\hspace{0.5cm} = \sqrt{n}\theta \1_{\{X_{(k-1)/n}\geq \rho_0+\theta\}}\frac{|X_{(k-1)/n}-\rho_0-\xi_k|}{\beta^2/n}\exp\left(-\frac{(X_{(k-1)/n}-\rho_0-\xi_k)^2}{2\beta^2/n}\right) \\
&\hspace{0.5cm} \leq C_{\alpha,\beta}\sqrt{n}\theta \1_{\{X_{(k-1)/n}\geq \rho_0+\theta\}}\exp\left(-\frac{(X_{(k-1)/n}-\rho_0-\xi_k)^2}{4\beta^2/n}\right) \\
&\hspace{0.5cm} \leq C_{\alpha,\beta}(K) \sqrt{n}\theta \1_{\{X_{(k-1)/n}\geq \rho_0+\theta\}}\exp\left(-\frac{(X_{(k-1)/n}-\rho_0)^2}{8\beta^2/n}\right),
\end{align*}
where the second step used boundedness of $x\mapsto |x|\exp(-x^2/4)$ and the last one follows by~\eqref{eq_proof:r81} with the same argument used for $r_n^{8,1}(t,\theta)$. Thus, 
\[ \left|r_n^{9,1}(t,\theta)\right|  \leq C_{\alpha,\beta}(K) n\theta^2 \sum_{k=1}^{\lfloor nt\rfloor} \1_{\{X_{(k-1)/n}\geq \rho_0\}}\exp\left(-\frac{(X_{(k-1)/n}-\rho_0)^2}{2\beta^2/n}\right) \]
and by Corollary~\ref{cor:bound_exp_X^2},
\[ \E_{\rho_0}\left[\sup_{0\leq \theta'\leq\theta}\sup_{s\leq t}\frac{|r_n^{9,1}(s,\theta')|}{\theta'}\right]\leq C_{\alpha,\beta} n^{3/2}\theta \sqrt{t}.\]\smallskip

\item[$\boldsymbol{- r_n^{9,2}}$.] Here, the evaluation is done analogously to that of $r_n^{7,2}(t,\theta)$ as $r_n^{9,2}(t,\theta)$ and $r_n^{7,2}(t,\theta)$ differ only in a factor $C_{\alpha,\beta}$.\smallskip

\item[$\boldsymbol{- r_n^{9,3}}$.] We first observe
\begin{align}\label{eq_bound:P2/P2}
\begin{split}
&\frac{P_2^{\rho_0}(X_{(k-1)/n},X_{k/n};1/n)}{P_2^{\rho_0+\xi_k}(X_{(k-1)/n},X_{k/n};1/n)}\\
&\hspace{0.5cm}= \frac{1+\frac{\alpha-\beta}{\alpha+\beta}\exp\left(-\frac{2}{\beta^2/n}(X_{k/n}-\rho_0)(X_{(k-1)/n}-\rho_0)\right)}{1+\frac{\alpha-\beta}{\alpha+\beta}\exp\left(-\frac{2}{\beta^2/n}(X_{k/n}-\rho_0-\xi_k)(X_{(k-1)/n}-\rho_0-\xi_k)\right)} \leq C_{\alpha,\beta} 
\end{split}
\end{align} 
for $X_{(k-1)/n},X_{k/n}\geq \rho_0+\theta$. Then, by an analougous computation as in $j=1$ for $R_1(k,0,\theta)$, we find
\begin{align*}
&\E_{\rho_0}\left[R_9(k,0,\theta)\mid X_{(k-1)/n}\right]\\
&\hspace{1cm}\leq C_{\alpha,\beta} n\theta^2 \1_{\{X_{(k-1)/n}\geq\rho_0+\theta\}} \exp\left(-\frac{(X_{(k-1)/n}+\theta-\rho_0-2\xi)^2}{2\beta^2/n}\right) \\
&\hspace{1cm}\leq C_{\alpha,\beta} n\theta^2 \1_{\{X_{(k-1)/n}\geq\rho_0+\theta\}} \exp\left(-\frac{(X_{(k-1)/n}-\rho_0-\theta)^2}{2\beta^2/n}\right) \\
&\hspace{1cm}\leq C_{\alpha,\beta}(K) n\theta^2 \1_{\{X_{(k-1)/n}\geq\rho_0\}} \exp\left(-\frac{(X_{(k-1)/n}-\rho_0)^2}{4\beta^2/n}\right),
\end{align*} 
where the last step works as with~\eqref{eq_proof:r81} as in the treatment of $r_n^{8,1}(t,\theta)$. Then, by Corollary~\ref{cor:bound_exp_X^2},
\[ \E_{\rho_0}\left[\sup_{0\leq\theta'\leq\theta}\sup_{s\leq t}\frac{|r_n^{9,3}(s,\theta')|}{\theta'}\right]\leq C_{\alpha,\beta} n^{3/2}\theta \sqrt{t}.\]
\end{itemize}\bigskip

\end{itemize}
Finally, we sum up our results: Defining
\[ r_n(t,\theta) := \sum_{j\in\{1,2,3,7,8,9\}} r_n^j(t,\theta) + \sum_{j\in\{4,5,6\}}\sum_{k=1}^{\lfloor nt\rfloor} \E_{\rho_0}\left[\left. \log\left( \frac{p_{1/n}^{\rho_0+\theta}(X_{(k-1)/n},X_{k/n})}{p_{1/n}^{\rho_0}(X_{(k-1)/n},X_{k/n})}\right)\1_{I_{j,k}^\theta} \right| X_{(k-1)/n}\right]\]
we have proven the decomposition
\begin{align*}
&\sum_{j=1}^9 \sum_{k=1}^{\lfloor nt\rfloor} \E_{\rho_0}\left[\left. \log\left( \frac{p_{1/n}^{\rho_0+\theta}(X_{(k-1)/n},X_{k/n})}{p_{1/n}^{\rho_0}(X_{(k-1)/n},X_{k/n})}\right)\1_{I_{j,k}^\theta} \right| X_{(k-1)/n}\right] \\
&\hspace{0.5cm} = \frac{\theta}{\sqrt{2\pi/n}}\left(\frac{\alpha-\beta}{\alpha+\beta}\frac{2}{\alpha}+ \frac{2}{\alpha+\beta}\frac{\alpha}{\beta}\log\left(\frac{\beta^2}{\alpha^2}\right) -\left(\frac{1}{\alpha}-\frac{1}{\beta}\right)\frac{2\alpha}{\alpha+\beta} \right)\\
&\hspace{2cm}\cdot \sum_{k=1}^{\lfloor nt\rfloor} \1_{\{X_{(k-1)/n}< \rho_0\}}\exp\left(-\frac{(X_{(k-1)/n}-\rho_0)^2}{2\alpha^2/n}\right)\\
&\hspace{1cm} +\frac{\theta}{\sqrt{2\pi/n}} \left(-\frac{2\beta}{\alpha+\beta}\left(\frac{1}{\alpha}-\frac{1}{\beta}\right)+ \frac{2}{\alpha+\beta}\frac{\alpha}{\beta}\log\left(\frac{\beta^2}{\alpha^2}\right)+\frac{\alpha-\beta}{\alpha+\beta}\frac{2}{\beta}\right)\\
&\hspace{2cm} \cdot \sum_{k=1}^{\lfloor nt\rfloor} \1_{\{X_{(k-1)/n}\geq \rho_0\}}\exp\left(-\frac{(X_{(k-1)/n}-\rho_0)^2}{2\beta^2/n}\right)\\
&\hspace{1cm} + r_n(t,\theta).
\end{align*}
From this and some simple algebraic manipulations, we get the asserted decomposition in the statement of Proposition~\ref{prop:expansion_of_drift_t}. Moreover, we have shown that
\[ \E_{\rho_0}\left[\sup_{0\leq\theta'\leq\theta}\sup_{s\leq t} \frac{| r_n(s,\theta')|}{\theta'}\right] \leq C_{\alpha,\beta}(K) n^{3/2}\theta \sqrt{t}.\]
The statement for $\theta<0$ can be derived in exactly the same way. Here we use that 
\[ \log\left( \frac{p_{1/n}^{\rho_0+\theta}(X_{(k-1)/n},X_{k/n})}{p_{1/n}^{\rho_0}(X_{(k-1)/n},X_{k/n})}\right) = -\log\left( \frac{p_{1/n}^{\rho_0}(X_{(k-1)/n},X_{k/n})}{p_{1/n}^{\rho_0+\theta}(X_{(k-1)/n},X_{k/n})}\right)\]
and then apply the expansions~\eqref{eq:expression_case1_second_order}-\eqref{eq:expression_case9_second_order_bound} to this right-hand side.
\end{proof}

\begin{proof}[Proof of Proposition~\ref{prop:moment_product}]
The proof is done via induction on $m$. For the induction hypothesis, we investigate $m=1$ and set $d_1=d$, $k_1=k$ and $j_1=j$ for the ease of notation. We are going to show the claim for $Y_k^j=Z_k^j$ for each choice of $d\leq D$, $1\leq k\leq n$ and $j\in\{1,\dots, 9\}$. The claim for $Y_k^j=\overline{Z}_k^j$ is then a consequence of Jensen's inequality for conditional expectations. We will repeatedly use that
\begin{align}\label{eq_proof:inequality_xexp}
\begin{split}
\sup_{x\in\R} \left[(1+|x|)^d\exp(-x^2/4)\right] &\leq 2^d + \sup_{x\in\R} \left[|x|^d\exp(-x^2/4)\right] \\
& = 2^d + (2d)^d\exp(-d^2) \leq 2^D + (2D)^D =: C(D).
\end{split}
\end{align} 
Moreover, we will always assume $\theta'\leq\theta$ and denote by $\xi_k$ an intermediate value of a corresponding (Taylor-) expansion, in particular, we have $\theta'\leq \xi_k\leq\theta$.\smallskip
\begin{itemize}
\item[$\bullet\ \boldsymbol{j=1}.$]  With the expansion~\eqref{eq:expression_case1_first_order} and~\eqref{eq_bound:exp/P1},
\begin{align}\label{eq_proof:lemma_moments_Z_j_1}
\begin{split}
\left|Z_k^1(\theta',\theta)\right|^d &\leq C_{\alpha,\beta}|\theta-\theta'|^d \left|\frac{X_{k/n}-2\rho_0-2\xi_k+X_{(k-1)/n}}{P_1^{\rho_0+\xi_k}(X_{(k-1)/n},X_{k/n};1/n)}\right|^d n^{3d/2}\\
&\hspace{3cm} \cdot \exp\left(-\frac{d(X_{k/n}-2\rho_0-2\xi_k+X_{(k-1)/n})^2}{2\alpha^2/n}\right) \\
&\leq C_{\alpha,\beta}|\theta-\theta'|^d n^{d/2} \frac{|X_{k/n}-2\rho_0-2\xi_k+X_{(k-1)/n}|^d n^{d/2}}{P_1^{\rho_0+\xi_k}(X_{(k-1)/n},X_{k/n};1/n)}\\
&\hspace{3cm}\cdot  \sqrt{n} \exp\left(-\frac{(X_{k/n}-2\rho_0-2\xi_k +X_{(k-1)/n})^2}{2\alpha^2/n}\right).
\end{split}
\end{align}
To evaluate the conditional expectation of this bound under $\Pr_{\rho_0}$, we have to distinguish the cases $\theta'<0$ and $\theta'\geq 0$ in order to perform the calculation with the correct regime of the transition density. \smallskip

\begin{itemize}
\item[$\boldsymbol{- \theta'<0}$.] Here, $\{X_{(k-1)/n}<\rho_0+\theta',X_{k/n}\leq \rho_0+\theta'\}\subset \{X_{(k-1)/n},X_{k/n}\leq \rho_0\}$. Then, using~\eqref{eq_bound:P1/P1}, we find from~\eqref{eq_proof:inequality_xexp}, \eqref{eq_proof:lemma_moments_Z_j_1} and the Gaussian tail inequality,
\begin{align*}
&\E_{\rho_0}\left[\left.\left|Z_k^1(\theta',\theta)\right|^d \right| X_{(k-1)/n}\right] \\
&\hspace{0.5cm} \leq C_{\alpha,\beta}|\theta-\theta'|^d n^{d/2}\int_{-\infty}^{\rho_0+\theta'} \frac{|y-2\rho_0-2\xi_k+X_{(k-1)/n}|^d}{(1/n)^{d/2}}\\
&\hspace{5.5cm} \cdot\sqrt{n}\exp\left(-\frac{(y-2\rho_0-2\xi_k+X_{(k-1)/n})^2}{2\alpha^2/n}\right) dy \\
&\hspace{0.5cm}\leq C_{\alpha,\beta}|\theta-\theta'|^d n^{d/2}\int_{-\infty}^{(X_{(k-1)/n}-\rho_0-2\xi_k+\theta')/\sqrt{\alpha^2/n}} |y|^d \exp\left(-\frac{y^2}{2}\right) dy \\
&\hspace{0.5cm}\leq C_{\alpha,\beta}(D)|\theta-\theta'|^d n^{d/2}\exp\left(-\frac{(X_{(k-1)/n}-\rho_0-2\xi_k+\theta')^2}{4\alpha^2/n}\right).
\end{align*}\smallskip

\item[$\boldsymbol{- \theta'\geq 0}$.] Here, we have to work with different regimes of the transition density, depending in which case on the right-hand side of
\begin{align*}
&\{X_{(k-1)/n}<\rho_0+\theta',X_{k/n}\leq \rho_0+\theta'\}\\
&\hspace{0.5cm} = \{X_{(k-1)/n}<\rho_0,X_{k/n}\leq \rho_0\} \cup \{X_{(k-1)/n}<\rho_0< X_{k/n}\leq \rho_0+\theta'\} \\
&\hspace{1.5cm} \cup \{X_{k/n}\leq \rho_0\leq X_{(k-1)/n}< \rho_0+\theta'\}\\
&\hspace{1.5cm} \cup \{\rho_0\leq X_{(k-1)/n}< \rho_0+\theta', \rho_0<X_{k/n}\leq \rho_0+\theta'\}
\end{align*}
we are in. For the first of them, we get from~\eqref{eq_bound:P1/P1}, \eqref{eq_proof:inequality_xexp}, \eqref{eq_proof:lemma_moments_Z_j_1} and the Gaussian tail inequality,
\begin{align*}
&\E_{\rho_0}\left[\left.\1_{\{X_{(k-1)/n}<\rho_0,X_{k/n}\leq\rho_0\}}\left|Z_k^1(\theta',\theta)\right|^d \right| X_{(k-1)/n}\right] \\
&\hspace{0.5cm} \leq C_{\alpha,\beta}|\theta-\theta'|^d n^{d/2}\int_{-\infty}^{\rho_0} \frac{|y-2\rho_0-2\xi_k+X_{(k-1)/n}|^d}{(1/n)^{d/2}}\\
&\hspace{5.5cm} \cdot \sqrt{n}\exp\left(-\frac{(y-2\rho_0-2\xi_k+X_{(k-1)/n})^2}{2\alpha^2/n}\right) dy \\
&\hspace{0.5cm}\leq C_{\alpha,\beta}|\theta-\theta'|^d n^{d/2}\int_{-\infty}^{(X_{(k-1)/n}-\rho_0-2\xi_k)/\sqrt{\alpha^2/n}} |y|^d \exp\left(-\frac{y^2}{2}\right) dy \\
&\hspace{0.5cm}\leq C_{\alpha,\beta}(D)|\theta-\theta'|^d n^{d/2}\exp\left(-\frac{(X_{(k-1)/n}-\rho_0-2\xi_k)^2}{4\alpha^2/n}\right).
\end{align*}
Using~\eqref{eq_bound:P2/P1} instead of~\eqref{eq_bound:P1/P1}, the same computation reveals
\begin{align*}
&\E_{\rho_0}\left[\left.\1_{\{\rho_0\leq X_{(k-1)/n}< \rho_0+\theta', \rho_0<X_{k/n}\leq \rho_0+\theta'\}}\left|Z_k^1(\theta',\theta)\right|^d \right| X_{(k-1)/n}\right] \\
&\hspace{1cm}\leq C_{\alpha,\beta}|\theta-\theta'|^d n^{d/2}\exp\left(-\frac{(X_{(k-1)/n}-\rho_0-2\xi_k+\theta')^2}{4\alpha^2/n}\right).
\end{align*}
If $X_{(k-1)/n}<\rho_0< X_{k/n}\leq \rho_0+\theta'$, we have $|X_{k/n}-2\rho_0-2\xi_k+X_{(k-1)/n}|\leq 3K/\sqrt{n}+|X_{(k-1)/n}-\rho_0|$. After applying~\eqref{eq_bound:exp/P1} to~\eqref{eq_proof:lemma_moments_Z_j_1} another time, we then find with Lemma~\ref{lemma:upper_bound_transition_density} and~\eqref{eq_proof:inequality_xexp},
\begin{align*}
&\E_{\rho_0}\left[\left.\1_{\{X_{(k-1)/n}<\rho_0< X_{k/n}\leq \rho_0+\theta'\}}\left|Z_k^1(\theta',\theta)\right|^d \right| X_{(k-1)/n}\right] \\
&\hspace{0.2cm} \leq C_{\alpha,\beta} |\theta-\theta'|^d n^{d/2}\left( 3K + \sqrt{n}|X_{(k-1)/n}-\rho_0|\right)^d \\
&\hspace{5cm} \cdot\E_{\rho_0}\left[ \left. \1_{\{X_{(k-1)/n}<\rho_0< X_{k/n}\leq \rho_0+\theta'\}}\right| X_{(k-1)/n}\right] \\
&\hspace{0.2cm} \leq C_{\alpha,\beta} |\theta-\theta'|^d n^{d/2}\left( 3K + \sqrt{n}|X_{(k-1)/n}-\rho_0|\right)^d\1_{\{X_{(k-1)/n}<\rho_0\}} \\
&\hspace{5cm} \cdot\int_{\rho_0}^{\rho_0+\theta'} \sqrt{n}\exp\left(-\frac{(y-X_{(k-1)/n})^2}{2\max\{\alpha^2,\beta^2\}/n}\right) dy \\
&\hspace{0.2cm} \leq C_{\alpha,\beta} |\theta-\theta'|^d n^{d/2}\left( 3K + \sqrt{n}|X_{(k-1)/n}-\rho_0|\right)^d \theta'\sqrt{n} \exp\left(-\frac{(X_{(k-1)/n}-\rho_0)^2}{2\max\{\alpha^2,\beta^2\}/n}\right) \\
&\hspace{0.2cm} \leq C_{\alpha,\beta}(D,K) |\theta-\theta'|^d n^{d/2}\exp\left(-\frac{(X_{(k-1)/n}-\rho_0)^2}{4\max\{\alpha^2,\beta^2\}/n}\right).
\end{align*}
In the remaining case, i.e. for $X_{k/n}\leq \rho_0\leq X_{(k-1)/n}< \rho_0+\theta'$, we first apply~\eqref{eq_bound:exp/P1} to~\eqref{eq_proof:lemma_moments_Z_j_1}. Then, as $|X_{k/n}-2\rho_0-2\xi_k+X_{(k-1)/n}|\leq 3K/\sqrt{n}+|X_{k/n}-\rho_0|$, we bound with Lemma~\ref{lemma:upper_bound_transition_density}, \eqref{eq_bound:exp/P1} and the Gaussian tail inequality,
\begin{align*}
&\E_{\rho_0}\left[\left.\1_{\{X_{k/n}\leq \rho_0\leq X_{(k-1)/n}< \rho_0+\theta'\}}\left|Z_k^1(\theta',\theta)\right|^d \right| X_{(k-1)/n}\right] \\
&\hspace{0.2cm} \leq C_{\alpha,\beta} |\theta-\theta'|^d n^{d/2} \E_{\rho_0}\left[ \left. \1_{\{X_{k/n}\leq \rho_0\leq X_{(k-1)/n}< \rho_0+\theta'\}}\left( 3K + \sqrt{n}|X_{k/n}-\rho_0|\right)^d\right| X_{(k-1)/n}\right] \\
&\hspace{0.2cm} \leq C_{\alpha,\beta} |\theta-\theta'|^d n^{d/2} \1_{\{X_{(k-1)/n}\geq\rho_0\}} \\
&\hspace{3cm} \cdot\int_{-\infty}^{\rho_0} \left( 3K + \sqrt{n}|y-\rho_0|\right)^d \sqrt{n}\exp\left(-\frac{(y-X_{(k-1)/n})^2}{2\max\{\alpha^2,\beta^2\}/n}\right) dy \\
&\hspace{0.2cm} \leq C_{\alpha,\beta} |\theta-\theta'|^d n^{d/2}\int_{-\infty}^{\rho_0} \left( 3K + \sqrt{n}|y-\rho_0|\right)^d \sqrt{n}\exp\left(-\frac{(y-\rho_0)^2}{2\max\{\alpha^2,\beta^2\}/n}\right) dy \\
&\hspace{0.2cm} \leq C_{\alpha,\beta} |\theta-\theta'|^d n^{d/2}\int_{-\infty}^{\sqrt{n}(X_{(k-1)/n}-\rho_0)} \left( 3K + |y|\right)^d \exp\left(-\frac{y^2}{2\max\{\alpha^2,\beta^2\}}\right) dy \\
&\hspace{0.2cm} \leq C_{\alpha,\beta}(D,K) |\theta-\theta'|^d n^{d/2}\exp\left(-\frac{(X_{(k-1)/n}-\rho_0)^2}{4\max\{\alpha^2,\beta^2\}/n}\right).
\end{align*}
\end{itemize}
Now, estimating the expectation of the upper bounds in each of these four cases with Corollary~\ref{cor:bound_exp_X^2} gives
\[ \E_{\rho_0}\left[\left|Z_k^1(\theta',\theta)\right|^d \right] \leq C_{\alpha,\beta}(D,K) |\theta-\theta'|^d n^{d/2}\frac{1}{\sqrt{k}}\]
which is the stated bound of the lemma for $j=1$.\bigskip

\item[$\bullet\ \boldsymbol{j=2}.$] With the expansion~\eqref{eq:expression_case2} and~\eqref{eq:expression_case2_bound} we find with Lemma~\ref{lemma:upper_bound_transition_density}, \eqref{eq_proof:inequality_xexp} and our assumption $|\theta-\theta'|\leq 2K/\sqrt{n}$,
\begin{align*}
&\E_{\rho_0}\left[\left.\left|Z_k^2(\theta',\theta)\right|^d \right| X_{(k-1)/n}\right] \\
&\hspace{0.2cm} \leq C_{\alpha,\beta}|\theta-\theta'|^d \E_{\rho_0}\left[\left. \left( 2K\sqrt{n} + n|X_{(k-1)/n}-\rho_0-\theta|\right)^d \1_{\{X_{(k-1)/n}<\rho_0+\theta'<X_{k/n}\leq\rho_0+\theta\}}\right| X_{(k-1)/n}\right] \\
&\hspace{0.2cm} \leq C_{\alpha,\beta}|\theta-\theta'|^d \1_{\{X_{(k-1)/n}<\rho_0+\theta'\}} \left( 2K\sqrt{n} + n|X_{(k-1)/n}-\rho_0-\theta|\right)^d \\
&\hspace{5cm}\cdot\int_{\rho_0+\theta'}^{\rho_0+\theta} \sqrt{n}\exp\left(-\frac{(y-X_{(k-1)/n})^2}{2\max\{\alpha^2,\beta^2\}/n}\right) dy \\
&\hspace{0.2cm} \leq C_{\alpha,\beta}|\theta-\theta'|^d \sqrt{n}|\theta-\theta'| \left( 2K\sqrt{n} + n|X_{(k-1)/n}-\rho_0-\theta|\right)^d \exp\left(-\frac{(X_{(k-1)/n}-\rho_0-\theta')^2}{2\max\{\alpha^2,\beta^2\}/n}\right) \\
&\hspace{0.2cm} \leq C_{\alpha,\beta}|\theta-\theta'|^d (2K) n^{d/2} \left( 2K + \sqrt{n}|X_{(k-1)/n}-\rho_0-\theta|\right)^d \exp\left(-\frac{(X_{(k-1)/n}-\rho_0-\theta')^2}{2\max\{\alpha^2,\beta^2\}/n}\right) \\
&\hspace{0.2cm} \leq C_{\alpha,\beta}(K) |\theta-\theta'|^d n^{d/2} \left( 4K + \sqrt{n}|X_{(k-1)/n}-\rho_0-\theta'|\right)^d \exp\left(-\frac{(X_{(k-1)/n}-\rho_0-\theta')^2}{2\max\{\alpha^2,\beta^2\}/n}\right) \\
&\hspace{0.2cm} \leq C_{\alpha,\beta}(D,K) |\theta-\theta'|^d n^{d/2}  \exp\left(-\frac{(X_{(k-1)/n}-\rho_0-\theta')^2}{4\max\{\alpha^2,\beta^2\}/n}\right).
\end{align*}
In particular, Corollary~\ref{cor:bound_exp_X^2} then implies
\[ \E_{\rho_0}\left[\left|Z_k^2(\theta',\theta)\right|^d\right] \leq C_{\alpha,\beta}(D,K) |\theta-\theta'|^d n^{d/2} \frac{1}{\sqrt{k}}.\]\smallskip

\item[$\bullet\ \boldsymbol{j=3}.$] By the expansion~\eqref{eq:expression_case3}, inequality~\eqref{eq_proof:inequality_xexp} and the Gaussian tail inequality,
\begin{align*}
&\E_{\rho_0}\left[ \left.\left|Z_k^3(\theta',\theta)\right|^d \right| X_{(k-1)/n}\right] \\
&\hspace{0.1cm} \leq C_{\alpha,\beta} |\theta-\theta'|^d n^{d/2} \E_{\rho_0}\left[ \left( \frac{|\theta-\theta'|}{2/\sqrt{n}} + \frac{X_{k/n}-\rho_0-\theta}{\beta/\sqrt{n}} - \frac{X_{(k-1)/n}-\rho_0-\theta}{\alpha/\sqrt{n}}\right)^d \right.\\
&\hspace{6.5cm} \cdot \1_{\{X_{(k-1)/n}<\rho_0+\theta'\leq \rho_0+\theta< X_{k/n}\}}\Big| X_{(k-1)/n} \Bigg] \\
&\hspace{0.1cm} \leq C_{\alpha,\beta} |\theta-\theta'|^d n^{d/2} \E_{\rho_0}\left[ \left( K + 2\sqrt{n}|X_{k/n}-X_{(k-1)/n}| \right)^d \1_{\{X_{(k-1)/n}<\rho_0+\theta'\leq \rho_0+\theta< X_{k/n}\}}\Big| X_{(k-1)/n} \right] \\
&\hspace{0.1cm} \leq C_{\alpha,\beta}(D) |\theta-\theta'|^d n^{d/2} K^d \int_{\rho_0+\theta}^\infty \sqrt{n}\exp\left(-\frac{(y-X_{(k-1)/n})^2}{2\max\{\alpha^2,\beta^2\}/n}\right) dy \\
&\hspace{0.3cm} + C_{\alpha,\beta}(D) |\theta-\theta'|^d n^{d/2}\1_{\{X_{(k-1)/n}<\rho_0+\theta'\}}\int_{\rho_0+\theta}^\infty n^{d/2}|y-X_{(k-1)/n}|^d \sqrt{n}\exp\left(-\frac{(y-X_{(k-1)/n})^2}{2\max\{\alpha^2,\beta^2\}/n}\right) dy \\
&\hspace{0.1cm} \leq C_{\alpha,\beta}(D,K) |\theta-\theta'|^d n^{d/2}\exp\left(-\frac{(X_{(k-1)/n}-\rho_0-\theta)^2}{2\max\{\alpha^2,\beta^2\}/n}\right) \\
&\hspace{0.3cm} + C_{\alpha,\beta}(D) |\theta-\theta'|^d n^{d/2} \1_{\{X_{(k-1)/n}<\rho_0+\theta'\}} \int_{\sqrt{n}(\rho_0+\theta-X_{(k-1)/n})}^\infty |y|^d \exp\left(-\frac{y^2}{2\max\{\alpha^2,\beta^2\}}\right) dy \\
&\hspace{0.1cm} \leq C_{\alpha,\beta}(D,K) |\theta-\theta'|^d n^{d/2}\exp\left(-\frac{(X_{(k-1)/n}-\rho_0-\theta)^2}{4\max\{\alpha^2,\beta^2\}/n}\right).
\end{align*}
Corollary~\eqref{cor:bound_exp_X^2} then reveals 
\[ \E_{\rho_0}\left[ \left|Z_k^3(\theta',\theta)\right|^d \right] \leq C_{\alpha,\beta}(D,K) |\theta-\theta'|^d n^{d/2}\frac{1}{\sqrt{k}}.\]\smallskip

\item[$\bullet\ \boldsymbol{j=4}.$] With the expansion~\eqref{eq:expression_case4} and~\eqref{eq:expression_case4_bound} we find using $|\theta-\theta'|\leq 2K/\sqrt{n}$, \eqref{eq_proof:inequality_xexp} and the Gaussian tail inequality,
\begin{align*}
&\E_{\rho_0}\left[\left.\left|Z_k^4(\theta',\theta)\right|^d \right| X_{(k-1)/n}\right] \\
&\hspace{0.2cm} \leq C_{\alpha,\beta}|\theta-\theta'|^d \E_{\rho_0}\left[\left. \left( 2K\sqrt{n} + n|X_{k/n}-\rho_0-\theta|\right)^d \1_{\{X_{k/n}\leq\rho_0+\theta'\leq X_{(k-1)/n}<\rho_0+\theta\}}\right| X_{(k-1)/n}\right] \\
&\hspace{0.2cm} \leq C_{\alpha,\beta}|\theta-\theta'|^d n^{d/2} \1_{\{\rho_0+\theta'\leq X_{(k-1)/n}<\rho_0+\theta\}} \int_{-\infty}^{\rho_0+\theta'} \left( 2K + \sqrt{n}|y-\rho_0-\theta|\right)^d \\
&\hspace{8cm} \cdot\sqrt{n}\exp\left(-\frac{(y-X_{(k-1)/n})^2}{2\max\{\alpha^2,\beta^2\}/n}\right) dy \\
&\hspace{0.2cm} \leq C_{\alpha,\beta}|\theta-\theta'|^d n^{d/2} \1_{\{\rho_0+\theta'\leq X_{(k-1)/n}<\rho_0+\theta\}} \\
&\hspace{3cm} \cdot\int_{-\infty}^{\rho_0+\theta'} \left( 4K + \sqrt{n}|y-\rho_0-\theta'|\right)^d \sqrt{n}\exp\left(-\frac{(y-\rho_0-\theta')^2}{2\max\{\alpha^2,\beta^2\}/n}\right) dy \\
&\hspace{0.2cm} \leq C_{\alpha,\beta}(K)|\theta-\theta'|^d n^{d/2} \1_{\{\rho_0+\theta'\leq X_{(k-1)/n}<\rho_0+\theta\}}\int_{-\infty}^0 (1+|y|)^d\exp\left(-\frac{y^2}{2\max\{\alpha^2,\beta^2\}}\right)dy \\
&\hspace{0.2cm} \leq C_{\alpha,\beta}(D,K)|\theta-\theta'|^d n^{d/2} \1_{\{\rho_0+\theta'\leq X_{(k-1)/n}<\rho_0+\theta\}}.
\end{align*}
In particular, we find with Lemma~\ref{lemma:upper_bound_transition_density} and $\sqrt{n}|\theta-\theta'|\leq 2K$,
\begin{align*}
&\E_{\rho_0}\left[\left|Z_k^4(\theta',\theta)\right|^d \right] \\
&\hspace{1cm}\leq C_{\alpha,\beta}(D,K)|\theta-\theta'|^d n^{d/2} \int_{\rho_0+\theta'}^{\rho_0+\theta} \frac{1}{\sqrt{(k-1)/n}}\exp\left(-\frac{(y-x_0)^2}{2\max\{\alpha^2,\beta^2\}(k-1)/n}\right) dy \\
&\hspace{1cm} \leq C_{\alpha,\beta}(D,K) |\theta-\theta'|^d n^{d/2} \sqrt{n}|\theta-\theta'|\frac{1}{\sqrt{k}} \\
&\hspace{1cm}\leq C_{\alpha,\beta}(D,K)|\theta-\theta'|^d n^{d/2}\frac{1}{\sqrt{k}} .
\end{align*} \smallskip

\item[$\bullet\ \boldsymbol{j=5}.$] Using~\eqref{eq_bound:P2/P1} and following the steps from $j=1$, by the expression~\eqref{eq:expression_case5_Taylor},
\begin{align*}
&\E_{\rho_0}\left[\left.\left|Z_k^5(\theta',\theta)\right|^d \right| X_{(k-1)/n}\right] \\
&\hspace{0.5cm}\leq C_{\alpha,\beta}(D,K)|\theta-\theta'|^d n^{d/2}\exp\left(-\frac{(X_{(k-1)/n}-\rho_0-2\xi_k+\theta')^2}{4/n}\right)
\end{align*}
with
\[ \E_{\rho_0}\left[\left|Z_k^5(\theta',\theta)\right|^d \right] \leq C_{\alpha,\beta}(D,K) |\theta-\theta'|^d n^{d/2}\frac{1}{\sqrt{k}}. \]\smallskip

\item[$\bullet\ \boldsymbol{j=6}.$] This works the same as $j=4$ with the bound~\eqref{eq:expression_case6_bound} on the expansion~\eqref{eq:expression_case6}.\smallskip

\item[$\bullet\ \boldsymbol{j=7}.$] This works the same as $j=3$ with the expansion~\eqref{eq:expression_case7}.\smallskip

\item[$\bullet\ \boldsymbol{j=8}.$] This works the same as $j=2$ with the expansion~\eqref{eq:expression_case8} and the bound~\eqref{eq:expression_case8_bound}.\smallskip

\item[$\bullet\ \boldsymbol{j=9}.$] This works analogously to $j=1$ with the expansion~\eqref{eq:expression_case9_first_order} by replacing the bound~\eqref{eq_bound:P1/P1} with~\eqref{eq_bound:P2/P2}. Moreover, we now distinguish the cases $\theta\geq 0$ and $\theta<0$. In the first one, we have the inclusion $\{X_{(k-1)/n}\geq \rho_0+\theta,X_{k/n}>\rho_0+\theta\}\subset \{X_{(k-1)/n},X_{k/n}\geq \rho_0\}$ and we can evaluate the conditional expectation by integrating against $P_2^{\rho_0}$. For $\theta<0$, we have
\begin{align*}
&\{X_{(k-1)/n}\geq \rho_0+\theta, X_{k/n}> \rho_0+\theta\} \\
&\hspace{0.5cm}= \{X_{(k-1)/n}\geq\rho_0, X_{k/n}> \rho_0\} \cup \{\rho_0+\theta\leq X_{(k-1)/n}<\rho_0,\rho_0+\theta< X_{k/n}\leq \rho_0\} \\
&\hspace{1cm} \cup \{\rho_0+\theta \leq X_{(k-1)/n}< \rho_0< X_{k/n}\} \cup \{\rho_0+\theta < X_{k/n}\leq \rho_0\leq X_{(k-1)/n}\},
\end{align*}
and use the corresponding regime of the transition density for the first two cases. For the other two, it is sufficient to work with the bound provided in Lemma~\ref{lemma:upper_bound_transition_density} (see $j=1$).
\end{itemize}
\bigskip

\noindent
This completes the induction hypothesis for $m=1$. The next step is the induction step $m\mapsto m+1$. Here, we have
\[ \E_{\rho_0}\left[ \prod_{i=1}^{m+1} \left|Y_{k_i}^{j_i}(\theta',\theta)\right|^{d_i}\right] = \E_{\rho_0}\left[ \prod_{i=1}^m \left|Y_{k_i}^{j_i}(\theta',\theta)\right|^{d_i} \E_{\rho_0}\left[\left. \left|Y_{k_{m+1}}^{j_{m+1}}(\theta',\theta)\right|^{d_{m+1}} \right| \cF_{k_{m+1}-1}\right]\right]. \]
Within the proof of the induction hypothesis $m=1$, we have shown that for all $j_{m+1}\in \{1,\dots, 9\}$ it holds that
\begin{align*}
&\E_{\rho_0}\left[\left. \left|Z_{k_{m+1}}^{j_{m+1}}(\theta',\theta)\right|^{d_{m+1}} \right| \cF_{k_{m+1}-1}\right] \\
&\hspace{0.5cm} \leq C_{\alpha,\beta}(D,K)|\theta-\theta'|^{d_{m+1}} n^{d_{m+1}/2}\Bigg[ \1_{\{\rho_0+\theta'\leq X_{(k-1)/n}\leq \rho_0+\theta\}}\\
&\hspace{7cm} \left. +\exp\left(-\frac{(X_{k_{m+1}-1}-c(\rho_0,\theta',\theta))^2}{c'(\alpha,\beta)/n}\right) \right].
\end{align*} 
for some constant $c(\rho_0,\theta',\theta)\in\R$ and $c'(\alpha,\beta)>0$. Consequently, 
\begin{align}\label{eq_proof:lemma:moments_Z_j}
\begin{split}
\E_{\rho_0}\left[ \prod_{i=1}^{m+1} \left|Y_{k_i}^{j_i}(\theta',\theta)\right|^{d_i}\right] &\leq C_{\alpha,\beta}(D,K)|\theta-\theta'|^{d_{m+1}} n^{d_{m+1}/2}\E_{\rho_0}\left[ \prod_{i=1}^m \left|Y_{k_i}^{j_i}(\theta',\theta)\right|^{d_i} \right. \\
&\hspace{0.2cm}\left.\cdot \left( \1_{\{\rho_0+\theta'\leq X_{(k-1)/n}\leq \rho_0+\theta\}} +\exp\left(-\frac{(X_{k_{m+1}-1}-c(\rho_0,\theta',\theta))^2}{c'(\alpha,\beta)/n}\right)\right)\right].
\end{split}
\end{align}
We now distinguish two cases that both yield the desired claim:
\begin{itemize}
\item[$k_m=k_{m+1}-1$.] Here, we simply bound
\[ \1_{\{\rho_0+\theta'\leq X_{(k-1)/n}\leq \rho_0+\theta\}}+\exp\left(-\frac{(X_{k_{m+1}-1}-c(\rho_0,\theta',\theta))^2}{c'(\alpha,\beta)/n}\right)\leq 2\]
and then the induction hypothesis for $m$ reveals from~\eqref{eq_proof:lemma:moments_Z_j}
\begin{align*}
&\E_{\rho_0}\left[ \prod_{i=1}^{m+1} \left|Y_{k_i}^{j_i}(\theta',\theta)\right|^{d_i}\right] \\
&\hspace{1cm}\leq C_{\alpha,\beta}(D,K)|\theta-\theta'|^{d_{m+1}} n^{d_{m+1}/2}\E_{\rho_0}\left[ \prod_{i=1}^m \left|Y_{k_i}^{j_i}(\theta',\theta)\right|^{d_i}\right]\\
&\hspace{1cm}\leq C_{\alpha,\beta}(D,K)|\theta-\theta'|^{d_{m+1}}n^{d_{m+1}/2}C n^{\frac12\sum_{i=1}^m d_i} |\theta-\theta'|^{\sum_{i=1}^m d_i} \prod_{i=1}^m \frac{1}{\sqrt{k_i-k_{i-1}}}\\
&\hspace{1cm}\leq C_{\alpha,\beta}(D,K)n^{\frac12\sum_{i=1}^{m+1} d_i} |\theta-\theta'|^{\sum_{i=1}^{m+1} d_i} \prod_{i=1}^{m+1} \frac{1}{\sqrt{k_i-k_{i-1}}},
\end{align*}
where the last step uses that $k_{m+1}-k_m=1$.
\item[$k_m<k_{m+1}-1$.] Here, we use that by Corollary~\ref{cor:bound_exp_X^2}, 
\[ \E_{\rho_0}\left[\left. \exp\left(-\frac{(X_{k_{m+1}-1}-c(\rho_0,\theta',\theta))^2}{c'(\alpha,\beta)/n}\right)\right| X_{k_m} \right] \leq \frac{C_{\alpha,\beta}}{\sqrt{k_{m+1}-k_m}} \]
and by Lemma~\ref{lemma:upper_bound_transition_density} and $|\theta-\theta'|\leq 2K/\sqrt{n}$,
\begin{align*}
&\E_{\rho_0}\left[\left.\1_{\{\rho_0+\theta'\leq X_{(k-1)/n}\leq \rho_0+\theta\}} \right| \cF_{k_m} \right]\\
&\hspace{0.5cm} \leq C_{\alpha,\beta} \int_{\rho_0+\theta'}^{\rho_0+\theta} \frac{1}{\sqrt{(k_{m+1}-k_m)/n}} \exp\left(-\frac{(y-X_{k_m})^2}{2(k_{m+1}-k_m)\max\{\alpha^2,\beta^2\}/n}\right) dy \\
&\hspace{0.5cm} \leq \frac{C_{\alpha,\beta}(K)}{\sqrt{k_{m+1}-k_m}}.
\end{align*} 
Then~\eqref{eq_proof:lemma:moments_Z_j} reveals together with the induction hypothesis for $m$
\begin{align*}
&\E_{\rho_0}\left[ \prod_{i=1}^{m+1} \left|Y_{k_i}^{j_i}(\theta',\theta)\right|^{d_i}\right] \\
&\hspace{0.2cm}\leq C_{\alpha,\beta}(D,K)|\theta-\theta'|^{d_{m+1}} n^{d_{m+1}/2} \E_{\rho_0}\left[ \prod_{i=1}^m \left|Y_{k_i}^{j_i}(\theta',\theta)\right|^{d_i} \right. \\
&\hspace{3cm}\left. \cdot\E_{\rho_0}\left[\left.\exp\left(-\frac{(X_{k_{m+1}-1}-c(\rho_0,\theta',\theta))^2}{c'(\alpha,\beta)/n}\right)\right| \cF_{k_m}\right]\right] \\
&\hspace{0.2cm}\leq C_{\alpha,\beta}(D,K) \frac{1}{\sqrt{k_{m+1}-k_m}}|\theta-\theta'|^{d_{m+1}} n^{d_{m+1}/2} \E_{\rho_0}\left[ \prod_{i=1}^m \left|Y_{k_i}^{j_i}(\theta',\theta)\right|^{d_i}\right] \\
&\hspace{0.2cm}\leq C_{\alpha,\beta}(D,K) \frac{1}{\sqrt{k_{m+1}-k_m}}|\theta-\theta'|^{d_{m+1}} n^{d_{m+1}/2} C n^{\frac12\sum_{i=1}^m d_i} |\theta-\theta'|^{\sum_{i=1}^m d_i} \prod_{i=1}^m \frac{1}{\sqrt{k_i-k_{i-1}}} \\
&\hspace{0.2cm}\leq C_{\alpha,\beta}(D,K) n^{\frac12\sum_{i=1}^{m+1} d_i} |\theta-\theta'|^{\sum_{i=1}^{m+1} d_i} \prod_{i=1}^{m+1} \frac{1}{\sqrt{k_i-k_{i-1}}}.
\end{align*}
\end{itemize}
\end{proof}

\section{Remaining proofs of Section~\ref{Section:Consistency}}

This section is divided into two parts: In Subsection~\ref{App:Section:proofs_sqrt(n)_consistency} the remaining proofs for the $\sqrt{n}$-consistency of Subsection~\ref{subsection:sqrt(n)_consistency} are given, whereas in Subsection~\ref{App:Section:proofs_n_consistency} those for the proof of the $n$-consistency of Subsection~\ref{subsection:n_consistency} may be found.\\
Recall the definition of $I_{j,k}^{\theta',\theta}$ given in~\eqref{eq:cases_I_jk} and remember that $C_{\alpha,\beta}$ denotes some real and positive constant that only depends on $\alpha$ and $\beta$ but may change from line to line.

\subsection{Remaining proofs of Subsection~\ref{subsection:sqrt(n)_consistency}}\label{App:Section:proofs_sqrt(n)_consistency}

\begin{proof}[Proof of Lemma~\ref{lemma:bound_L_sqrt(n)}]
The proof makes use of the inequality
\[ \sup_{\theta\in\Theta_i} \ell_n(\theta)\leq \sum_{j=1}^9 \sup_{\theta\in\Theta_i} \mathcal{I}_j(\theta) =  \sum_{j=1}^9 \sup_{\theta\in\Theta_i} \sum_{k=1}^n \log\left(\frac{p_{1/n}^{\rho_0+\theta}(X_{(k-1)/n},X_{k/n})}{p_{1/n}^{\rho_0}(X_{(k-1)/n},X_{k/n})}\right) \1_{I_{j,k}^\theta}\]
for the nine different cases $\mathcal{I}_1(\theta),\dots, \mathcal{I}_9(\theta)$ given in~\eqref{eq:def_I} and investigate all of those seperately. It will turn out that all $\mathcal{I}_j(\theta)$ for $j\neq 5$ are summarized in $F_n^1(K,L)$, $F_n^2(K,L)$ and the fifth case is split into $N_n^L(\theta)$ and a remainder that also contributes to $F_n^1(K,L)$ and $F_n^2(K,L)$, respectively.\\

\noindent
$\boldsymbol{\mathcal{I}_1(\theta)}$: Recall that $I_{1,k}^\theta=I_{1,k}$ is independ of $\theta$. Using $(1-a)/(1-b) = 1+(b-a)/(1-b)$, $\log(1+x)\leq x$ for $x>-1$ and the fact that for $X_{(k-1)/n},X_{k/n}\leq \rho_0$
\[ 1-\frac{\alpha-\beta}{\alpha+\beta}\exp\left(-\frac{2}{\alpha^2/n}(X_{k/n}-\rho_0)(X_{(k-1)/n}-\rho_0)\right) \geq  1-\frac{|\alpha-\beta|}{\alpha+\beta}>0\]
gives
\begin{align*}
& \1_{I_{1,k}} \log\left(\frac{P_1^{\rho_0+\theta}(X_{(k-1)/n},X_{k/n};1/n)}{P_1^{\rho_0}(X_{(k-1)/n},X_{k/n};1/n)}\right) \\
&\hspace{1cm} = \1_{I_{1,k}} \log\left(\frac{1-\frac{\alpha-\beta}{\alpha+\beta}\exp\left(-\frac{2}{\alpha^2/n}(X_{k/n}-\rho_0-\theta)(X_{(k-1)/n}-\rho_0-\theta)\right)}{1-\frac{\alpha-\beta}{\alpha+\beta}\exp\left(-\frac{2}{\alpha^2/n}(X_{k/n}-\rho_0)(X_{(k-1)/n}-\rho_0)\right)}\right) \\
&\hspace{1cm} \leq C_{\alpha,\beta}\1_{I_{1,k}} \exp\left(-\frac{2}{\alpha^2/n}(X_{k/n}-\rho_0)(X_{(k-1)/n}-\rho_0)\right) \\
&\hspace{3cm} \cdot \left| \exp\left(\frac{2\theta}{\alpha^2/n}(X_{k/n}-\rho_0+X_{(k-1)/n}-\rho_0)- \frac{2\theta^2}{\alpha^2/n}\right) - 1\right| \\
&\hspace{1cm} \leq C_{\alpha,\beta}\1_{I_{1,k}} \exp\left(-\frac{2}{\alpha^2/n}(X_{k/n}-\rho_0)(X_{(k-1)/n}-\rho_0)\right),
\end{align*}
where the last step uses $X_{(k-1)/n},X_{k/n}\leq \rho_0$. Next, we bound its expectation. First, using that $(a+b)^2\geq a^2+b^2$ for $ab\geq 0$, and
\[ \1_{I_{1,k}} P_1^{\rho_0}(X_{(k-1)/n},X_{k/n};1/n) \leq \1_{I_{1,k}} \frac{1}{\sqrt{2\pi\alpha^2/n}} \left(1+\frac{|\alpha-\beta|}{\alpha+\beta}\right)\exp\left(-\frac{(X_{k/n}-X_{(k-1)/n})^2}{2\alpha^2/n}\right),\]
we obtain
\begin{align*}
&\E_{\rho_0}\left[ \left.\exp\left(-\frac{2}{\alpha^2/n}(X_{k/n}-\rho_0)(X_{(k-1)/n}-\rho_0)\right)\1_{I_{1,k}}\right| X_{(k-1)/n}\right] \\
& \hspace{0.2cm} \leq C_{\alpha,\beta}\1_{\{X_{(k-1)/n}<\rho_0\}} \int_{-\infty}^{\rho_0} \exp\left(-\frac{2}{\alpha^2/n}(y-\rho_0)(X_{(k-1)/n}-\rho_0)\right) \\
&\hspace{7cm} \cdot\sqrt{n}\exp\left(-\frac{(y-X_{(k-1)/n})^2}{2\alpha^2/n}\right) dy \\
& \hspace{0.2cm} = C_{\alpha,\beta}\1_{\{X_{(k-1)/n}<\rho_0\}} \int_{-\infty}^{\rho_0} \sqrt{n}\exp\left(-\frac{(y-2\rho_0+X_{(k-1)/n})^2}{2\alpha^2/n}\right) dy \\
&\hspace{0.2cm} \leq C_{\alpha,\beta}\1_{\{X_{(k-1)/n}<\rho_0\}}\exp\left(-\frac{(X_{(k-1)/n}-\rho_0)^2}{2\alpha^2/n}\right) \int_{-\infty}^{\rho_0} \sqrt{n}\exp\left(-\frac{(y-\rho_0)^2}{2\alpha^2/n}\right) dy \\
&\hspace{0.2cm} \leq C_{\alpha,\beta}\1_{\{X_{(k-1)/n}<\rho_0\}}\exp\left(-\frac{(X_{(k-1)/n}-\rho_0)^2}{2\alpha^2/n}\right).
\end{align*}
In consequence, by Corollary~\ref{cor:bound_exp_X^2},
\begin{align*}
&\sum_{k=1}^n \E_{\rho_0}\left[ \exp\left(-\frac{2}{\alpha^2/n}(X_{k/n}-\rho_0)(X_{(k-1)/n}-\rho_0)\right)\1_{I_{1,k}} \right] \leq C_{\alpha,\beta} \sum_{k=1}^n \frac{1}{\sqrt{k}} = C_{\alpha,\beta}\sqrt{n}.
\end{align*}
Summarising,
\begin{align*}
&\sup_{\theta\in\Theta_1\cup\Theta_2} \sum_{k=1}^n \log\left(\frac{p_{1/n}^{\rho_0+\theta}(X_{(k-1)/n},X_{k/n})}{p_{1/n}^{\rho_0}(X_{(k-1)/n},X_{k/n})}\right) \1_{I_{1,k}} \\
&\hspace{2cm} \leq C_{\alpha,\beta}\sum_{k=1}^n \1_{I_{1,k}} \exp\left(-\frac{2}{\alpha^2/n}(X_{k/n}-\rho_0)(X_{(k-1)/n}-\rho_0)\right) =: \Xi_n^{(1)}
\end{align*} 
with $\E_{\rho_0}[|\Xi_n^{(1)}|/\sqrt{n}] \leq C_{\alpha,\beta}L$, as $L\geq 1$.\\

\noindent
$\boldsymbol{\mathcal{I}_2(\theta)}$: With the inequality
\[ \1_{I_{2,k}^\theta}\left(1- \frac{\alpha-\beta}{\alpha+\beta}\exp\left(-\frac{2}{\alpha^2/n}(X_{k/n}-\rho_0-\theta)(X_{(k-1)/n}-\rho_0-\theta)\right)\right) \leq  1+\frac{|\alpha-\beta|}{\alpha+\beta}\]
we deduce
\begin{align*}
& \log\left(\frac{P_1^{\rho_0+\theta}(X_{(k-1)/n},X_{k/n};1/n)}{P_3^{\rho_0}(X_{(k-1)/n},X_{k/n};1/n)}\right)\1_{I_{2,k}^\theta}  \\
&=\1_{I_{2,k}^\theta}  \log\left( \frac{\beta(\alpha+\beta)}{2\alpha^2} \left( 1- \frac{\alpha-\beta}{\alpha+\beta}\exp\left(-\frac{2}{\alpha^2/n}(X_{k/n}-\rho_0-\theta)(X_{(k-1)/n}-\rho_0-\theta)\right)\right) \right. \\
&\hspace{1cm}\left. \cdot\exp\left(-\frac{(X_{k/n}-X_{(k-1)/n})^2}{2\alpha^2/n} + \frac{n}{2}\left(\frac{X_{k/n}-\rho_0}{\beta}-\frac{X_{(k-1)/n}-\rho_0}{\alpha}\right)^2\right)\right)\\
&\leq C_{\alpha,\beta}\1_{I_{2,k}^\theta}\left( 1+ n(X_{k/n}-\rho_0)^2+ n|X_{k/n}-\rho_0||X_{(k-1)/n}-\rho_0|\right) \\
&\leq C_{\alpha,\beta}\1_{\{X_{(k-1)/n}<\rho_0< X_{k/n}\}}\left( 1+ n(X_{k/n}-X_{(k-1)/n})^2\right), 
\end{align*}
where the last step follows as $X_{(k-1)/n}<\rho_0<X_{k/n}$. As $x\mapsto (1+x^2)\exp(-x^2/4)$ is bounded, we obtain with the Gaussian tail inequality,
\begin{align}\label{eq_proof:I2_new}
\begin{split}
&\E_{\rho_0}\left[\left. \left(1+ n(X_{k/n}-X_{(k-1)/n})^2\right) \1_{\{X_{(k-1)/n}<\rho_0<X_{k/n}\}} \right|X_{(k-1)/n}\right] \\
&\hspace{1cm} \leq C_{\alpha,\beta}\int_{\rho_0}^\infty \left(1+ n(y-X_{(k-1)/n})^2\right) \sqrt{n} \exp\left(- \frac{(y-X_{(k-1)/n})^2}{2\max\{\alpha^2,\beta^2\}/n}\right) dy \\
&\hspace{1cm} = C_{\alpha,\beta} \int_{\sqrt{n}(\rho_0-X_{(k-1)/n})}^\infty \left(1+y^2\right) \exp\left(-\frac{y^2}{2\max\{\alpha^2,\beta^2\}}\right) dy \\
&\hspace{1cm} \leq C_{\alpha,\beta} \exp\left(-\frac{(X_{(k-1)/n}-\rho_0)^2}{4 \max\{\alpha^2,\beta^2\}/n}\right).
\end{split}
\end{align}
With Corollary~\ref{cor:bound_exp_X^2}, we then find
\begin{align*}
\sum_{k=1}^n\E_{\rho_0}\left[ \left(1+ n(X_{k/n}-X_{(k-1)/n})^2 \right) \1_{\{X_{(k-1)/n}<\rho_0<X_{k/n}\}} \right]  \leq C_{\alpha,\beta} \sum_{k=1}^n\frac{1}{\sqrt{k}} \leq C_{\alpha,\beta}\sqrt{n}
\end{align*}
and hence,
\begin{align*}
&\sup_{\theta\in\Theta_1\cup\Theta_2} \sum_{k=1}^n \log\left(\frac{P_1^{\rho_0+\theta}(X_{(k-1)/n},X_{k/n};1/n)}{P_3^{\rho_0}(X_{(k-1)/n},X_{k/n};1/n)}\right)\1_{I_{2,k}^\theta}  \\
&\hspace{2cm}\leq C_{\alpha,\beta}\sum_{k=1}^n\1_{\{X_{(k-1)/n}<\rho_0\leq X_{k/n}\}}\left( 1+ n(X_{k/n}-X_{(k-1)/n})^2\right) := \Xi_n^{(2)}
\end{align*}
with $\E_{\rho_0}[|\Xi_n^{(2)}|/\sqrt{n}]\leq C_{\alpha,\beta}$.\\

\noindent
$\boldsymbol{\mathcal{I}_3(\theta)}$: By dropping negative terms and using $X_{(k-1)/n}<\rho_0$ and $X_{k/n}\geq\rho_0+\theta$ on $I_{3,k}^\theta$, we have the bound
\begin{align*}
\log\left(\frac{P_3^{\rho_0+\theta}(X_{(k-1)/n},X_{k/n};1/n)}{P_3^{\rho_0}(X_{(k-1)/n},X_{k/n};1/n)}\right)\1_{I_{3,k}^\theta} & \leq \frac{n}{2} \left(\frac{X_{k/n}-\rho_0}{\beta} - \frac{X_{(k-1)/n}-\rho_0}{\alpha}\right)^2\1_{I_{3,k}^\theta} \\
&\leq n(\alpha^{-1}+\beta^{-1})^2 \left( X_{k/n}-X_{(k-1)/n}\right)^2 \1_{I_{3,k}^{0}},
\end{align*}
where the last step uses that the indicator $\1_{I_{3,k}^\theta}=\1_{\{X_{(k-1)/n}<\rho_0\leq\rho_0+\theta<X_{k/n}\}}$ is larger, the smaller $\theta$. Hence,
\begin{align*}
&\sup_{\theta\in\Theta_1\cup\Theta_2} \sum_{k=1}^n\log\left(\frac{P_3^{\rho_0+\theta}(X_{(k-1)/n},X_{k/n};1/n)}{P_3^{\rho_0}(X_{(k-1)/n},X_{k/n};1/n)}\right)\1_{I_{3,k}^\theta}\\
&\hspace{2cm} \leq (\alpha^{-1}+\beta^{-1})^2\sum_{k=1}^nn\left( X_{k/n}-X_{(k-1)/n}\right)^2 \1_{I_{3,k}^{0}} =:\Xi_n^{(3)} 
\end{align*} 
By~\eqref{eq_proof:I2_new} and Corollary~\ref{cor:bound_exp_X^2} we find $\sup_{K\geq 0}\E_{\rho_0}[|\Xi_n^{(3)}|/\sqrt{n}]\leq C_{\alpha,\beta}$.\\

\noindent
$\boldsymbol{\mathcal{I}_4(\theta)}$: Follows by the same reasoning as $\mathcal{I}_2(\theta)$ by interchanging the roles of $\alpha$ and $\beta$ and gives an upper bound $\Xi_n^{(4)}$ with $\E_{\rho_0}[|\Xi_n^{(4)}|/\sqrt{n}]\leq C_{\alpha,\beta}$.\\

\noindent
$\boldsymbol{\mathcal{I}_5(\theta)}$: First, we split
\begin{align*}
&\frac{P_1^{\rho_0+\theta}(X_{(k-1)/n},X_{k/n};1/n)}{P_2^{\rho_0}(X_{(k-1)/n},X_{k/n};1/n)}\\
&\hspace{0.2cm} = \frac{\beta}{\alpha} \frac{\exp\left(-\frac{(X_{k/n}-X_{(k-1)/n})^2}{2\alpha^2/n}\right)}{\exp\left(-\frac{(X_{k/n}-X_{(k-1)/n})^2}{2\beta^2/n}\right)} \frac{1-\frac{\alpha-\beta}{\alpha+\beta}\exp\left(-\frac{2}{\alpha^2/n}(X_{k/n}-\rho_0-\theta)(X_{(k-1)/n}-\rho_0-\theta)\right)}{1+\frac{\alpha-\beta}{\alpha+\beta}\exp\left(-\frac{2}{\beta^2/n}(X_{k/n}-\rho_0)(X_{(k-1)/n}-\rho_0)\right)}.
\end{align*}
Taking the logarithm on both sides then yields
\begin{align*}
\log\left( \frac{P_1^{\rho_0+\theta}(X_{(k-1)/n},X_{k/n};1/n)}{P_2^{\rho_0}(X_{(k-1)/n},X_{k/n};1/n)}\right) = \log\left(\frac{\beta}{\alpha}\right) -\frac{(X_{k/n}-X_{(k-1)/n})^2}{2/n}\left(\frac{1}{\alpha^2}-\frac{1}{\beta^2}\right) + R_{k,\theta}
\end{align*}
with
\[ R_{k,\theta} = \log\left(  \frac{1-\frac{\alpha-\beta}{\alpha+\beta}\exp\left(-\frac{2}{\alpha^2/n}(X_{k/n}-\rho_0-\theta)(X_{(k-1)/n}-\rho_0-\theta)\right)}{1+\frac{\alpha-\beta}{\alpha+\beta}\exp\left(-\frac{2}{\beta^2/n}(X_{k/n}-\rho_0)(X_{(k-1)/n}-\rho_0)\right)}\right).\]
Denoting
\[ g_k := \log\left(\frac{\beta}{\alpha}\right) - \frac{n}{2}\left(\frac{1}{\alpha^2}-\frac{1}{\beta^2}\right)(X_{k/n}-X_{(k-1)/n})^2 \]
(which is independent of $\theta$) and decomposing
\begin{align*}
\1_{I_{5,k}^\theta} &= \1_{\{\rho_0\leq X_{(k-1)/n}< \rho_0+\theta\}}\1_{I_{5,k}^\theta}  \\
&= \1_{\{ \rho_0+L/\sqrt{n}\leq X_{(k-1)/n}< \rho_0+\theta\}}  - \1_{\{ \rho_0+L/\sqrt{n}\leq X_{(k-1)/n}< \rho_0+\theta\}}\left(1-\1_{\{\rho_0<X_{k/n}\leq \rho_0+\theta\}}\right) \\
&\hspace{1cm} + \1_{\{\rho_0\leq X_{(k-1)/n}< \rho_0+L/\sqrt{n}\}} \1_{\{\rho_0<X_{k/n}\leq \rho_0+\theta\}}
\end{align*} 
then gives the decomposition
\begin{align}\label{eq_proof:bound_L_sqrt(n)_1}
\begin{split}
\log\left( \frac{P_1^{\rho_0+\theta}(X_{(k-1)/n},X_{k/n};1/n)}{P_2^{\rho_0}(X_{(k-1)/n},X_{k/n};1/n)}\right) \1_{I_{5,k}^\theta} &=  S_{1,\theta}(k) + S_{2,\theta}(k) + S_{3,\theta}(k) + S_{4,\theta}(k),
\end{split}
\end{align}
with
\begin{align*}
S_{1,\theta}(k) &:= g_k \1_{\{ \rho_0+L/\sqrt{n}\leq X_{(k-1)/n}< \rho_0+\theta\}},\\
S_{2,\theta}(k) &:= R_{k,\theta}  \1_{\{\rho_0\leq X_{(k-1)/n}< \rho_0+\theta, \rho_0<X_{k/n}\leq \rho_0+\theta\}},\\
S_{3,\theta}(k) &:= -g_k\1_{\{ \rho_0+L/\sqrt{n}\leq X_{(k-1)/n}< \rho_0+\theta\}}\left(1-\1_{\{\rho_0<X_{k/n}\leq \rho_0+\theta\}}\right), \\
S_{4,\theta}(k) &:= g_k \1_{\{\rho_0\leq X_{(k-1)/n}< \rho_0+L/\sqrt{n}\}} \1_{\{\rho_0<X_{k/n}\leq \rho_0+\theta\}}.
\end{align*}
First, we observe that
\[ \sum_{k=1}^n S_{1,\theta}(k) = N_n^L(\theta).\]
In what follows, we will construct upper bounds for $\sum_{k=1}^n S_{j,\theta}(k)$ for $j=2,3,4$ and determine upper bounds of their first moments.\\ 

\noindent
$\bullet\ \boldsymbol{S_{2,\theta}(k)}$. Here, we first observe
\begin{align*}
S_{2,\theta}(k) &= R_{k,\theta} \1_{\{\rho_0\leq X_{(k-1)/n}< \rho_0+\theta, \rho_0<X_{k/n}\leq \rho_0+\theta\}} \\
&\leq |R_{k,\theta}|\1_{\{\rho_0\leq X_{(k-1)/n}< \rho_0+\theta\}}\1_{\{\rho_0< X_{k/n}\leq \rho_0+\theta-1/\sqrt{n}\}}  +|R_{k,\theta}|\1_{\{\rho_0+\theta-1/\sqrt{n}< X_{k/n}\leq \rho_0+\theta\}}.
\end{align*}
For the first term, we use $x/(1+x)\leq \log(1+x)\leq x$ for $x\geq -1$ and the inequalities
\[ 1-\frac{|\alpha-\beta|}{\alpha+\beta} \leq 1-\frac{\alpha-\beta}{\alpha+\beta}\exp\left(-\frac{2}{\alpha^2/n}(X_{k/n}-\rho_0-\theta)(X_{(k-1)/n}-\rho_0-\theta)\right)\leq 1+\frac{|\alpha-\beta|}{\alpha+\beta}\]
and 
\[ 1-\frac{|\alpha-\beta|}{\alpha+\beta} \leq 1+\frac{\alpha-\beta}{\alpha+\beta}\exp\left(-\frac{2}{\beta^2/n}(X_{k/n}-\rho_0)(X_{(k-1)/n}-\rho_0)\right)\leq 1+\frac{|\alpha-\beta|}{\alpha+\beta}\]
to derive
\begin{align*}
&|R_{k,\theta}|\1_{\{\rho_0\leq X_{(k-1)/n}< \rho_0+\theta\}}\1_{\{\rho_0< X_{k/n}\leq \rho_0+\theta-1/\sqrt{n}\}}\\
&\hspace{1cm}\leq C_{\alpha,\beta}\exp\left(-\frac{2}{\alpha^2/n}(X_{k/n}-\rho_0-\theta)(X_{(k-1)/n}-\rho_0-\theta)\right)\\
&\hspace{2.5cm}\cdot\1_{\{\rho_0\leq X_{(k-1)/n}< \rho_0+\theta\}}\1_{\{\rho_0< X_{k/n}\leq \rho_0+\theta-1/\sqrt{n}\}}\\
&\hspace{2cm} + C_{\alpha,\beta}\exp\left(-\frac{2}{\beta^2/n}(X_{k/n}-\rho_0)(X_{(k-1)/n}-\rho_0)\right)\1_{\{X_{(k-1)/n},X_{k/n}\geq \rho_0\}} \\
&\hspace{1cm}\leq C_{\alpha,\beta}\exp\left(\frac{2}{\alpha^2/\sqrt{n}}(X_{(k-1)/n}-\rho_0-\theta)\right)\1_{\{X_{(k-1)/n}\leq \rho_0+\theta\}}\\
&\hspace{2cm} + C_{\alpha,\beta}\exp\left(-\frac{2}{\beta^2/n}(X_{k/n}-\rho_0)(X_{(k-1)/n}-\rho_0)\right)\1_{\{X_{(k-1)/n},X_{k/n}\geq \rho_0\}}.
\end{align*}
For $|R_{k,\theta}|\1_{\{\rho_0+\theta-1/\sqrt{n}< X_{k/n}\leq \rho_0+\theta\}}$ we simply use that $|R_{k,\theta}|$ can be bounded independently of $k$ and $\theta$ and thus have
\[ \sup_{\theta\in\Theta_1} \sum_{k=1}^n S_{2,\theta}(k) \leq C_{\alpha,\beta}\left(\Xi_n^{(5,1)} + \Xi_n^{(5,2)} +\Xi_n^{(5,3)}\right) \]
and
\[ \sup_{\theta\in\Theta_2} \sum_{k=1}^n S_{2,\theta}(k) \leq C_{\alpha,\beta}\left( \tilde{\Xi}_n^{(5,1)} + \Xi_n^{(5,2)} +  \tilde{\Xi}_n^{(5,3)}\right),\]
where
\begin{align*}
\Xi_n^{(5,1)} &:= \sup_{\theta\in\Theta_1} \sum_{k=1}^n \exp\left(\frac{2}{\alpha^2/\sqrt{n}}(X_{(k-1)/n}-\rho_0-\theta)\right)\1_{\{X_{(k-1)/n}\leq \rho_0+\theta\}}, \\
\tilde{\Xi}_n^{(5,1)} &:= \sup_{\theta\in\Theta_2}\sum_{k=1}^n \exp\left(\frac{2}{\alpha^2/\sqrt{n}}(X_{(k-1)/n}-\rho_0-\theta)\right)\1_{\{X_{(k-1)/n}\leq \rho_0+\theta\}}, \\
\Xi_n^{(5,2)} &:=\sum_{k=1}^n\exp\left(-\frac{2}{\beta^2/n}(X_{k/n}-\rho_0)(X_{(k-1)/n}-\rho_0)\right)\1_{\{X_{(k-1)/n},X_{k/n}\geq \rho_0\}}, \\
\Xi_n^{(5,3)} & :=  \sup_{\theta\in\Theta_1} \sum_{k=1}^n \1_{\{\rho_0+\theta-1/\sqrt{n}< X_{k/n}\leq \rho_0+\theta\}}, \\
\tilde{\Xi}_n^{(5,3)} &:= \sup_{\theta\in\Theta_2} \sum_{k=1}^n \1_{\{\rho_0+\theta-1/\sqrt{n}< X_{k/n}\leq \rho_0+\theta\}}.\\
\end{align*}
In what follows, we will derive deterministic bounds on sets with high probability for $\Xi_n^{(5,1)}$, $\tilde{\Xi}_n^{(5,1)}, \Xi_n^{(5,3)},\tilde{\Xi}_n^{(5,3)}$ and an $L^1$-bound on $\Xi_n^{(5,2)}$. To this aim, we consider
\begin{align*}
A_2 &= \{\overline{X}>\rho_0+\gamma, \underline{X}<\rho_0-\gamma\}, \\
A_3' &= \left\{ \inf_{y\in [\rho_0-\gamma/2,\rho_0+\gamma/2]} L_1^y(X) >\xi_l\right\}\cap\left\{\sup_{y\in\R} L_1^y(X)\leq \xi_u\right\}, \\
A_4(n)& = \left\{ \sup_{|t-s|< 1/n} |X_t-X_s|\leq n^{-4/9}\right\}.
\end{align*}
The set $A_2$ was given in~\eqref{eq:set_A2}, $A_4(n)$ was given in~\eqref{eq:set_A4}. For those, we have chosen the parameter $\gamma$ such that $\Pr_{\rho_0}(A_i|L_1^{\rho_0}(X)>0)\geq 1-\epsilon$, $i=2,4$ for $n$ sufficiently large. To choose $\xi_l,\xi_u$ to ensure $\Pr_{\rho_0}(A_3')\geq 1-\epsilon$, we first choose $\xi_l$ sufficiently small such that the first set in the definition of $A_3'$ has probability $\geq 1-\epsilon/2$. This is possible, as this set coincides with $A_3$ given in~\eqref{eq:set_A3}. Now choose a compact interval $[-a,a]$ such that for $A=\{X_s\notin [-a,a] \textrm{ for some } s\in [0,1]\}$, we have $\Pr_{\rho_0}(A)<\epsilon/4$ which is possible by the argument used to bound the probability of $A_1$ in~\eqref{eq:set_A1}. By Corollary~$29.18$ in~\citeSM{App:Kallenberg},
\[ \sup_{y\in\R} L_1^y(X)\1_{A^c} = \sup_{y\in [-a,a]}L_1^y(X)\1_{A^c}.\]
Moreover, as $L_1^\bullet(X)$ is continuous by Corollary~$1.8$ on p.~$226$ in~\citeSM{App:Revuz/Yor}, and attains its maximum and minimum on the compact set $[-a,a]$, $\sup_{y\in [-a,a]}L_1^y(X)$ is a $\R$-valued random variable and consequently, it is tight. Thus, $\xi_u$ can be chosen large enough such that
\[ \Pr_{\rho_0} \left(\sup_{y\in \R} L_1^y > \xi_u\right)\leq \Pr_{\rho_0}\left(A\right)+ \Pr_{\rho_0} \left(\sup_{y\in [-a,a]} L_1^y > \xi_u\right) \leq \frac{\epsilon}{2}.\]

\begin{itemize}
\item[$\boldsymbol{-\ \Xi_n^{(5,1)}}$.] Defining
\[ G_{n,\theta}(x) := \exp\left(\frac{2}{\alpha^2/\sqrt{n}}(x-\rho_0-\theta)\right)\1_{\{x\leq \rho_0+\theta\}},\]
we have by Lemma~\ref{lemma:uniform_stochastic_occupation_approximation} that the probability of the event
\begin{align*}
&E_{5,1} := \left\{ \sup_{K/\sqrt{n}<\theta\leq n^{-1/4}} \left| \frac{1}{\sqrt{n}} \sum_{k=1}^n G_{n,\theta}(X_{(k-1)/n}) - \sqrt{n} \int_0^1 G_{n,\theta}(X_s) ds \right| \leq \sqrt{n} \int_0^1 G_{n,\theta}(X_s) ds \right\} \\
&\hspace{2cm} \cap A_2\cap A_3'  \\
&\hspace{1cm} \supset \left\{ \sup_{K/\sqrt{n}<\theta\leq n^{-1/4}} \left| \frac{1}{\sqrt{n}} \sum_{k=1}^n G_{n,\theta}(X_{(k-1)/n}) - \sqrt{n} \int_0^1 G_{n,\theta}(X_s) ds \right| \leq \frac{\alpha^2\xi_l}{4\max\{\alpha^2,\beta^2\}} \right\}\\
&\hspace{2cm}\cap A_2\cap A_3'
\end{align*}
is larger than $1-3\epsilon$ for $n$ sufficiently large. Here, we used that on $A_3'$ we have $L_1^y(X)>\xi_l$ for $\rho_0-\gamma/2\leq y\leq \rho_0+\gamma/2$ and consequently by the occupation times formula
\begin{align*}
\sqrt{n} \int_0^1 G_{n,\theta}(X_s) ds &\geq \frac{\sqrt{n}}{\max\{\alpha^2,\beta^2\}} \int_0^1 G_{n,\theta}(X_s) \sigma(X_s)^2 ds \\
&= \frac{\sqrt{n}}{\max\{\alpha^2,\beta^2\}} \int_\R G_{n,\theta}(y) L_1^y(X) dy \\
&\geq \frac{\sqrt{n}}{\max\{\alpha^2,\beta^2\}} \int_{-\infty}^0 \exp\left( \frac{2y}{\alpha^2/\sqrt{n}}\right) L_1^{y+\rho_0+\theta} dy \\
&\geq \frac{\sqrt{n} \xi_l}{\max\{\alpha^2,\beta^2\}} \int_{-\gamma/2}^0 \exp\left( \frac{2y}{\alpha^2/\sqrt{n}}\right) dy \\
&=\frac{\alpha^2 \xi_l}{2\max\{\alpha^2,\beta^2\}} \left( 1- \exp\left(-\frac{\gamma}{\alpha^2/\sqrt{n}}\right)\right) \geq \frac{\alpha^2 \xi_l}{4\max\{\alpha^2,\beta^2\}},
\end{align*}
where the last step is true for $n$ large enough to ensure $\exp(-\gamma\sqrt{n}/\alpha^2)\leq 1/2$. Then, on $E_{5,1}$ we have the bound
\begin{align*}
\Xi_n^{(5,1)} &= \sup_{\theta\in\Theta_1} \sum_{k=1}^n \exp\left(\frac{2}{\alpha^2/\sqrt{n}}(X_{(k-1)/n}-\rho_0-\theta)\right)\1_{\{X_{(k-1)/n}\leq \rho_0+\theta\}} \\
& \leq \sup_{\theta\in\Theta_1} 2n \int_0^1 \exp\left(\frac{2}{\alpha^2/\sqrt{n}}(X_{s}-\rho_0-\theta)\right)\1_{\{X_{s}\leq \rho_0+\theta\}} ds \\
&\leq  2n \sup_{\theta\in\Theta_1} \frac{\xi_u}{\min\{\alpha^2,\beta^2\}} \int_{-\infty}^{\rho_0+\theta} \exp\left(\frac{2}{\alpha^2/\sqrt{n}} (y-\rho_0-\theta)\right) dy\\
& \leq \frac{\alpha^2 \xi_u}{\min\{\alpha^2,\beta^2\}} \sqrt{n}.
\end{align*}
\item[$\boldsymbol{-\ \tilde{\Xi}_n^{(5,1)}}$.] On $A_2\cap A_4(n)$, we find with the occupation times formula the upper bound
\begin{align*}
\tilde{\Xi}_n^{(5,1)}&=\sup_{\theta\in\Theta_2}\sum_{k=1}^n \exp\left(\frac{2}{\alpha^2/\sqrt{n}}(X_{(k-1)/n}-\rho_0-\theta)\right)\1_{\{X_{(k-1)/n}\leq \rho_0+\theta\}} \\
& \leq \sup_{\theta\in\Theta_2}\sum_{k=1}^n \exp\left(\frac{2}{\alpha^2/\sqrt{n}}(X_{(k-1)/n}-\rho_0-\theta)\right)\1_{\{X_{(k-1)/n}\leq \rho_0+\theta-n^{-4/9}\}} \\
&\hspace{0.5cm} + \sup_{\theta\in\Theta_2}\sum_{k=1}^n \exp\left(\frac{2}{\alpha^2/\sqrt{n}}(X_{(k-1)/n}-\rho_0-\theta)\right)\1_{\{\rho_0+\theta -n^{-4/9}\leq X_{(k-1)/n}\leq \rho_0+\theta\}} \\
& \leq \sum_{k=1}^n \exp\left(-\frac{2n^{1/18}}{\alpha^2}\right) + \sup_{\theta\in\Theta_2} n\int_0^1 \1_{[\rho_0+\theta-2n^{-4/9}, \rho_0+\theta + n^{-4/9}]}(X_s) ds \\
& \leq n\exp\left(-\frac{2n^{1/18}}{\alpha^2}\right) + \frac{3}{\min\{\alpha^2,\beta^2\}}\xi_u n^{5/9} \leq C_{\alpha,\beta}(\xi_u) n^{5/9}.
\end{align*}

\item[$\boldsymbol{-\ \Xi_n^{(5,2)}}$.] As in the treatment of $\mathcal{I}_1(\theta)$, one can easily see that $\E_{\rho_0}[|\Xi_n^{(5,2)}|/\sqrt{n}] \leq C_{\alpha,\beta}$.

\item[$\boldsymbol{-\ \Xi_n^{(5,3)}}$.] We define
\[ G_{n,\theta}'(x) := \1_{[\rho_0+\theta-1/\sqrt{n},\rho_0+\theta]}(x)\]
and the event
\begin{align*}
E_{5,2} &:= \left\{ \sup_{K/\sqrt{n}<\theta\leq n^{-1/4}} \left| \frac{1}{\sqrt{n}} \sum_{k=1}^n G_{n,\theta}'(X_{k/n}) - \sqrt{n} \int_0^1 G_{n,\theta}'(X_s) ds \right| \leq \sqrt{n} \int_0^1 G_{n,\theta}'(X_s) ds \right\} \\ 
&\hspace{1.5cm} \cap A_2\cap A_3'.
\end{align*} 
Similar to $E_{5,1}$ we find $\Pr_{\rho_0}>1-\epsilon$ for $n$ large enough and the bound
\begin{align*}
\Xi_n^{(5,3)}\1_{E_{5,2}} &\leq \sup_{\theta\in\Theta_1} 2n\int_0^1 \1_{[\rho_0+\theta-1/\sqrt{n},\rho_0+\theta]}(X_s) ds \\
&\leq \frac{2n\xi_u}{\min\{\alpha^2,\beta^2\}} \sup_{\theta\in\Theta_1}\int_{\rho_0+\theta-1/\sqrt{n}}^{\rho_0+\theta} 1dy = \frac{2\xi_u}{\min\{\alpha^2,\beta^2\}} \sqrt{n}.
\end{align*}

\item[$\boldsymbol{-\ \tilde{\Xi}_n^{(5,3)}}$.] Here, we use that $X_{k/n}\in [\rho_0+\theta-1/\sqrt{n},\rho_0+\theta]$ implies on the event $A_4(n)$ that $X_s\in [\rho_0+\theta-1/\sqrt{n}-n^{-4/9}, \rho_0+\theta+n^{-4/9}]$ for all $(k-1)/n\leq s\leq k/n$. In particular, 
\begin{align*}
\tilde{\Xi}_n^{(5,3)}\1_{A_3'\cap A_4(n)} &\leq \1_{A_3'}\sup_{\theta\in\Theta_2} n\int_{0}^{1} \1_{ [\rho_0+\theta-1/\sqrt{n}-n^{-4/9}, \rho_0+\theta+n^{-4/9}]} (X_s) ds \\
&\leq \frac{n\xi_u}{\min\{\alpha^2,\beta^2\}} \sup_{\theta\in\Theta_2}\int_{\rho_0+\theta-1/\sqrt{n}-n^{-4/9}}^{\rho_0+\theta+n^{-4/9}} 1dy \leq \frac{3\xi_u}{\min\{\alpha^2,\beta^2\}}n^{5/9}.
\end{align*}
\end{itemize}\smallskip

$\bullet\ \boldsymbol{S_{3,\theta}(k)}$. Here, we have the bound
\[ S_{3,\theta}(k) \leq |g_k| \1_{\{X_{k/n} <\rho_0\leq \rho_0+L/\sqrt{n}\leq X_{(k-1)/n}\}} + |g_k|\1_{\{X_{(k-1)/n}<\rho_0+\theta< X_{k/n}\}}.\]
The first summand is already independent of $\theta$ and we have in particular
\[ \sup_{\theta\in\Theta_1} \sum_{k=1}^n S_{3,\theta}(k) \leq \Xi_n^{(5,4)} + \Xi_n^{(5,5)} \quad\textrm{ and }\quad \sup_{\theta\in\Theta_2} \sum_{k=1}^n S_{3,\theta}(k) \leq \tilde{\Xi}_n^{(5,4)} + \Xi_n^{(5,5)},\]
where
\begin{align*}
\Xi_n^{(5,4)} &:= \sup_{\theta\in\Theta_1} \sum_{k=1}^n |g_k|\1_{\{X_{(k-1)/n}<\rho_0+\theta< X_{k/n}\}} \\
\tilde{\Xi}_n^{(5,4)} &:= \sup_{\theta\in\Theta_2}\sum_{k=1}^n |g_k|\1_{\{X_{(k-1)/n}<\rho_0+\theta< X_{k/n}\}}\\
\Xi_n^{(5,5)} &:= \sum_{k=1}^n |g_k| \1_{\{X_{k/n} <\rho_0\leq \rho_0+L/\sqrt{n}\leq X_{(k-1)/n}\}}.
\end{align*}
In what follows, we will derive deterministic bounds on sets with high probability for $\Xi_n^{(5,4)}$, $\tilde{\Xi}_n^{(5,4)}$ and an $L^1$-bound on $\Xi_n^{(5,5)}$.
\begin{itemize}
\item[$-\ \boldsymbol{\Xi_n^{(5,4)}}$.] We first note that
\[ \Xi_n^{(5,4)} \leq C_{\alpha,\beta} \sup_{\theta\in\Theta_1}\sum_{k=1}^n \left(1+ \frac{n}{2}(X_{k/n}-X_{(k-1)/n})^2\right)\1_{\{X_{(k-1)/n}<\rho_0+\theta\leq X_{k/n}\}}. \]
Then we decompose
\begin{align*}
\Xi_n^{(5,4)} &\leq C_{\alpha,\beta} \sup_{\theta\in\Theta_1}\sum_{k=1}^n \E_{\rho_0}\left[\left.\left(1+\frac{n}{2}(X_{k/n}-X_{(k-1)/n})^2\right)\1_{\{X_{(k-1)/n}<\rho_0+\theta\leq X_{k/n}\}}\right| X_{(k-1)/n}\right] \\
&\hspace{1cm} + C_{\alpha,\beta} \sup_{\theta\in\Theta_1}\sum_{k=1}^n \left( \left(1+\frac{n}{2}(X_{k/n}-X_{(k-1)/n})^2\right)\1_{\{X_{(k-1)/n}<\rho_0+\theta\leq X_{k/n}\}} \right. \\
&\hspace{1.5cm} \left. - \E_{\rho_0}\left[\left.\left(1+\frac{n}{2}(X_{k/n}-X_{(k-1)/n})^2\right)\1_{\{X_{(k-1)/n}<\rho_0+\theta\leq X_{k/n}\}}\right| X_{(k-1)/n}\right]\right). 
\end{align*} 
By Lemma~\ref{lemma:supremum_at_rho+theta}, the $L^1(\Pr_{\rho_0})$-norm of the second summand is bounded by $C_{\alpha,\beta} n^{3/8}\log(n)$. For the first one, we begin estimating with Lemma~\ref{lemma:upper_bound_transition_density} and using boundedness of $x\mapsto x^2\exp(-x^2/4)$
\begin{align*}
&\E_{\rho_0}\left[\left.\left(1+\frac{n}{2}(X_{k/n}-X_{(k-1)/n})^2\right)\1_{\{X_{(k-1)/n}<\rho_0+\theta\leq X_{k/n}\}}\right| X_{(k-1)/n}\right] \\
&\hspace{0.2cm} \leq C_{\alpha,\beta}\1_{\{X_{(k-1)/n}<\rho_0+\theta\}} \int_{\rho_0+\theta}^\infty \left(1+\frac{n}{2}(y-X_{(k-1)/n})^2\right) \sqrt{n}\exp\left(-\frac{(y-X_{(k-1)/n})^2}{2\max\{\alpha^2,\beta^2\}/n}\right)dy \\
&\hspace{0.2cm} \leq C_{\alpha,\beta}\1_{\{X_{(k-1)/n}<\rho_0+\theta\}} \int_{\sqrt{n}(\rho_0+\theta-X_{(k-1)/n})}^\infty \left(1+\frac{y^2}{2}\right) \exp\left(-\frac{y^2}{2\max\{\alpha^2,\beta^2\}}\right)dy \\
&\hspace{0.2cm} \leq C_{\alpha,\beta}\1_{\{X_{(k-1)/n}<\rho_0+\theta\}}\exp\left(-\frac{(X_{(k-1)/n}-\rho_0-\theta)^2}{4\max\{\alpha^2,\beta^2\}/n}\right) \\
&\hspace{0.2cm} \leq C_{\alpha,\beta} \1_{\{\rho_0+\theta-1/\sqrt{n}\leq X_{(k-1)/n}\leq \rho_0+\theta\}} \\
&\hspace{2cm} + C_{\alpha,\beta}\1_{\{X_{(k-1)/n}\leq\rho_0+\theta-1/\sqrt{n}\}} \exp\left(\frac{(X_{(k-1)/n}-\rho_0-\theta)}{4\max\{\alpha^2,\beta^2\}/\sqrt{n}}\right).
\end{align*}
The expression $\sup_{\theta\in\Theta_1}\sum_{k=1}\1_{\{\rho_0+\theta-1/\sqrt{n}\leq X_{(k-1)/n}\leq \rho_0+\theta\}} $ can be bounded analogously to $\Xi_n^{(5,3)}$. For the other term we define
\begin{align*}
G_{n,\theta}''(x) &:= \exp\left(\frac{(x-\rho_0-\theta)}{4\max\{\alpha^2,\beta^2\}/\sqrt{n}}\right)\1_{\{x\leq \rho_0+\theta\}},
\end{align*} 
and the event
\begin{align*}
E_{5,3} &:= \left\{ \sup_{K/\sqrt{n}<\theta\leq n^{-1/4}} \left| \frac{1}{\sqrt{n}} \sum_{k=1}^n G_{n,\theta}''(X_{(k-1)/n}) - \sqrt{n} \int_0^1 G_{n,\theta}''(X_s) ds \right| \leq \sqrt{n} \int_0^1 G_{n,\theta}''(X_s) ds \right\} \\
&\hspace{1.5cm} \cap A_2\cap A_3'
\end{align*} 
Similar to $E_{5,1}$, we find $\Pr_{\rho_0}(E_{5,3})>1-\epsilon$ for $n$ large enough. 
On these two set, we then obtain the bound
\begin{align*}
&\sup_{\theta\in\Theta_1}\sum_{k=1}^n \1_{\{X_{(k-1)/n}\leq\rho_0+\theta-1/\sqrt{n}\}} \exp\left(\frac{(X_{(k-1)/n}-\rho_0-\theta)}{4\max\{\alpha^2,\beta^2\}/\sqrt{n}}\right)\1_{E_{5,3}} \\
&\hspace{1cm} \leq \sup_{\theta\in\Theta_1} C_{\alpha,\beta}2n\int_0^1 \1_{\{X_{s}<\rho_0+\theta\}}\exp\left(\frac{(X_{s}-\rho_0-\theta)}{4\max\{\alpha^2,\beta^2\}/\sqrt{n}}\right)ds \\
&\hspace{1cm} \leq \frac{C_{\alpha,\beta}\xi_u n}{\min\{\alpha^2,\beta^2\}}\sup_{\theta\in\Theta_1} \int_{-\infty}^{\rho_0+\theta} \exp\left(\frac{(y-\rho_0-\theta)}{4\max\{\alpha^2,\beta^2\}/\sqrt{n}}\right) dy  \\
&\hspace{1cm} = \frac{C_{\alpha,\beta}\xi_u\sqrt{n}}{\min\{\alpha^2,\beta^2\}}\int_{-\infty}^0 \exp\left(\frac{y}{2\max\{\alpha^2,\beta^2\}}\right) dy .
\end{align*}

\item[$\boldsymbol{-\ \tilde{\Xi}_n^{(5,4)}}$.] On $A_4(n)$ we have
\[ |g_k| \leq C_{\alpha,\beta} \left(1 + n n^{-8/9}\right) \leq C_{\alpha,\beta} n^{1/9}.\]
Moreover, again on $A_3'\cap A_4(n)$ we find with the occupation times formula
\begin{align*}
\sum_{k=1}^n \1_{\{X_{(k-1)/n}<\rho_0+\theta < X_{k/n}\}} &= \sum_{k=1}^n \1_{\{\rho_0+\theta - n^{-4/9}\leq X_{(k-1)/n} \leq \rho_0+\theta + n^{-4/9}\}}\\
& \leq n \int_0^1 \1_{[\rho_0+\theta - 2n^{-4/9},\rho_0+\theta + 2n^{-4/9}]}(X_s) ds \\
&\leq n \frac{4\xi_u}{\min\{\alpha^2,\beta^2\}} n^{-4/9} = \frac{4\xi_u}{\min\{\alpha^2,\beta^2\}} n^{5/9}.
\end{align*}
Both bounds do not depend on $\theta$ such that on $A_3'\cap A_4(n)$ we finally have the bound
\[ \tilde{\Xi}_n^{(5,4)} \leq \frac{4C_{\alpha,\beta}\xi_u}{\min\{\alpha^2,\beta^2\}} n^{2/3}.\]

\item[$\boldsymbol{-\ \Xi_n^{(5,5)}}$.] By Lemma~\ref{lemma:upper_bound_transition_density}, Corollary~\ref{cor:bound_exp_X^2} and boundedness of $x\mapsto x\exp(-x^2/4)$,
\begin{align*}
&\E_{\rho_0}\left[ |g_k| \1_{\{X_{k/n} <\rho_0\leq \rho_0+L/\sqrt{n}\leq X_{(k-1)/n}\}}\right] \\
&\hspace{0.2cm} \leq C_{\alpha,\beta} \E_{\rho_0}\left[ \1_{\{X_{(k-1)/n}\geq \rho_0+L/\sqrt{n}\}}\int_{-\infty}^{\rho_0} \left( 1+ n(y-X_{(k-1)/n})^2\right) \right. \\
&\hspace{7cm} \left.\cdot\sqrt{n} \exp\left(-\frac{(y-X_{(k-1)/n})^2}{2\max\{\alpha^2,\beta^2\}/n}\right) dy \right] \\
&\hspace{0.2cm} \leq C_{\alpha,\beta} \E_{\rho_0}\left[ \1_{\{X_{(k-1)/n}\geq \rho_0+L/\sqrt{n}\}}\int_{-\infty}^{-\sqrt{n}(X_{(k-1)/n}-\rho_0)} \left( 1+ y^2\right) \right. \\
&\hspace{7cm} \left. \cdot \exp\left(-\frac{y^2}{2\max\{\alpha^2,\beta^2\}}\right) dy \right] \\
&\hspace{0.2cm} \leq C_{\alpha,\beta} \E_{\rho_0}\left[ \exp\left(-\frac{(X_{(k-1)/n}-\rho_0)^2}{4\max\{\alpha^2,\beta^2\}/n}\right)\right] \\
&\hspace{0.2cm} \leq \frac{C_{\alpha,\beta}}{\sqrt{k-1}}.
\end{align*}
In particular, we have $\E_{\rho_0}[|\Xi_n^{(5,5)}|/\sqrt{n}] \leq C_{\alpha,\beta}$ as desired.
\end{itemize}\smallskip

$\bullet\ \boldsymbol{S_{4,\theta}(k)}$. This term is straigtforward. We estimate
\begin{align*}
\sum_{k=1}^n g_k\1_{\{\rho_0\leq X_{(k-1)/n}< \rho_0+L/\sqrt{n}\}}\1_{\{\rho_0 <X_{k/n}\leq \rho_0+\theta\}} \leq \sum_{k=1}^n g_k\1_{\{\rho_0\leq X_{(k-1)/n}< \rho_0+L/\sqrt{n}\}} =: \Xi_n^{(5,6)},
\end{align*}
which is independent of $\theta$. By Lemma~\ref{lemma:upper_bound_transition_density} and Corollary~\ref{cor:bound_exp_X^2},
\begin{align*}
&\E_{\rho_0}\left[ |g_k|\1_{\{\rho_0\leq X_{(k-1)/n}< \rho_0+L/\sqrt{n}\}} \right] \\
&\hspace{0.2cm} \leq C_{\alpha,\beta} \E_{\rho_0}\left[\1_{\{\rho_0\leq X_{(k-1)/n}< \rho_0+L/\sqrt{n}\}} \int_\R \left(1+ n(y-X_{(k-1)/n})^2\right)^2 \right. \\
&\hspace{7cm} \left.\cdot \sqrt{n}\exp\left(-\frac{(y-X_{(k-1)/n})^2}{2\max\{\alpha^2,\beta^2\}/n}\right) dy \right] \\
&\hspace{0.2cm} \leq C_{\alpha,\beta}   \E_{\rho_0}\left[\1_{\{\rho_0\leq X_{(k-1)/n}< \rho_0+L/\sqrt{n}\}} \int_\R \left(1+y^2\right) \exp\left(-\frac{y^2}{2\max\{\alpha^2,\beta^2\}}\right) dy \right] \\
&\hspace{0.2cm} \leq C_{\alpha,\beta}   \E_{\rho_0}\left[\1_{\{\rho_0\leq X_{(k-1)/n}< \rho_0+L/\sqrt{n}\}} \right] \\
&\hspace{0.2cm} \leq \frac{C_{\alpha,\beta}L}{\sqrt{k}}
\end{align*}
and thus $\E_{\rho_0}[ |\Xi_n^{(5,6)}|/\sqrt{n}] \leq C_{\alpha,\beta}L$.\\

\noindent
$\boldsymbol{\mathcal{I}_6(\theta)}$: With the inequality
\[ \1_{I_{6,k}^\theta}\left(1- \frac{\alpha-\beta}{\alpha+\beta}\exp\left(-\frac{2}{\alpha^2/n}(X_{k/n}-\rho_0)(X_{(k-1)/n}-\rho_0)\right)\right) \geq 1-\frac{|\alpha-\beta|}{\alpha+\beta}\]
we deduce
\begin{align*}
& \log\left(\frac{P_3^{\rho_0+\theta}(X_{(k-1)/n},X_{k/n};1/n)}{P_2^{\rho_0}(X_{(k-1)/n},X_{k/n};1/n)}\right)\1_{I_{6,k}^\theta}  \\
&=\1_{I_{6,k}^\theta}  \log\left( \frac{2\alpha}{\alpha+\beta} \left( 1+ \frac{\alpha-\beta}{\alpha+\beta}\exp\left(-\frac{2}{\alpha^2/n}(X_{k/n}-\rho_0)(X_{(k-1)/n}-\rho_0)\right)\right)^{-1} \right. \\
&\hspace{1cm}\left. \cdot\exp\left(\frac{(X_{k/n}-X_{(k-1)/n})^2}{2\alpha^2/n} - \frac{n}{2}\left(\frac{X_{k/n}-\rho_0-\theta}{\beta}-\frac{X_{(k-1)/n}-\rho_0-\theta}{\alpha}\right)^2\right)\right)\\
&\leq C_{\alpha,\beta}\1_{I_{6,k}^\theta}\left( 1+ n(X_{k/n}-\rho_0-\theta)^2+ n|X_{k/n}-\rho_0-\theta||X_{(k-1)/n}-\rho_0-\theta|\right) \\
&\leq C_{\alpha,\beta}\1_{\{X_{(k-1)/n}<\rho_0+\theta< X_{k/n}\}}\left( 1+ n(X_{k/n}-X_{(k-1)/n})^2\right), 
\end{align*}
where the last step uses that on $I_{6,k}^\theta$ we have $X_{(k-1)/n}< \rho_0+\theta< X_{k/n}$. The claim follows now along the lines of the treatment of the remainder terms $\Xi_n^{(5,4)}$, $\tilde{\Xi}_n^{(5,4)}$ and gives upper bounds $\Xi_n^{(6)}$ and $\tilde{\Xi}_n^{(6)}$ on a set with high probability, where  $\E_{\rho_0}[|\Xi_n^{(6)}|]\leq C_{\alpha,\beta}\sqrt{n}$ and $\E_{\rho_0}[|\tilde{\Xi}_n^{(6)}|]\leq C_{\alpha,\beta}n^{2/3}$, respectively.\\

\noindent
$\boldsymbol{\mathcal{I}_7(\theta)}$: This follows by the same reasoning as $\mathcal{I}_3(\theta)$ and provides an upper bound $\Xi_n^{(7)}$ with $\E_{\rho_0}[ |\Xi_n^{(7)}|/\sqrt{n}] \leq C_{\alpha,\beta}$\\

\noindent
$\boldsymbol{\mathcal{I}_8(\theta)}$: Along the lines of $\mathcal{I}_2(\theta)$, we find
\begin{align*}
& \log\left(\frac{P_4^{\rho_0+\theta}(X_{(k-1)/n},X_{k/n};1/n)}{P_2^{\rho_0}(X_{(k-1)/n},X_{k/n};1/n)}\right)\1_{I_{8,k}^\theta}  \\
&\hspace{1cm}\leq C_{\alpha,\beta}\1_{\{X_{k/n}\leq\rho_0+\theta\leq X_{(k-1)/n}\}}\left( 1+ n(X_{k/n}-X_{(k-1)/n})^2\right). 
\end{align*}
Then the claim follows along the lines of the treatment of the remainder terms $\Xi_n^{(5,4)}$, $\tilde{\Xi}_n^{(5,4)}$ by replacing $\1_{\{X_{(k-1)/n}<\rho_0+\theta< X_{k/n}\}}$ with $\1_{\{X_{k/n}\leq\rho_0+\theta\leq X_{(k-1)/n}\}}$. On a set with high probability, this yields upper bounds $\Xi_n^{(8)}$ and $\tilde{\Xi}_n^{(8)}$ with $\E_{\rho_0}[|\Xi_n^{(8)}|]\leq C_{\alpha,\beta}\sqrt{n}$ and $\E_{\rho_0}[|\tilde{\Xi}_n^{(8)}|] \leq C_{\alpha,\beta}n^{2/3}$, respectively.\\

\noindent
$\boldsymbol{\mathcal{I}_9(\theta)}$: Similar as for $I_1(\theta)$ we obtain
\begin{align*}
& \1_{I_{9,k}^\theta} \log\left(\frac{P_2^{\rho_0+\theta}(X_{(k-1)/n},X_{k/n};1/n)}{P_2^{\rho_0}(X_{(k-1)/n},X_{k/n};1/n)}\right) \\
&\hspace{1cm} = \1_{I_{9,k}^\theta} \log\left(\frac{1+\frac{\alpha-\beta}{\alpha+\beta}\exp\left(-\frac{2}{\beta^2/n}(X_{k/n}-\rho_0-\theta)(X_{(k-1)/n}-\rho_0-\theta)\right)}{1+\frac{\alpha-\beta}{\alpha+\beta}\exp\left(-\frac{2}{\beta^2/n}(X_{k/n}-\rho_0)(X_{(k-1)/n}-\rho_0)\right)}\right) \\
&\hspace{1cm} \leq C_{\alpha,\beta}\1_{I_{9,k}^\theta} \exp\left(-\frac{2}{\beta^2/n}(X_{k/n}-\rho_0-\theta)(X_{(k-1)/n}-\rho_0-\theta)\right) \\
&\hspace{3cm} \cdot \left| \exp\left(-\frac{2\theta}{\beta^2/n}(X_{k/n}-\rho_0+X_{(k-1)/n}-\rho_0-\theta)\right) - 1\right| \\
&\hspace{1cm} \leq C_{\alpha,\beta}\1_{I_{9,k}^\theta} \exp\left(-\frac{2}{\beta^2/n}(X_{k/n}-\rho_0-\theta)(X_{(k-1)/n}-\rho_0-\theta)\right),
\end{align*}
where the last step uses $X_{(k-1)/n}\geq\rho_0+\theta,X_{k/n}> \rho_0+\theta$. From this, it follows that
\begin{align*}
& \sum_{k=1}^n \1_{I_{9,k}^\theta} \log\left(\frac{P_2^{\rho_0+\theta}(X_{(k-1)/n},X_{k/n};1/n)}{P_2^{\rho_0}(X_{(k-1)/n},X_{k/n};1/n)}\right) \\ 
&\hspace{0.5cm} \leq C_{\alpha,\beta} \sum_{k=1}^n \1_{\{\rho_0+\theta \leq X_{(k-1)/n} \leq \rho_0+\theta+1/\sqrt{n}\}} \\
&\hspace{1.5cm}+ C_{\alpha,\beta} \sum_{k=1}^n \1_{\{X_{(k-1)/n}> \rho_0+\theta+1/\sqrt{n}\}}\1_{\{X_{k/n}> \rho_0+\theta\}}\\
&\hspace{3cm} \cdot\exp\left(-\frac{2}{\beta^2/n}(X_{k/n}-\rho_0-\theta)(X_{(k-1)/n}-\rho_0-\theta)\right) \\
&\hspace{0.5cm} \leq C_{\alpha,\beta} \sum_{k=1}^n \1_{\{\rho_0+\theta \leq X_{(k-1)/n} \leq \rho_0+\theta+1/\sqrt{n}\}} \\
&\hspace{2
cm} + C_{\alpha,\beta} \sum_{k=1}^n \1_{\{X_{k/n}> \rho_0+\theta\}}\exp\left(-\frac{2}{\beta^2/\sqrt{n}}(X_{k/n}-\rho_0-\theta)\right).
\end{align*}
The first summand can now be treated as the same part appearing in $\Xi_n^{(5,4)}$ in $\mathcal{I}_{5}(\theta)$ with the slightly different indicator $\1_{\{\rho_0+\theta-1/\sqrt{n}\leq X_{(k-1)/n}\leq \rho_0+\theta\}}$ and the second one as $\Xi_n^{(5,1)}$, $\tilde{\Xi}_n^{(5,1)}$ in the discussion of $\mathcal{I}_5(\theta)$. In particular, on a set with high probability, we get upper bounds $\Xi_n^{(9)}$ and $\tilde{\Xi}_n^{(9)}$ with $\E_{\rho_0}[|\Xi_n^{(9)}|]\leq C_{\alpha,\beta}\sqrt{n}$ and $\E_{\rho_0}[|\tilde{\Xi}_n^{(9)}|] \leq C_{\alpha,\beta}n^{2/3}$, respectively.\\

\noindent 
To sum up, we have shown the statement of the lemma with
\[ F_n^1(K,L) := \sum_{\substack{j=1, \dots, 9 \\ j\neq 5}} \Xi_n^{(j)} + \sum_{l=1}^6  \Xi_n^{(5,l)} \]
and
\[ F_n^2(K,L) := \tilde{\Xi}_n^{(5,1)} +\Xi_n^{(5,2)} + \tilde{\Xi}_n^{(5,3)}+\tilde{\Xi}_n^{(5,4)}+\Xi_n^{(5,5)}+\Xi_n^{(5,6)}+ \sum_{j=1}^4 \Xi_n^{(j)} +\sum_{j=6}^9 \tilde{\Xi}_n^{(j)}. \]
\end{proof}

\begin{proof}[Proof of Lemma~\ref{lemma:E_squared_difference}]
In this proof, we set $B_k := \{\rho_0+L/\sqrt{n} \leq X_{(k-1)/n}<\rho_0+\theta\}$. Decomposing $1=\1_{\{X_{k/n}<\rho_0\}}+\1_{\{X_{k/n}\geq\rho_0\}}$, reveals by the triangle inequality
\begin{align}\label{eq_proof:E_squared_difference_1}
\begin{split}
&\left|\E_{\rho_0}\left[ \left. n(X_{k/n}-X_{(k-1)/n})^2\right| X_{(k-1)/n}\right]-\beta^2\right|  \1_{B_k}  \\
&\hspace{1cm} \leq  \left|\E_{\rho_0}\left[ \left.\left(n(X_{k/n}-X_{(k-1)/n})^2-\beta^2 \right) \1_{B_k} \1_{\{X_{k/n}<\rho_0\}} \right| X_{(k-1)/n}\right] \right| \\
&\hspace{2cm} + \left|\E_{\rho_0}\left[ \left.\left(n(X_{k/n}-X_{(k-1)/n})^2-\beta^2\right) \1_{B_k} \1_{\{X_{k/n}\geq \rho_0\}} \right| X_{(k-1)/n}\right] \right|.
\end{split}
\end{align}
Subsequently, both summmands will be bounded seperately. The first expectation on the right-hand side of~\eqref{eq_proof:E_squared_difference_1} can be rewritten as
\begin{align*}
&\left| \E_{\rho_0}\left[ \left.\left(n(X_{k/n}-X_{(k-1)/n})^2-\beta^2\right) \1_{B_k}\1_{\{X_{k/n}<\rho_0\}} \right| X_{(k-1)/n}\right]\right| \\
&\hspace{0.1cm} = \1_{B_k}\left| \int_{-\infty}^{\rho_0} \left(n(y-X_{(k-1)/n})^2-\beta^2\right) \frac{1}{\sqrt{2\pi/n}} \frac{2}{\alpha+\beta}\frac{\beta}{\alpha}\exp\left(-\frac{n}{2}\left(\frac{y-\rho_0}{\alpha}-\frac{X_{(k-1)/n}-\rho_0}{\beta}\right)^2\right) dy \right| \\
&\hspace{0.1cm} =  \1_{B_k} \frac{2\alpha^2\beta}{\alpha+\beta} \int_{-\infty}^{-(X_{(k-1)/n}-\rho_0)/\sqrt{\beta^2/n}} \left(\left( y - \frac{(\beta-\alpha)(X_{(k-1)/n}-\rho_0)}{\alpha\beta/\sqrt{n}}\right)^2-\frac{\beta^2}{\alpha^2}\right) \frac{1}{\sqrt{2\pi}}\exp\left(-\frac{y^2}{2}\right) dy.
\end{align*}
Now, using $X_{(k-1)/n}-\rho_0\geq L/\sqrt{n}$ on $B_k$, $x\exp(-x/4)\leq 4e^{-1}$ for $x\geq 0$ and the Gaussian tail inequality, we obtain
\begin{align*}
&\1_{B_k}\left| \int_{-\infty}^{-(X_{(k-1)/n}-\rho_0)/\sqrt{\beta^2/n}} \left(\left( y - \frac{(\beta-\alpha)(X_{(k-1)/n}-\rho_0)}{\alpha\beta/\sqrt{n}}\right)^2 - \frac{\beta^2}{\alpha^2}\right) \frac{1}{\sqrt{2\pi}}\exp\left(-\frac{y^2}{2}\right) dy\right| \\
&\hspace{0.2cm} \leq C_{\alpha,\beta}\1_{B_k} \int_{-\infty}^{-L/\beta} \left(y^2 +1\right) \exp\left(-\frac{y^2}{2}\right) dy  \\
&\hspace{1.2cm} +C_{\alpha,\beta}\1_{B_k}n(X_{(k-1)/n}-\rho_0)^2 \int_{-\infty}^{-(X_{(k-1)/n}-\rho_0)/\sqrt{\beta^2/n}} \exp\left(-\frac{y^2}{2}\right) dy \\
&\hspace{0.2cm} \leq C_{\alpha,\beta}  \int_{-\infty}^{-L/\beta} \left(y^2 +1\right) \exp\left(-\frac{y^2}{2}\right) dy  + C_{\alpha,\beta}\1_{B_k}n(X_{(k-1)/n}-\rho_0)^2 \exp\left(-\frac{(X_{(k-1)/n}-\rho_0)^2}{2\beta^2/n}\right)\\
&\hspace{0.2cm} \leq C_{\alpha,\beta} \int_{-\infty}^{-L/\beta} \left(y^2 +1\right) \exp\left(-\frac{y^2}{2}\right) dy  + C_{\alpha,\beta}\1_{B_k}\exp\left(-\frac{(X_{(k-1)/n}-\rho_0)^2}{4\beta^2/n}\right) \\
&\hspace{0.2cm} \leq C_{\alpha,\beta} \int_{-\infty}^{-L/\beta} \left(y^2 +1\right) \exp\left(-\frac{y^2}{2}\right) dy +C_{\alpha,\beta} \exp\left(-\frac{L^2}{4\beta^2}\right).
\end{align*}
Consequently, for every $\epsilon>0$ there exists $L=L(\epsilon)>0$ large enough such that
\begin{align}\label{eq_proof:E_squared_difference_2}
\left| \E_{\rho_0}\left[ \left.\left(n(X_{k/n}-X_{(k-1)/n})^2-\beta^2\right) \1_{B_k} \1_{\{X_{k/n}<\rho_0\}} \right| X_{(k-1)/n}\right]\right| \leq  \frac{\epsilon}{2}.
\end{align} 
For the second expectation on the right-hand side of~\eqref{eq_proof:E_squared_difference_1}, we find
\begin{align*}
& \left| \E_{\rho_0}\left[ \left.\left(n(X_{k/n}-X_{(k-1)/n})^2-\beta^2\right) \1_{B_k} \1_{\{X_{k/n}\geq \rho_0\}} \right| X_{(k-1)/n}\right] \right|\\
&\hspace{0.2cm} \leq  \1_{B_k}\left| \int_{\rho_0}^\infty \left(n(y-X_{(k-1)/n})^2-\beta^2\right) \frac{1}{\sqrt{2\pi\beta^2/n}}\exp\left(-\frac{(y-X_{(k-1)/n})^2}{2\beta^2/n}\right) dy  \right|\\
&\hspace{0.5cm} +  \1_{B_k}\left| \int_{\rho_0}^\infty \left(n(y-X_{(k-1)/n})^2-\beta^2\right) \frac{1}{\sqrt{2\pi\beta^2/n}}\frac{\alpha-\beta}{\alpha+\beta}\exp\left(-\frac{(y-2\rho_0+X_{(k-1)/n})^2}{2\beta^2/n}\right) dy  \right|\\
&\hspace{0.2cm} = \1_{B_k}\beta^2 \left|\int_{(\rho_0-X_{(k-1)/n})/\sqrt{\beta^2/n}}^\infty \left(y^2-1\right) \frac{1}{\sqrt{2\pi}} \exp\left(-\frac{y^2}{2}\right) dy \right| \\
&\hspace{0.5cm} +  \1_{B_k} \beta^2 \left| \frac{\alpha-\beta}{\alpha+\beta} \int_{(X_{(k-1)/n}-\rho_0)/\sqrt{\beta^2/n}}^\infty \left(\left(y-\frac{2(X_{(k-1)/n}-\rho_0)}{\beta/\sqrt{n}}\right)^2-1\right) \frac{1}{\sqrt{2\pi}} \exp\left(-\frac{y^2}{2}\right) dy  \right|.
\end{align*}
Note that we have
\begin{align*}
&\1_{B_k}\beta^2 \left|\int_{(\rho_0-X_{(k-1)/n})/\sqrt{\beta^2/n}}^\infty \left(y^2-1\right) \frac{1}{\sqrt{2\pi}} \exp\left(-\frac{y^2}{2}\right) dy \right| \\
&\hspace{3cm} \leq \beta^2 \left|\int_{-L/\beta}^\infty \left(y^2-1\right) \frac{1}{\sqrt{2\pi}} \exp\left(-\frac{y^2}{2}\right) dy \right|
\end{align*} 
and we see that $L$ can be chosen large enough such that the right-hand side is bounded from above by $\epsilon/4$, because $\int_\R (y^2-1)\exp(-y^2/2)dy =0$. Moreover, using again the inequality $x\exp(-x/4)\leq 4e^{-1}$ for $x\geq 0$ and the Gaussian tail inequality, we obtain
\begin{align*}
& \1_{B_k}\left| \int_{(X_{(k-1)/n}-\rho_0)/\sqrt{\beta^2/n}}^\infty \left(\left(y-\frac{2(X_{(k-1)/n}-\rho_0)}{\beta/\sqrt{n}}\right)^2 -1\right) \frac{1}{\sqrt{2\pi}} \exp\left(-\frac{y^2}{2}\right) dy \right| \\
&\hspace{0.2cm} \leq 2\1_{B_k} \int_{(X_{(k-1)/n}-\rho_0)/\sqrt{\beta^2/n}}^\infty \left(y^2+1\right) \frac{1}{\sqrt{2\pi}} \exp\left(-\frac{y^2}{2}\right) dy  \\
&\hspace{1.2cm} + \frac{8(X_{(k-1)/n}-\rho_0)^2}{\beta^2/n}\int_{(X_{(k-1)/n}-\rho_0)/\sqrt{\beta^2/n}}^\infty  \frac{1}{\sqrt{2\pi}} \exp\left(-\frac{y^2}{2}\right) dy \\
&\hspace{0.2cm} \leq 2 \int_{L/\beta}^\infty \left(y^2+1\right) \frac{1}{\sqrt{2\pi}} \exp\left(-\frac{y^2}{2}\right) dy  + \1_{B_k}\frac{4(X_{(k-1)/n}-\rho_0)^2}{\beta^2/n}\exp\left(-\frac{(X_{(k-1)/n}-\rho_0)^2}{2\beta^2/n}\right)\\
&\hspace{0.2cm} \leq 2\int_{L/\beta}^\infty \left(y^2+1\right) \frac{1}{\sqrt{2\pi}} \exp\left(-\frac{y^2}{2}\right) dy + \1_{B_k}\frac{4}{e}\exp\left(-\frac{(X_{(k-1)/n}-\rho_0)^2}{4\beta^2/n}\right)\\
&\hspace{0.2cm} \leq 2\int_{L/\beta}^\infty \left(y^2+1\right) \frac{1}{\sqrt{2\pi}} \exp\left(-\frac{y^2}{2}\right) dy + \frac{4}{e}\exp\left(-\frac{L^2}{4\beta^2}\right).
\end{align*}
Hence, we can choose $L=L(\epsilon)$ large enough such that this bound is again bounded by $\epsilon/4$. In particular, we have shown that for every $\epsilon>0$ there exists an $L=L(\epsilon)>0$ large enough such that
\begin{align*}
&\left| \E_{\rho_0}\left[\left( n(X_{k/n}-X_{(k-1)/n})^2 -\beta^2\right) \1_{B_k} \1_{\{X_{k/n}\geq \rho_0\}} | X_{(k-1)/n}\right] \right|  \leq \frac{\epsilon}{2}
\end{align*}
and the claim follows by combining this with~\eqref{eq_proof:E_squared_difference_1} and~\eqref{eq_proof:E_squared_difference_2}.
\end{proof}

\begin{proof}[Proof of \eqref{eq_proof:sqrt(n)_chaining_bound_new}]
Exploiting the martingale structure of $N_n-\overline{N}_n$, one can easily see that (details are given direcly after this proof) 
\begin{align}\label{eq_proof:sqrt(n)_consistency_1}
\E_{\rho_0}\left[ \left( N_n^L(\theta)-N_n^L(\theta') + \overline{N}_n^L(\theta') - \overline{N}_n^L(\theta)\right)^2\right] \leq c n |\theta-\theta'|
\end{align} 
for some constant $c=c(\alpha,\beta)>0$. With $\psi(x)=x^2$ and the metric $\rho_n(\theta,\theta') := \sqrt{c n|\theta-\theta'|}$, we obtain
\[ \E_{\rho_0}\left[ \psi\left(\frac{N_n^L(\theta)-N_n^L(\theta') + \overline{N}_n^L(\theta') - \overline{N}_n^L(\theta)}{\rho_n(\theta,\theta')}\right)\right] \leq 1.\]
As the piecewise constant function $\theta\mapsto N_n^L(\theta)-\overline{N}_n^L(\theta)$ is càdlàg, we have
\[  \sup_{\theta\in S_{n,j}} \left(N_n^L(\theta) - \overline{N}_n^L(\theta) \right)=  \sup_{\theta\in S_{n,j} \cap \Q} \left(N_n^L(\theta) - \overline{N}_n^L(\theta) \right)\]
and thus can work on a countable set. For an enumeration $(\theta_m)_{m\in\N}$ of $[0,2^{j+1}/\sqrt{n}]\cap\Q \supset S_{n,j}\cap\Q$, by monotone convergence,
\[  \E_{\rho_0}\left[\sup_{\theta\in [0,2^{j+1}/\sqrt{n}]\cap\Q} \left(N_n^L(\theta) - \overline{N}_n^L(\theta) \right)\right] = \lim_{m\to\infty}\E_{\rho_0} \left[ \sup_{\theta\in \{\theta_1,\dots,\theta_m\}} \left(N_n^L(\theta) - \overline{N}_n^L(\theta) \right)\right]. \]
Hence, it is enough to evaluate the supremum over any finite subset $\mathcal{T}^n$ of $[0,2^{j+1}/\sqrt{n}]\cap\Q$. Without loss of generality, we assume $\theta\in\mathcal{T}^n$ and set $0=\theta_0^n\in \mathcal{T}^n$. Furthermore, we define
\[ \delta_0^n := \sup_{\theta\in \mathcal{T}^n} \rho_n(\theta,\theta_0^n) \leq \sqrt{c n2^{j+1}/\sqrt{n}} = \sqrt{C_2}2^{(j+1)/2}n^{1/4}\]
and $\delta_k^n := 2^{-k}\delta_0^n$ for $k\in\N$. Now we inductively define maximal subsets $\mathcal{T}_k^n\subset \mathcal{T}^n$ such that
\[ \mathcal{T}_{k-1}^n\subset\mathcal{T}_k^n \qquad \textrm{ and }\qquad \rho_n(\theta,\theta') \leq \delta_k^n \textrm{ for different } \theta,\theta'\in\mathcal{T}_k^n.\]
In particular, the cardinality $\#\mathcal{T}_k^n$ is bounded by
\[ \#\mathcal{T}_k^n \leq D\left( \delta_k^n, \mathcal{T}^n,\rho_n\right) = \max\{ \#\mathcal{T}_0: \mathcal{T}_0\subset \mathcal{T}^n, \rho_n(\theta,\theta')\geq \delta_k^n \textrm{ for different } \theta,\theta'\in\mathcal{T}_0\}\]
for all $k\in\N$ and we have
\[ \sup_{\theta\in\mathcal{T}_k^n} \rho_n(\theta,\theta') \leq \delta_k^n \quad \textrm{ for all } \theta'\in\mathcal{T}^n \textrm{ and } k\in\N.\]
By construction, it is now possible to find for each $k\in\N$ and $\theta\in\mathcal{T}^n$ a point $\pi_k^n\theta\in\mathcal{T}_k^n$ such that $\rho_n(\theta,\pi_k^n\theta)\leq\delta_k^n$ and for $Z_n(\theta)=N_n^L(\theta)-\overline{N}_n^L(\theta)$ we have
\begin{align}\label{eq_proof:sqrt(n)_chaining_one_step}
\begin{split}
\sup_{\theta\in \mathcal{T}_{k+1}^n} |Z_n(\theta)| &\leq \max_{\theta\in\mathcal{T}_{k+1}^n}\left( |Z_n(\pi_k^n\theta)| + |Z_n(\theta)-Z_n(\pi_k^n\theta)|\right) \\
&\leq \sup_{\theta\in \mathcal{T}_{k}^n} |Z_n(\theta)| +  \max_{\theta\in\mathcal{T}_{k+1}^n} |Z_n(\theta)-Z_n(\pi_k^n\theta)|.
\end{split}
\end{align}
Using  Pisier's inequality
\begin{align}\label{inequality_Pisier}
\E_{\rho_0}\left[ \max_{j=1,\dots, m} |Y_i|\right] \leq \phi^{-1}\left(\sum_{j=1}^m \E_{\rho_0}[\phi(Y_i)]\right)
\end{align}
for arbitrary real-valued random variables $Y_1,\dots, Y_m$ and even, convex $\phi$ with $\phi(0)=0$, $\phi\neq 0$ and $\phi^{-1}(v):=\max\{u\geq 0: \phi(u)\leq v\}$, we find with $\phi=\psi$ in~\eqref{inequality_Pisier}
\begin{align}\label{eq_proof:sqrt(n)_chaining_integral}
\begin{split}
\E_{\rho_0}\left[ \max_{\theta\in\mathcal{T}_{k+1}^n} |Z_n(\theta)-Z_n(\pi_k^n\theta)|\right] &= \delta_k^n \E_{\rho_0}\left[ \max_{\theta\in\mathcal{T}_{k+1}^n} \frac{|Z_n(\theta)-Z_n(\pi_k^n\theta)|}{\delta_k}\right] \\
&\leq \delta_k \psi^{-1}\left( D(\delta_{k+1}^n, \mathcal{T}^n,\rho_n)\right) \\
&= 4\left(\delta_{k+1}^n - \delta_{k+2}^n \right) \psi^{-1}\left( D(\delta_{k+1}^n, \mathcal{T}^n,\rho_n)\right) \\
&\leq 4\int_{\delta_{k+2}^n}^{\delta_{k+1}^n} \psi^{-1}\left( D(u, \mathcal{T}^n,\rho_n)\right) du.
\end{split}
\end{align}
Combining this inequality for all $k=1,\dots, k_0$, where $k_0$ is the largest value for $k$ such that $\delta_{k-1}^n \geq 1$ and using our particular choice $\psi(x)=x^2$ from above as well as $Z_n(0)=N_n^L(0)-\overline{N}_n^L(0)=0$, we arrive at
\begin{align}\label{eq_proof:sqrt(n)_chaining_bound}
\begin{split}
&\E_{\rho_0}\left[ \sup_{\theta\in\mathcal{T}^n} \left( N_n^L(\theta)-\overline{N}_n^L(\theta)\right)\right]\\
&\hspace{1cm} \leq 4\int_1^{\delta_0^n/2} \sqrt{D(u, \mathcal{T}^n,\rho_n)} du + \E_{\rho_0}\left[\sup_{\theta\in\mathcal{T}_{k_0+1}^n} |Z_n(\theta)-Z_n(\pi_{k_0}^n\theta)|\right].
\end{split}
\end{align}
To evaluate the first integral, we note that
\[D\left(u, \mathcal{T}^n,\rho_n\right) \leq D\left(u, [0,2^{j+1}/\sqrt{n},\rho_n\right) \leq c 2^{j+1}\sqrt{n} u^{-2}. \]
Next, we treat the remainder in~\eqref{eq_proof:sqrt(n)_chaining_bound}. For $\theta,\theta'\in\mathcal{T}_{k_0}^n$ we have $\rho_n(\theta,\theta')\leq 1$ by definition of $\mathcal{T}_k^n$ and $k_0$. In particular we have $|\theta-\theta'|\leq (c n)^{-1}$. Denoting by $U_\epsilon(x)$ an $\epsilon$-environment of $x$, we thus have by using~\eqref{inequality_Pisier}
\begin{align}\label{eq_proof:sqrt(n)_Pisier}
\begin{split}
\E_{\rho_0}\left[\sup_{\theta\in\mathcal{T}_{k_0+1}^n} |Z_n(\theta)-Z_n(\pi_{k_0}^n\theta)|\right] &\leq \E_{\rho_0}\left[\sup_{\theta,\theta'\in\mathcal{T}_{k_0}^n} |Z_n(\theta)-Z_n(\theta')|\right]\\
&\leq \E_{\rho_0}\left[\sup_{\theta\in \mathcal{T}_{k_0}^n} \sup_{\theta'\in U_{1/(cn)}(\theta)} |Z_n(\theta)-Z_n(\theta')|\right] \\
&\leq \left( \sum_{\theta\in \mathcal{T}_{k_0}^n} \E_{\rho_0}\left[ \sup_{\theta'\in U_{1/(cn)}(\theta)} |Z_n(\theta)-Z_n(\theta')|^2\right]\right)^\frac12.
\end{split}
\end{align}
First, note that the number of elements contained in $\mathcal{T}_{k_0}^n$ is bounded from above by $4c\sqrt{n}2^{j+1}$ since $\delta_{k_0}^n\geq 1/2$. Second, we can bound (assuming wlog $\theta<\theta'$)
\begin{align*}
&Z_n(\theta)-Z_n(\theta') \\
&\hspace{0.5cm}= n\left(\frac{1}{\alpha^2}-\frac{1}{\beta^2}\right)\sum_{k=1}^n \left( (X_{k/n}-X_{(k-1)/n})^2 - \E_{\rho_0}[(X_{k/n}-X_{(k-1)/n})^2\mid X_{(k-1)/n}]\right)\\
&\hspace{4cm} \cdot\1_{\{\rho_0+\theta\leq X_{(k-1)/n}<\rho_0+\theta'\}}\1_{\{X_{(k-1)/n}\geq \rho_0+ L/\sqrt{n}\}} \\
&\hspace{0.5cm} \leq  n \left|\frac{1}{\alpha^2}-\frac{1}{\beta^2}\right| \sum_{k=1}^n \left( (X_{k/n}-X_{(k-1)/n})^2 + \E_{\rho_0}[(X_{k/n}-X_{(k-1)/n})^2\mid X_{(k-1)/n}]\right)\\
&\hspace{4cm} \cdot \1_{\{\rho_0+\theta\leq X_{(k-1)/n}<\rho_0+\theta+1/(cn)\}},
\end{align*}
which is independent of $\theta'$. This is due to the special structure of the martingale part $N_n^L(\theta)-\overline{N}_n^L(\theta)$ and the essential observation that makes the bound~\eqref{eq_proof:sqrt(n)_chaining_bound} useful. Using the estimate $(a+b)^2\leq 2a^2 + 2b^2$ and Lemma~\ref{lemma:second_moment_sum_intervals}, we find
\begin{align*}
\begin{split}
&\E_{\rho_0}\left[\left( n\sum_{k=1}^n \left( (X_{k/n}-X_{(k-1)/n})^2 + \E_{\rho_0}[(X_{k/n}-X_{(k-1)/n})^2\mid X_{(k-1)/n}]\right) \right.\right. \\
&\hspace{2.5cm} \cdot \left.\1_{\{\rho_0+\theta\leq X_{(k-1)/n}<\rho_0+\theta+1/(cn)\}}\Big)^2\right] \leq c',
\end{split}
\end{align*}
where the constant $c'=c'(\alpha,\beta)>0$ is independent of $n$ and $\theta$. By using this bound, we get from~\eqref{eq_proof:sqrt(n)_chaining_bound} and~\eqref{eq_proof:sqrt(n)_Pisier} that for some constant $C_2=C_2(\alpha,\beta)>0$
\begin{align*}
&\E_{\rho_0}\left[ \sup_{\theta\in\mathcal{T}^n} \left( N_n^L(\theta)-\overline{N}_n^L(\theta)\right)\right] \\
&\hspace{1cm}\leq 4\int_1^{\sqrt{c}2^{j/2}n^{1/4}} \sqrt{c} 2^{(j+1)/2}n^{1/4} u^{-1} du + 2\sqrt{cc'} 2^{(j+1)/2}n^{1/4} \\
&\hspace{1cm}= C_2 2^{(j+1)/2}n^{1/4} \left( j\log(2) + \log(n)\right).
\end{align*}
\end{proof}

\begin{proof}[Proof of \eqref{eq_proof:sqrt(n)_consistency_1}]
Note that $N_n^L(\theta)-N_n^L(\theta') + \overline{N}_n^L(\theta')-\overline{N}_n^L(\theta) = \sum_{k=1}^n d_k(\theta,\theta')$, where $\E_{\rho_0}[d_k(\theta,\theta')\mid \cF_{(k-1)/n}] =0$. Hence, for $l<k$ we have
\begin{align*}
\E_{\rho_0}\left[ d_k(\theta,\theta') d_l(\theta,\theta')\right] &= \E_{\rho_0}\left[\E_{\rho_0}\left[ d_k(\theta,\theta') d_l(\theta,\theta')\mid \cF_{(k-1)/n}\right]\right] \\
&=  \E_{\rho_0}\left[d_l(\theta,\theta') \E_{\rho_0}\left[ d_k(\theta,\theta')\mid \cF_{(k-1)/n}\right]\right] = 0
\end{align*} 
and consequently,
\begin{align}\label{eq_proof:sqrt(n)_consistency_1_1}
\E_{\rho_0}\left[ \left( \sum_{k=1}^n d_k(\theta,\theta')\right)^2\right] = \sum_{k=1}^n \E_{\rho_0}\left[ d_k(\theta,\theta')^2 \right].
\end{align} 
For $\theta'\leq\theta$,
\begin{align*}
d_k(\theta,\theta') &= n\left(\frac{1}{\alpha^2}-\frac{1}{\beta^2}\right) \left( (X_{k/n}-X_{(k-1)/n})^2 - \E_{\rho_0}[(X_{k/n}-X_{(k-1)/n})^2\mid X_{(k-1)/n}]\right)\\
&\hspace{2cm} \cdot \1_{\{\rho_0+\theta'\leq X_{(k-1)/n}< \rho_0+\theta\}}.
\end{align*}
Using that by Lemma~\ref{lemma:upper_bound_transition_density}
\begin{align}\label{eq_proof:sqrt(n)_consistency_1_2}
\begin{split}
&\E_{\rho_0}\left[\left. n^2(X_{k/n}-X_{(k-1)/n})^4\right| X_{(k-1)/n}\right]\\
&\hspace{1cm}\leq C_{\alpha,\beta}\int_\R n^2(y-X_{(k-1)/n})^4 \sqrt{n} \exp\left(-\frac{(y-X_{(k-1)/n})^2}{2\max\{\alpha^2,\beta^2\}/n}\right) dy \\
&\hspace{1cm}= C_{\alpha,\beta}  \int_\R y^4 \exp\left(-\frac{y^2}{2\max\{\alpha^2,\beta^2\}}\right) dy\leq C_{\alpha,\beta},
\end{split}
\end{align} 
and $(a+b)^2\leq 2a^2 + 2b^2$ yields again with Lemma~\ref{lemma:upper_bound_transition_density}
\begin{align*}
&\E_{\rho_0}\left[ d_k(\theta,\theta')^2 \right]\\
&\hspace{0.2cm}\leq 2 \left(\frac{1}{\alpha^2}-\frac{1}{\beta^2}\right)^2 \E_{\rho_0}\Big[\left(n^2(X_{k/n}-X_{(k-1)/n})^4 + n^2\E_{\rho_0}[(X_{k/n}-X_{(k-1)/n})^2\mid X_{(k-1)/n}]^2\right)\\
&\hspace{1cm} \cdot  \1_{\{\rho_0+\theta'\leq X_{(k-1)/n}< \rho_0+\theta\}} \Big] \\
&\hspace{0.2cm}\leq 4\left(\frac{1}{\alpha^2}-\frac{1}{\beta^2}\right)^2  \E_{\rho_0}\left[\E_{\rho_0}\left[n^2(X_{k/n}-X_{(k-1)/n})^4\mid X_{(k-1)/n}\right] \1_{\{\rho_0+\theta'\leq X_{(k-1)/n}< \rho_0+\theta\}} \right] \\
&\hspace{0.2cm}\leq 4C_{\alpha,\beta}\left(\frac{1}{\alpha^2}-\frac{1}{\beta^2}\right)^2 \E_{\rho_0}\left[\1_{\{\rho_0+\theta'\leq X_{(k-1)/n}< \rho_0+\theta\}} \right]\\
&\hspace{0.2cm}\leq C_{\alpha,\beta} \int_{\rho_0+\theta'}^{\rho_0+\theta} \frac{1}{\sqrt{2\pi(k-1)/n}}\exp\left(-\frac{(y-x_0)^2}{2\max\{\alpha^2,\beta^2\}(k-1)/n}\right) dy \\
&\hspace{0.2cm}\leq C_{\alpha,\beta} \frac{1}{\sqrt{k/n}}|\theta-\theta'|.
\end{align*}
We then arrive at
\begin{align*}
\sum_{k=1}^n \E_{\rho_0}\left[ d_k(\theta,\theta')^2 \right] &\leq C_{\alpha,\beta}\sqrt{n}|\theta-\theta'| \sum_{k=2}^n \frac{1}{\sqrt{k}}\leq C_{\alpha,\beta} n|\theta-\theta'|
\end{align*} 
and~\eqref{eq_proof:sqrt(n)_consistency_1} follows.
\end{proof}

The following result may be well-known, yet we did not find an appropriate reference in the literature.

\begin{lemma}\label{lemma:L2-norm_occupation_approximation}
Let $f_n:\R\longrightarrow\R$, $n\in\N$, be measurable functions such that for $m=1,2$, $\int_\R |x|^m |f_n(x)|dx<\infty$, $\|f_n\|_{L^1}\leq \kappa/\sqrt{n}$, $\|f_n\|_{\textrm{sup}} \leq \kappa$ and $\E_{\rho_0}[|f_n(X_{(k-1)/n})|]\leq \kappa/\sqrt{k}$ for all $1\leq k\leq n+1$ and some constant $\kappa>0$. Then we have
\[ \E_{\rho_0}\left[ \left( \frac{1}{\sqrt{n}}\sum_{k=1}^n f_n(X_{(k-1)/n}) - \sqrt{n} \int_0^1 f_n(X_s) ds \right)^2\right] \leq C_{\alpha,\beta}(\kappa)n^{-1/2}.\]
\end{lemma}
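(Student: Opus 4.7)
The natural starting point is to rewrite the error as a sum indexed by the discretization grid. Since $\sqrt{n}\int_0^1 f_n(X_s)\,ds=\sqrt{n}\sum_{k=1}^n \int_{(k-1)/n}^{k/n} f_n(X_s)\,ds$, I set
\[
D_k:=\sqrt{n}\int_{(k-1)/n}^{k/n}\bigl(f_n(X_{(k-1)/n})-f_n(X_s)\bigr)\,ds,
\]
so that the quantity inside the expectation equals $\sum_{k=1}^n D_k$. I will decompose each $D_k$ into its $\cF_{(k-1)/n}$-martingale difference $M_k=D_k-\E_{\rho_0}[D_k\mid\cF_{(k-1)/n}]$ and its predictable compensator $A_k=\E_{\rho_0}[D_k\mid\cF_{(k-1)/n}]$, and then bound $\E_{\rho_0}[(\sum_k D_k)^2]$ by $2\E_{\rho_0}[(\sum_k M_k)^2]+2\E_{\rho_0}[(\sum_k A_k)^2]$.

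The martingale piece is the easier of the two. By orthogonality of martingale increments, $\E_{\rho_0}[(\sum_k M_k)^2]=\sum_k\E_{\rho_0}[M_k^2]\le\sum_k\E_{\rho_0}[D_k^2]$. Cauchy--Schwarz gives $D_k^2\le \int_{(k-1)/n}^{k/n}(f_n(X_{(k-1)/n})-f_n(X_s))^2\,ds$, which I further bound via $(a-b)^2\le 2\|f_n\|_{\sup}(|a|+|b|)$ and then combine Lemma~\ref{lemma:upper_bound_transition_density}, the hypothesis $\|f_n\|_{L^1}\le\kappa/\sqrt{n}$, and the bound $\E_{\rho_0}[|f_n(X_{(k-1)/n})|]\le\kappa/\sqrt{k}$ to obtain $\E_{\rho_0}[D_k^2]\le C_{\alpha,\beta}(\kappa)/(n\sqrt{k})$. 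Summing gives a contribution of order $1/\sqrt{n}$, as required.

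The compensator $A_k$ is of the form $A_k=g_n(X_{(k-1)/n})$ with
\[
g_n(x):=\sqrt{n}\int_0^{1/n}\bigl(f_n(x)-P_u f_n(x)\bigr)\,du,
\]
where $P_u$ denotes the transition semigroup. By Lemma~\ref{lemma:upper_bound_transition_density}, one readily shows $\|g_n\|_{\sup}\le 2\kappa/\sqrt{n}$ and $\|g_n\|_{L^1}\le C_{\alpha,\beta}\|f_n\|_{L^1}/\sqrt{n}\le C_{\alpha,\beta}\kappa/n$. Diagonal terms are handled by $\E_{\rho_0}[A_k^2]\le \|A_k\|_{\sup}\E_{\rho_0}[|A_k|]\le C_{\alpha,\beta}(\kappa)/(n\sqrt{k})$, again summing to the desired order. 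For off-diagonal terms with $k<l$, the Markov property rewrites
\[
\E_{\rho_0}[A_k A_l]=\E_{\rho_0}\bigl[g_n(X_{(k-1)/n})\,(P_{(l-k)/n}g_n)(X_{(k-1)/n})\bigr].
\]

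The main obstacle is to prove $\sum_{k<l}|\E_{\rho_0}[A_kA_l]|\le C_{\alpha,\beta}(\kappa)/\sqrt{n}$. The two naive semigroup bounds $\|P_tg_n\|_{\sup}\le\|g_n\|_{\sup}$ and $\|P_tg_n\|_{\sup}\le C_{\alpha,\beta}\|g_n\|_{L^1}/\sqrt{t}$ are each individually insufficient after summation. The fix is to unfold both conditional expectations as a double integral against the transition density and to exploit that \emph{both} $X_{(k-1)/n}$ and $X_{l/n}$ have densities controlled by Lemma~\ref{lemma:upper_bound_transition_density}. Concretely, writing
\[
\E_{\rho_0}[A_kA_l]=\int\!\!\int g_n(x)g_n(y)\,p_{(k-1)/n}^{\rho_0}(x_0,x)\,p_{(l-k)/n}^{\rho_0}(x,y)\,dx\,dy
\]
and applying Lemma~\ref{lemma:upper_bound_transition_density} to dominate both densities by Gaussian kernels leads, after the Chapman--Kolmogorov identity, to an expression in terms of $\|g_n\|_{L^1}^{2}$ divided by $\sqrt{(k-1)(l-k)}/n$ (up to boundary corrections for small $k$ or $l-k$, where one uses the weaker $\|g_n\|_{\sup}$-bound instead). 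The resulting double sum telescopes to $C_{\alpha,\beta}\|g_n\|_{L^1}^2\cdot n\le C_{\alpha,\beta}\kappa^2/n$, which is of the right order $1/\sqrt{n}$ after multiplication by the remaining normalization. Carefully combining the martingale and compensator bounds then completes the proof. The subtle bookkeeping of the boundary regimes $k\in\{1,2\}$ and $l-k\in\{1,2\}$, where the $\sqrt{t}$-type denominators must be replaced by $\sqrt{n}$-uniform estimates, is the part that I expect will require the most delicate work.
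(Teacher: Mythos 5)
Your martingale/compensator split is a clean reorganization (the paper instead expands the square directly into cross terms), and the martingale part and the diagonal compensator terms go through as you describe. The genuine gap is in the off-diagonal compensator terms. Writing $A_k=g_n(X_{(k-1)/n})$ with $g_n=\sqrt{n}\int_0^{1/n}(f_n-P_uf_n)\,du$, your bounds $\|g_n\|_{\sup}\leq 2\kappa/\sqrt{n}$ and $\|g_n\|_{L^1}\leq C\kappa/n$ are correct, but dominating both transition densities by Gaussian kernels then yields
\[
|\E_{\rho_0}[A_kA_l]|\ \leq\ \frac{C\,n\,\|g_n\|_{L^1}^2}{\sqrt{(k-1)(l-k)}}\ \leq\ \frac{C\kappa^2}{n\sqrt{(k-1)(l-k)}},
\]
and since $\sum_{k=2}^{l-1}(\,(k-1)(l-k)\,)^{-1/2}=O(1)$ uniformly in $l$, summing over the $n$ values of $l$ gives $O(1)$, not the $O(n^{-1/2})$ (and certainly not the $\|g_n\|_{L^1}^2\cdot n\leq\kappa^2/n$) you assert; your summation drops a factor of $n$. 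Moreover this is not just a bookkeeping slip that sharper Gaussian estimates could repair: an argument that only uses $\|g_n\|_{L^1}$, $\|g_n\|_{\sup}$ and density domination cannot do better, because for a \emph{signed-definite} $g_n$ with these norms (a bump of height $n^{-1/2}$ and width $n^{-1/2}$ near $\rho_0$) the sum $\sum_kA_k$ converges to a nonzero multiple of local time, so $\E[(\sum_kA_k)^2]$ is genuinely of order one.

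What is missing is the cancellation inside $g_n$ after further propagation by the semigroup. The paper's proof rests on the smoothing estimate (imported from Lemma~$3$(v) of the local-time reference)
\[
\bigl|\E_{\rho_0}[\,f_n(X_{u+t})-f_n(X_{u+s})\mid X_u\,]\bigr|\ \leq\ C_{\alpha,\beta}\,\frac{t-s}{s^{3/2}}\,\|f_n\|_{L^1},
\]
which in your notation gives $\|P_vg_n\|_{\sup}\leq C\kappa\,n^{-1/2}\,(nv)^{-3/2}$ for $v>0$; combined with $\E_{\rho_0}[|g_n(X_{(k-1)/n})|]\leq C\kappa/\sqrt{n(k-1)}$ this yields $|\E_{\rho_0}[A_kA_l]|\leq C\kappa^2 n^{-1}(k-1)^{-1/2}(l-k)^{-3/2}$, whose double sum is $O(n^{-1/2})$ as required. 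Without this (or an equivalent quantitative statement that $P_vg_n$ is smaller by a factor $(nv)^{-3/2}$ than the crude bound), the compensator estimate does not close, so the proof as proposed is incomplete at its hardest step.
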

\begin{proof}
The proof makes use of the following inequality several times: For every function $f_n$ that satisfies the assumptions of the lemma, there exists a constant $C_{\alpha,\beta}$ that is independent of $f_n$ such that for all $0<s<t$
\begin{align}\label{eq_proof:inequality_semigroup}
\left| \E_{\rho_0}\left[ f_n(X_{u+t}) - f_n(X_{u+s}) \mid X_u\right]\right| \leq C_{\alpha,\beta} \frac{t-s}{s^{3/2}} \|f_n\|_{L^1}. 
\end{align}
This inequality follows direcly from item (v) in Lemma~$3$ in~\citeSM{App:Mazzonetto}. To start with the proof of the claim, we first decompose
\begin{align}\label{eq_proof:L2-norm_occupation_approximation_1}
\begin{split}
&\E_{\rho_0}\left[ \left( \frac{1}{\sqrt{n}}\sum_{k=1}^n f_n(X_{(k-1)/n}) - \sqrt{n}\int_0^1 f_n(X_s) ds \right)^2\right] \\
&\hspace{0.2cm} = \sum_{k,l=1}^n \E_{\rho_0}\left[ \frac{1}{\sqrt{n}} f_n(X_{(k-1)/n}) \frac{1}{\sqrt{n}} f_n(X_{(l-1)/n}) - \frac{1}{\sqrt{n}}f_n(X_{(k-1)/n}) \int_{(l-1)/n}^{l/n} \sqrt{n} f_n(X_s)ds \right] \\
&\hspace{0.5cm} +  \sum_{k,l=1}^n \E_{\rho_0}\left[ \int_{(k-1)/n}^{k/n} \sqrt{n} f_n(X_s)ds\int_{(l-1)/n}^{l/n} \sqrt{n} f_n(X_s)ds\right. \\
&\hspace{5cm} \left. -\frac{1}{\sqrt{n}} f_n(X_{(l-1)/n}) \int_{(k-1)/n}^{k/n} \sqrt{n} f_n(X_s)ds \right].
\end{split}
\end{align}
For $k=l, l-1$ we directly get the estimate
\begin{align*}
&\left| \E_{\rho_0}\left[ \frac{1}{\sqrt{n}} f_n(X_{(k-1)/n}) \frac{1}{\sqrt{n}} f_n(X_{(l-1)/n}) - \frac{1}{\sqrt{n}} f_n(X_{(k-1)/n}) \int_{(l-1)/n}^{l/n} \sqrt{n} f_n(X_s)ds \right] \right| \\
&\hspace{1cm} \leq \frac1n \E_{\rho_0}\left[ |f_n(X_{(k-1)/n})|\left| f_n(X_{(l-1)/n}) - n \int_{(l-1)/n}^{l/n} f_n(X_s)ds \right| \right] \\
&\hspace{1cm} \leq \frac{2\kappa}{n} \E_{\rho_0}\left[ |f_n(X_{(k-1)/n})|\right] \\
&\hspace{1cm} \leq \frac{C_{\alpha,\beta}\kappa^2}{n\sqrt{k}}.
\end{align*}
For $k<l-1$ we get
\begin{align*}
&\left| \E_{\rho_0}\left[ \frac{1}{\sqrt{n}}f_n(X_{(k-1)/n})\frac{1}{\sqrt{n}} f_n(X_{(l-1)/n}) - \frac{1}{\sqrt{n}}f_n(X_{(k-1)/n}) \int_{(l-1)/n}^{l/n} \sqrt{n} f_n(X_s)ds \right]\right| \\
&\hspace{0.2cm} \leq \E_{\rho_0}\left[ \frac{1}{\sqrt{n}}|f_n(X_{(k-1)/n})| \left|\E_{\rho_0}\left[\left. \frac{1}{\sqrt{n}} f_n(X_{(l-1)/n}) - \int_{(l-1)/n}^{l/n} \sqrt{n} f_n(X_s)ds\ \right| \cF_{(k-1)/n}\right]\right| \right] \\
&\hspace{0.2cm} = \E_{\rho_0}\left[ |f_n(X_{(k-1)/n})| \left|\E_{\rho_0}\left[\left.  \int_{(l-1)/n}^{l/n} \left(f_n(X_{(l-1)/n})- f_n(X_s)\right) ds \right|  \cF_{(k-1)/n}\right]\right| \right] \\
&\hspace{0.2cm} \leq \E_{\rho_0}\left[ |f_n(X_{(k-1)/n})|  \int_{(l-1)/n}^{l/n} \Big|\E_{\rho_0}\left[f_n(X_{(l-1)/n})-f_n(X_s)\mid \cF_{(k-1)/n}\right]  \Big| ds\right] \\
&\hspace{0.2cm} \stackrel{\eqref{eq_proof:inequality_semigroup}}{\leq} \E_{\rho_0}\left[ |f_n(X_{(k-1)/n})|  \int_{(l-1)/n}^{l/n} C_{\alpha,\beta} \frac{(s-(k-1)/n)-(l-k)/n}{((l-k)/n)^{3/2}} \|f_n\|_{L^1} ds\right] \\
&\hspace{0.2cm} \leq C_{\alpha,\beta} \kappa \frac{1}{\sqrt{n}} \E_{\rho_0}\left[ |f_n(X_{(k-1)/n})|  \int_{0}^{1/n}  \frac{s}{((l-k)/n)^{3/2}} ds\right] \\
&\hspace{0.2cm} \leq \frac{C_{\alpha,\beta}\kappa}{(l-k)^{3/2}}n \E_{\rho_0}\left[ |f_n(X_{(k-1)/n})|  \int_{0}^{1/n} s ds\right] \\
&\hspace{0.2cm} \leq \frac{C_{\alpha,\beta}\kappa}{(l-k)^{3/2}}\frac1n \E_{\rho_0}\left[ |f_n(X_{(k-1)/n})| \right] \\
&\hspace{0.2cm} \leq \frac{C_{\alpha,\beta}\kappa^2}{(l-k)^{3/2}}\frac1n \frac{1}{\sqrt{k}}.
\end{align*}
We now find
\begin{align}\label{eq_proof:sum_int_approx}
\begin{split}
&\sum_{k,l=1}^n \E_{\rho_0}\left[ \frac{1}{\sqrt{n}} f_n(X_{(k-1)/n}) \frac{1}{\sqrt{n}} f_n(X_{(l-1)/n}) - \frac{1}{\sqrt{n}} f_n(X_{(k-1)/n}) \int_{(l-1)/n}^{l/n} \sqrt{n} f_n(X_s) \right] \\
&\hspace{1cm} \leq 3C_{\alpha,\beta}\kappa^2 \frac1n \sum_{k=1}^n \frac{1}{\sqrt{k}} + 2C_{\alpha,\beta}\kappa^2 \frac1n \sum_{k=1}^{n-2} \sum_{l=k+2}^n \frac{1}{(l-k)^{3/2}} \frac{1}{\sqrt{k}} \\
&\leq 3C_{\alpha,\beta}\kappa^2 n^{-1/2} + 2C_{\alpha,\beta}\kappa^2 \frac1n \int_1^{n-2} \int_{k+2}^n \frac{1}{(l-k)^{3/2}} \frac{1}{\sqrt{k}} dl dk \\
&= 3C_{\alpha,\beta}\kappa^2 n^{-1/2} + 2C_{\alpha,\beta}\kappa^2 \frac1n \int_1^{n-2} \left(\sqrt{2}-\frac{2}{\sqrt{n-k}}\right) \frac{1}{\sqrt{k}} dk \\
&\leq 3C_{\alpha,\beta}\kappa^2 n^{-1/2} + 4C_{\alpha,\beta}\kappa^2 \frac1n \int_1^{n-2} \frac{1}{\sqrt{k}} dk\\
&\leq C_{\alpha,\beta}\kappa^2 n^{-1/2}.
\end{split}
\end{align}
For the second term in our initial decomposition~\eqref{eq_proof:L2-norm_occupation_approximation_1}, we apply a similar argument. For this, we first deduce from~\eqref{eq_proof:inequality_semigroup} that
\begin{align*}
& \left| \E_{\rho_0}\left[ \frac{1}{\sqrt{n}} |f_n(X_{(k-1)/n})| - \int_{(k-1)/n}^{k/n} \sqrt{n}|f_n(X_s)| ds \right] \right| \\
&\hspace{1cm} \leq \sqrt{n} \int_{(k-1)/n}^{k/n} \Big| \E_{\rho_0}[|f_n(X_{(k-1)/n})|] -\E_{\rho_0}[|f_n(X_s)|]\Big| ds \\
&\hspace{1cm} \leq \sqrt{n} \|f_n\|_{L^1} \int_{(k-1)/n}^{k/n} \frac{s-(k-1)/n}{((k-1)/n)^{3/2}} ds  \leq C_{\alpha,\beta}\kappa\frac{1}{\sqrt{n}}k^{-3/2}.
\end{align*}
From this, we consequently derive
\begin{align}\label{eq_proof:Expectation_increment_occupation}
\begin{split}
\E_{\rho_0}\left[ \int_{(k-1)/n}^{k/n} \sqrt{n} |f_n(X_s)| ds\right] &\leq \frac{1}{\sqrt{n}}\E_{\rho_0}[|f_n(X_{(k-1)/n})|] + C_{\alpha,\beta}\kappa\frac{1}{\sqrt{n}} k^{-3/2} \\
&\leq \frac{1}{\sqrt{n}} \left( \kappa k^{-1/2} + C_{\alpha,\beta}\kappa k^{-3/2}\right)\\
&\leq C_{\alpha,\beta}\kappa \frac{1}{\sqrt{nk}}.
\end{split}
\end{align}
Let $k=l,l-1$. Then we find with~\eqref{eq_proof:Expectation_increment_occupation}
\begin{align*}
&\left|\E_{\rho_0}\left[\int_{(k-1)/n}^{k/n} \sqrt{n} f_n(X_s)ds \int_{(l-1)/n}^{l/n} \sqrt{n} f_n(X_s)ds - \frac{1}{\sqrt{n}} f_n(X_{(l-1)/n}) \int_{(k-1)/n}^{k/n} \sqrt{n} f_n(X_s) ds \right] \right| \\
&\hspace{1cm} \leq \frac{1}{\sqrt{n}} \E_{\rho_0}\left[ \int_{(k-1)/n}^{k/n} \sqrt{n} |f_n(X_s)| ds \left|n\int_{(l-1)/n}^{l/n}  f_n(X_s) -f_n(X_{(l-1)/n}) \right| \right] \\
&\hspace{1cm} \leq \frac{2\kappa}{\sqrt{n}}\E_{\rho_0}\left[ \int_{(k-1)/n}^{k/n} \sqrt{n} |f_n(X_s)| ds \right] \\
&\hspace{1cm} \leq \frac{C_{\alpha,\beta}\kappa^2}{n\sqrt{k}}. 
\end{align*}
Moreover, for $k<l-1$ we can derive with~\eqref{eq_proof:inequality_semigroup} and~\eqref{eq_proof:Expectation_increment_occupation}
\begin{align*}
&\left|\E_{\rho_0}\left[\int_{(k-1)/n}^{k/n} \sqrt{n} f_n(X_s)ds \int_{(l-1)/n}^{l/n} \sqrt{n} f_n(X_s)ds - \frac{1}{\sqrt{n}}f_n(X_{(l-1)/n}) \int_{(k-1)/n}^{k/n} \sqrt{n} f_n(X_s) ds \right] \right| \\
&\hspace{0.2cm} \leq \E_{\rho_0}\left[ \int_{(k-1)/n}^{k/n} \sqrt{n} |f_n(X_s)| ds \left|\E_{\rho_0}\left[ \left. \int_{(l-1)/n}^{l/n} \sqrt{n} f_n(X_s)ds - \frac{1}{\sqrt{n}}f_n(X_{(l-1)/n}) \right| \cF_{k/n}\right] \right| \right] \\
&\hspace{0.2cm} = \sqrt{n}  \E_{\rho_0}\left[ \int_{(k-1)/n}^{k/n} \sqrt{n} |f_n(X_s)| \left|\E_{\rho_0}\left[ \left. \int_{(l-1)/n}^{l/n} f_n(X_s) -f_n(X_{(l-1)/n}) ds \right| \cF_{k/n}\right] \right| \right] \\
&\hspace{0.2cm} \leq  \sqrt{n}  \E_{\rho_0}\left[ \int_{(k-1)/n}^{k/n} \sqrt{n} |f_n(X_s)|ds  \int_{(l-1)/n}^{l/n} \left| \E_{\rho_0}\left[f_n(X_s)-f_n(X_{(l-1)/n})\mid\cF_{k/n}\right] \right|ds  \right] \\
&\hspace{0.2cm} \leq  C_{\alpha,\beta}\sqrt{n} \|f_n\|_{L^1}  \E_{\rho_0}\left[ \int_{(k-1)/n}^{k/n} \sqrt{n} |f_n(X_s)| ds  \int_{(l-1)/n}^{l/n} \frac{s-k/n - (l-1-k)/n}{((l-k-1)/n)^{3/2}} ds  \right] \\
&\hspace{0.2cm} \leq  C_{\alpha,\beta}\kappa  n^{3/2} \frac{1}{(l-k-1)^{3/2}} \frac{1}{n^2} \E_{\rho_0}\left[ \int_{(k-1)/n}^{k/n} \sqrt{n} |f_n(X_s)| ds \right] \\
&\hspace{0.2cm} \leq C_{\alpha,\beta}\kappa^2 \frac1n \frac{1}{(l-k-1)^{3/2}}\frac{1}{\sqrt{k}}.
\end{align*}
Then, we conclude analogously as in~\eqref{eq_proof:sum_int_approx} that
\begin{align*}
&\sum_{k,l=1}^n \E_{\rho_0}\left[ \int_{(k-1)/n}^{k/n} \sqrt{n} f_n(X_s)ds\int_{(l-1)/n}^{l/n} \sqrt{n} f_n(X_s)ds - \frac{1}{\sqrt{n}} f_n(X_{(l-1)/n}) \int_{(k-1)/n}^{k/n} \sqrt{n} f_n(X_s)ds \right] \\
&\hspace{1cm} \leq C_{\alpha,\beta}\kappa^2 n^{-1/2}
\end{align*}
as desired.
\end{proof}

\begin{lemma}\label{lemma:uniform_stochastic_occupation_approximation}
Let $K,\xi_u>0$ and $E:=\{L_1^y(X) \leq \xi_u\textrm{ for all }y\in\R\}$. Then we have
\[\sup_{K/\sqrt{n} \leq \theta \leq n^{-1/4}}\left| \frac{1}{\sqrt{n}}\sum_{k=1}^n f_{n,\theta}^{(i)}(X_{(k-1)/n}) - \sqrt{n} \int_0^1  f_{n,\theta}^{(i)}(X_s)ds\right|\1_E  \longrightarrow_{\Pr_{\rho_0}} 0\]
for
\begin{align*}
f_{n,\theta}^{(1)}(x) &:= \1_{[a+\theta-L/\sqrt{n}, a+\theta]}(x), \\
f_{n,\theta}^{(2)}(x) & := \1_{[a+\theta, a+\theta+ L/\sqrt{n}]}(x), \\
f_{n,\theta}^{(3)}(x) &:=\1_{A_\theta}(x)\exp(-\sqrt{n}C|x-a-\theta|),
\end{align*} 
where $a\in\R$, $C,L>0$ and $A_\theta=(-\infty,a+\theta]$ or $A_\theta =[a+\theta,\infty)$.
\end{lemma}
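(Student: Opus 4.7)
The strategy is to combine the pointwise $L^{2}$-bound of Lemma~\ref{lemma:L2-norm_occupation_approximation} with a discretization argument on the parameter range $[K/\sqrt{n}, n^{-1/4}]$. Write $F_n(\theta):=n^{-1/2}\sum_{k=1}^n f_{n,\theta}^{(i)}(X_{(k-1)/n})$ and $G_n(\theta):=\sqrt{n}\int_0^1 f_{n,\theta}^{(i)}(X_s)\,ds$. The first step is to verify that for each $i\in\{1,2,3\}$ the functions $f_{n,\theta}^{(i)}$ satisfy the hypotheses of Lemma~\ref{lemma:L2-norm_occupation_approximation} with a constant $\kappa$ independent of both $n$ and $\theta$ in the considered range: the bound $\|f_{n,\theta}^{(i)}\|_{L^1}\leq \kappa/\sqrt{n}$ is direct, and the moment condition $\E_{\rho_0}[|f_{n,\theta}^{(i)}(X_{(k-1)/n})|]\leq \kappa/\sqrt{k}$ follows from the Gaussian envelope on the transition density in Lemma~\ref{lemma:upper_bound_transition_density}. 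This yields the pointwise bound $\E_{\rho_0}[(F_n(\theta)-G_n(\theta))^2]\leq C_{\alpha,\beta}\, n^{-1/2}$.

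I then place a grid $\theta_j := K/\sqrt{n}+j\delta_n$, $j=0,\dots, M_n$, with mesh $\delta_n:=n^{-5/8}$, so that $M_n\leq n^{3/8}$. Markov's inequality together with the union bound give
\[\Pr_{\rho_0}\Bigl(\max_j|F_n(\theta_j)-G_n(\theta_j)|>\varepsilon\Bigr)\;\leq\; M_n\cdot \frac{C_{\alpha,\beta}}{\varepsilon^2 \sqrt{n}}\;\leq\; \frac{C_{\alpha,\beta}\,n^{-1/8}}{\varepsilon^2}\;\longrightarrow\;0.\]
On the event $E$, the occupation times formula and the uniform bound $L_1^y(X)\leq \xi_u$ make $G_n$ Lipschitz in $\theta$ with constant $\leq C_{\alpha,\beta}(\xi_u)\sqrt{n}$, so $\max_j\sup_{\theta\in[\theta_j,\theta_{j+1}]}|G_n(\theta)-G_n(\theta_j)|\leq C_{\alpha,\beta}(\xi_u)\sqrt{n}\,\delta_n= C_{\alpha,\beta}(\xi_u)\,n^{-1/8}\to 0$.

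The more delicate step is to control the empirical oscillation $|F_n(\theta)-F_n(\theta_j)|$ uniformly in $j$. For types $(1)$ and $(2)$, this difference is bounded by $n^{-1/2}$ times the number of $X_{(k-1)/n}$ in the symmetric difference of the two defining intervals, which splits into two empirical counts over intervals of length $\leq \delta_n$. These auxiliary indicators themselves satisfy the hypotheses of Lemma~\ref{lemma:L2-norm_occupation_approximation} with $\kappa=1$ (since $\delta_n\leq 1/\sqrt{n}$ for $n$ large), so a second application of the pointwise $L^2$-bound combined with a union bound over the same grid, together with the observation that on $E$ their corresponding occupation integrals are of order $\sqrt{n}\,\delta_n=n^{-1/8}$, shows that these counts are $o_{\Pr_{\rho_0}}(1)$ uniformly in $j$. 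For type $(3)$ I would use the pointwise estimate
\[\bigl|f_{n,\theta'}^{(3)}(x)-f_{n,\theta}^{(3)}(x)\bigr|\;\leq\;\1_{[a+\theta,a+\theta']}(x)+\sqrt{n}\,C(\theta'-\theta)\,f_{n,\theta'}^{(3)}(x),\qquad \theta<\theta',\]
(together with its obvious analogue when $A_\theta=(-\infty,a+\theta]$). The first summand is treated as in the indicator case, and the second produces a multiplicative remainder $\sqrt{n}\,C\,\delta_n\,F_n(\theta_j)$, which is $o_{\Pr_{\rho_0}}(1)$ since on $E$ one has $G_n(\theta_j)\leq \xi_u/(C\min\{\alpha^2,\beta^2\})$ and $|F_n(\theta_j)-G_n(\theta_j)|\to 0$ uniformly by the second step.

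I expect the main obstacle to be the simultaneous calibration of $\delta_n$: the mesh must be small enough that $\sqrt{n}\,\delta_n\to 0$ (to damp both the Lipschitz error of $G_n$ and the multiplicative term arising for type $(3)$), yet large enough that the union bound $M_n\cdot n^{-1/2}\to 0$ at the grid still goes through. Any exponent in $(1/2,3/4)$ reconciles these constraints, and the choice $\delta_n=n^{-5/8}$ makes all three error contributions decay at the common rate $n^{-1/8}$.
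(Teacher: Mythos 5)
Your proposal is correct and follows essentially the same route as the paper: discretize $\theta$ over a polynomial grid, apply the pointwise $L^2$-bound of Lemma~\ref{lemma:L2-norm_occupation_approximation} at the grid points with Markov's inequality and a union bound, and control the within-cell oscillation on $E$ via the occupation times formula. The only (immaterial) difference is in the bookkeeping of the oscillation step: the paper sandwiches $f_{n,\theta}^{(i)}$ between monotone brackets at grid points with mesh $\propto \epsilon/\sqrt{n}$ and measures the bracket gap directly, whereas you take a coarser mesh $n^{-5/8}$ and bound the empirical oscillation by a second application of the same $L^2$ lemma to the symmetric-difference indicators — both calibrations close the argument.
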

\begin{proof}
The proof is built on a so-called bracketing argument. We start with $f_{n,\theta}^{(1)}$. Let $\epsilon>0$ and 
\begin{align}\label{eq_proof:uniform_stochastic_occupation_approximation_1}
\theta_j := K/\sqrt{n} + j\epsilon \min\{\alpha^2,\beta^2\}(2\xi_u)^{-1}/\sqrt{n}
\end{align}
for $j=0,1,\dots, \lceil 2\xi_u(\min\{\alpha^2,\beta^2\}\epsilon)^{-1}( n^{1/4}-K)\rceil$. In particular, for every $K/\sqrt{n}<\theta <n^{-1/4}$ there exists $j_0$ such that $\theta_{j_0}\leq \theta\leq \theta_{j_0+1}$ and 
\[ \1_{[a+\theta_{j_0+1} - L/\sqrt{n},a+\theta_{j_0}]}\leq \1_{[a+\theta - L/\sqrt{n},a+\theta]}\leq \1_{[a+\theta_{j_0} - L/\sqrt{n},a+\theta_{j_0+1}]}. \]
From this, we find on $E$
\begin{align*}
&\frac{1}{\sqrt{n}}\sum_{k=1}^n \1_{[a+\theta - L/\sqrt{n},a+\theta]}(X_{(k-1)/n}) - \sqrt{n} \int_0^1  \1_{[a+\theta - L/\sqrt{n},a+\theta]}(X_s)ds \\
&\hspace{1cm} \leq \frac{1}{\sqrt{n}} \sum_{k=1}^n \1_{[a+\theta_{j_0} - L/\sqrt{n},a+\theta_{j_0+1}]}(X_{(k-1)/n})- \sqrt{n}\int_0^1 \1_{[a+\theta_{j_0+1} - L/\sqrt{n},a+\theta_{j_0}]}(X_s)ds \\
&\hspace{1cm} \leq \frac{1}{\sqrt{n}}\sum_{k=1}^n \1_{[a+\theta_{j_0} - L/\sqrt{n},a+\theta_{j_0+1}]}(X_{(k-1)/n})- \sqrt{n}\int_0^1 \1_{[a+\theta_{j_0} - L/\sqrt{n},a+\theta_{j_0+1}]}(X_s)ds \\
&\hspace{1.5cm} + \sqrt{n}\int_0^1  \1_{[a+\theta_{j_0} - L/\sqrt{n},a+\theta_{j_0+1}]}(X_s)-\1_{[a+\theta_{j_0+1} - L/\sqrt{n},a+\theta_{j_0}]}(X_s) ds \\
&\hspace{1cm} \leq \frac{1}{\sqrt{n}}\sum_{k=1}^n \1_{[a+\theta_{j_0} - L/\sqrt{n},a+\theta_{j_0+1}]}(X_{(k-1)/n})- \sqrt{n}\int_0^1  \1_{[a+\theta_{j_0} - L/\sqrt{n},a+\theta_{j_0+1}]}(X_s)ds +\epsilon,
\end{align*}
where we use that by the occupation times formula
\begin{align*}
&\1_E \sqrt{n}\int_0^1  \1_{[a+\theta_{j_0} - L/\sqrt{n},a+\theta_{j_0+1}]}(X_s)-\1_{[a+\theta_{j_0+1} - L/\sqrt{n},a+\theta_{j_0}]}(X_s) ds \\
&\hspace{1cm} \leq \1_E\frac{\sqrt{n}}{\min\{\alpha^2,\beta^2\}} \left(\int_{a+\theta_{j_0} - L/\sqrt{n}}^{a+\theta_{j_0+1} - L/\sqrt{n}} L_1^y(X) dy + \int_{a+\theta_{j_0}}^{a+\theta_{j_0+1}} L_1^y(X) dy\right) \\
&\hspace{1cm} \leq \frac{2\sqrt{n}}{\min\{\alpha^2,\beta^2\}} \xi_u \left( \theta_{j_0+1}-\theta_{j_0}\right)  = \epsilon.
\end{align*}
Similarly, we derive on $E$
\begin{align*}
&\frac{1}{\sqrt{n}}\sum_{k=1}^n \1_{[a+\theta - L/\sqrt{n},a+\theta]}(X_{(k-1)/n}) - \sqrt{n} \int_0^1  \1_{[a+\theta - L/\sqrt{n},a+\theta]}(X_s)ds \\
&\hspace{0.2cm} \geq \frac{1}{\sqrt{n}} \sum_{k=1}^n \1_{[a+\theta_{j_0+1} - L/\sqrt{n},a+\theta_{j_0}]}(X_{(k-1)/n})- \sqrt{n}\int_0^1  \1_{[a+\theta_{j_0+1} - L/\sqrt{n},a+\theta_{j_0}]}(X_s)ds -\epsilon.
\end{align*}
Combining these bounds and abbreviating $c:=2\xi_u\min\{\alpha^2,\beta^2\}^{-1}$ then yields
\begin{align}\label{eq_proof:uniform_stochastic_occupation_approximation_2}
\begin{split}
&\sup_{K/\sqrt{n} \leq \theta \leq n^{-1/4}}\left|\frac{1}{\sqrt{n}} \sum_{k=1}^n \1_{[a+\theta - L/\sqrt{n},a+\theta]}(X_{(k-1)/n}) - \sqrt{n}\int_0^1  \1_{[a+\theta - L/\sqrt{n},a+\theta]}(X_s)ds\right|\1_E \\
&\hspace{0.2cm} \leq \max_{j=1,\dots, \lceil c\epsilon^{-1} n^{1/4}\rceil}\left|\frac{1}{\sqrt{n}}\sum_{k=1}^n \1_{[a+\theta_j - L/\sqrt{n},a+\theta_j]}(X_{(k-1)/n}) - \sqrt{n}\int_0^1  \1_{[a+\theta_j - L/\sqrt{n},a+\theta_j]}(X_s)ds\right| + \epsilon.
\end{split}
\end{align}
In the next step, we want to apply Lemma~\ref{lemma:L2-norm_occupation_approximation} to $f_{n,\theta}^{(1)}$. This is possible, because this function is clearly bounded by $1$, $\int_\R f_n^{(1)}(x) |x|^mdx <\infty$ for $m=1,2$,
\[ \left\| f_{n,\theta}^{(1)} \right\|_{L^1} := \int_{\R} \1_{[a+\theta-L/\sqrt{n}, a+\theta]}(x) dx = \frac{L}{\sqrt{n}},\]
and
\begin{align*}
&\E_{\rho_0}\left[\1_{[a+\theta - L/\sqrt{n},a+\theta]}(X_{(k-1)/n})\right]\\
&\hspace{0.5cm}\leq C_{\alpha,\beta} \int_{a+\theta-L/\sqrt{n}}^{a+\theta} \frac{1}{\sqrt{(k-1)/n}}\exp\left(-\frac{(y-x_0)^2}{2\max\{\alpha^2,\beta^2\}(k-1)/n}\right) dy \leq C_{\alpha,\beta}\frac{L}{\sqrt{k-1}}
\end{align*} 
by Lemma~\ref{lemma:upper_bound_transition_density}, where all bounds are independent of $\theta$. Then, we have with Lemma~\ref{lemma:L2-norm_occupation_approximation}
\begin{align*}
&\Pr_{\rho_0}\left( \sup_{K/\sqrt{n} \leq \theta \leq n^{-1/4}}\left|\frac{1}{\sqrt{n}}\sum_{k=1}^n \1_{[a+\theta - L/\sqrt{n},a+\theta]}(X_{(k-1)/n}) - \sqrt{n} \int_0^1  \1_{[a+\theta - L/\sqrt{n},a+\theta]}(X_s)ds\right| >2\epsilon\right) \\
&\leq \Pr_{\rho_0}\left( \max_{j=1,\dots, \lceil c\epsilon^{-1} n^{1/4}\rceil}\left|\frac{1}{\sqrt{n}}\sum_{k=1}^n \1_{[a+\theta_j - L/\sqrt{n},a+\theta_j]}(X_{(k-1)/n}) - \sqrt{n}\int_0^1  \1_{[a+\theta_j - L/\sqrt{n},a+\theta_j]}(X_s)ds\right| >\epsilon\right) \\
&\leq \frac{1}{\epsilon^2} \sum_{j=1}^{\lceil c\epsilon^{-1}n^{1/4}\rceil} \E_{\rho_0}\left[\left(\frac{1}{\sqrt{n}}\sum_{k=1}^n \1_{[a+\theta_j - L/\sqrt{n},a+\theta_j]}(X_{(k-1)/n}) - \sqrt{n}\int_0^1  \1_{[a+\theta_j - L/\sqrt{n},a+\theta_j]}(X_s)ds\right)^2 \right] \\
&\leq \frac{1}{\epsilon^2} C_{\alpha,\beta}(L) n^{-1/2} n^{1/4} \longrightarrow 0
\end{align*}
which completes the proof for the function $f_{n,\theta}^{(1)}$. The function $f_{n,\theta}^{(2)}$ works exaktly the same way. For $f_{n,\theta}^{(3)}$ with $A_\theta=(-\infty,a+\theta]$, we first recall $\theta_j$ from~\eqref{eq_proof:uniform_stochastic_occupation_approximation_1} and define the brackets
\begin{align*}
\1_{(-\infty,a+\theta_{j}]}(x)\exp(C\sqrt{n}(x-a-\theta_{j+1})) &\leq \1_{(-\infty,a+\theta]}(x)\exp(C\sqrt{n}(x-a-\theta))\\
& \leq \1_{(-\infty,a+\theta_{j+1}]}(x)\exp(C\sqrt{n}(x-a-\theta_j)).
\end{align*} 
for $\theta_j\leq \theta\leq\theta_{j+1}$. For these upper and lower bounds we find with the occupation times formula on $E$
\begin{align*}
&\sqrt{n} \int_0^1 \left( \1_{(-\infty,a+\theta_{j+1}]}(X_s)\exp(C\sqrt{n}(X_s-a-\theta_j)) \right. \\
&\hspace{5cm} \left. - \1_{(-\infty,a+\theta_{j}]}(X_s)\exp(C\sqrt{n}(X_s-a-\theta_{j+1}))\right) ds \\
&\hspace{1cm} \leq \frac{\sqrt{n}}{\min\{\alpha^2,\beta^2\}} \int_\R \left( \1_{(-\infty,a+\theta_{j+1}]}(y)\exp(C\sqrt{n}(y-a-\theta_j)) \right. \\
&\hspace{5cm} \left. - \1_{(-\infty,a+\theta_{j}]}(y)\exp(C\sqrt{n}(y-a-\theta_{j+1}))\right) L_1^y(X) dy \\
&\hspace{1cm} \leq \frac{\sqrt{n}\xi_u}{\min\{\alpha^2,\beta^2\}} \int_{-\infty}^{a+\theta_j} \left(\exp(C\sqrt{n}(y-a-\theta_j))-\exp(C\sqrt{n}(y-a-\theta_{j+1})) \right) dy \\
&\hspace{2cm} + \frac{\sqrt{n}\xi_u}{\min\{\alpha^2,\beta^2\}} \int_{a+\theta_j}^{a+\theta_{j+1}}\exp(C\sqrt{n}(y-a-\theta_j))dy.
\end{align*}
By a Taylor expansion with intermediate point $\theta_j\leq\zeta\leq\theta_{j+1}$ (possibly depending on $C,n,j$) we find
\begin{align*}
&\int_{-\infty}^{a+\theta_j} \left(\exp(C\sqrt{n}(y-a-\theta_j))-\exp(C\sqrt{n}(y-a-\theta_{j+1})) \right) dy \\
&\hspace{1cm} =  \left(\theta_{j+1}-\theta_j\right) \int_{-\infty}^{a+\theta_j}C\sqrt{n}\exp\left( C\sqrt{n}(y-a-\zeta)\right) dy \\
&\hspace{1cm} = \left(\theta_{j+1}-\theta_j\right)\int_{-\infty}^{C\sqrt{n}(\theta_j-\zeta)} e^y dy \leq \theta_{j+1}-\theta_j
\end{align*}
and
\begin{align*}
\int_{a+\theta_j}^{a+\theta_{j+1}}\exp(C\sqrt{n}(y-a-\theta_j))dy &\leq \left(\theta_{j+1}-\theta_j\right) \exp\left(C\sqrt{n}\left(\theta_{j+1}-\theta_j\right)\right) \\
&= \left(\theta_{j+1}-\theta_j\right) \exp\left( \frac{C\epsilon\min\{\alpha^2,\beta^2\}}{2\xi_u}\right),
\end{align*}
which then yields on $E$ the bound
\begin{align*}
&\sqrt{n} \int_0^1 \left( \1_{(-\infty,a+\theta_{j+1}]}(X_s)\exp(C\sqrt{n}(X_s-a-\theta_j)) \right. \\
&\hspace{5cm} \left. - \1_{(-\infty,a+\theta_{j}]}(X_s)\exp(C\sqrt{n}(X_s-a-\theta_{j+1}))\right) ds \\
&\hspace{1cm} \leq \frac{\sqrt{n}\xi_u}{\min\{\alpha^2,\beta^2\}}\left(\theta_{j+1}-\theta_j\right)  \left(1+\exp\left( \frac{C\epsilon\min\{\alpha^2,\beta^2\}}{2\xi_u}\right) \right) \\
&\hspace{1cm} \leq \tilde{C}\epsilon
\end{align*}
for some constant $\tilde{C}>0$. Repeating the steps to derive~\eqref{eq_proof:uniform_stochastic_occupation_approximation_2} then gives
\begin{align*}
&\sup_{K/\sqrt{n} \leq \theta \leq n^{-1/4}}\left|\frac{1}{\sqrt{n}}\sum_{k=1}^n f_{n,\theta}^{(3)}(X_{(k-1)/n}) - \sqrt{n}\int_0^1  f_{n,\theta}^{(3)}(X_s)ds\right|\1_E \\
&\hspace{1cm} \leq \max_{j=1,\dots, \lceil c\epsilon^{-1} n^{1/4}\rceil}\left|\frac{1}{\sqrt{n}}\sum_{k=1}^n f_{n,\theta_j}^{(3)}(X_{(k-1)/n}) - \sqrt{n}\int_0^1  f_{n,\theta_j}^{(3)}(X_s)ds\right| + \tilde{C}\epsilon.
\end{align*}
From this, the claim follows again by applying Lemma~\ref{lemma:L2-norm_occupation_approximation} and Markov's inequality as it was done for $f_{n,\theta}^{(1)}$, provided this Lemma is also applicable for $f_{n,\theta}^{(3)}$. This is indeed the case, since $\int_\R f_n^{(3)}(x) |x|^mdx <\infty$ for $m=1,2$, $|f_n^{(3)}|\leq 1$, 
\[ \|f_n^{(3)}\|_{L^1} = \int_{-\infty}^{a+\theta} \exp\left(C\sqrt{n} (x-a-\theta)\right) dx = \int_{-\infty}^0 \exp\left(C\sqrt{n}x\right) dx = \frac{1}{C\sqrt{n}},\]
and by Lemma~\ref{lemma:upper_bound_transition_density},
\begin{align*}
\E_{\rho_0}\left[f_n^{(3)}(X_{(k-1)/n})\right] &\leq C_{\alpha,\beta} \int_{-\infty}^{a+\theta} \exp\left(C\sqrt{n}(y-a-\theta)\right)\frac{1}{\sqrt{(k-1)/n}}\\
&\hspace{4cm} \cdot\exp\left(-\frac{(y-x_0)^2}{2\max\{\alpha^2,\beta^2\}(k-1)/n} \right) dy \\
&\leq \frac{C_{\alpha,\beta}}{\sqrt{k-1}} \int_{-\infty}^0 \exp\left(Cy\right) \exp\left(-\frac{(y + \sqrt{n}(a+\theta-x_0))^2}{2\max\{\alpha^2,\beta^2\}(k-1)}\right) dy \\
& \leq \frac{C_{\alpha,\beta}}{\sqrt{k-1}} \int_{-\infty}^0 \exp\left(Cy\right) dy \leq \frac{C_{\alpha,\beta}}{\sqrt{k}},
\end{align*} 
with all bounds being independent of $\theta$. Finally, the proof of $f_{n,\theta}^{(3)}$ with $A_\theta = [a+\theta,\infty)$ works exactly the same.
\end{proof}

The next Lemma provides a moment bound for a supremum appearing in the discussion of $\Xi_n^{(5,3)}$ in the proof of Lemma~\ref{lemma:bound_L_sqrt(n)}. Note that the order $n^{3/8}\log(n)$ is suboptimal, but sufficiently good for our purpose. It could be further improved by using the metric $\rho_n(\theta,\theta')^2 = n|\theta-\theta'|\wedge \sqrt{n}$ in the chaining argument together with subexponential tail inequalities for martingales (see~\citeSM{App:Pena}).

\begin{lemma}\label{lemma:supremum_at_rho+theta}
Define
\begin{align*}
M_n(\theta) &= \sum_{k=1}^n \left( 1+ \frac{n}{2}(X_{k/n}-X_{(k-1)/n})^2\right)\1_{\{X_{(k-1)/n}<\rho_0+\theta\leq X_{k/n}\}} \\
&\hspace{2cm} - \E_{\rho_0}\left[ \left. \left( 1+ \frac{n}{2}(X_{k/n}-X_{(k-1)/n})^2\right)\1_{\{X_{(k-1)/n}<\rho_0+\theta\leq X_{k/n}\}}\right| X_{(k-1)/n}\right].
\end{align*} 
Then for some constant $C_{\alpha,\beta}>0$ we have
\[ \E_{\rho_0}\left[ \sup_{0\leq \theta \leq n^{-1/4}} |M_n(\theta)|\right] \leq C_{\alpha,\beta}n^{3/8}\log(n). \]
\end{lemma}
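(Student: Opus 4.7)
The strategy is a modified chaining argument analogous to the one behind \eqref{eq_proof:sqrt(n)_chaining_bound}, but tailored to the parameter range $[0,n^{-1/4}]$ and to the particular martingale structure of $M_n$.

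First I would establish a variance bound of the form
\[
\E_{\rho_0}\bigl[(M_n(\theta)-M_n(\theta'))^2\bigr]\;\leq\; C_{\alpha,\beta}\,n|\theta-\theta'|
\quad\text{for all }0\leq\theta'<\theta\leq n^{-1/4}.
\]
By martingale orthogonality this reduces to bounding $\sum_{k=1}^n\E_{\rho_0}[(Y_k(\theta)-Y_k(\theta'))^2]$, where $Y_k(\theta)=(1+\tfrac{n}{2}(X_{k/n}-X_{(k-1)/n})^2)\mathbbm{1}_{\{X_{(k-1)/n}<\rho_0+\theta\leq X_{k/n}\}}$. The symmetric difference of the indicator sets is contained in $\{\rho_0+\theta'\leq X_{(k-1)/n}<\rho_0+\theta\}\cup\{\rho_0+\theta'\leq X_{k/n}<\rho_0+\theta\}$, the conditional fourth moment of $X_{k/n}-X_{(k-1)/n}$ given $X_{(k-1)/n}$ is bounded uniformly in $n$ (as in \eqref{eq_proof:sqrt(n)_consistency_1_2}), and Lemma~\ref{lemma:upper_bound_transition_density} provides $\E_{\rho_0}[\mathbbm{1}_{\{\rho_0+\theta'\leq X_{(k-1)/n}<\rho_0+\theta\}}]\leq C|\theta-\theta'|/\sqrt{k/n}$, so the sum is of order $\sqrt{n}|\theta-\theta'|\sum_{k}k^{-1/2}\leq Cn|\theta-\theta'|$, as desired.

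Given this variance bound, I use the metric $\rho_n(\theta,\theta')=\sqrt{C_{\alpha,\beta}n|\theta-\theta'|}$ and proceed exactly as in the chaining derivation leading to \eqref{eq_proof:sqrt(n)_chaining_bound}, but with diameter $\delta_0^n\leq\sqrt{C_{\alpha,\beta}\,n\cdot n^{-1/4}}=\sqrt{C_{\alpha,\beta}}\,n^{3/8}$ and packing number $D(u,[0,n^{-1/4}],\rho_n)\leq C_{\alpha,\beta}n^{3/4}/u^2$. With $\psi(x)=x^2$ the chaining integral yields
\[
\int_{1}^{\delta_0^n}\sqrt{D(u)}\,du\;\leq\;C\,n^{3/8}\int_{1}^{\sqrt{C}n^{3/8}}u^{-1}du\;\leq\;C_{\alpha,\beta}\,n^{3/8}\log n.
\]

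The main (and only nonroutine) obstacle is the remainder term obtained when the chaining is stopped at $\rho_n\leq 1$, i.e.\ at scale $|\theta-\theta'|\lesssim 1/n$. Here the usual Pisier-type square-root bound would lose too much because $|\mathcal{T}_{k_0}^n|$ can be as large as $C_{\alpha,\beta}n^{3/4}$. I circumvent this exactly as in \eqref{eq_proof:n_consistency_7}: for $\theta'\in U_{1/(C_{\alpha,\beta}n)}(\theta)$ one bounds $|M_n(\theta)-M_n(\theta')|$ \emph{uniformly} in $\theta'$ by a random variable of the form $\sum_{k}(1+\tfrac{n}{2}(X_{k/n}-X_{(k-1)/n})^2)\bigl(\mathbbm{1}_{J_n(\theta)}(X_{(k-1)/n})+\mathbbm{1}_{J_n(\theta)}(X_{k/n})\bigr)$ plus its conditional expectation, where $J_n(\theta)$ is an interval of length $2/(C_{\alpha,\beta}n)$ around $\rho_0+\theta$. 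Lemma~\ref{lemma:second_moment_sum_intervals} applied with $\lambda(J_1)=\lambda(J_2)\asymp 1/n$ yields a second moment bound of order $n\cdot 1/n+n^{2}\cdot 1/n^{2}=O(1)$, independently of $\theta$. The Pisier-type inequality then gives a remainder bounded by $\sqrt{|\mathcal{T}_{k_0}^n|\cdot C_{\alpha,\beta}}\leq C_{\alpha,\beta}\,n^{3/8}$.

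Combining the chaining integral and the remainder yields $\E_{\rho_0}[\sup_{0\leq\theta\leq n^{-1/4}}|M_n(\theta)|]\leq C_{\alpha,\beta}n^{3/8}\log n$, as claimed. A caveat is that the càdlàg nature of $\theta\mapsto M_n(\theta)$ needs to be invoked (as after \eqref{eq_proof:sqrt(n)_consistency_1}) so that the supremum can first be reduced to a countable grid before the finite-max chaining is applied.
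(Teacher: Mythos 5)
Your proposal is correct and follows essentially the same route as the paper's proof: the same variance bound $\E_{\rho_0}[(M_n(\theta)-M_n(\theta'))^2]\le C_{\alpha,\beta}n|\theta-\theta'|$ via martingale orthogonality and the indicator decomposition, the same chaining metric $\rho_n(\theta,\theta')=\sqrt{C_{\alpha,\beta}n|\theta-\theta'|}$ with diameter $\asymp n^{3/8}$ and remainder stopped at scale $\rho_n\asymp 1$, the same uniform-in-$\theta'$ majorant of $|M_n(\theta)-M_n(\theta')|$ over $U_{C/n}(\theta)$ handled via Lemma~\ref{lemma:second_moment_sum_intervals}, and the same reduction to a countable grid via right-continuity.
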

\begin{proof}
The proof relies on a chaining argument similar to that in Subsection~\ref{subsection:sqrt(n)_consistency}. First, for $0\leq\theta'\leq \theta$, we rewrite
\begin{align}\label{eq_proof:lemma_supremum_at_rho+theta}
\begin{split}
&\1_{\{X_{(k-1)/n}<\rho_0+\theta\leq X_{k/n}\}}-\1_{\{X_{(k-1)/n}<\rho_0+\theta'\leq X_{k/n}\}}\\
&\hspace{2cm} = \1_{\{\rho_0+\theta'<X_{(k-1)/n}\leq \rho_0+\theta\}}\1_{\{X_{k/n}\geq \rho_0+\theta\}} -\1_{\{\rho_0+\theta'<X_{k/n}\leq \rho_0+\theta\}}\1_{\{X_{(k-1)/n}<\rho_0+\theta'\}},
\end{split}
\end{align} 
such that all indicators on the right-hand side depend solely on one random variable. Using that $M_n$ is a sum of martingale differences and
\begin{align*}
&\left(\1_{\{\rho_0+\theta'<X_{(k-1)/n}\leq \rho_0+\theta\}}\1_{\{X_{k/n}\geq \rho_0+\theta\}} -\1_{\{\rho_0+\theta'<X_{k/n}\leq \rho_0+\theta\}}\1_{\{X_{(k-1)/n}<\rho_0+\theta'\}}\right)^2 \\
&\hspace{1cm} = \1_{\{\rho_0+\theta'<X_{(k-1)/n}\leq \rho_0+\theta\}}\1_{\{X_{k/n}\geq \rho_0+\theta\}}+\1_{\{\rho_0+\theta'<X_{k/n}\leq \rho_0+\theta\}}\1_{\{X_{(k-1)/n}<\rho_0+\theta'\}}
\end{align*}
we obtain
\begin{align*}
&\E_{\rho_0}\left[ (M_n(\theta)-M_n(\theta'))^2\right]\\
&\hspace{0.5cm}\leq \sum_{k=1}^n \E_{\rho_0}\left[ \left(1+\frac{n}{2}(X_{k/n}-X_{(k-1)/n})^2\right)^2 \1_{\{\rho_0+\theta'<X_{(k-1)/n}\leq \rho_0+\theta\}}\1_{\{X_{k/n}\geq \rho_0+\theta\}}\right] \\
&\hspace{1cm} +\sum_{k=1}^n \E_{\rho_0}\left[ \left(1+\frac{n}{2}(X_{k/n}-X_{(k-1)/n})^2\right)^2 \1_{\{\rho_0+\theta'<X_{k/n}\leq \rho_0+\theta\}}\1_{\{X_{(k-1)/n}<\rho_0+\theta'\}}\right].
\end{align*}
By direct evaluation using Lemma~\ref{lemma:upper_bound_transition_density}
\begin{align*}
&\E_{\rho_0}\left[ \left(1+\frac{n}{2}(X_{k/n}-X_{(k-1)/n})^2\right)^2 \1_{\{\rho_0+\theta'<X_{(k-1)/n}\leq \rho_0+\theta\}}\1_{\{X_{k/n}\geq \rho_0+\theta\}}\right] \\
&\hspace{0.2cm} \leq C_{\alpha,\beta} \E_{\rho_0}\left[\1_{\{\rho_0+\theta'<X_{(k-1)/n}\leq \rho_0+\theta\}} \int_\R \left( 1+\frac{n}{2}(y-X_{(k-1)/n})^2\right)^2 \right.\\
&\hspace{6cm}\left. \cdot\sqrt{n}\exp\left(-\frac{(y-X_{(k-1)/n})^2}{2\max\{\alpha^2,\beta^2\}/n}\right) dy \right] \\
&\hspace{0.2cm} \leq C_{\alpha,\beta}\E_{\rho_0}\left[ \1_{\{\rho_0+\theta'<X_{(k-1)/n}\leq \rho_0+\theta\}} \int_\R \left( 1 + \frac{y^2}{2}\right)^2 \exp\left(-\frac{y^2}{2\max\{\alpha^2,\beta^2\}}\right) dy \right] \\
&\hspace{0.2cm} \leq C_{\alpha,\beta} \int_{\rho_0+\theta'}^{\rho_0+\theta} \frac{1}{\sqrt{(k-1)/n}} \exp\left(-\frac{(y-x_0)^2}{2\max\{\alpha^2,\beta^2\}(k-1)/n}\right) dy \\
&\hspace{0.2cm} \leq C_{\alpha,\beta} \frac{|\theta-\theta'|}{\sqrt{(k-1)/n}}
\end{align*}
and using boundedness of $x\mapsto (1+x^2)^2\exp(-x^2/4)$ together with Corollary~\ref{cor:bound_exp_X^2} in the last step, we also derive
\begin{align*}
& \E_{\rho_0}\left[ \left(1+\frac{n}{2}(X_{k/n}-X_{(k-1)/n})^2\right)^2 \1_{\{\rho_0+\theta'<X_{k/n}\leq \rho_0+\theta\}}\1_{\{X_{(k-1)/n}<\rho_0+\theta'\}}\right] \\
&\hspace{0.2cm} \leq C_{\alpha,\beta} \E_{\rho_0} \left[\1_{\{X_{(k-1)/n}<\rho_0+\theta'\}} \int_{\rho_0+\theta'}^{\rho_0+\theta} \left( 1+\frac{n}{2}(y-X_{(k-1)/n})^2\right)^2 \sqrt{n} \exp\left(-\frac{(y-X_{(k-1)/n})^2}{2\max\{\alpha^2,\beta^2\}/n}\right) dy \right] \\
&\hspace{0.2cm} \leq C_{\alpha,\beta}\E_{\rho_0} \left[\1_{\{X_{(k-1)/n}<\rho_0+\theta'\}}\int_{\sqrt{n}(\rho_0+\theta' -X_{(k-1)/n})}^{\sqrt{n}(\rho_0+\theta -X_{(k-1)/n})} \left(1+\frac{y^2}{2}\right)^2 \exp\left(-\frac{y^2}{2\max\{\alpha^2,\beta^2\}}\right) dy \right] \\
&\hspace{0.2cm} \leq C_{\alpha,\beta} \sqrt{n}|\theta-\theta'|\E_{\rho_0} \left[ \exp\left(-\frac{(X_{(k-1)/n}-\rho_0-\theta')^2}{4\max\{\alpha^2,\beta^2\}/n}\right) \right] \\
&\hspace{0.2cm} \leq C_{\alpha,\beta} \frac{|\theta-\theta'|}{\sqrt{(k-1)/n}}.
\end{align*}
Hence,
\[ \E_{\rho_0}\left[ (M_n(\theta)-M_n(\theta'))^2\right] \leq C_{\alpha,\beta} n|\theta-\theta'|.\]
By the same steps that were used to derive~\eqref{eq_proof:sqrt(n)_chaining_bound} and~\eqref{eq_proof:sqrt(n)_Pisier} applied to the set $\mathcal{T}_n^0 = [0,n^{-1/4}]$, the chaining argument gives
\begin{align}\label{eq_proof:lemma_supremum_at_rho+theta_1}
\begin{split}
\E_{\rho_0}\left[\sup_{0<\theta\leq n^{-1/4}} |M_n(\theta)|\right] &\leq C_{\alpha,\beta}\int_1^{n^{3/8}} n^{3/8} u^{-1} du  \\
&\hspace{0.3cm} + C_{\alpha,\beta}\left( \sum_{\theta\in [0,n^{-1/4}]\cap\mathcal{T}_n}  \E_{\rho_0}\left[ \sup_{\theta'\in U_{C/n}(\theta)} \left( M_n(\theta) - M_n(\theta') \right)^2 \right] \right)^\frac12,
\end{split}
\end{align}
where $\mathcal{T}_n$ is some finite grid of $[0,n^{-1/4}]$ with cardinality $\#\mathcal{T}_n\leq Cn^{3/4}$ for some constant $C>0$. Next, we observe that the supremum within the expectation can be bounded explicitly in terms of $\theta$ only: 
By the decomposition~\eqref{eq_proof:lemma_supremum_at_rho+theta},
\begin{align}\label{eq_proof:lemma_supremum_at_rho+theta_2}
\begin{split}
&\sup_{\theta'\in [\theta-C/n,\theta]} \left| M_n(\theta) - M_n(\theta') \right| \\
&\hspace{0.2cm} \leq \sup_{\theta'\in [\theta-C/n,\theta]} \left( \sum_{k=1}^n \left(1+\frac{n}{2}(X_{k/n}-X_{(k-1)/n})^2\right) \right. \\
&\hspace{3cm} \cdot\left(\1_{\{\rho_0+\theta'<X_{(k-1)/n}\leq \rho_0+\theta\}}+\1_{\{\rho_0+\theta'<X_{k/n}\leq \rho_0+\theta\}} \right)  \\
&\hspace{0.7cm} +  \sum_{k=1}^n \E_{\rho_0}\left[  \left(1+\frac{n}{2}(X_{k/n}-X_{(k-1)/n})^2\right) \right. \\
&\hspace{3cm} \left.\left. \cdot \left(\1_{\{\rho_0+\theta'<X_{(k-1)/n}\leq \rho_0+\theta\}}+\1_{\{\rho_0+\theta'<X_{k/n}\leq \rho_0+\theta\}} \right)\right|X_{(k-1)/n}\right] \Bigg) \\
&\hspace{0.2cm}\leq \sum_{k=1}^n \left(1+\frac{n}{2}(X_{k/n}-X_{(k-1)/n})^2\right)\left( \1_{\{\rho_0+\theta-C/n<X_{(k-1)/n}\leq \rho_0+\theta\}} +\1_{\{\rho_0+\theta-C/n<X_{k/n}\leq \rho_0+\theta\}}\right)\\
&\hspace{0.7cm} +\sum_{k=1}^n \E_{\rho_0}\left[  \left(1+\frac{n}{2}(X_{k/n}-X_{(k-1)/n})^2\right) \right. \\
&\hspace{3cm} \left.\left. \cdot\left( \1_{\{\rho_0+\theta-C/n<X_{(k-1)/n}\leq \rho_0+\theta\}} +\1_{\{\rho_0+\theta-C/n<X_{k/n}\leq \rho_0+\theta\}}\right)\right|X_{(k-1)/n}\right].
\end{split}
\end{align}
The supremum over $\theta'\in [\theta,\theta+C/n]$ can be dealt with in the same way, starting with a similar decomposition as~\eqref{eq_proof:lemma_supremum_at_rho+theta}. By Lemma~\ref{lemma:second_moment_sum_intervals}, the second moment of the bound in~\eqref{eq_proof:lemma_supremum_at_rho+theta_2} is bounded uniformly in $n$ and $\theta$. Thus, the claim of the this lemma is shown due to~\eqref{eq_proof:lemma_supremum_at_rho+theta_1}.
\end{proof}

\subsection{Remaining proofs of Subsection~\ref{subsection:n_consistency}}\label{App:Section:proofs_n_consistency}

\begin{proof}[Proof of Lemma~\ref{lemma:bound_drift}]
We will first specify $\kappa_0$ and $\zeta_1$ such that by the first inequality is valid for $|\theta|\leq \kappa_0/\sqrt{n}$ with $\zeta=\zeta_1$ and then show that the second one is valid for the remaining $\kappa_0/\sqrt{n}\leq |\theta|\leq K/\sqrt{n}$ and some $\zeta_2$. The claim then follows for $\zeta=\min\{\zeta_1,\zeta_2\}$. By Proposition~\ref{prop:expansion_of_drift_t} we have for $|\theta|\leq \kappa/\sqrt{n}$,
\[ B_n(\theta) \leq -n|\theta|\left( \left(\1_{\{\theta\geq 0\}}F_{\alpha,\beta}+\1_{\{\theta< 0\}} \tilde{F}_{\alpha,\beta}\right)\frac1n \Lambda_{\alpha,\beta}^n\left((X_{(k-1)/n})_{1\leq k\leq n}\right) - \frac{r_n(1,\theta)}{n|\theta|} \right),\]
where $\Lambda_{\alpha,\beta}^n\left((X_{(k-1)/n})_{1\leq k\leq n}\right)$ is given in~\eqref{eq:Lambda_nt(X)} and 
\[ \E_{\rho_0}\left[\sup_{|\theta'|\leq |\theta|} \frac{|r_n(1,\theta')|}{|\theta'|}\right] \leq C_{\alpha,\beta} |\theta| n^{3/2}.\]
By Lemma~\ref{lemma:Riemann_approximation},
\begin{align}\label{eq_proof:lemma:bound_drift:main_1}
\frac1n \Lambda_{\alpha,\beta}^n\left((X_{(k-1)/n})_{1\leq k\leq n}\right)\longrightarrow_{\Pr_{\rho_0}} \frac{1}{2}\left(\frac{1}{\alpha} + \frac{1}{\beta}\right) L_1^{\rho_0}(X),
\end{align} 
such that for $n\geq n_0$ and $n_0$ large enough, we have $\Pr_{\rho_0}(E_n)>1-\epsilon/6$ for
\[ E_n := \left\{\frac1n \Lambda_{\alpha,\beta}^n\left((X_{(k-1)/n})_{1\leq k\leq n}\right) \geq \frac{L_1^{\rho_0}(X)(\alpha+\beta)}{4\alpha\beta} \right\}. \]
Recall $A_2\cap A_3$ from~\eqref{eq:set_A2} and~\eqref{eq:set_A3} with $\Pr_{\rho_0}(\{L_1^{\rho_0}(X)>0\}\cap A_2\cap A_3)\geq 1-\epsilon/6$ for suitable constants in the definition of $A_2, A_3$ and define
\[ A_n'(\kappa) := A_2\cap A_3 \cap E_n \cap \left\{ \sup_{\theta'\leq \kappa/\sqrt{n}} \frac{|r_n(1,\theta')|}{|\theta'|} \leq C_r n \kappa \right\}.\]
By Markov's inequality,
\[ \Pr_{\rho_0}\left(\sup_{|\theta'|\leq \kappa/\sqrt{n}} \frac{|r_n(1,\theta')|}{|\theta'|} > C_r n\kappa\right) \leq \frac{C_{\alpha,\beta}\kappa}{C_r \kappa} = \frac{C_{\alpha,\beta}}{C_r}, \]
and we can choose $C_r$ independently of $n$ and $\kappa$ large enough such that $\Pr_{\rho_0}(A_n'(\kappa))\geq 1-\epsilon/2$ for $n\geq n_0$. Then we have for $|\theta|\leq \kappa/\sqrt{n}$,
\[ B_n(\theta)\1_{A_n'(\kappa)} \leq -n|\theta|\left(\left(\1_{\{\theta\geq 0\}}F_{\alpha,\beta}+\1_{\{\theta< 0\}} \tilde{F}_{\alpha,\beta}\right)\frac{\xi(\alpha+\beta)}{4\alpha\beta} - C_r\kappa \right).\]
For
\[ \kappa_0 := \frac{\xi(\alpha+\beta)}{8C_r\alpha\beta}\min\{F_{\alpha,\beta},\tilde{F}_{\alpha,\beta}\}, \]
the first assertion of the lemma then follows with $\zeta=\zeta_1 := C_r\kappa_0$. Next, we consider $\kappa_0/\sqrt{n}\leq |\theta|\leq K/\sqrt{n}$. Denoting by $\Pr_{\rho,1/n}^{X_{(k-1)/n}}(\cdot) = \Pr_{\rho}(X_{k/n}\in\cdot\mid X_{(k-1)/n})$ the distribution of $X_{k/n}$ given $X_{(k-1)/n}$ in our model with parameter $\rho$ and by $KL(\Pr_1,\Pr_2)$ the Kullback--Leibler divergence of two probability measures $\Pr_1$, $\Pr_2$, we observe
\[ \E_{\rho_0}\left[\left. \log\left(\frac{p_{1/n}^{\rho_0+\theta}(X_{(k-1)/n},X_{k/n})}{p_{1/n}^{\rho_0}(X_{(k-1)/n},X_{k/n})}\right)\ \right|\ X_{(k-1)/n}\right] = - KL\left( \Pr_{\rho_0,1/n}^{X_{(k-1)/n}}, \Pr_{\rho_0+\theta,1/n}^{X_{(k-1)/n}}\right). \]
By the first Pinsker inequality (\citeSM{App:Tsybakov}, Lemma~$2.5$) we have
\[ KL\left( \Pr_{\rho_0,1/n}^{X_{(k-1)/n}}, \Pr_{\rho_0+\theta,1/n}^{X_{(k-1)/n}}\right) \geq 2 d_{\textrm{TV}}\left(\Pr_{\rho_0,1/n}^{X_{(k-1)/n}}, \Pr_{\rho_0+\theta,1/n}^{X_{(k-1)/n}}\right)^2,\]
where $d_{\textrm{TV}}$ denots total variation distance. By Scheffé's theorem (\citeSM{App:Tsybakov}, Lemma~$2.1$) we conclude
\begin{align*}
&\E_{\rho_0}\left[\left. \log\left(\frac{p_{1/n}^{\rho_0+\theta}(X_{(k-1)/n},X_{k/n})}{p_{1/n}^{\rho_0}(X_{(k-1)/n},X_{k/n})}\right)\ \right|\ X_{(k-1)/n}\right] \\
&\hspace{2cm} \leq - \frac12 \left(\int_{\R} \left| p_{1/n}^{\rho_0+\theta}(X_{(k-1)/n},y) - p_{1/n}^{\rho_0}(X_{(k-1)/n},y) \right| dy\right)^2.
\end{align*}
Consequently, by~\eqref{eq:bound_TV_distance} (which is proven right after this proof),
\[ B_n(\theta) \leq - \frac12\left(C_{\alpha,\beta}^1\right)^2 n\theta^2 \sum_{k=1}^n \exp\left(-\frac{2(X_{(k-1)/n}-\rho_0)^2}{\min\{\alpha^2,\beta^2\}/n} - 2\left(C_{\alpha,\beta}^2 K\right)^2\right). \]
From Lemma~\ref{lemma:Riemann_approximation}, we find for $f(x):= \exp(-2x^2/\min\{\alpha^2,\beta^2\} - 2(C_{\alpha,\beta}^2 K)^2)$ that
\[ \frac{1}{\sqrt{n}}\sum_{k=1}^n \exp\left(-\frac{2(X_{(k-1)/n}-\rho_0)^2}{\min\{\alpha^2,\beta^2\}/n} - 2\left(C_{\alpha,\beta}^2 K\right)^2\right) \longrightarrow_{\Pr_{\rho_0}} \lambda_{\alpha,\beta}(f) L_1^{\rho_0}(X). \]
Then, for
\begin{align*}
A_n'' &:= A_2\cap A_3\cap\left\{ \frac{1}{\sqrt{n}}\sum_{k=1}^n \exp\left(-\frac{2(X_{(k-1)/n}-\rho_0)^2}{\min\{\alpha^2,\beta^2\}/n} - 2\left(C_{\alpha,\beta}^2 K\right)^2\right) > \frac{\lambda_{\alpha,\beta}(f)}{2} L_1^{\rho_0}(X)\right\}
\end{align*} 
we obtain $\Pr_{\rho_0}(A_n'')\geq 1-\epsilon/2$ for $n$ large enough and 
\[ B_n(\theta)\1_{A_n''} \leq -\frac12 \left(C_{\alpha,\beta}^1 \right)^2 \frac{\lambda_{\alpha,\beta}(f)}{2}\xi n^{3/2}\theta^2. \]
Thus, the second inequality in the statement is true for $\zeta=\zeta_2 := (C_{\alpha,\beta}^1)^2\lambda_{\alpha,\beta}(f)\xi /4$ and the statement follows for $\zeta=\min\{\zeta_1,\zeta_2\}$ and the sets $A_n := A_n'(\kappa_0)\cap A_n''$.
\end{proof}

\begin{proof}[Proof of~\eqref{eq:bound_TV_distance}]
We prove the statement for $\theta\geq 0$. The case $\theta<0$ works the same. For the proof, we distinguish the cases $X_{(k-1)/n}< \rho_0$, $\rho_0\leq X_{(k-1)/n}< \rho_0+\theta$ and $X_{(k-1)/n}\geq\rho_0+\theta$. Moreover, we denote $\varphi(x)= \frac{1}{\sqrt{2\pi}} e^{-x^2/2}$ and by $\Phi(\cdot)$ the cumulative distribution function of the standard Gaussian distribution, i.e. $\Phi(y) = \int_{-\infty}^y \varphi(x) dx$.
\begin{itemize}
\item[$\bullet\ \boldsymbol{X_{(k-1)/n}<\rho_0}$.] Here, using the explicit representation of the transition density and Jensen's inequality in the second step,
\begin{align}\label{eq_proof:lemma_bound_TV_distance_1}
\begin{split}
&\1_{\{X_{(k-1)/n}<\rho_0\}}\int_{\R} \left| p_{1/n}^{\rho_0+\theta}(X_{(k-1)/n},y) - p_{1/n}^{\rho_0}(X_{(k-1)/n},y) \right| dy \\
&\hspace{0.5cm} \geq \1_{\{X_{(k-1)/n}<\rho_0\}}\int_{\rho_0+\theta}^\infty \left| p_{1/n}^{\rho_0+\theta}(X_{(k-1)/n},y) - p_{1/n}^{\rho_0}(X_{(k-1)/n},y) \right| dy \\
&\hspace{0.5cm} \geq \frac{2\alpha}{\alpha+\beta} \frac{1}{\sqrt{2\pi\beta^2/n}}\left| \int_{\rho_0+\theta}^\infty  \exp\left(-\frac{n}{2}\left( \frac{y-\rho_0-\theta}{\beta} - \frac{X_{(k-1)/n}-\rho_0-\theta}{\alpha}\right)^2\right) \right.\\
&\hspace{5.5cm} \left. - \exp\left(-\frac{n}{2}\left( \frac{y-\rho_0}{\beta} - \frac{X_{(k-1)/n}-\rho_0}{\alpha}\right)^2\right)dy \right|.
\end{split}
\end{align}
By substitution we find
\begin{align*}
&\frac{1}{\sqrt{2\pi\beta^2/n}}\int_{\rho_0+\theta}^\infty \exp\left(-\frac{n}{2}\left( \frac{y-\rho_0-\theta}{\beta} - \frac{X_{(k-1)/n}-\rho_0-\theta}{\alpha}\right)^2\right) dy \\
&\hspace{1.5cm} = 1- \Phi\left( -\frac{X_{(k-1)/n}-\rho_0}{\alpha/\sqrt{n}} + \frac{\theta}{\alpha/\sqrt{n}} \right) 
\end{align*} 
and
\begin{align*}
&\frac{1}{\sqrt{2\pi\beta^2/n}}\int_{\rho_0+\theta}^\infty \exp\left(-\frac{n}{2}\left( \frac{y-\rho_0}{\beta} - \frac{X_{(k-1)/n}-\rho_0}{\alpha}\right)^2\right) dy \\
&\hspace{1.5cm}  =1- \Phi\left(-\frac{X_{(k-1)/n}-\rho_0}{\alpha/\sqrt{n}} + \frac{\theta}{\beta/\sqrt{n}}\right). 
\end{align*} 
The mean value theorem gives $\Phi(x+\epsilon)- \Phi(x) = \varphi(\xi_x)\epsilon$ for $x\leq \xi_x\leq x+\epsilon$ and thus by~\eqref{eq_proof:lemma_bound_TV_distance_1},
\begin{align*}
& \1_{\{X_{(k-1)/n}<\rho_0\}}\int_{\R} \left| p_{1/n}^{\rho_0+\theta}(X_{(k-1)/n},y) - p_{1/n}^{\rho_0}(X_{(k-1)/n},y) \right| dy \\
&\hspace{0.5cm}\geq \frac{2\alpha}{\alpha+\beta} \left| \Phi\left( -\frac{X_{(k-1)/n}-\rho_0}{\alpha/\sqrt{n}} + \frac{\theta}{\beta/\sqrt{n}} \right) - \Phi\left(-\frac{X_{(k-1)/n}-\rho_0}{\alpha/\sqrt{n}} + \frac{\theta}{\alpha/\sqrt{n}}\right) \right| \\
&\hspace{0.5cm} = \frac{2\alpha}{\alpha+\beta} \left|\frac{1}{\alpha}-\frac{1}{\beta}\right| \sqrt{n}\theta \varphi(\xi_k)
\end{align*}
for an intermediate value $\xi_k$ satisfying
\[ -\frac{X_{(k-1)/n}-\rho_0}{\alpha/\sqrt{n}} + \frac{\theta}{\max\{\alpha,\beta\}/\sqrt{n}} \leq \xi_k \leq -\frac{X_{(k-1)/n}-\rho_0}{\alpha/\sqrt{n}} + \frac{\theta}{\min\{\alpha,\beta\}/\sqrt{n}}.\]
In particular, using $(a+b)^2\leq 2a^2+2b^2$ and the assumption $|\theta|\leq K/\sqrt{n}$, we have the final bound
\begin{align*}
&\1_{\{X_{(k-1)/n}<\rho_0\}} \int_{\R} \left| p_{1/n}^{\rho_0+\theta}(X_{(k-1)/n},y) - p_{1/n}^{\rho_0}(X_{(k-1)/n},y) \right| dy \\
&\hspace{1cm} \geq \frac{2\alpha}{\alpha+\beta} \left|\frac{1}{\alpha}-\frac{1}{\beta}\right| \theta\sqrt{n} \exp\left(-\frac12\left(-\frac{X_{(k-1)/n}-\rho_0}{\alpha/\sqrt{n}} + \frac{\theta}{\min\{\alpha,\beta\}/\sqrt{n}}\right)^2\right) \\
&\hspace{1cm} \geq \frac{2\alpha}{\alpha+\beta} \left|\frac{1}{\alpha}-\frac{1}{\beta}\right| \theta\sqrt{n} \exp\left(-\frac{(X_{(k-1)/n}-\rho_0)^2}{\alpha^2/n} - \frac{K^2}{\min\{\alpha^2,\beta^2\}}\right).
\end{align*}

\item[$\bullet\ \boldsymbol{\rho_0\leq X_{(k-1)/n}< \rho_0+\theta}$.] Denoting by $\|\cdot\|_{L^1(\lambda)}$ the $L^1$-norm with respect to Lebesgue measure and applying the reverse triangle inequality yields
\begin{align}\label{eq_proof:lemma_bound_TV_distance_2}
\begin{split}
&\1_{\{\rho_0\leq  X_{(k-1)/n}< \rho_0+\theta\}} \int_{\R} \left| p_{1/n}^{\rho_0+\theta}(X_{(k-1)/n},y) - p_{1/n}^{\rho_0}(X_{(k-1)/n},y) \right| dy \\
&\hspace{0.5cm} \geq \1_{\{\rho_0\leq  X_{(k-1)/n}< \rho_0+\theta\}}\int_{-\infty}^{\rho_0} \left| p_{1/n}^{\rho_0+\theta}(X_{(k-1)/n},y) - p_{1/n}^{\rho_0}(X_{(k-1)/n},y) \right| dy \\
&\hspace{0.5cm} = \left\| \left(P_1^{\rho_0+\theta}(X_{(k-1)/n},\cdot\ ;1/n)-P_4^{\rho_0}(X_{(k-1)/n},\cdot\ ;1/n)\right)\1_{(-\infty,\rho_0]}(\cdot)\right\|_{L^1(\lambda)} \\
&\hspace{0.5cm} \geq \left| \left\|P_1^{\rho_0+\theta}(X_{(k-1)/n},\cdot\ ;1/n)\1_{(-\infty,\rho_0]}(\cdot)\right\|_{L^1(\lambda)} - \left\|P_4^{\rho_0}(X_{(k-1)/n},\cdot\ ;1/n)\1_{(-\infty,\rho_0]}(\cdot)\right\|_{L^1(\lambda)}\right|.
\end{split}
\end{align}
By substitution, we find
\begin{align*}
&\left\|P_1^{\rho_0+\theta}(X_{(k-1)/n},\cdot\ ;1/n)\1_{(-\infty,\rho_0]}(\cdot)\right\|_{L^1(\lambda)} \\
&\hspace{1.5cm} = \Phi\left(-\frac{X_{(k-1)/n}-\rho_0}{\alpha/\sqrt{n}}\right) - \frac{\alpha-\beta}{\alpha+\beta}\Phi\left( \frac{X_{(k-1)/n}-\rho_0-2\theta}{\alpha/\sqrt{n}}\right) 
\end{align*} 
and
\[ \left\|P_4^{\rho_0}(X_{(k-1)/n},\cdot\ ;1/n)\1_{(-\infty,\rho_0]}(\cdot)\right\|_{L^1(\lambda)} = \frac{2\beta}{\alpha+\beta}\Phi\left(-\frac{X_{(k-1)/n}-\rho_0}{\beta/\sqrt{n}}\right).\]
Note here that $2\beta/(\alpha+\beta) = 1-(\alpha-\beta)/(\alpha+\beta)$. Using the mean value theorem for $\Phi$, we observe
\[ \Phi\left(-\frac{X_{(k-1)/n}-\rho_0}{\alpha/\sqrt{n}}\right) = \Phi\left(-\frac{X_{(k-1)/n}-\rho_0}{\beta/\sqrt{n}}\right) - \sqrt{n}(X_{(k-1)/n}-\rho_0)\left(\frac{1}{\alpha}-\frac{1}{\beta}\right) \varphi(\xi_{1,k})\]
and 
\[ \Phi\left(\frac{X_{(k-1)/n}-\rho_0-2\theta}{\alpha/\sqrt{n}}\right) = \Phi\left(-\frac{X_{(k-1)/n}-\rho_0}{\alpha/\sqrt{n}}\right) + \frac{2(X_{(k-1)/n}-\rho_0-\theta)}{\alpha/\sqrt{n}} \varphi(\xi_{2,k}),\]
for intermediate values $\xi_1,\xi_2$ with
\[ \min\left\{\frac{\rho_0-X_{(k-1)/n}}{\alpha/\sqrt{n}},\frac{\rho_0-X_{(k-1)/n}}{\beta/\sqrt{n}}  \right\} \leq \xi_{1,k}\leq \max\left\{\frac{\rho_0-X_{(k-1)/n}}{\alpha/\sqrt{n}},\frac{\rho_0-X_{(k-1)/n}}{\beta/\sqrt{n}}  \right\}\]
and
\begin{align*}
&\min\left\{ -\frac{X_{(k-1)/n}-\rho_0}{\alpha/\sqrt{n}}, \frac{X_{(k-1)/n}-\rho_0-2\theta}{\alpha/\sqrt{n}} \right\}\leq \xi_{2,k}\\
&\hspace{4cm} \leq \max\left\{ -\frac{X_{(k-1)/n}-\rho_0}{\alpha/\sqrt{n}}, \frac{X_{(k-1)/n}-\rho_0-2\theta}{\alpha/\sqrt{n}} \right\}.
\end{align*} 
As $\rho_0< X_{(k-1)/n}\leq \rho_0+\theta$, $|X_{(k-1)/n}-\rho_0|\leq |X_{(k-1)/n}-\rho_0-2\theta|$ and consequently for $i=1,2$,
\[ |\xi_{i,k}| \leq \frac{|X_{(k-1)/n}-\rho_0-2\theta|}{2\min\{\alpha^2,\beta^2\}}.\]
In consequence, starting from~\eqref{eq_proof:lemma_bound_TV_distance_2},
\begin{align*}
&\1_{\{\rho_0\leq  X_{(k-1)/n}< \rho_0+\theta\}}\int_{\R} \left| p_{1/n}^{\rho_0+\theta}(X_{(k-1)/n},y) - p_{1/n}^{\rho_0}(X_{(k-1)/n},y) \right| dy \\
&\hspace{0.2cm} \geq \left| \Phi\left(-\frac{X_{(k-1)/n}-\rho_0}{\alpha/\sqrt{n}}\right)-\Phi\left(-\frac{X_{(k-1)/n}-\rho_0}{\beta/\sqrt{n}}\right) \right. \\
&\hspace{3cm} \left. + \frac{\alpha-\beta}{\alpha+\beta} \Phi\left(-\frac{X_{(k-1)/n}-\rho_0}{\beta/\sqrt{n}}\right)-\frac{\alpha-\beta}{\alpha+\beta}\Phi\left( \frac{X_{(k-1)/n}-\rho_0-2\theta}{\alpha/\sqrt{n}}\right) \right| \\
&\hspace{0.2cm} = \left| - \sqrt{n}(X_{(k-1)/n}-\rho_0)\left(\frac{1}{\alpha}-\frac{1}{\beta}\right) \left( 1-\frac{\alpha-\beta}{\alpha+\beta}\right) \varphi(\xi_1) - \frac{\alpha-\beta}{\alpha+\beta}\frac{2(X_{(k-1)/n}-\rho_0-\theta)}{\alpha/\sqrt{n}} \varphi(\xi_2) \right|\\
&\hspace{0.2cm} = \left(\1_{\{\alpha\geq \beta\}}-\1_{\{\alpha<\beta\}}\right) \left[ -\frac{2\beta}{\alpha+\beta} \left(\frac{1}{\alpha}-\frac{1}{\beta}\right) \sqrt{n}(X_{(k-1)/n}-\rho_0)\varphi(\xi_1) \right. \\
&\hspace{5cm} \left. - \frac{\alpha-\beta}{\alpha+\beta}\frac{2(X_{(k-1)/n}-\rho_0-2\theta)}{\alpha/\sqrt{n}}\varphi(\xi_2)\right] \\
&\hspace{0.2cm} \geq \left(\1_{\{\alpha\geq \beta\}}-\1_{\{\alpha<\beta\}}\right) \left[- \frac{2\beta}{\alpha+\beta} \left(\frac{1}{\alpha}-\frac{1}{\beta}\right) \sqrt{n}(X_{(k-1)/n}-\rho_0) \right.  \\
&\hspace{2cm}\left. - \frac{\alpha-\beta}{\alpha+\beta}\frac{2(X_{(k-1)/n}-\rho_0-2\theta)}{\alpha/\sqrt{n}}\right] \frac{1}{\sqrt{2\pi}} \exp\left( - \frac{(X_{(k-1)/n}-\rho_0-2\theta)^2}{2\min\{\alpha^2,\beta^2\}/n}\right) \\
&\hspace{0.2cm} =\left(\1_{\{\alpha\geq \beta\}}-\1_{\{\alpha<\beta\}}\right) \frac{2(\alpha-\beta)}{(\alpha+\beta)\alpha}\sqrt{n}\left(X_{(k-1)/n}-\rho_0 - (X_{(k-1)/n}-\rho_0-2\theta)\right)\\
&\hspace{2cm} \cdot \frac{1}{\sqrt{2\pi}} \exp\left( - \frac{(X_{(k-1)/n}-\rho_0-2\theta)^2}{2\min\{\alpha^2,\beta^2\}/n}\right) \\
&\hspace{0.2cm} = \frac{4|1-\beta/\alpha|}{\alpha+\beta} \frac{\theta}{\sqrt{2\pi\/n}}  \exp\left( - \frac{(X_{(k-1)/n}-\rho_0-2\theta)^2}{2\min\{\alpha^2,\beta^2\}/n}\right),
\end{align*}
where the third step uses that both summands within the absolute value have the same sign for $\rho_0<X_{(k-1)/n}\leq \rho_0+\theta$. Consequently, we have proven that (using again $\theta\leq K/\sqrt{n}$ and $(a+b)^2\leq 2a^2+2b^2$),
\begin{align*}
&\1_{\{\rho_0\leq  X_{(k-1)/n}< \rho_0+\theta\}} \int_{\R} \left| p_{1/n}^{\rho_0+\theta}(X_{(k-1)/n},y) - p_{1/n}^{\rho_0}(X_{(k-1)/n},y) \right| dy \\
&\hspace{1cm} \geq \frac{4|1-\beta/\alpha|}{\alpha+\beta}\frac{1}{\sqrt{2\pi}} \theta \sqrt{n}\exp\left( - \frac{(X_{(k-1)/n}-\rho_0)^2}{\min\{\alpha^2,\beta^2\}/n} - \frac{2K^2}{\min\{\alpha^2,\beta^2\}}\right).
\end{align*}

\item[$\bullet\ \boldsymbol{X_{(k-1)/n}\geq\rho_0+\theta}$.] This case works analogously to the first one with $X_{(k-1)/n}<\rho_0$ and is therefore omitted. It yields
\begin{align*}
&\1_{\{X_{(k-1)/n}\geq\rho_0+\theta\}} \int_{\R} \left| p_{1/n}^{\rho_0+\theta}(X_{(k-1)/n},y) - p_{1/n}^{\rho_0}(X_{(k-1)/n},y) \right| dy  \\
&\hspace{1cm} \geq  \frac{2\alpha}{\alpha+\beta} \left|\frac{1}{\alpha}-\frac{1}{\beta}\right| \theta \sqrt{n} \exp\left(- \frac{(X_{(k-1)/n}-\rho_0)^2}{\beta^2/n} - K^2\left(\frac{1}{\beta}-\frac{1}{\alpha}\right)^2\right).
\end{align*}
\end{itemize}
The inequality in~\eqref{eq:bound_TV_distance} now follows from these three steps by setting
\[ C_{\alpha,\beta}^1 := \min\left\{\frac{2\alpha}{\alpha+\beta} \left|\frac{1}{\alpha}-\frac{1}{\beta}\right|  ,\frac{4|1-\beta/\alpha|}{\alpha+\beta}\frac{1}{\sqrt{2\pi}}  ,\frac{2\alpha}{\alpha+\beta} \left|\frac{1}{\alpha}-\frac{1}{\beta}\right|\right\}\]
and 
\[ C_{\alpha,\beta}^2 := K^2\cdot \max\left\{\frac{2}{\min\{\alpha^2,\beta^2\}}, \left(\frac{1}{\alpha}-\frac{1}{\beta}\right)^2\right\}. \]
\end{proof}

\begin{proof}[Proof of \eqref{eq_proof:n_consistency_5}]
Note that $M_n^1(\theta) - M_n^1(\theta') = \sum_{k=1}^n d_k(\theta,\theta')$, where $d_k(\theta,\theta')$ are martingale increments, i.e. $\E_{\rho_0}[d_k(\theta,\theta')\mid \cF_{(k-1)/n}] =0$ and consequently~\eqref{eq_proof:sqrt(n)_consistency_1_1} holds true. Here, for $0\leq\theta'\leq\theta$, $d_k(\theta,\theta')$ is given as
\begin{align*}
&\log\left(\frac{\beta^2}{\alpha^2}\right)\left(\1_{\{X_{(k-1)/n}<\rho_0\}}\1_{\{\rho_0+\theta'< X_{k/n}\leq\rho_0+\theta\}} \right. \\
&\hspace{4cm} \left. - \E_{\rho_0}\left[\1_{\{X_{(k-1)/n}<\rho_0\}}\1_{\{\rho_0+\theta'< X_{k/n}\leq\rho_0+\theta\}}\mid X_{(k-1)/n}\right] \right) \\
&\hspace{0.2cm}  + \log\left(\frac{\beta^2}{\alpha^2}\right) \left( \1_{\{\rho_0+\theta'<X_{k/n}\leq\rho_0+\theta\}}\1_{\{\rho_0+\theta \leq X_{(k-1)/n}\}}\right. \\
&\hspace{4cm} \left.  - \E_{\rho_0}\left[\1_{\{\rho_0+\theta'<X_{k/n}\leq\rho_0+\theta\}}\1_{\{\rho_0+\theta \leq X_{(k-1)/n}\}} \mid X_{(k-1)/n}\right] \right)\\
&\hspace{0.2cm} - \log\left(\frac{\beta^2}{\alpha^2}\right)\left( \1_{\{\rho_0<X_{k/n}\leq\rho_0+\theta'\}}\1_{\{\rho_0+\theta'\leq X_{(k-1)/n} <\rho_0+\theta\}} \right. \\
&\hspace{4cm} \left. - \E_{\rho_0}\left[\1_{\{\rho_0<X_{k/n}\leq\rho_0+\theta'\}}\1_{\{\rho_0+\theta'\leq X_{(k-1)/n} <\rho_0+\theta\}} \mid X_{(k-1)/n}\right] \right).
\end{align*}
From this expression, we find the upper bound
\begin{align*}
d_k(\theta,\theta')^2 &\leq 2^6 \log\left(\frac{\beta^2}{\alpha^2}\right)^2 \left[\1_{\{\rho_0+\theta'\leq X_{k/n}\leq\rho_0+\theta\}} +  \E_{\rho_0}\left[\1_{\{\rho_0+\theta'\leq X_{k/n}\leq \rho_0+\theta\}}\mid X_{(k-1)/n}\right]^2 \right. \\
&\hspace{0.5cm} + \1_{\{\rho_0+\theta'\leq X_{k/n}\leq \rho_0+\theta\}} +\E_{\rho_0}\left[\1_{\{\rho_0+\theta'\leq X_{k/n}\leq \rho_0+\theta\}}\mid X_{(k-1)/n}\right]^2 \\
&\hspace{0.5cm}\left. +   \1_{\{\rho_0+\theta' \leq X_{(k-1)/n} \leq \rho_0+\theta\}} + \E_{\rho_0}\left[\1_{\{\rho_0+\theta' \leq X_{(k-1)/n} \leq \rho_0+\theta\}}\mid X_{(k-1)/n}\right]^2\right].
\end{align*}
With Lemma~\ref{lemma:upper_bound_transition_density} we then find the moment bounds
\begin{align*}
\E_{\rho_0}\left[\1_{\{\rho_0+\theta'\leq X_{k/n}\leq \rho_0+\theta\}} \right] &\leq C_{\alpha,\beta}\int_{\rho_0+\theta'}^{\rho_0+\theta} \frac{1}{\sqrt{k/n}} \exp\left(-\frac{(y-x_0)^2}{2\max\{\alpha^2,\beta^2\}k/n}\right) dy \\
& \leq C_{\alpha,\beta} \frac{1}{\sqrt{k/n}} |\theta-\theta'|
\end{align*} 
and by Jensen's inequality for conditional expectations,
\begin{align*}
\E_{\rho_0}\left[\E_{\rho_0}\left[\1_{\{\rho_0+\theta'\leq X_{k/n}\leq \rho_0+\theta\}}\mid X_{(k-1)/n}\right]^2 \right] &\leq \E_{\rho_0}\left[\1_{\{\rho_0+\theta'\leq X_{k/n}\leq \rho_0+\theta\}}^2 \right] \\
&  \leq C_{\alpha,\beta} \frac{1}{\sqrt{k/n}}|\theta-\theta'|.
\end{align*} 
The others work the same way and yield the same upper bound. Consequently, we have from~\eqref{eq_proof:sqrt(n)_consistency_1_1} that
\[ \E_{\rho_0}\left[ \left(M_n^1(\theta) - M_n^1(\theta')\right)^2\right] \leq C_{\alpha,\beta}|\theta-\theta'|\sum_{k=1}^n\frac{1}{\sqrt{k/n}} \leq C_{\alpha,\beta} n|\theta-\theta'|. \]
\end{proof}

\begin{proof}[Proof of \eqref{eq_proof:n_consistency_7}]
We use that
\begin{align*}
\sup_{\theta'\in U_{1/(C_1 n)}(\theta)} \left( M_n^1(\theta)-M_n^1(\theta')\right)^2 &= \max\left\{ \sup_{\theta-1/(C_1 n)\leq \theta'\leq \theta} \left( M_n^1(\theta)-M_n^1(\theta')\right)^2, \right. \\
&\hspace{2cm} \left. \sup_{\theta\leq \theta'\leq \theta+1/(C_1 n)} \left( M_n^1(\theta)-M_n^1(\theta')\right)^2 \right\} 
\end{align*} 
and only construct an upper bound for $\sup_{\theta-1/(C_1 n)\leq \theta'\leq \theta} \left( M_n^1(\theta)-M_n^1(\theta')\right)^2$. The other one can be built analogously and the upper bound of the statement is then obtained as the maximum of the two upper bounds. For $\theta-1/(C_1 n)\leq \theta'\leq \theta$, we find
\begin{align*}
&\left| M_n^1(\theta) - M_n^1(\theta')\right|\\
&\hspace{0.2cm} = \left|\log\left(\frac{\beta^2}{\alpha^2}\right)\sum_{k=1}^n \Big( \1_{\{X_{(k-1)/n}<\rho_0\}}\1_{\{\rho_0+\theta'< X_{k/n} \leq \rho_0+\theta\}} +\1_{\{\rho_0+\theta'<X_{k/n}\leq\rho_0+\theta\}}\1_{\{\rho_0+\theta\leq X_{(k-1)/n}\}}\right.\\
&\hspace{3.5cm} - \1_{\{\rho_0<X_{k/n}\leq \rho_0+\theta'\}}\1_{\{\rho_0+\theta' \leq X_{(k-1)/n} <\rho_0+\theta\}}\\
&\hspace{1.5cm}  -\E_{\rho_0}\left[\1_{\{X_{(k-1)/n}<\rho_0\}}\1_{\{\rho_0+\theta'< X_{k/n}\leq \rho_0+\theta\}} + \1_{\{\rho_0+\theta'<X_{k/n}\leq \rho_0+\theta\}}\1_{\{\rho_0+\theta\leq X_{(k-1)/n}\}}\right.\\
&\hspace{3.5cm} \left. - \1_{\{\rho_0<X_{k/n}\leq\rho_0+\theta'\}}\1_{\{\rho_0+\theta'\leq X_{(k-1)/n} <\rho_0+\theta\}} | X_{(k-1)/n}\right] \Big)\Bigg| \\
&\hspace{0.2cm} \leq \left|\log\left(\frac{\beta^2}{\alpha^2}\right)\right|\sum_{k=1}^n \Big(\1_{\{\rho_0+\theta-1/(C_1 n)\leq X_{k/n}\leq \rho_0+\theta\}} + \E_{\rho_0}\left[\1_{\{\rho_0+\theta-1/(C_1 n)\leq X_{k/n}\leq \rho_0+\theta\}}\mid X_{(k-1)/n}\right] \\
&\hspace{1.5cm}  +  \1_{\{\rho_0+\theta-1/(C_1 n)\leq X_{k/n}\leq \rho_0+\theta\}} + \1_{\{\rho_0+\theta-1/(C_1 n) \leq X_{(k-1)/n}\leq \rho_0+\theta\}} \\
&\hspace{1.5cm}  +  \E_{\rho_0}\left[ \1_{\{\rho_0+\theta-1/(C_1 n)\leq X_{k/n}\leq \rho_0+\theta\}} + \1_{\{\rho_0+\theta-1/(C_1 n)\leq X_{(k-1)/n}\leq \rho_0+\theta\}} \mid X_{(k-1)/n}\right] \Big) \\
&\hspace{0.2cm} := \overline{M}_n^1(\theta).
\end{align*}
Using $(a_1+\dots + a_6)^2 \leq 2^6 (a_1^2 + \dots + a_6^2)$, Lemma~\ref{lemma:second_moment_sum_intervals} reveals the second moment of $\overline{M}_n^1(\theta)$ is bounded with a bound independent of $\theta$ and hence~\eqref{eq_proof:n_consistency_7} follows.
\end{proof}

\section{Stable convergence of piecewise constant processes towards discontinuous conditional PIIs}\label{App:Jacod_version}

In this section, we present a modification of the stable limit results given in Theorem~$4.1$ in~\citeSM{App:Jacod_stablePII} and Theorem~$2.1$ in~\citeSM{App:Jacod_stableGaussian}, tailored to our context. The result in~\citeSM{App:Jacod_stableGaussian} is to the best of our knowledge the only one that is applicable for infill asymptotics without a certain nestedness condition on the filtration, but only covers a continuous (in time) limit and thus can not be applied to the process $\ell_{n,t}$ given in~\eqref{eq:def_ell_nt} as the Lindeberg-type condition~$(2.12)$ in Theorem~$2.1$ in this paper is not satisfied (see Section~\ref{Section:Limiting_distribution}). On the other hand, Theorem~$4.1$ in~\citeSM{App:Jacod_stablePII} covers limit processes with jumps but does not allow in its current formulation to treat convergence of processes $X^n$, where each $X^n$ is defined on a different stochastic basis $\mathcal{B}^n$.\\

Starting from a filtered probability space $(\Omega, \cF, (\cF_t)_{t\in [0,1]}, \Pr)$, we work on a very good extension of this space in the sense of Section~$2$ in~\citeSM{App:Jacod_stablePII}. We prove a result on stable convergence of a piecewise constant process towards an $\cF$-conditional process with independent increments (PII) which is allowed to be discontinuous. For more details on the notion of $\cF$-conditional PIIs, the reader is referred to~\citeSM{App:Jacod_stablePII}. Throughout this section, we assume the following:

\begin{itemize}
\item $(\Omega,\cF,\Pr)$ is a probability space supporting a standard Brownian motion $W$ with $\mathbb{F}=(\cF_t)_{t\in [0,1]}$ being the augmented filtration induced by $W$ and restrict attention to the case $\cF=\cF_1$. 
Then, the stochastic basis $\mathcal{B}:=(\Omega,\cF, \mathbb{F}, \Pr)$ has the martingale representation property with respect to $W$ (see Theorem~$19.11$ in~\citeSM{App:Kallenberg}).
\item Corresponding to $\mathbb{F}$, for any $n\in\N$, we introduce the discretized filtration $\mathbb{F}^n=(\cF_t^n)_{t\in [0,1]}$ via $\cF_t^n := \cF_{\lfloor nt\rfloor/n}$.
\item We define the process $W^n = (W_t^n)_{t\in [0,1]}$ via
\[ W_t^n := W_{\lfloor nt\rfloor} = \sum_{k=1}^{\lfloor nt\rfloor} W_{k/n}-W_{(k-1)/n}.\]
Then $W^n$ is a square-integrable $\mathbb{F}^n$-martingale and we have
\[ \langle W^n\rangle_t \longrightarrow_\Pr \langle W\rangle_t = t \quad \textrm{ for all } t\in [0,1].\]
\item For each $n\in\N$, let $X^n$ be a $\mathbb{F}^n$-semimartingale with
\[ X_t^n = \sum_{k=1}^{\lfloor nt\rfloor} \chi_{nk}, \]
where $\chi_{nk}$ is $\cF_{k/n}^n$-measurable and square-integrable.
\item We now consider the $\cF^n$-semimartingale $(X^n, W^n)$. Its first characteristic $B^n$, its second modified characteristic $C^n$ and its third characteristic $\nu^n$ are given as (see Theorem~$II.3.11$(b) and $II.3.18$ in~\citeSM{App:Jacod/Shiryaev})
\[  B^n = \begin{pmatrix} \sum_{k=1}^{\lfloor nt\rfloor} \E[\chi_{nk}\mid \cF_{(k-1)/n}] \\ 0 \end{pmatrix}, \qquad C^n =\begin{pmatrix} \langle X^n,X^n\rangle &\langle X^n,W^n\rangle \\ \langle X^n,W^n\rangle & \langle W^n,W^n\rangle\end{pmatrix}, \]
and for any measurable $g:\R^2\longrightarrow [0,\infty)$ 
\[ \left(g\star \nu^n\right)_t = \sum_{k=1}^{\lfloor nt\rfloor} \E\left[\left. g\left( \chi_{nk}, W_{k/n}-W_{(k-1)/n}\right)\right|\cF_{(k-1)/n}\right].  \]
\end{itemize}

\begin{prop}\label{prop:version_Jacod}
Assume there exists a continuous process $B$ with finite variation and a random measure $\nu$ on $[0,1]\times \R^2$ not charging $[0,1]\times \{0\}$ and satisfying $\nu(\{t\}\times\R^2)=0$ identically, such that the following convergences hold for all $t\in [0,1]$:
\begin{align}\label{eq_Jacod:1char}
\sup_{s\leq t} |B_s^n - B_s| \longrightarrow_{\Pr} 0,
\end{align}
\begin{align}\label{eq_Jacod:2char_1}
\sum_{k=1}^{\lfloor nt\rfloor} \E\left[ (\chi_{nk}-\E[\chi_{nk}|\cF_{(k-1)/n}])^2 | \cF_{(k-1)/n}\right]\longrightarrow_{\Pr} (f\star \nu)_t \quad \textrm{ for } f(x,y):=x^2,
\end{align}
\begin{align}\label{eq_Jacod:2char_2}
\sum_{k=1}^{\lfloor nt\rfloor} \E\left[ (\chi_{nk}-\E[\chi_{nk}|\cF_{(k-1)/n}])(W_{k/n}-W_{(k-1)/n}) | \cF_{(k-1)/n}\right]\longrightarrow_{\Pr} 0,
\end{align}
\begin{align}\label{eq_Jacod:3char}
(g\star \nu^n)_t \longrightarrow_{\Pr} (g\star \nu)_t \quad \textrm{ for all } g\in\mathcal{C},
\end{align}
where $\mathcal{C}$ is a countable set of Lipschitz-continuous bounded nonnegative functions on $\R^2$, vanishing in a neighbourhood of $0$ and being a measure-determining class for measures not charging $0$. Moreover, we assume there exists a constant $\kappa>0$ such that
\begin{align}\label{eq_Jacod:large_jumps}
\E\left[\sum_{k=1}^n \chi_{nk}^2\1_{\{|\chi_{nk}|>\kappa\}}\right] \longrightarrow\ 0.
\end{align}
Then there exist a very good extension $\tilde{\mathcal{B}}$ of $\mathcal{B}$ and a quasi-left continuous process $X$ on $\tilde{\mathcal{B}}$ which is an $\cF$-conditional PII and the pair $(X,W)$ admits the characteristics $(B, C, \nu)$, where $C= \begin{pmatrix} 0 & 0 \\ 0 & \mathrm{id}_{[0,1]}\end{pmatrix}$.
Moreover, $X^n$ converges stably in law to $X$.
\end{prop}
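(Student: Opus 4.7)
The plan is to bridge the two results of Jacod cited after the statement: Theorem~2.1 in~\cite{Jacod_stableGaussian} handles our infill-asymptotic filtration structure $\mathbb{F}^n$ but only produces continuous conditional-Gaussian limits, while Theorem~4.1 in~\cite{Jacod_stablePII} allows for discontinuous $\mathcal{F}$-conditional PII limits but is formulated on a fixed stochastic basis with a filtration into which all the $\mathbb{F}^n$ embed in a nested way. The structural link that makes the bridge work is that $W$ generates $\mathcal{F}$ and that $W^n_t = W_{\lfloor nt\rfloor/n}$ converges to $W$ almost surely locally uniformly, so that conditioning against $\mathbb{F}^n$ can asymptotically be replaced by conditioning against $\mathbb{F}$.

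First I would establish tightness of $((X^n, W^n))_{n\in\mathbb{N}}$ in the Skorokhod space $\mathcal{D}([0,1], \mathbb{R}^2)$. Tightness of $W^n$ is immediate. For $X^n$, I would invoke an Aldous--Rebolledo criterion using~\eqref{eq_Jacod:1char} to control the drift, \eqref{eq_Jacod:2char_1} together with~\eqref{eq_Jacod:large_jumps} to bound the martingale part via the truncated variance
\[
\sum_{k=1}^{\lfloor nt\rfloor} \E\!\left[ (\chi_{nk}-\E[\chi_{nk}\mid \cF_{(k-1)/n}])^2\,\1_{\{|\chi_{nk}|\le\kappa\}} \,\big|\, \cF_{(k-1)/n}\right],
\]
and~\eqref{eq_Jacod:3char} together with~\eqref{eq_Jacod:large_jumps} to control the jumps. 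The point is that the Lindeberg-type bound~\eqref{eq_Jacod:large_jumps}, in combination with predictable-quadratic-variation control, replaces the usual negligibility of individual jumps that would be needed for a continuous limit.

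Second I would identify the limit along a convergent subsequence $(X^{n_k},W^{n_k}) \Rightarrow (X,W)$ on a suitable extension $\tilde{\mathcal{B}}$. Following the martingale-problem strategy in~\cite{Jacod_stablePII}, one verifies that for every $g$ in the test class~$\mathcal{C}$, the process $g\star\mu^{X^n} - (g\star\nu^n)$ is an $\mathbb{F}^n$-martingale whose limit, jointly with the limits of $B^n$ and of the quadratic covariation characteristics~\eqref{eq_Jacod:2char_1}--\eqref{eq_Jacod:2char_2}, provides $(B,C,\nu)$ as the triplet of $(X,W)$ on $\tilde{\mathcal{B}}$. Quasi-left continuity of $X$ and predictability of $(B,C,\nu)$ with respect to $\mathbb{F}$ then imply, by Theorem~3.2 in~\cite{Jacod_stablePII}, that $X$ is an $\mathcal{F}$-conditional PII with uniquely determined $\mathcal{F}$-conditional law, hence the full sequence converges.

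Third I would upgrade convergence in law to $\mathcal{F}$-stable convergence by the technique underlying Theorem~2.1 in~\cite{Jacod_stableGaussian}: since $\mathcal{F}$ is generated by $W$ and $W^n_t \to W_t$ almost surely for every $t$, any bounded $\mathcal{F}$-measurable random variable $V$ can be approximated in $L^1(\Pr)$ by bounded continuous functionals $V_m = \Phi_m(W)$, and joint convergence of $(X^n, W^n)$ to $(X, W)$ combined with the uniqueness of the limit yields $\E[V_m\, f(X^n)] \to \tilde{\E}[V_m\, f(X)]$ for every bounded continuous $f$ and, after letting $m\to\infty$, $\mathcal{F}$-stable convergence. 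The main obstacle here, and the reason a direct appeal to either reference does not suffice, is precisely reconciling the discretized structure with a jumpy limit: the jumps of $X^n$ are supported on the deterministic grid $\{k/n\}$ while $\nu$ is continuous in time, so one must verify that no jump mass is lost by the gridding. This is ensured by the vanishing-near-zero and Lipschitz assumptions on $\mathcal{C}$ together with~\eqref{eq_Jacod:large_jumps}, which together guarantee that the predictable jump characteristic~\eqref{eq_Jacod:3char} detects all asymptotically relevant jump activity.
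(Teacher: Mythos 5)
Your outline reproduces the broad architecture of the paper's argument (truncate, tightness, identify the limit via the martingale problem, upgrade to stable convergence via approximation of $\mathcal{F}$-measurable weights), and some of it matches: the use of Theorem~3.2 in \cite{Jacod_stablePII} to get uniqueness of the $\mathcal{F}$-conditional law and hence convergence of the full sequence is exactly what the paper does. But the proposal misses the one piece of machinery that actually bridges the two Jacod theorems, and this is a genuine gap, not just an omission of detail.

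The crux is establishing that the extension $\tilde{\mathcal{B}}$ on which the weak subsequential limit lives is a \emph{very good} extension of $\mathcal{B}$ --- every martingale on $\mathcal{B}$ must remain a martingale on $\tilde{\mathcal{B}}$. Without this, one cannot conclude that $(X,W)$ is a semimartingale on $\tilde{\mathcal{B}}$ with the prescribed characteristics, and Theorem~3.2 in \cite{Jacod_stablePII} simply does not apply. The paper's steps (v)--(vii) are entirely devoted to this: one picks a countable family $\{V_m\}$ dense in $L^2(\Omega,\mathcal{F},\Pr)$, forms the martingales $N^m_t=\E[V_m\mid\mathcal{F}_t]$ and their discretizations $N(n)^m_t=N^m_{\lfloor nt\rfloor/n}$, and establishes the joint convergence of the \emph{entire tuple} $(\tilde{X}^n,\tilde{W}^n,\tilde{B}^n,\tilde{G}^n,\tilde{\mathcal{H}}^n,\mathcal{N}(n))$. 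Passing to the subsequential limit and invoking Proposition~IX.1.17 in \cite{Jacod/Shiryaev} then shows that every $N^m$ remains a local martingale on $\tilde{\mathcal{B}}$, and the martingale representation property (A) converts this into the very-good-extension property. Your step two asserts that the martingale-problem argument ``provides $(B,C,\nu)$ as the triplet of $(X,W)$ on $\tilde{\mathcal{B}}$'' but never verifies that martingales on $\mathcal{B}$ survive the extension --- yet this is precisely the hypothesis that fails to be automatic in the infill setting, since the discretized filtrations $\mathbb{F}^n$ are not nested in $\mathbb{F}$.

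Relatedly, your diagnosis of ``the main obstacle'' is misplaced. You worry that ``no jump mass is lost by the gridding'' because the jumps of $X^n$ sit on $\{k/n\}$ while $\nu$ is time-continuous --- but this is actually the easy direction and is taken care of by the hypothesis $\nu(\{t\}\times\R^2)=0$ together with the Lipschitz test class. The real obstacle is the filtration mismatch just described, which is precisely why neither of the two Jacod theorems applies directly. Two smaller points: (i) your tightness step should be performed on the explicitly truncated processes $\tilde{X}^n,\tilde{W}^n$, and you must first show that the truncation does not alter conditions~\eqref{eq_Jacod:1char}--\eqref{eq_Jacod:3char} (this is the paper's step (i), which relies nontrivially on~\eqref{eq_Jacod:large_jumps} and on the Cauchy--Schwarz inequality for the predictable bracket); merely gesturing at a truncated variance does not substitute for this. (ii) Your stable-convergence step --- approximating $V$ by continuous functionals $\Phi_m(W)$ of the Brownian path and using $\Pr$-convergence of $W^n$ to $W$ --- is a legitimate alternative to the paper's $L^2$-dense family plus $N(n)^m\to N^m$ argument, and if written out with the standard triangular $L^1$-approximation it is correct; the paper's route is preferred because it feeds directly into the very-good-extension verification, killing two birds with one stone.
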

\begin{proof}
We define processes $\tilde{W}^n=(\tilde{W}^n_t)_{t\in [0,1]}$ and $\tilde{X}^n=(\tilde{X}^n_t)_{t\in [0,1]}$ via
\[ \tilde{W}^n_t = \sum_{k=1}^{\lfloor nt\rfloor} (W_{k/n}-W_{(k-1)/n})\1_{\{|W_{k/n}-W_{(k-1)/n}|\leq 1\}}\]
and (recall the constant $\kappa$ from~\eqref{eq_Jacod:large_jumps})
\[ \tilde{X}^n_t = \sum_{k=1}^{\lfloor nt\rfloor} \chi_{nk} \1_{\{|\chi_{nk}|\leq \kappa\}}.\]
For the latter we have using~\eqref{eq_Jacod:large_jumps},
\begin{align*}
&\sup_{t\leq 1} |X_t^n - \tilde{X}_t^n| \leq \sum_{k=1}^n |\chi_{nk}|\1_{\{|\chi_{nk}|>\kappa\}}\leq\frac{1}{\kappa} \sum_{k=1}^n |\chi_{nk}|^2\1_{\{|\chi_{nk}|>\kappa\}}\longrightarrow_{\Pr}\ 0.
\end{align*} 
Assuming we have shown $\cF$-stable convergence $\tilde{X}^n\longrightarrow X$, then the claim of $\cF$-stable convergence $X^n\longrightarrow X$ follows by Theorem~$3.18$(a) in~\citeSM{App:Haeusler/Luschgy} together with the previously shown uniform stochastic convergence of $\tilde{X}^n$ towards $X^n$. Thus, in what follows we establish 
\begin{align}\label{eq_proof:Jacod_4}
\tilde{X}^n\ \stackrel{\cF-st}{\longrightarrow}\ X. 
\end{align}
This proof is conducted in eight steps.
\begin{itemize}
\item[(i)] Denote by $\tilde{B}^n$ and $\tilde{\nu}^n$ the first and third characteristic of $(\tilde{X}^n,\tilde{W}^n)$. We now show that~\eqref{eq_Jacod:1char}-\eqref{eq_Jacod:3char} hold true with the same limits for $\tilde{X}^n$ in place of $X^n$ and $\tilde{W}^n$ replacing $W^n$, i.e. we prove for all $t\in [0,1]$, $g\in\mathcal{C}$ and $f(x,y)=x^2$ that:

\smallskip
\begin{itemize}
\item[(a)] $\sup_{s\leq t} |\tilde{B}_s^n - B_s| \longrightarrow_{\Pr} 0$,\smallskip
\item[(b)] $\langle \tilde{X}^n,\tilde{X}^n\rangle_t\longrightarrow_{\Pr} (f\star \nu)_t$,\smallskip
\item[(c)] $\langle \tilde{X}^n,\tilde{W^n}\rangle_t \longrightarrow_{\Pr} 0$,\smallskip
\item[(d)] $(g\star \tilde{\nu}^n)_t \longrightarrow_{\Pr} (g\star \nu)_t$,\smallskip
\end{itemize}
and additionally\smallskip
\begin{itemize}
\item[(e)] $\langle \tilde{W}^n,\tilde{W}^n\rangle_t \longrightarrow_\Pr\ t$.
\end{itemize}\medskip
We start with claim (a). Here, we have
\begin{align*}
\sup_{s\leq t} |\tilde{B}_s^n - B^n_s| \leq \sum_{k=1}^n \E\left[|\chi_{nk}|\1_{\{|\chi_{nk}|>\kappa\}}|\cF_{(k-1)/n}\right] \leq \frac{1}{\kappa} \sum_{k=1}^n \E\left[|\chi_{nk}|^2\1_{\{|\chi_{nk}|>\kappa\}}|\cF_{(k-1)/n}\right].
\end{align*}
The last expression converges to zero in probability by~\eqref{eq_Jacod:large_jumps} and thus (a) follows by~\eqref{eq_Jacod:1char}. For part (b), we note that
\[ \langle \tilde{X}^n,\tilde{X}^n\rangle_t = \sum_{k=1}^{\lfloor nt\rfloor} \E\left[ (\chi_{nk}\1_{\{|\chi_{nk}|\leq \kappa\}}-\E[\chi_{nk}\1_{\{|\chi_{nk}|\leq \kappa\}}|\cF_{(k-1)/n}])^2 | \cF_{(k-1)/n}\right]\]
and 
\[ \langle X^n,X^n\rangle_t = \sum_{k=1}^{\lfloor nt\rfloor} \E\left[ (\chi_{nk}-\E[\chi_{nk}|\cF_{(k-1)/n}])^2 | \cF_{(k-1)/n}\right]. \]
By the Cauchy--Schwarz inequality for symmetric positive semidfinite bilinear forms,
\begin{align}\label{eq_proof:Jacod_6}
\begin{split}
&| \langle \tilde{X}^n,\tilde{X}^n\rangle_t-\langle X^n,X^n\rangle_t| \\
&\hspace{1cm}\leq  \langle \tilde{X}^n-X^n, \tilde{X}^n-X^n\rangle_t + 2|\langle X^n,\tilde{X}^n-X^n\rangle_t| \\
&\hspace{1cm}\leq \langle \tilde{X}^n-X^n, \tilde{X}^n-X^n\rangle_t + \sqrt{|\langle X^n,X^n\rangle_t||\langle \tilde{X}^n-X^n, \tilde{X}^n-X^n\rangle_t|}.
\end{split}
\end{align}
Now,
\begin{align}\label{eq_proof:Jacod_2}
\langle \tilde{X}^n-X^n, \tilde{X}^n-X^n\rangle_t \leq \sum_{k=1}^{\lfloor nt\rfloor} \E\left[\chi_{nk}^2\1_{\{|\chi_{nk}|>\kappa\}}|\cF_{(k-1)/n}\right] \longrightarrow_\Pr 0
\end{align}
by~\eqref{eq_Jacod:large_jumps}. Then, (b) follows from~\eqref{eq_proof:Jacod_6} and $\langle X^n,X^n\rangle_t \rightarrow_{\Pr}(f\star\nu)_t$ given in~\eqref{eq_Jacod:2char_1}. The reasoning for (c) is similar to that of (b). Here, we first observe that with the abbreviation $\tilde{w}_{nk}=(W_{k/n}-W_{(k-1)/n})\1_{\{|W_{k/n}-W_{(k-1)/n}|\leq 1\}}$,
\begin{align*}
\langle \tilde{X}^n, \tilde{W}^n\rangle_t &= \sum_{k=1}^{\lfloor nt\rfloor} \E\Big[ (\chi_{nk}\1_{\{|\chi_{nk}|\leq \kappa\}}-\E[\chi_{nk}\1_{\{|\chi_{nk}|\leq \kappa\}}|\cF_{(k-1)/n}])  \\
&\hspace{4cm} \cdot (\tilde{w}_{nk} - \E[\tilde{w}_{nk}|\cF_{(k-1)/n}]) \big| \cF_{(k-1)/n}\Big]
\end{align*} 
and
\[ \langle X^n, W^n\rangle_t = \sum_{k=1}^{\lfloor nt\rfloor} \E\left[ (\chi_{nk}-\E[\chi_{nk}|\cF_{(k-1)/n}])(W_{k/n}-W_{(k-1)/n}) | \cF_{(k-1)/n}\right]. \]
Then, again by the Cauchy--Schwarz inequality for symmetric and positive semidefinite bilinear forms,
\begin{align*}
\left| \langle \tilde{X}^n, \tilde{W}^n\rangle_t - \langle X^n,W^n\rangle_t\right| &= \left| \langle \tilde{X}^n-X^n,\tilde{W}^n\rangle_t  + \langle X^n, \tilde{W}^n-W^n\rangle_t\right| \\
&\leq \sqrt{\langle \tilde{X}^n-X^n, \tilde{X}^n-X^n\rangle_t\langle \tilde{W}^n,\tilde{W}^n\rangle_t } \\
&\hspace{1cm} + \sqrt{\langle \tilde{W}^n-W^n, \tilde{W}^n-W^n\rangle_t\langle \tilde{X}^n,\tilde{X}^n\rangle_t }.
\end{align*}
The first summand converges to zero in probability by~\eqref{eq_proof:Jacod_2}, for the second one it follows by
\begin{align}\label{eq_proof:Jacod_3}
\begin{split}
&\langle \tilde{W}^n-W^n, \tilde{W}^n-W^n\rangle_t \\
&\hspace{1cm}\leq \sum_{k=1}^{\lfloor nt\rfloor}  (W_{k/n}-W_{(k-1)/n})^2\1_{\{|W_{k/n}-W_{(k-1)/n}|>1\}}\\
&\hspace{1cm}\leq \sum_{k=1}^{\lfloor nt\rfloor} \sqrt{\E\left[  (W_{k/n}-W_{(k-1)/n})^4\right]} \sqrt{\Pr(|W_{k/n}-W_{(k-1)/n}|>1)}  \\
&\hspace{1cm}\leq \frac{\lfloor nt\rfloor}{n} \sqrt{\Pr\left(|Z|>\sqrt{n}\right)} \longrightarrow 0,
\end{split}
\end{align} 
where $Z\sim\mathcal{N}(0,1)$ denotes a standard Gaussian random variable. Then, condition (c) follows by~\eqref{eq_Jacod:2char_2}. To prove (d), we abbreviate  $\tilde{w}_{nk}=(W_{k/n}-W_{(k-1)/n})\1_{\{|W_{k/n}-W_{(k-1)/n}|\leq 1\}}$ and bound
\begin{align*}
&\left| (g\star \tilde{\nu}^n)_t-(g\star \nu^n)_t\right|\\
&\hspace{1cm}\leq \sum_{k=1}^{\lfloor nt\rfloor} \E\left[\left. | g(\chi_{nk}\1_{\{|\chi_{nk}|\leq \kappa\}}, \tilde{w}_{nk}) - g(\chi_{nk},W_{k/n}-W_{(k-1)/n})|\right|\cF_{(k-1)/n}\right] \\
&\hspace{1cm}\leq 2\|g\|_{\sup} \sum_{k=1}^{\lfloor nt\rfloor} \E\left[ \1_{\{|\chi_{nk}> \kappa\}}+\1_{\{|W_{k/n}-W_{(k-1)/n}|> 1\}}|\cF_{(k-1)/n}\right] \\
&\hspace{1cm}\leq 2\|g\|_{\sup}\left( \frac{1}{\kappa^2} \sum_{k=1}^{\lfloor nt\rfloor} \E\left[ \chi_{nk}^2\1_{\{|\chi_{nk}> \kappa\}}|\cF_{(k-1)/n}\right] + \lfloor nt\rfloor \Pr\left( |Z|>\sqrt{n}\right) \right),
\end{align*}
where $Z\sim\mathcal{N}(0,1)$ is a standard Gaussian random variable. The last expression now tends to zero by~\eqref{eq_Jacod:large_jumps} and the Gaussian tail inequality. Then by~\eqref{eq_Jacod:3char}, (d) follows. Following the steps for (b), it is sufficient for (e) to show that
\[ \langle \tilde{W}^n-W^n, \tilde{W}^n-W^n\rangle_t\longrightarrow_{\Pr}\ 0, \]
which is a direct consequence of~\eqref{eq_proof:Jacod_3}. \bigskip

\item[(ii)] We denote the second modified characteristic of $(\tilde{X}^n,\tilde{W}^n)$ by $\tilde{G}^n$, i.e.
\[ \tilde{G}^n := \begin{pmatrix}
\langle \tilde{X}^n,\tilde{X}^n\rangle & \langle \tilde{X}^n,\tilde{W}^n\rangle \\
\langle \tilde{W}^n,\tilde{X}^n\rangle & \langle \tilde{W}^n,\tilde{W}^n\rangle
\end{pmatrix}\]
and define $G=(G_t)_{t\in [0,1]}$ via
\[ G_t = \begin{pmatrix} (f\star \nu)_t & 0 \\ 0 & t \end{pmatrix}. \] 
For any unit vector $u\in\R^2$, by Proposition~$II.2.17$(b) in~\citeSM{App:Jacod/Shiryaev}, $u^T\tilde{G}_t^n u$ is non-decreasing in $t$; the same is obiously true for $u^TG_t u$. By a standard argument (that allows to deduce uniform convergence from pointwise convergence of non-decreasing functions to a non-decreasing limiting function), we then obtain 
\[ \sup_{t\leq 1} \left|u^T\tilde{G}_t^n u - u^TG_t u\right| = \sup_{t\leq 1} \left| u^T (\tilde{G}_t^n - G_t)u\right| \longrightarrow_{\Pr} 0. \]
By setting $u$ to the standard basis vectors, we conclude uniform stochastic convergence of the diagonal terms. Subsequently choosing $u=(1/\sqrt{2},1/\sqrt{2})$ gives uniform stochastic convergence of the off-diagonal entries and we conclude for any matrix norm $\|\cdot\|$,
\[ \sup_{s\leq t} \| \tilde{G}_s^n - G_s\| \longrightarrow 0 \qquad \textrm{ for all } t\in [0,1].\]

\item[(iii)] By the convergences~\eqref{eq_Jacod:1char}-\eqref{eq_Jacod:3char} and that of $\langle \tilde{W}^n\rangle$, the processes $B$, $G$ and $g\star\nu$ for $g\in\mathcal{C}$ are $(\cF_t)_{t\in [0,1]}$-adapted. By assumption, $B$ is continuous in $t$, and for $G$ and $g\star\nu$, continuity in $t$ follows from the assumption $\nu(\{t\}\times \R^2)=0$. Therefore, $B,C$ and $g\star\nu$ are adpated and continuous processes, hence $(\cF_t)_{t\in [0,1]}$-predictable.\smallskip

\item[(iv)] From $(\cF_t)_{t\in [0,1]}$-predictability of $B,G$ and $\nu$ (which was established in (iii)), we can deduce from Theorem~$3.2$ in~\citeSM{App:Jacod_stablePII} that there exists a very good extension $\mathcal{B}'$ of $\mathcal{B}$ and a quasi-left continuous process $X$ on $\mathcal{B}'$ which is an $\cF$-conditional PII and the pair $(X,W)$ admits the characteristics $(B, C, \nu)$. By the same result, $X$ can be realized as follows: Let $(\hat{\Omega}, \hat{\cF}, (\hat{\cF}_t)_{t\in [0,1]})$ be the canonical space of all càdlàg functions and $X$ the canonical process. Then $\Pr'(d\omega,d\hat{\omega}) = \Pr(d\omega)Q_\omega(d\hat{\omega})$ and $Q_\omega$ is entirely determined by the first coordinate of $B$, $(f\star\nu)$ and $\nu(dt, dx,\{0\})$.\smallskip

\item[(v)] As the $\sigma$-field generated by $W$ is countably generated, by Proposition~$3.4.5$ in~\citeSM{App:Cohn} there exists a countable collection $\{V_m: m\in\N\}$ of bounded random variables which is dense in $L^2(\Omega,\cF,\Pr)$. We set $N^m := (N_t^m)_{t\in [0,1]}$ with $N_t^m = \E[V_m|\cF_t]$. According to $IX.7.9$ and $IX.7.10$ in~\citeSM{App:Jacod/Shiryaev}, we have\smallskip
\begin{itemize}
\item[(A)] Every bounded martingale on $(\Omega,\cF, (\cF_t)_{t\in [0,1]},\Pr)$ is the limit in $L^2$, locally uniformly in time, of a sequence of sums of stochastic integrals w.r.t. a finite number of different $N^m$.
\item[(B)] $(\cF_t)_{t\in [0,1]}$ is the smallest filtration, up to $\Pr$-null sets, w.r.t. which all $N^m$, $m\in\N$, are adapted.
\end{itemize}\smallskip

\item[(vi)] For each $m\in\N$, we define $N(n)^m=(N(n)_t^m)_{t\in [0,1]}$ via $N(n)_t^m = N_{\lfloor nt\rfloor/n}^m$. As $N^m$ is bounded, we clearly have $\sup_{\omega,t,n}|N(n)_t^m(\omega)| <\infty$ and because $N^m$ is continuous by the martingale representation property, we find
\[ \left( \tilde{W}^n, N(n)^1, \dots, N(n)^m\right) \longrightarrow_\Pr\ \left( W, N^1, \dots, N^m\right)\]
in the Skorohod space $\mathcal{D}([0,1],\R^{m+1})$.\\

We can consider $\mathcal{N}=(N^m)_{m\in\N}$ and $\mathcal{N}(n) = (N^m(n))_{m\in\N}$ as a process with paths in the Skorohod space $\mathcal{D}([0,1],\R^\N)$ and $\tilde{\mathcal{H}}^n =(g\star \tilde{\nu}^n)_{g\in\mathcal{C}}$, $\mathcal{H}=(g\star\nu)_{g\in\mathcal{C}}$ as processes in $\mathcal{D}([0,1],\R^\mathcal{C})$. By our convergence assumption and (ii), we have
\begin{align}\label{eq_proof:Jacod_1}
\left( \tilde{B}^n, \tilde{G}^n, \tilde{\mathcal{H}}^n, \tilde{W}^n, \mathcal{N}(n)\right) \longrightarrow_{\Pr}\ \left( B,G,\mathcal{H},W, \mathcal{N}\right)
\end{align} 
in the Skorohod sense. As the jumps of $(\tilde{X}^n,\tilde{W}^n)$ are uniformly bounded, Theorem~$VI.4.18$ and Lemma~$VI.4.22$ in~\citeSM{App:Jacod/Shiryaev} together with (ii) reveal that $(\tilde{X}^n,\tilde{W}^n)$ is tight in $\mathcal{D}([0,1],\R^2)$. The process on the right-hand side of~\eqref{eq_proof:Jacod_1} is continuous and we conclude with Corollary~$VI.3.33$ in~\citeSM{App:Jacod/Shiryaev} that $(\tilde{X}^n, \tilde{W}^n, \tilde{B}^n, \tilde{G}^n, \tilde{\mathcal{H}}_n, \mathcal{N}(n))$ is tight in the respective Skorohod space. Moreover, for any limiting process $(\hat{X},\hat{W}, \hat{B}, \hat{G},\hat{\mathcal{H}}, \hat{\mathcal{N}})$, we have $\mathcal{L}(\hat{W}, \hat{B}, \hat{G},\hat{\mathcal{H}}, \hat{\mathcal{N}}) = \mathcal{L}(W,G,B,\mathcal{H},\mathcal{N})$.\smallskip

\item[(vii)] We now choose any subsequence, indexed in $n_k$ such that the sequence of distributions $\mathcal{L}(\tilde{X}^{n_k}, \tilde{W}^{n_k}, \tilde{B}^{n_k}, \tilde{G}^{n_k},\tilde{\mathcal{H}}^{n_k}, \mathcal{N}(n_k))$ converges weakly to some measure $\overline{\Q}$ on the corresponding image. From what precedes, one can realize the limit as follows: Consider again the canonical space $(\hat{\Omega},\hat{\cF}, (\hat{\cF}_t)_{t\in [0,1]})$ of real-valued càdlàg functions on $[0,1]$ with the canonical process $X$. Then we set $\tilde{\Omega} = \Omega\times \hat{\Omega}$, $\tilde{\cF}=\cF\otimes \hat{\cF}$ and $\tilde{\cF}_t = \bigcap_{s> t} \cF_s\otimes\hat{\cF}_s$. Since $\cF=\sigma(V_m: m\in\N)$ up to $\Pr$-null sets, the pullback measure of $\overline{\Q}$ is a measure on $(\tilde{\Omega},\tilde{\cF})$, in particular there exists a probability measure $\tilde{\Pr}$ on $(\tilde{\Omega},\tilde{\cF})$ whose $\Omega$-marginal is $\Pr$, and such that $\mathcal{L}(\tilde{X}^{n_k}, \tilde{W}^{n_k}, \tilde{B}^{n_k}, \tilde{G}^{n_k},\tilde{\mathcal{H}}^{n_k}, \mathcal{N}(n_k))$ converges weakly to the law of $(X,W,B,G,\mathcal{H},\mathcal{N})$ under $\tilde{\Pr}$.\\

Therefore, we have an extension $\tilde{\mathcal{B}}=(\tilde{\Omega},\tilde{\cF},(\tilde{\cF}_t)_{t\in [0,1]},\tilde{\Pr})$ of $\mathcal{B}=(\Omega,\cF,(\cF_t)_{t\in [0,1]},\Pr)$ with a disintegration $\tilde{\Pr}(d\omega,d\hat{\omega})=\Pr(d\omega)\tilde{Q}_\omega(d\hat{\omega})$ (since $(\hat{\Omega},\hat{\cF})$ is Polish, see see $II.1.2$ in~\citeSM{App:Jacod/Shiryaev}). Up to $\tilde{\Pr}$-null sets, the filtrations $(\cF)_{t\in [0,1]}$ and $(\tilde{\cF}_t)_{t\in [0,1]}$ are generated by $\mathcal{N}$ and $(\mathcal{N},X)$, respectively (this follows from property (B)).\\

We now show that $(X,W)$ is a semimartingale on the stochastic basis $\tilde{\mathcal{B}}$ with respective characteristics $(B,C,\nu)$. To this aim, denote by $\mu^n$ the jump measure of $(\tilde{X}^n,\tilde{W}^n)$. Then all components of $\mathcal{N}(n)$, $(\tilde{X}^n,\tilde{W}^n) - \tilde{B}^n$, $(g\star \mu^n - g\star\tilde{\nu}^n)_{g\in\mathcal{C}}$ and
\[ \begin{pmatrix} (\tilde{X}^n)^2 & \tilde{X}^n\tilde{W}^n \\ \tilde{W}^n\tilde{X}^n & (\tilde{W}^n)^2\end{pmatrix} - \tilde{G}^n\]
are $\mathbb{F}^n$-local martingales with uniformly bounded jumps. Additionally, by weak convergence of $(\tilde{X}^n,\tilde{W}^n)$ to $(X,W)$, Corollary~$VI.2.8$ in~\citeSM{App:Jacod/Shiryaev} provides weak convergence of $g\star\mu^n$ to $g\star \mu$ for all $g\in\mathcal{C}$. Hence, Proposition~$IX.1.17$ in~\citeSM{App:Jacod/Shiryaev} yields that all components of $\mathcal{N}$, $(X,W)$, $(g\star \mu - g\star\nu)_{g\in\mathcal{C}}$ ($\mu$ denoting the jump measure associated to $(X,W)$) and 
\[ \begin{pmatrix} X^2 & XW \\ WX & W^2\end{pmatrix} - G\]
are $(\tilde{\cF}_t)_{t\in [0,1]}$-local martingales. Consequently, $(X,W)$ is a semimartingale on $\tilde{\mathcal{B}}$ with characteristics $(B,C,\nu)$. Because all elements of $\mathcal{N}$ are $\tilde{\mathcal{B}}$-martingales, property (A) of step (v) gives that every martingale on $\mathcal{B}$ is also a martingale on $\tilde{\mathcal{B}}$ and our extension is very good. Then, Theorem~$3.2$ of~\citeSM{App:Jacod_stablePII} states that the conditional $\tilde{\Pr}$-law of $X$ knowing $\mathcal{F}$ is entirely determined by $W$ and the characteristics of the pair $(X,W)$. In particular, compare (iv), we have $\tilde{Q}_\omega= Q_\omega$ for $\Pr$-almost every $\omega\in\Omega$ and the original sequence $(\tilde{X}^n, \tilde{W}^n, \tilde{B}^n, \tilde{G}^n, \tilde{\mathcal{H}}_n, \mathcal{N}(n))$ converges in distribution to $(X,W,B,G,\mathcal{H},\mathcal{N})$ as defined on the basis $\tilde{\mathcal{B}}$.\smallskip

\item[(viii)] It remains to prove that the convergence is indeed $\cF$-stable. This follows as in the fourth step of the proof of Theorem~$2.1$ in~\citeSM{App:Jacod_stableGaussian}, but is given here for the readers convenience. From (vii) we know that the sequence $(\tilde{X}^n,\mathcal{N}(n))$ converges in law to $(X,\mathcal{N})$. In particular, if $f:\mathcal{D}([0,1],\R)\longrightarrow\R$ is a bounded continuous function we find (denoting with $\tilde{\E}$ the expectation with respect to $\tilde{\Pr}$),
\[ \E\left[ f(\tilde{X}^n) N(n)_1^m\right] \longrightarrow {\E}\left[ f(X) N_1^m\right], \]
since $N(n)^m$ is a component of $\mathcal{N}(n)$ that is uniformly bounded in $n$. By boundedness of $N(n)^m$ and convergence of $N(n)^m$ to $N^m$ in the Skorohod space in probability, we can deduce that $N(n)_1^m \rightarrow N_1^m$ in $L^1$, hence
\[ \E\left[ f(\tilde{X}^n) N_1^m\right] \longrightarrow \tilde{\E}\left[ f(X)N_1^m\right].\]
Since $\tilde{\E}[UN_1^m] = \tilde{\E}[UV_m]$ for any bounded $\tilde{\cF}$-measurable variable $U$, we deduce
\[ \E\left[ f(\tilde{X}^n) V_m\right] \longrightarrow \tilde{\E}\left[ f(X)V_m\right].\]
Finally, any bounded $\cF$-measurable random variable $V$ is the $L^2$-limit of a sequence in $\{V_m: m\in\N\}$ (see (v)), and we find
\[ \E\left[ f(\tilde{X}^n) V\right] \longrightarrow \tilde{\E}\left[ f(X)V\right],\]
which is the desired stable convergence.
\end{itemize}
\end{proof}

\section{Remaining proofs of Section~\ref{Section:Limiting_distribution}}\label{App:conditions_prop_stable}

We recall the assumption that $N=m+M$ and $z_m<\dots <z_1<0< z_{m+1}< \dots< z_N$, together with the definition
\[ z_0=0, \qquad z_j^- := z_j,\ j=1,\dots, m\quad\textrm{ and } z_j^+:= z_{m+j},\ j=1,\dots, M.\]
In particular, we have
\[ \sum_{l=1}^N \kappa_l \ell_{n,t}(z_l) = \sum_{l=1}^m \kappa_l^- \ell_{n,t}(z_l^-) + \sum_{l=1}^M \kappa_l^+ \ell_{n,t}(z_l^+).\]
Recalling the definition of $Z_k^j(\theta',\theta)$ for $\theta'\leq\theta$ given in~\eqref{eq:def_Z_j} and~\eqref{eq:def_Z_j28}, we can further write
\[ \sum_{l=1}^m \kappa_l^- \ell_{n,t}(z_l^-) = \sum_{k=1}^{\lfloor nt\rfloor} \left( \sum_{l=1}^m  \kappa_l^- \left(-\sum_{j=1}^9 Z_k^j(z_l^-/n,0) + \log\left(\frac{\alpha^2}{\beta^2}\right)\left( \1_{I_{2,k}^{z_l^-/n,0}} + \1_{I_{8,k}^{z_l^-/n,0}}\right)\right)\right), \]
as well as
\[ \sum_{l=1}^M \kappa_l^+ \ell_{n,t}(z_l^+) = \sum_{k=1}^{\lfloor nt\rfloor} \left( \sum_{l=1}^M  \kappa_l^+ \left(\sum_{j=1}^9Z_k^j(0,z_l^+/n) + \log\left(\frac{\beta^2}{\alpha^2}\right)\left( \1_{I_{2,k}^{0,z_{l}^+/n}} + \1_{I_{8,k}^{0,z_l^+/n}}\right)\right)\right). \]

\begin{proof}[Verification of~\eqref{condition_2char_1}]
We use that for the conditional variance we have
\begin{align*}
\E_{\rho_0}\left[(y_{nk}-\E_{\rho_0}[y_{nk}|\cF_{(k-1)/n}])^2|\cF_{(k-1)/n}\right] =\E_{\rho_0}[y_{nk}^2|\cF_{(k-1)/n}]-\E_{\rho_0}[y_{nk}|\cF_{(k-1)/n}]^2 . 
\end{align*}
Next, we have by Lemma~\ref{lemma:upper_bound_transition_density} for $z\geq 0$,
\begin{align*}
\E_{\rho_0}\left[\left. \1_{I_{2,k}^{0,z/n}}\right|\ \cF_{(k-1)/n} \right] &\leq C_{\alpha,\beta}\1_{\{X_{(k-1)/n}<\rho_0\}} \int_{\rho_0}^{\rho_0+z/n} \sqrt{n} \exp\left(-\frac{(y-X_{(k-1)/n})^2}{2\max\{\alpha^2,\beta^2\}/n}\right) dy \\
&\leq C_{\alpha,\beta}\1_{\{X_{(k-1)/n}<\rho_0\}}\frac{z}{\sqrt{n}} \exp\left(-\frac{(X_{(k-1)/n}-\rho_0)^2}{2\max\{\alpha^2,\beta^2\}/n}\right).
\end{align*}
Analogously, one obtains
\[ \E_{\rho_0}\left[\left. \1_{I_{8,k}^{0,z/n}}\right|\ \cF_{(k-1)/n} \right] \leq C_{\alpha,\beta}\1_{\{X_{(k-1)/n}\geq\rho_0+z/n\}}\frac{z}{\sqrt{n}} \exp\left(-\frac{(X_{(k-1)/n}-\rho_0-z/n)^2}{2\max\{\alpha^2,\beta^2\}/n}\right)\]
and for $z<0$ the estimates
\[ \E_{\rho_0}\left[\left. \1_{I_{2,k}^{z/n,0}}\right|\ \cF_{(k-1)/n} \right] \leq C_{\alpha,\beta}\1_{\{X_{(k-1)/n}<\rho_0+z/n\}}\frac{|z|}{\sqrt{n}} \exp\left(-\frac{(X_{(k-1)/n}-\rho_0-z/n)^2}{2\max\{\alpha^2,\beta^2\}/n}\right)\]
and
\[ \E_{\rho_0}\left[\left. \1_{I_{8,k}^{z/n,0}}\right|\ \cF_{(k-1)/n} \right] \leq C_{\alpha,\beta}\1_{\{X_{(k-1)/n}\geq\rho_0\}}\frac{|z|}{\sqrt{n}} \exp\left(-\frac{(X_{(k-1)/n}-\rho_0)^2}{2\max\{\alpha^2,\beta^2\}/n}\right). \]
In particular, by Corollary~\ref{cor:bound_exp_X^2}, we find for $z\geq 0$,
\begin{align}\label{eq_proof:1_char_condition_moment_Ik}
\E_{\rho_0}\left[ \E_{\rho_0}[\1_{I_k}|\cF_{(k-1)/n}]^2\right] \leq C_{\alpha,\beta}\frac{|z|^2}{n\sqrt{k}}\quad\textrm{ for } I_k\in\left\{I_{2,k}^{0,z/n},I_{8,k}^{0,z/n},I_{2,k}^{-z/n,0},I_{8,k}^{-z/n,0} \right\}
\end{align}
and
\begin{align}\label{eq_proof:1_char_moment_Ik}
\E_{\rho_0}\left[ \1_{I_k} \right] \leq C_{\alpha,\beta} |z| \frac{1}{\sqrt{nk}} \quad\textrm{ for } I_k\in\left\{I_{2,k}^{0,z/n},I_{8,k}^{0,z/n},I_{2,k}^{-z/n,0},I_{8,k}^{-z/n,0} \right\}.
\end{align} 
By Proposition~\ref{prop:moment_product}, for $z\geq 0$,
\begin{align*}
\E_{\rho_0}\left[ \E_{\rho_0}[|Z_k^j(-z/n,0)|\big|\cF_{(k-1)/n}]^2\right] &\leq C_{\alpha,\beta} \frac{|z|^2}{n}\frac{1}{\sqrt{k}}, \\
\E_{\rho_0}\left[ \E_{\rho_0}[|Z_k^j(0,z/n)|\big|\cF_{(k-1)/n}]^2\right] &\leq C_{\alpha,\beta} \frac{|z|^2}{n}\frac{1}{\sqrt{k}},
\end{align*} 
such that together with~\eqref{eq_proof:1_char_condition_moment_Ik},
\begin{align*}
&\E_{\rho_0}\left[ \sum_{k=1}^{\lfloor nt\rfloor} \E_{\rho_0}[y_{nk}|\cF_{(k-1)/n}]^2 \right]\\
&\hspace{1cm}\leq C_{\alpha,\beta}(m,M) \max\{|z_m^-|,\dots, |z_1^-|,|z_1^+|,\dots, |z_M^+|\}^2 \frac{1}{n}\sum_{k=1}^{\lfloor nt\rfloor} \frac{1}{\sqrt{k}} \longrightarrow 0.
\end{align*}
Hence,
\[ \sum_{k=1}^{\lfloor nt\rfloor}\E_{\rho_0}\left[(y_{nk}-\E_{\rho_0}[y_{nk}|\cF_{(k-1)/n}])^2|\cF_{(k-1)/n}\right] = \sum_{k=1}^{\lfloor nt\rfloor}\E_{\rho_0}[y_{nk}^2|\cF_{(k-1)/n}] + o_{\Pr_{\rho_0}}(1)\]
and 
\begin{align*}
&\E_{\rho_0}[y_{nk}^2|\cF_{(k-1)/n}] \\
&\hspace{0.2cm} = \E_{\rho_0}\left[ \left( \sum_{l=1}^m \sum_{j=1}^9 \kappa_l^- \left(-Z_k^j(z_l^-/n,0) + \log\left(\frac{\alpha^2}{\beta^2}\right)\left( \1_{I_{2,k}^{z_l^-/n,0}} + \1_{I_{8,k}^{z_l^-/n,0}}\right)\right) \right.\right. \\
&\hspace{0.7cm} + \left.\left.\left. \sum_{l=1}^M \sum_{j=1}^9 \kappa_l^+ \left(Z_k^j(0,z_l^+/n) + \log\left(\frac{\beta^2}{\alpha^2}\right)\left( \1_{I_{2,k}^{0,z_{l}^+/n}} + \1_{I_{8,k}^{0,z_l^+/n}}\right)\right)\right)^2 \right|\ \cF_{(k-1)/n} \right] \\
&\hspace{0.2cm} = \sum_{j,l}\sum_{Z,Z'\in\{ -Z_k^j(z_l^-/n,0),Z_k^j(0,z_l^+/n)\}} \kappa(Z)\kappa(Z')\E_{\rho_0}\left[ ZZ'|\cF_{(k-1)/n}\right] \\
&\hspace{0.5cm} + \sum_{j,l}\sum_{Z\in\{ -Z_k^j(z_l^-/n,0),Z_k^j(0,z_l^+/n)\}} \sum_{I_k\in\{I_{2,k}^{z_l^-/n,0},I_{8,k}^{z_l^-/n,0},I_{2,k}^{0,z_{l}^+/n},I_{8,k}^{0,z_l^+/n} \}} \hspace{-1cm}\kappa(Z)\kappa(I_k)\E_{\rho_0}\left[ Z \1_{I_k}|\cF_{(k-1)/n}\right]\\
&\hspace{1.5cm} + \E_{\rho_0}\left[ \left( \sum_{l=1}^m  \kappa_l^- \log\left(\frac{\alpha^2}{\beta^2}\right)\left( \1_{I_{2,k}^{z_l^-/n},0} + \1_{I_{8,k}^{z_l^-/n,0}}\right) \right.\right. \\
&\hspace{3.5cm} + \left.\left.\left. \sum_{l=1}^M  \kappa_l^+  \log\left(\frac{\beta^2}{\alpha^2}\right)\left( \1_{I_{2,k}^{0,z_{l}^+/n}} + \1_{I_{8,k}^{0,z_l^+/n}}\right)\right)^2 \right|\ \cF_{(k-1)/n} \right] \\
&\hspace{0.2cm} =: S_1(k) + S_2(k) + S_3(k).
\end{align*}
In what follows, we will work with these summands seperately. For the first two, we bound there $L^1(\Pr_{\rho_0})$-norm as both will turn out to be negligable. The third one gives the main contributing term and is evaluated more explicitly. 
\begin{itemize}
\item[$\bullet\ \boldsymbol{S_1(k)}.$] Using $|ab|\leq (a^2+b^2)/2$, we obtain by Proposition~\ref{prop:moment_product},
\[ \E_{\rho_0}\left[\E_{\rho_0}\left[ ZZ'|\cF_{(k-1)/n}\right]\right] \leq \frac12 \E_{\rho_0}\left[ Z^2 + (Z')^2\right] \leq C_{\alpha,\beta}\max\{|z_m^-|,\dots, |z_M^+|\}^2\frac{1}{n\sqrt{k}}. \]
In particular,
\[ \E_{\rho_0}\left[ |S_1(k)|\right] \leq C_{\alpha,\beta}(m,M)\max\{|z_m^-|,\dots, |z_M^+|\}^2 \max\{|\kappa_m^-|,\dots, |\kappa_M^+|\}^2\frac{1}{n\sqrt{k}}.\]
\smallskip

\item[$\bullet\ \boldsymbol{S_2(k)}.$] By Proposition~\ref{prop:moment_product}, Cauchy-Schwarz' inequality and~\eqref{eq_proof:1_char_moment_Ik},
\begin{align*}
\E_{\rho_0}\left[\E_{\rho_0}\left[ Z\1_{I_k}|\cF_{(k-1)/n}\right]\right] &\leq \sqrt{\E_{\rho_0}[Z^2] \E_{\rho_0}[\1_{I_k}]} \leq C_{\alpha,\beta}\max\{|z_m^-|,\dots, |z_M^+|\}^2\frac{1}{n^{3/4}\sqrt{k}}.
\end{align*}
In particular,
\[ \E_{\rho_0}\left[ |S_2(k)|\right] \leq C_{\alpha,\beta}(m,M)\max\{|z_m^-|,\dots, |z_M^+|\}^2 \max\{|\kappa_m^-|,\dots, |\kappa_M^+|\}^2\frac{1}{n^{3/4}\sqrt{k}}.\] \smallskip

\item[$\bullet\ \boldsymbol{S_3(k)}.$] First, we observe that
\begin{align}\label{eq_proof:indicators_28}
\begin{split}
& \1_{I_{2,k}^{z_l^-/n,0}}\1_{I_{8,k}^{z_{l'}^-/n,0}}=0, \quad \1_{I_{2,k}^{z_l^-/n,0}}\1_{I_{2,k}^{0,z_{l'}^+/n}}=0, \quad \1_{I_{2,k}^{z_l^-/n},0}\1_{I_{8,k}^{0,z_{l'}^+/n}}=0, \\
& \1_{I_{8,k}^{z_l^-/n,0}}\1_{I_{2,k}^{0,z_{l'}^+/n}}=0, \quad \1_{I_{8,k}^{z_l^-/n,0}}\1_{I_{8,k}^{0,z_{l'}^+/n}}=0, \quad \1_{I_{2,k}^{0,z_{l}^+/n}}\1_{I_{8,k}^{0,z_{l'}^+/n}}=0.
\end{split}
\end{align} 
Hence,
\begin{align*}
S_3(k) &= \log\left(\frac{\alpha^2}{\beta^2}\right)^2 \E_{\rho_0}\left[\left. \left( \sum_{l=1}^m  \kappa_l^- \1_{I_{2,k}^{z_l^-/n,0}} \right)^2 + \left(\sum_{l=1}^m  \kappa_l^-  \1_{I_{8,k}^{z_l^-/n,0}} \right)^2 \right|\ \cF_{(k-1)/n}\right] \\
&\hspace{1cm} + \log\left(\frac{\beta^2}{\alpha^2}\right)^2 \E_{\rho_0}\left[ \left. \left(\sum_{l=1}^M  \kappa_l^+  \1_{I_{2,k}^{0,z_{l}^+/n}}\right)^2  + \left( \sum_{l=1}^M  \kappa_l^+\1_{I_{8,k}^{0,z_l^+/n}}\right)^2 \right|\ \cF_{(k-1)/n} \right].
\end{align*}
Both conditional expectations can be further split into two summands, yielding in total $S_3(k) = S_{31}(k)+S_{32}(k)+S_{33}(k)+S_{34}(k)$. We will now evaluate those terms seperately:\smallskip
\begin{itemize}
\item[$\boldsymbol{-\ S_{31}(k)}.$] Here, we first observe
\begin{align*}
\left(\sum_{l=1}^m  \kappa_l^-  \1_{I_{2,k}^{z_l^-/n,0}}\right)^2 &= \left( \sum_{l=1}^m \left( \kappa_l^- +\dots + \kappa_m^-\right) \1_{\{X_{(k-1)/n} < \rho_0+z_l^-/n < X_{k/n} \leq \rho_0+z_{l-1}^-/n \leq \rho_0\}} \right. \\
&\hspace{1cm} \left. -  \sum_{l=1}^m \sum_{j=l}^m \kappa_j^- \1_{\{\rho_0+z_j^-\leq X_{(k-1)/n}< \rho_0, \rho_0 +z_l^- <X_{k/n}\leq \rho_0+z_{l-1}^-/n \}} \right)^2 \\
& \hspace{-1.5cm}= \sum_{l=1}^m \left( \kappa_l^- +\dots + \kappa_m^-\right)^2 \1_{\{X_{(k-1)/n} < \rho_0+z_l^-/n < X_{k/n} \leq \rho_0+z_{l-1}^-/n \leq \rho_0\}} \\
&\hspace{-1cm} - 2\left( \sum_{l=1}^m \left( \kappa_l^- +\dots + \kappa_m^-\right) \1_{\{X_{(k-1)/n} < \rho_0+z_l^-/n < X_{k/n} \leq \rho_0+z_{l-1}^-/n \leq \rho_0\}}\right) \\
&\hspace{0cm} \cdot \left(\sum_{l=1}^m \sum_{j=l}^m \kappa_j^- \1_{\{ \rho_0+z_j^-\leq X_{(k-1)/n}< \rho_0, \rho_0 +z_l^- <X_{k/n}\leq \rho_0+z_{l-1}^-/n \}} \right)\\
&\hspace{-0.5cm} + \left(\sum_{l=1}^m \sum_{j=l}^m \kappa_j^- \1_{\{ \rho_0+z_j^-\leq X_{(k-1)/n}< \rho_0, \rho_0 +z_l^- <X_{k/n}\leq \rho_0+z_{l-1}^-/n \}} \right)^2.
\end{align*}
Next, we see that
\begin{align*}
&\E_{\rho_0}\left[ \1_{\{X_{(k-1)/n} < \rho_0+z_l^-/n \leq \rho_0+z_l^-/n < X_{k/n} \leq \rho_0+z_{l-1}^-/n \leq \rho_0\}}|\cF_{(k-1)/n}\right] \\
&\hspace{0.2cm} = \frac{2}{\alpha+\beta}\frac{\beta}{\alpha} \frac{1}{\sqrt{2\pi/n}} \exp\left(-\frac{(X_{(k-1)/n}-\rho_0)^2}{2\alpha^2/n}\right)\1_{\{X_{(k-1)/n}<\rho_0\}}\frac{\left|z_l^- - z_{l-1}^-\right|}{n} + r_{n,k,l}^{31} + s_{n,k,l}^{31},
\end{align*}
where 
\begin{align*}
r_{n,k,l}^{31} &= \int_{\rho_0+z_l^-/n}^{\rho_0+z_{l-1}^-/n} \left( P_1^{\rho_0}(X_{(k-1)/n},y;1/n) - P_1^{\rho_0}(X_{(k-1)/n},\rho_0;1/n)\right) dy,\\
s_{n,k,l}^{31} &= \frac{2}{\alpha+\beta}\frac{\beta}{\alpha} \frac{1}{\sqrt{2\pi/n}}\left(\1_{\{X_{(k-1)/n}<\rho_0+z_l^-/n\}} - \1_{\{X_{(k-1)/n}<\rho_0\}}\right) \\
&\hspace{5cm}\exp\left(-\frac{(X_{(k-1)/n}-\rho_0)^2}{2\alpha^2/n}\right)\frac{\left|z_l^- - z_{l-1}^-\right|}{n},
\end{align*} 
and by arguments analogous to those used in the evaluation of $r_n^{8,2}$ in the proof of Proposition~\ref{prop:expansion_of_drift_t}, we derive with a Taylor expansion of $P_1^{\rho_0}$ that
\[ \E_{\rho_0}\left[ |r_{n,k,l}^{31}|\right] \leq C_{\alpha,\beta}(z_{l-1}^-,z_l^-)\frac{1}{n\sqrt{k}} \]
and following the reasoning for $r_n^{7,2}$ in the proof of Proposition~\ref{prop:expansion_of_drift_t} gives
\[ \E_{\rho_0}\left[ |s_{n,k,l}^{31}|\right] \leq C_{\alpha,\beta}(z_{l-1},z_l)\frac{1}{n\sqrt{k}}. \]
Next, with Lemma~\ref{lemma:upper_bound_transition_density} we get
\begin{align}\label{eq_proof:1char_1}
\begin{split}
&\E_{\rho_0}\left[ \1_{\{\rho_0+z_m^-/n \leq X_{(k-1)/n},X_{k/n}\leq \rho_0\}}\right] \\
&\hspace{0.2cm} \leq \E_{\rho_0}\left[ C_{\alpha,\beta}\1_{\{\rho_0+z_m^-/n \leq X_{(k-1)/n}\leq \rho_0\}}\int_{\rho_0+z_m^-/n}^{\rho_0} \sqrt{n} \exp\left(-\frac{(y-X_{(k-1)/n})^2}{2\max\{\alpha^2,\beta^2\}/n}\right) dy \right] \\
&\hspace{0.2cm} \leq C_{\alpha,\beta}\frac{|z_m^-|}{\sqrt{n}} \int_{\rho_0+z_m^-/n}^{\rho_0} \frac{1}{\sqrt{(k-1)/n}} \exp\left(-\frac{(y-x_0)^2}{2(k-1)\max\{\alpha^2,\beta^2\}/n}\right) dy \\
&\hspace{0.2cm} \leq C_{\alpha,\beta}\frac{|z_m^-|^2}{n} \frac{1}{\sqrt{k}}.
\end{split}
\end{align}
With this bound~\eqref{eq_proof:1char_1}, we then get
\begin{align*}
&\E_{\rho_0}\left[ 2\left( \sum_{l=1}^m \left( \kappa_l^- +\dots + \kappa_m^-\right) \1_{\{X_{(k-1)/n} < \rho_0+z_l^-/n < X_{k/n} \leq \rho_0+z_{l-1}^-/n \leq \rho_0\}}\right) \right.\\
&\hspace{2cm} \left.\cdot \left(\sum_{l=1}^m \sum_{j=l}^m \kappa_j^- \1_{\{  \rho_0+z_j^-\leq X_{(k-1)/n}<\rho_0, \rho_0 +z_l^- <X_{k/n}\leq \rho_0+z_{l-1}^-/n \}} \right) \right]\\
&\hspace{0.2cm} \leq 2m\left(|\kappa_1^-|+\dots + |\kappa_m^-|\right) \sum_{l=1}^m \sum_{j=l}^m |\kappa_j^-| \E_{\rho_0}\left[\1_{\{\rho_0+z_m^-/n \leq X_{(k-1)/n},X_{k/n}\leq \rho_0 \}}\right] \\
&\hspace{0.2cm} \leq C_{\alpha,\beta} m^3\left(|\kappa_1^-|+\dots + |\kappa_m^-|\right)^2 |z_m^-|^2\frac{1}{n\sqrt{k}}.
\end{align*}
Quite similar, \eqref{eq_proof:1char_1} provides us with the bound
\begin{align*}
&\E_{\rho_0}\left[ \left(\sum_{l=1}^m \sum_{j=l}^m \kappa_j^- \1_{\{ \rho_0+z_j^-\leq X_{(k-1)/n}< \rho_0, \rho_0 +z_l^- <X_{k/n}\leq \rho_0+z_{l-1}^-/n \}} \right)^2 \right] \\
&\hspace{1cm} \leq C(m) \max\{|\kappa_1^-|,\dots, |\kappa_m^-|\}^2 \E_{\rho_0}\left[ \1_{\{\rho_0+z_m^-/n \leq X_{(k-1)/n},X_{k/n}\leq \rho_0 \}} \right]\\
&\hspace{1cm} \leq C_{\alpha,\beta}(m) \max\{|\kappa_1^-|,\dots, |\kappa_m^-|\}^2 |z_m^-|^2 \frac{1}{n\sqrt{k}}.
\end{align*}
Summing up, we have shown that
\begin{align*}
S_{31}(k) &=  \log\left(\frac{\alpha^2}{\beta^2}\right)^2\frac{2}{\alpha+\beta}\frac{\beta}{\alpha} \frac{1}{\sqrt{2\pi}} \frac{1}{\sqrt{n}}\exp\left(-\frac{(X_{(k-1)/n}-\rho_0)^2}{2\alpha^2/n}\right)\1_{\{X_{(k-1)/n}< \rho_0\}} \\
&\hspace{1cm} \cdot \sum_{l=1}^m \left( \kappa_l^- +\dots + \kappa_m^-\right)^2 \left|z_l^- - z_{l-1}^-\right| + r_{nk}^{31}+ s_{nk}^{31},
\end{align*}
where $r_{nk}^{31}+ s_{nk}^{31}$ is a remainder with the $L^1(\Pr_{\rho_0})$-bound
\[ \E_{\rho_0}\left[ |r_{nk}^{31}+ s_{nk}^{31}|\right] \leq C_{\alpha,\beta}(m,z_1^-, \dots, z_m^-) \frac{1}{n\sqrt{k}}.\]

\item[$\boldsymbol{-\ S_{32}(k)}.$] First, we note that
\begin{align*}
\left(\sum_{l=1}^m  \kappa_l^-  \1_{I_{8,k}^{z_l^-/n,0}}\right)^2 &= \left(\sum_{l=1}^m \left( \kappa_l^- +\dots + \kappa_m^-\right)\1_{\{\rho_0+z_l^-/n< X_{k/n} \leq \rho_0+z_{l-1}^-/n \leq \rho_0 \leq X_{(k-1)/n}\}}\right)^2  \\
&= \sum_{l=1}^m \left( \kappa_l^- +\dots + \kappa_m^-\right)^2\1_{\{\rho_0+z_l^-/n< X_{k/n} \leq\rho_0+z_{l-1}^-/n \leq \rho_0 \leq X_{(k-1)/n}\}}.
\end{align*}
Moreover,
\begin{align*}
&\E_{\rho_0}\left[ \1_{\{\rho_0+z_l^-/n< X_{k/n} \leq\rho_0+z_{l-1}^-/n \leq \rho_0 \leq X_{(k-1)/n}\}}|\cF_{(k-1)/n}\right] \\
&\hspace{0.5cm} = \frac{2}{\alpha+\beta}\frac{\beta}{\alpha} \frac{1}{\sqrt{2\pi/n}} \frac{\left|z_l^- - z_{l-1}^-\right|}{n} \exp\left(-\frac{(X_{(k-1)/n}-\rho_0)^2}{2\beta^2/n}\right)\1_{\{X_{(k-1)/n}\geq\rho_0\}} + r_{n,k,l}^{32},
\end{align*}
where
\[ r_{n,k,l}^{32} = \int_{z_{l}^-/n}^{z_{l-1}^-/n} \left( P_4^{\rho_0}(X_{(k-1)/n},y;1/n) - P_4^{\rho_0}(X_{(k-1)/n},\rho_0;1/n)\right) dy\]
and analogously to the treatment of $r_n^{2,2}$ in the proof of Proposition~\ref{prop:expansion_of_drift_t} we see that
\[ \E_{\rho_0}\left[ |r_{n,k,l}^{32}|\right] \leq C_{\alpha,\beta}(z_{l-1}^-,z_l^-) \frac{1}{n\sqrt{k}}.\]
In particular,
\begin{align*}
S_{32}(k) &= \log\left(\frac{\alpha^2}{\beta^2}\right)^2\frac{2}{\alpha+\beta}\frac{\beta}{\alpha} \frac{1}{\sqrt{2\pi}}\frac{1}{\sqrt{n}} \exp\left(-\frac{(X_{(k-1)/n}-\rho_0)^2}{2\beta^2/n}\right)\1_{\{X_{(k-1)/n}\geq\rho_0\}} \\
&\hspace{1cm} \cdot \sum_{l=1}^m \left( \kappa_l^- +\dots + \kappa_m^-\right)^2 \left| z_l^- - z_{l-1}^-\right| +r_{nk}^{32}, 
\end{align*} 
where $r_{nk}^{32}$ is a remainder with the $L^1(\Pr_{\rho_0})$-bound
\[ \E_{\rho_0}\left[ |r_{nk}^{32}|\right] \leq C_{\alpha,\beta}(m,z_1^-, \dots, z_m^-) \frac{1}{n\sqrt{k}}.\]
\smallskip

\item[$\boldsymbol{-\ S_{33}(k)}.$] Analogously to $S_{32}(k)$ we find
\begin{align*}
\left(\sum_{l=1}^M  \kappa_l^+  \1_{I_{2,k}^{0,z_{l}^+/n}}\right)^2 = \sum_{l=1}^M \left( \kappa_l^+ +\dots + \kappa_M^+\right)^2\1_{\{X_{(k-1)/n}< \rho_0 \leq\rho_0+z_{l-1}^+/n < X_{k/n}\leq \rho_0+z_l^+/n\}}
\end{align*} 
and based on that
\begin{align*}
S_{33}(k) &= \log\left(\frac{\beta^2}{\alpha^2}\right)^2 \frac{2}{\alpha+\beta}\frac{\alpha}{\beta} \frac{1}{\sqrt{2\pi}} \frac{1}{\sqrt{n}} \exp\left(-\frac{(X_{(k-1)/n}-\rho_0)^2}{2\alpha^2/n}\right)\1_{\{X_{(k-1)/n}<\rho_0\}}\\
&\hspace{1cm} \cdot \sum_{l=1}^M \left( \kappa_l^+ +\dots + \kappa_M^+\right)^2 \left|z_l^+ - z_{l-1}^+\right|  + r_{nk}^{33}
\end{align*} 
with a remainder $r_{nk}^{33}$ for which we have
\[ \E_{\rho_0}\left[ |r_{nk}^{33}|\right] \leq C_{\alpha,\beta}(M,z_1^+,\dots, z_M^+) \frac{1}{n\sqrt{k}}.\]
\smallskip

\item[$\boldsymbol{-\ S_{34}(k)}.$] By the same arguments employed for $S_{31}(k)$, one obtains
\begin{align*}
S_{34}(k) &=  \log\left(\frac{\beta^2}{\alpha^2}\right)^2\frac{2}{\alpha+\beta}\frac{\alpha}{\beta} \frac{1}{\sqrt{2\pi}} \frac{1}{\sqrt{n}} \exp\left(-\frac{(X_{(k-1)/n}-\rho_0)^2}{2\beta^2/n}\right)\1_{\{X_{(k-1)/n}\geq\rho_0\}}\\
&\hspace{1cm} \cdot \sum_{l=1}^M \left( \kappa_l^+ +\dots + \kappa_M^+\right)^2 \left|z_l^+ - z_{l-1}^+\right|  + r_{nk}^{34},
\end{align*}
where $r_{nk}^{34}$ satisfies
\[ \E_{\rho_0}\left[ |r_{nk}^{34}|\right] \leq C_{\alpha,\beta}(M,z_1^+, \dots, z_M^+) \frac{1}{n\sqrt{k}}.\]
\end{itemize}
\end{itemize}\bigskip
Summing up the results for $S_1(k),S_2(k)$ and $S_3(k)$ gives
\begin{align}\label{eq_proof:1char_2}
\begin{split}
&\E_{\rho_0}[y_{nk}^2|\cF_{(k-1)/n}]\\
&\hspace{0.5cm}= \frac{2}{\alpha+\beta}\frac{\beta}{\alpha} \frac{1}{\sqrt{2\pi}}\frac{1}{\sqrt{n}} \sum_{l=1}^m \left(\log\left(\frac{\alpha^2}{\beta^2}\right) \left(\kappa_l^- +\dots + \kappa_m^-\right)\right)^2 \left|z_l^- - z_{l-1}^-\right| \\
&\hspace{2cm} \cdot  \exp\left(-\frac{(X_{(k-1)/n}-\rho_0)^2}{2(\1_{\{X_{(k-1)/n}<\rho_0\}}\alpha^2+\1_{\{X_{(k-1)/n}\geq\rho_0\}}\beta^2 )/n}\right)  \\
&\hspace{1cm} + \frac{2}{\alpha+\beta}\frac{\alpha}{\beta} \frac{1}{\sqrt{2\pi}}\frac{1}{\sqrt{n}} \sum_{l=1}^M \left(\log\left(\frac{\beta^2}{\alpha^2}\right) \left( \kappa_l^+ +\dots + \kappa_M^+\right)\right)^2 \left|z_l^+ - z_{l-1}^+\right| \\
&\hspace{2cm} \cdot  \exp\left(-\frac{(X_{(k-1)/n}-\rho_0)^2}{2(\1_{\{X_{(k-1)/n}<\rho_0\}}\alpha^2+\1_{\{X_{(k-1)/n}\geq\rho_0\}}\beta^2 )/n}\right) + r_{nk},
\end{split}
\end{align}
where (remember that in this proof $C_{\alpha,\beta}$ may depend on $m,M,z_1^-,\dots, z_m^-, z_1^+,\dots, z_M^+$)
\begin{align}\label{eq_proof:1char_3}
\E_{\rho_0}\left[ |r_{nk}|\right] \leq  C_{\alpha,\beta}(m,M, \kappa_m^-,\dots, \kappa_M^+,z_m^-,\dots, z_M^+) \frac{1}{n^{3/4}\sqrt{k}}.
\end{align}  
Summing this over $k$ then give
\begin{align*}
\sum_{k=1}^{\lfloor nt\rfloor} \E_{\rho_0}[y_{nk}^2|\cF_{(k-1)/n}] &= \sum_{k=1}^{\lfloor nt\rfloor} r_{nk}+\frac{2}{\alpha+\beta} \frac{1}{\sqrt{2\pi}} \Lambda_{\alpha,\beta}^n\left((X_{(k-1)/n})_{1\leq k\leq \lfloor nt\rfloor}\right) \\
&\hspace{0.5cm} \cdot \left(\frac{\beta}{\alpha} \sum_{l=1}^m \left(\log\left(\frac{\alpha^2}{\beta^2}\right) \left(\kappa_l^- +\dots + \kappa_m^-\right)\right)^2 \left|z_l^- - z_{l-1}^-\right| \right. \\
&\hspace{1cm} + \left.\frac{\alpha}{\beta} \sum_{l=1}^M \left(\log\left(\frac{\beta^2}{\alpha^2}\right) \left( \kappa_l^+ +\dots + \kappa_M^+\right)\right)^2 \left|z_l^+ - z_{l-1}^+\right| \right)\\
&\hspace{0.5cm} \longrightarrow_{\Pr_{\rho_0}}\ (f\star\nu)_t,
\end{align*}
where the last step uses Lemma~\ref{lemma:Riemann_approximation} and 
\[ \sum_{k=1}^{\lfloor nt\rfloor} \E_{\rho_0}[|r_{nk}|] \leq C_{\alpha,\beta} n^{-3/4}\sum_{k=1}^n k^{-1/2}\longrightarrow 0. \] 
\end{proof}

\begin{proof}[Verification of~\eqref{condition_2char_2}]
We have to prove 
\begin{align*}
\sum_{k=1}^{\lfloor nt\rfloor} \E_{\rho_0}\left[ \left(y_{nk} - \E_{\rho_0}[y_{nk}|\cF_{(k-1)/n}]\right)\left( W_{k/n}-W_{(k-1)/n}\right) |\cF_{(k-1)/n}\right] \longrightarrow_{\Pr_{\rho_0}} 0.
\end{align*} 
As the conditional expectation given $\cF_{(k-1)/n}$ is $\cF_{(k-1)/n}$-measurable, and the increment $W_{k/n}-W_{(k-1)/n}$ is independent of $\cF_{(k-1)/n}$, we have
\begin{align*}
&\E_{\rho_0}\left[ \E_{\rho_0}[y_{nk}|\cF_{(k-1)/n}]\left( W_{k/n}-W_{(k-1)/n}\right) |\cF_{(k-1)/n}\right] \\
&\hspace{2cm} =\E_{\rho_0}[y_{nk}|\cF_{(k-1)/n}]\E_{\rho_0}\left[W_{k/n}-W_{(k-1)/n} \right] =0 
\end{align*} 
and it suffices to evaluate 
\begin{align*}
&\sum_{k=1}^{\lfloor nt\rfloor} \E_{\rho_0}\left[ y_{nk}\left( W_{k/n}-W_{(k-1)/n}\right) |\cF_{(k-1)/n}\right]\\
&\hspace{0.2cm} \leq \frac{1}{\sqrt{n}}\sum_{k=1}^{\lfloor nt\rfloor}\sqrt{\E_{\rho_0}\left[ y_{nk}^2|\cF_{(k-1)/n}\right]}\\
&\hspace{0.2cm} \leq \frac{1}{\sqrt{n}} C_{\alpha,\beta}(m,M,z_m^-,\dots, z_M^+)\sum_{k=1}^{\lfloor nt\rfloor} \left( \frac{1}{n^{1/4}}\exp\left(-\frac{(X_{(k-1)/n}-\rho_0)^2}{4\max\{\alpha^2,\beta^2\}/n}\right) +  \sqrt{|r_{nk}|}\right) , 
\end{align*} 
where the first inequality is due to the Cauchy-Schwarz inequality for conditional expectation and the second one uses~\eqref{eq_proof:1char_2} together with the inequality $\sqrt{a+b}\leq \sqrt{a}+\sqrt{b}$ for $a,b\geq 0$. By Corollary~\ref{cor:bound_exp_X^2},
\[ \E_{\rho_0}\left[ \frac{1}{\sqrt{n}}\sum_{k=1}^{\lfloor nt\rfloor} \frac{1}{n^{1/4}}\exp\left(-\frac{(X_{(k-1)/n}-\rho_0)^2}{4\max\{\alpha^2,\beta^2\}/n}\right) \right] \leq C_{\alpha,\beta} \frac{1}{n^{3/4}} \sum_{k=1}^{\lfloor nt\rfloor} \frac{1}{\sqrt{k}} \leq C_{\alpha,\beta} n^{-1/4}.\]
Moreover, by Jensen's inequality for sums,
\[ \left( \frac{1}{\sqrt{n}} \sum_{k=1}^n \sqrt{|r_{nk}|}\right)^2 = n \left(\frac1n \sum_{k=1}^n \sqrt{|r_{nk}|}\right)^2 \leq \sum_{k=1}^n |r_{nk}|.\]
By the moment bound~\eqref{eq_proof:1char_3}, we then deduce $n^{-1/2}\sum_{k=1}^{\lfloor nt\rfloor} \sqrt{|r_{nk}|} \longrightarrow_{\Pr_{\rho_0}} 0$ and finally get
\[ \sum_{k=1}^{\lfloor nt\rfloor} \E_{\rho_0}\left[ y_{nk}\left( W_{k/n}-W_{(k-1)/n}\right) |\cF_{(k-1)/n}\right]\longrightarrow_{\Pr_{\rho_0}} 0.\]
\end{proof}

\begin{proof}[Verification of~\eqref{condition_second_moment}]
First, we observe that
\begin{align}\label{eq_proof:Lindeberg_type_cond}
\begin{split}
\left|y_{nk}\right| &= \left| \sum_{l=1}^m  \kappa_l^- \left(-\sum_{j=1}^9 Z_k^j(z_l^-/n,0) + \log\left(\frac{\alpha^2}{\beta^2}\right)\left( \1_{I_{2,k}^{z_l^-/n,0}} + \1_{I_{8,k}^{z_l^-/n,0}}\right)\right) \right. \\
&\hspace{1cm} + \left.\sum_{l=1}^M  \kappa_l^+ \left(\sum_{j=1}^9 Z_k^j(0,z_l^+/n) + \log\left(\frac{\beta^2}{\alpha^2}\right)\left( \1_{I_{2,k}^{0,z_{l}^+/n}} + \1_{I_{8,k}^{0,z_l^+/n}}\right)\right) \right| \\
&\leq C_{\alpha,\beta}(m,M) \max\{|\kappa_m^-|,\dots, |\kappa_1^-|,|\kappa_1^+|,\dots, |\kappa_M^+|\} \\
&\hspace{1cm} \cdot \left( 1+  \sum_{l=1}^m \sum_{j=1}^9 |Z_k^j(z_l^-/n,0)| + \sum_{l=1}^M\sum_{j=1}^9 |Z_k^j(0,z_l^+/n)| \right).
\end{split}
\end{align}
Now let $\epsilon>0$ and choose 
\[ a > 2\left( \sum_{l=1}^m |\kappa_l^-| + \sum_{l=1}^M |\kappa_l^+|\right) \max\left\{ \left|\log\left(\frac{\alpha^2}{\beta^2}\right)\right|, \left|\log\left(\frac{\beta^2}{\alpha^2}\right)\right|\right\} +\epsilon.\]
Then,
\begin{align*}
\left\{ |y_{nk}|>a\right\} &\subset \left\{  \sum_{l=1}^m  |\kappa_l^-| \left(\sum_{j=1}^9 |Z_k^j(z_l^-/n,0)| + \left|\log\left(\frac{\alpha^2}{\beta^2}\right)\right|\left( \1_{I_{2,k}^{z_l^-/n,0}} + \1_{I_{8,k}^{z_l^-/n,0}}\right)\right) \right. \\
&\hspace{0.2cm} + \left.\sum_{l=1}^M  |\kappa_l^+| \left(\sum_{j=1}^9 |Z_k^j(0,z_l^+/n)| + \left|\log\left(\frac{\beta^2}{\alpha^2}\right)\right| \left( \1_{I_{2,k}^{0,z_{l}^+/n}} + \1_{I_{8,k}^{0,z_l^+/n}}\right)\right) > a \right\} \\
&\subset \left\{ \sum_{l=1}^m \sum_{j=1}^9 |\kappa_l^-||Z_k^j(z_l^-/n,0)|+\sum_{l=1}^M \sum_{j=1}^9 |\kappa_l^+| |Z_k^j(0,z_l^+/n)| >\epsilon\right\}
\end{align*}
and in particular
\[ \1_{\{ |y_{nk}|>a\}} \leq \frac{1}{\epsilon^2} \left( \sum_{l=1}^m \sum_{j=1}^9 |\kappa_l^-||Z_k^j(z_l^-/n,0)|+\sum_{l=1}^M \sum_{j=1}^9 |\kappa_l^+| |Z_k^j(0,z_l^+/n)|\right)^2.\]
Together with~\eqref{eq_proof:Lindeberg_type_cond} and Proposition~\ref{prop:moment_product} we then find
\begin{align*}
&\E_{\rho_0}\left[ |y_{nk}|^2 \1_{\{|y_{nk}|>a\}}\right]\\
&\hspace{1cm}\leq C_{\alpha,\beta}(m,M) \max\{|\kappa_m^-|,\dots, |\kappa_1^-|,|\kappa_1^+|,\dots, |\kappa_M^+|\}^4 \epsilon^{-2} \\
&\hspace{2cm} \cdot \E_{\rho_0}\left[ \left(\sum_{l=1}^m \sum_{j=1}^9 |Z_k^j(z_l^-/n,0)| + \sum_{l=1}^M\sum_{j=1}^9 |Z_k^j(0,z_l^+/n)|\right)^2 \right.\\
&\hspace{2.5cm} + \left. \left(\sum_{l=1}^m \sum_{j=1}^9 |Z_k^j(z_l^-/n,0)| + \sum_{l=1}^M\sum_{j=1}^9 |Z_k^j(0,z_l^+/n)|\right)^4 \right] \\
&\hspace{1cm}\leq C_{\alpha,\beta}(m,M) \max\{|\kappa_m^-|,\dots, |\kappa_1^-|,|\kappa_1^+|,\dots, |\kappa_M^+|\}^4 \epsilon^{-2} \\
&\hspace{2cm} \cdot \left( \sum_{l=1}^m \sum_{j=1}^9 \E_{\rho_0}\left[|Z_k^j(z_l^-/n,0)|^2\right] + \sum_{l=1}^M\sum_{j=1}^9 \E_{\rho_0}\left[|Z_k^j(0,z_l^+/n)|^2\right] \right.\\
&\hspace{2.5cm} + \left. \sum_{l=1}^m \sum_{j=1}^9 \E_{\rho_0}\left[|Z_k^j(z_l^-/n,0)|^4\right] + \sum_{l=1}^M\sum_{j=1}^9 \E_{\rho_0}\left[|Z_k^j(0,z_l^+/n)|^4\right] \right) \\
&\hspace{1cm}\leq C_{\alpha,\beta}(m,M) \max\{|\kappa_m^-|,\dots, |\kappa_1^-|,|\kappa_1^+|,\dots, |\kappa_M^+|\}^4 \epsilon^{-2} \\
&\hspace{2cm} \cdot \left( \frac{\max\{|z_m^-|,\dots, |z_M^+|\}^2}{n} +\frac{\max\{|z_m^-|,\dots, |z_M^+|\}^4}{n^2}\right) \frac{1}{\sqrt{k}}.
\end{align*}
Then, summing over $k$ yields for some constant $C$ depending on $\alpha,\beta,m,M$ and all $\kappa_l^\pm$, $z_l^\pm$
\[ \E_{\rho_0}\left[\sum_{k=1}^{\lfloor nt\rfloor} |y_{nk}|^2 \1_{\{|y_{nk}|>a\}}\right] \leq C \frac{1}{n} \sum_{k=1}^{\lfloor nt\rfloor} \frac{1}{\sqrt{k}} \leq \frac{C\sqrt{nt}}{n}\longrightarrow 0.\]
\end{proof}

\begin{lemma}\label{lemma:tightness}
Let $K>0$. Then the sequence $(\ell_n(\cdot/n)_{n\in\N}$ of processes $(\ell_n(z/n))_{z\in [-K,K]}$ is tight in $\mathcal{D}([-K,K])$.
\end{lemma}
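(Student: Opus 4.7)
The plan is to invoke the standard moment criterion for tightness in the Skorohod space $\mathcal{D}([-K,K])$: by Theorem~$13.5$ and the subsequent Remark~$(13.14)$ in~\citeSM{App:Billingsley_2}, and since $\ell_n(0) = 0$ identically makes tightness of one-dimensional marginals automatic, it suffices to establish a uniform four-point moment bound
\[
\E_{\rho_0}\bigl[|\ell_n(s/n) - \ell_n(r/n)|^{2}\,|\ell_n(t/n) - \ell_n(s/n)|^{2}\bigr] \le C\,(t-r)^{1+\epsilon}
\]
for all $-K \le r \le s \le t \le K$, with constants $C = C(\alpha,\beta,K)$ and $\epsilon > 0$ independent of $n$. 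I will verify this with $\epsilon = 1$.

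The key step is to decompose each increment according to the nine-regime structure exploited throughout the paper,
\[
\ell_n(s/n) - \ell_n(r/n) = A_n(r,s) + B_n(r,s),
\]
where $A_n(r,s) := \sum_{k=1}^n \sum_{j=1}^9 Z_k^j(r/n,s/n)$ gathers the smooth contributions and $B_n(r,s) := \log(\beta^2/\alpha^2)\sum_{k=1}^n \bigl(\1_{I_{2,k}^{r/n,s/n}} + \1_{I_{8,k}^{r/n,s/n}}\bigr)$ is the Poissonian counting part. The elementary inequality $(u+v)^2(u'+v')^2 \le 4(u^2+v^2)(u'^2+v'^2)$ reduces the task to bounding the four terms $\E_{\rho_0}[A_n(r,s)^2 A_n(s,t)^2]$, $\E_{\rho_0}[B_n(r,s)^2 B_n(s,t)^2]$, $\E_{\rho_0}[A_n(r,s)^2 B_n(s,t)^2]$, and its symmetric counterpart.

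For the smooth part, Proposition~\ref{prop:moment_product} applied with $m \le 4$, $d_i = 1$, and $|\theta - \theta'| = |s-r|/n \le 2K/n$ yields, after expanding the fourth moment and summing over ordered tuples $k_1 < \dots < k_4 \le n$ (the factor $n^{2}$ from the summation exactly balances the $n^{-2}$ provided by Proposition~\ref{prop:moment_product}), the bound $\E_{\rho_0}[A_n(r,s)^4] \le C\,(s-r)^4$, and in particular $\E_{\rho_0}[A_n(r,s)^2 A_n(s,t)^2] \le C\,(s-r)^2 (t-s)^2 \le C\,(t-r)^4$. For the Poissonian part, the crucial observation is that, for each fixed $k$, the indicators $\1_{I_{2,k}^{r/n,s/n}}, \1_{I_{8,k}^{r/n,s/n}}$ constrain $X_{k/n}$ to lie in $(\rho_0+r/n,\rho_0+s/n]$, while $\1_{I_{2,k}^{s/n,t/n}}, \1_{I_{8,k}^{s/n,t/n}}$ constrain it to the disjoint interval $(\rho_0+s/n,\rho_0+t/n]$, so products of indicators with coinciding index $k$ vanish identically. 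Expanding $B_n(r,s)^2 B_n(s,t)^2$ as a quadruple sum and applying Lemma~\ref{lemma:second_moment_sum_intervals} (generalised to products of four indicators on two disjoint intervals of respective lengths $|s-r|/n$ and $|t-s|/n$, via the same iterated-conditioning technique as in its proof) gives $\E_{\rho_0}[B_n(r,s)^2 B_n(s,t)^2] \le C\,|s-r|\,|t-s| \le C\,(t-r)^2$, together with $\E_{\rho_0}[B_n(r,s)^4] \le C\,|s-r|$. The two mixed terms are handled by Cauchy--Schwarz: $\E_{\rho_0}[A_n(r,s)^2 B_n(s,t)^2] \le \sqrt{\E_{\rho_0} A_n(r,s)^4 \cdot \E_{\rho_0} B_n(s,t)^4} \le C\,(s-r)^2 \sqrt{|t-s|} \le C\,(t-r)^{5/2}$. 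Summing the four contributions yields the desired bound $C\,(t-r)^2$.

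The main obstacle is controlling the joint moment $\E_{\rho_0}[B_n(r,s)^2 B_n(s,t)^2]$: pointwise disjointness only kills the contributions from coinciding $k$'s, whereas for distinct indices $k_1, k_2, k_3, k_4$ the correlations induced by the Markov chain must be estimated by combining the second-moment techniques of Lemma~\ref{lemma:second_moment_sum_intervals} with iterated conditioning, producing several sub-cases according to how the indices group. This is a tedious but conceptually routine extension of the moment machinery already in place, and relies on no new idea beyond the transition density estimates of Lemma~\ref{lemma:upper_bound_transition_density} and Corollary~\ref{cor:bound_exp_X^2}.
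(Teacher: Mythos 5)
Your proposal is correct and follows the same skeleton as the paper's proof: the same decomposition of each increment into the ``smooth'' part $\sum_{k,j} Z_k^j$ and the Poissonian counting part built from $\1_{I_{2,k}}+\1_{I_{8,k}}$, the same crucial observation that indicators attached to the two disjoint parameter intervals vanish at coinciding indices $k$ (which is what upgrades the bound from order $|t-r|$ to order $|t-r|^2$ for the counting--counting term), and the same tools (Proposition~\ref{prop:moment_product}, Lemma~\ref{lemma:second_moment_sum_intervals}, Cauchy--Schwarz for the mixed terms). The one genuine difference is the choice of exponents in Billingsley's criterion: you verify the $(2,2)$-moment version $\E[|\Delta_1|^2|\Delta_2|^2]\le C(t-r)^2$, whereas the paper verifies the $(1,1)$-version $\E[|\Delta_1||\Delta_2|]\le C|z_1-z_3|^{3/2}$. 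The paper's choice buys a lighter proof: it only ever needs \emph{second} moments of each piece, so Lemma~\ref{lemma:second_moment_sum_intervals} applies as stated and the mixed terms reduce to $|z_1-z_2|\sqrt{\E B_n^2}\,\sqrt{\E(\sum_k\1)^2}$. Your route instead requires fourth moments, in particular the four-indicator extension of Lemma~\ref{lemma:second_moment_sum_intervals} and the bound $\E[B_n(r,s)^4]\le C(K)|s-r|$, which you correctly identify as routine but which would have to be written out (diagonal terms dominate; off-diagonal tuples contribute higher powers of $|s-r|$, absorbed into the constant via $|s-r|\le 2K$). Two small points to tighten: (i) tightness of the one-dimensional marginals is not literally ``automatic'' from $\ell_n(0)=0$ alone --- you need it at every $z$, but it does follow from $\E[(\ell_n(z/n))^2]\le C(A_n$- and $B_n$-second moments$)$, so state that; (ii) in expanding $\E[A_n^4]$, Proposition~\ref{prop:moment_product} covers powers of a single $Z_{k}^{j}$ at each index, so products $Z_k^{j}Z_k^{j'}$ with $j\neq j'$ at the same $k$ should first be reduced via $|\sum_j Z_k^j|^4\le 9^3\sum_j|Z_k^j|^4$ (or Hölder), exactly as the paper does with the factor $2^8$ in its second-moment computation.
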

\begin{proof}
We prove tightness of $(\ell_n(\cdot/n))_{n\in\N}$ by verifying the moment condition
\begin{align}\label{eq_proof:tightness_moment_condition}
\E_{\rho_0}\left[ |\ell_n(z_1/n) - \ell_n(z_2/n)||\ell_n(z_2/n) - \ell_n(z_3/n)|\right] \leq C |z_1-z_3|^\gamma
\end{align} 
for $z_1\leq z_2\leq z_3$, some constant $C>0$ and $\gamma>1$ (see \citeSM{App:Billingsley_2}, Remark $(13.14)$ after Theorem~$13.5$). Recall the random variables $Z_k^j(\theta',\theta)$ given in Section~\ref{Section:likelihood} in~\eqref{eq:def_Z_j} and~\eqref{eq:def_Z_j28}. With those, we obtain for $a<b$, $a,b\in\{1,2,3\}$,
\begin{align}\label{eq_proof:tightness_1}
\begin{split}
|\ell_n(z_a/n) - \ell_n(z_b/n)| &\leq  \left|\log\left(\frac{\beta^2}{\alpha^2}\right)\right| \sum_{k=1}^n \1_{\{X_{(k-1)/n}\leq \rho_0 + z_a/n\leq X_{k/n} \leq\rho_0+z_b/n\}} \\
&\hspace{1cm} +  \left|\log\left(\frac{\beta^2}{\alpha^2}\right)\right|\sum_{k=1}^n \1_{\{\rho_0 + z_a/n\leq X_{k/n} \leq\rho_0+z_b/n\leq X_{(k-1)/n}\}}  \\
&\hspace{1cm} + \sum_{k=1}^n \sum_{j=1}^9 \left| Z_k^j(z_a/n, z_b/n)\right|.
\end{split}
\end{align}
Next, we will find an upper bound of the $L^2(\Pr_{\rho_0})$-norm of the last of these three summands. By Proposition~\ref{prop:moment_product},
\begin{align*}
&\E_{\rho_0}\left[ \left(\sum_{k=1}^n \sum_{j=1}^9 \left| Z_k^j(z_a/n, z_b/n)\right|\right)^2\right] \\
&\hspace{0.2cm} \leq 2^8 \sum_{j=1}^9 \E_{\rho_0}\left[ \left(\sum_{k=1}^n  \left| Z_k^j(z_a/n, z_b/n)\right|\right)^2\right] \\
&\hspace{0.2cm} \leq 2^8 \sum_{j=1}^9 \sum_{k=1}^n\E_{\rho_0}\left[\left| Z_k^j(z_i/n, z_j/n)\right|^2\right] + 2^{9} \sum_{j=1}^9 \sum_{l=1}^n\sum_{k=1}^l \E_{\rho_0}\left[\left| Z_k^j(z_a/n, z_b/n)Z_l^j(z_a/n, z_b/n)\right|\right] \\
&\hspace{0.2cm} \leq C_{\alpha,\beta} \frac{|z_a-z_b|^2}{n} \left( \sum_{k=1}^n \frac{1}{\sqrt{k}} +\sum_{l=1}^n\sum_{k=1}^{l-1} \frac{1}{\sqrt{k(l-k)}}\right)\\
&\hspace{0.2cm} \leq C_{\alpha,\beta}|z_a-z_b|^2.
\end{align*}
Defining
\[ B_n(z_a,z_b) := |z_a-z_b|^{-1}\sum_{k=1}^n \sum_{j=1}^9 \left| Z_k^j(z_a/n, z_b/n)\right|, \]
the preceding estimate gives
\begin{align}\label{eq_proof:tightness_3}
\E_{\rho_0}[|B_n(z_a,z_b)|^2] \leq C_{\alpha,\beta},
\end{align} 
with a bound $C_{\alpha,\beta}>0$ independent of $n$ and $z_a,z_b$, and we deduce from~\eqref{eq_proof:tightness_1} that
\begin{align}\label{eq_proof:tightness_2}
\begin{split}
&|\ell_n(z_a/n) - \ell_n(z_b/n)| \\
&\hspace{0.5cm} \leq |z_a-z_b| B_n(z_a,z_b) + \left|\log\left(\frac{\beta^2}{\alpha^2}\right)\right| \sum_{k=1}^n \Big(\1_{\{X_{(k-1)/n}\leq \rho_0 + z_a/n\leq X_{k/n}\leq \rho_0+z_b/n\}}\\
&\hspace{6.5cm} + \1_{\{\rho_0 + z_a/n\leq X_{k/n}\leq \rho_0+z_b/n\leq X_{(k-1)/n}\}} \Big).
\end{split}
\end{align}
We now show, how~\eqref{eq_proof:tightness_moment_condition} follows from inequality~\eqref{eq_proof:tightness_2} and the moment bound~\eqref{eq_proof:tightness_3} for $B_n$. By~\eqref{eq_proof:tightness_1},
\begin{align*}
&\E_{\rho_0}\left[ |\ell_n(z_1/n) - \ell_n(z_2/n)||\ell_n(z_2/n) - \ell_n(z_3/n)|\right] \\
&\hspace{0.2cm}\leq C_{\alpha,\beta} \E_{\rho_0}\left[ \left( \sum_{k=1}^n \1_{\{X_{(k-1)/n}\leq \rho_0 + z_1/n\leq X_{k/n}\leq \rho_0+z_2/n\}} + \1_{\{\rho_0 + z_1/n\leq X_{k/n}\leq \rho_0+z_2/n\leq X_{(k-1)/n}\}} \right)\right. \\
&\hspace{2cm} \cdot \left.  \left( \sum_{l=1}^n \1_{\{X_{(l-1)/n}\leq \rho_0 + z_2/n\leq X_{l/n}\leq \rho_0+z_3/n\}} + \1_{\{\rho_0 + z_2/n\leq X_{l/n}\leq \rho_0+z_3/n\leq X_{(l-1)/n}\}} \right)\right]\\
&\hspace{0.7cm} + C_{\alpha,\beta} |z_1-z_2|\E_{\rho_0}\left[ B_n(z_1,z_2)\left(\sum_{l=1}^n \1_{\{X_{(l-1)/n}\leq \rho_0 + z_2/n\leq X_{l/n}\leq \rho_0+z_3/n\}} \right.\right.\\
&\hspace{7cm}  + \1_{\{\rho_0 + z_2/n\leq X_{l/n} \leq \rho_0+z_3/n\leq X_{(l-1)/n}\}} \Bigg)\Bigg] \\
&\hspace{0.7cm} + C_{\alpha,\beta} |z_2-z_3|\E_{\rho_0}\left[ B_n(z_2,z_3)\left(\sum_{k=1}^n \1_{\{X_{(k-1)/n}\leq \rho_0 + z_1/n\leq X_{k/n}\leq \rho_0+z_2/n\}} \right.\right. \\
&\hspace{7cm} + \1_{\{\rho_0 + z_1/n\leq X_{k/n} \leq \rho_0+z_2/n\leq X_{(k-1)/n}\}} \Bigg)\Bigg] \\
&\hspace{0.7cm} + |z_1-z_2||z_2-z_3| \E_{\rho_0}\left[ B_n(z_1,z_2)B_n(z_2,z_3)\right] \\
&\hspace{0.2cm} =: T_1 + T_2 + T_3 + T_4.
\end{align*}
We now show that each summand is bounded by $C_{\alpha,\beta}(K)|z_1-z_3|^\gamma$ with $\gamma=3/2$. Then~\eqref{eq_proof:tightness_moment_condition} is proven and the statement of the lemma then follows.
\begin{itemize}
\item[$\bullet\ \boldsymbol{T_1}$.] Explicitly writing out the expression gives
\begin{align}\label{eq_proof:tightness_5}
\begin{split}
T_1 &= C_{\alpha,\beta} \sum_{k,l=1}^n \E_{\rho_0}\left[\1_{\{X_{(k-1)/n}\leq \rho_0 + z_1/n\leq X_{k/n}\leq \rho_0+z_2/n\}}\1_{\{X_{(l-1)/n}\leq \rho_0 + z_2/n\leq X_{l/n}\leq \rho_0+z_3/n\}}  \right] \\
&\hspace{0.2cm} + C_{\alpha,\beta} \sum_{k,l=1}^n \E_{\rho_0}\left[\1_{\{X_{(k-1)/n}\leq \rho_0 + z_1/n\leq X_{k/n}\leq \rho_0+z_2/n\}} \1_{\{\rho_0 + z_2/n\leq X_{l/n}\leq \rho_0+z_3/n\leq X_{(l-1)/n}\}} \right] \\
&\hspace{0.2cm} + C_{\alpha,\beta} \sum_{k,l=1}^n \E_{\rho_0}\left[ \1_{\{\rho_0 + z_1/n\leq X_{k/n}\leq \rho_0+z_2/n\leq X_{(k-1)/n}\}}\1_{\{X_{(l-1)/n}\leq \rho_0 + z_2/n\leq X_{l/n}\leq \rho_0+z_3/n\}} \right] \\
&\hspace{0.2cm} + C_{\alpha,\beta} \sum_{k,l=1}^n \E_{\rho_0}\left[ \1_{\{\rho_0 + z_1/n\leq X_{k/n}\leq \rho_0+z_2/n\leq X_{(k-1)/n}\}} \1_{\{\rho_0 + z_2/n\leq X_{l/n}\leq \rho_0+z_3/n\leq X_{(l-1)/n}\}}\right].
\end{split}
\end{align}
We construct an upper bound for the first summand over $k,l$ in this expressions, the others can be dealt with analogously. First, notice that the product of the indicators vanishes for $k=l$. This observation is essential in order to derive a bound that is a multiple of $|z_1-z_3|^2$ rather than a multiple of $|z_1-z_3|$. Hence, assume $k<l$. Then by Lemma~\ref{lemma:upper_bound_transition_density} and Corollary~\ref{cor:bound_exp_X^2},
\begin{align*}
&\E_{\rho_0}\left[ \1_{\{X_{(k-1)/n}\leq \rho_0 + z_1/n\leq X_{k/n}\leq \rho_0+z_2/n\}}\1_{\{X_{(l-1)/n}\leq \rho_0 + z_2/n\leq X_{l/n}\leq \rho_0+z_3/n\}}\right] \\
&\hspace{0.5cm} \leq C_{\alpha,\beta}\E_{\rho_0}\left[ \1_{\{X_{(k-1)/n}\leq \rho_0 + z_1/n\leq X_{k/n}\leq \rho_0+z_2/n\}}\1_{\{X_{(l-1)/n}\leq \rho_0 + z_2/n \}} \right.\\
&\hspace{5.5cm} \left. \cdot\int_{\rho_0+z_2/n}^{\rho_0+z_3/n} \sqrt{n}\exp\left(\frac{(y-X_{(l-1)/n})^2}{2\max\{\alpha^2,\beta^2\}/n}\right) dy\right] \\
&\hspace{0.5cm} \leq C_{\alpha,\beta}\frac{|z_2-z_3|}{\sqrt{n}}\E_{\rho_0}\left[ \1_{\{X_{(k-1)/n}\leq \rho_0 + z_1/n\leq X_{k/n}\leq \rho_0+z_2/n\}}\1_{\{X_{(l-1)/n}\leq \rho_0 + z_2/n \}} \right. \\
&\hspace{5.5cm} \left. \cdot\exp\left(-\frac{(X_{(l-1)/n}-\rho_0-z_2/n)^2}{2\max\{\alpha^2,\beta^2\}/n}\right)\right] \\
&\hspace{0.5cm} \leq C_{\alpha,\beta}\frac{|z_2-z_3|}{\sqrt{n}}\E_{\rho_0}\Bigg[ \1_{\{X_{(k-1)/n}\leq \rho_0 + z_1/n\leq X_{k/n}\leq \rho_0+z_2/n\}} \\
&\hspace{4.5cm} \left. \cdot\E_{\rho_0}\left.\left[\exp\left(-\frac{(X_{(l-1)/n}-\rho_0-z_2/n)^2}{2\max\{\alpha^2,\beta^2\}/n}\right) \right|\ \cF_{k/n}\right]\right] \\
&\hspace{0.5cm} \leq C_{\alpha,\beta}\frac{|z_2-z_3|}{\sqrt{n}} \frac{1}{\sqrt{l-k}}\E_{\rho_0}\left[ \1_{\{X_{(k-1)/n}\leq \rho_0 + z_1/n\leq X_{k/n}\leq \rho_0+z_2/n\}} \right] \\
&\hspace{0.5cm} \leq C_{\alpha,\beta}\frac{|z_2-z_3|}{\sqrt{n(l-k)}} \E_{\rho_0}\left[ \1_{\{X_{(k-1)/n}\leq \rho_0 + z_1/n\}} \E_{\rho_0}\left[\left.\1_{\{\rho_0+z_1/n \leq X_{k/n}\leq \rho_0+z_2/n\}}\right|\ \cF_{(k-1)/n}\right]\right] \\
&\hspace{0.5cm} \leq C_{\alpha,\beta}\frac{|z_1-z_2||z_2-z_3|}{n \sqrt{l-k}} \E_{\rho_0}\left[ \1_{\{X_{(k-1)/n}\leq \rho_0 + z_1/n\}} \exp\left(-\frac{(X_{(k-1)/n}-\rho_0-z_1/n)^2}{2 \max\{\alpha^2,\beta^2\}/n}\right)\right] \\
&\hspace{0.5cm} \leq C_{\alpha,\beta}\frac{|z_1-z_3|^2}{n} \frac{1}{\sqrt{l-k}}\frac{1}{\sqrt{k}}.
\end{align*}
This gives
\begin{align*}
&C_{\alpha,\beta} \sum_{k,l=1}^n \E_{\rho_0}\left[\1_{\{X_{(k-1)/n}\leq \rho_0 + z_1/n\leq X_{k/n}\leq \rho_0+z_2/n\}}\1_{\{X_{(l-1)/n}\leq \rho_0 + z_2/n\leq X_{l/n}\leq \rho_0+z_3/n\}}  \right] \\
&\hspace{1cm} \leq C_{\alpha,\beta} \frac{|z_1-z_3|^2}{n}\sum_{l=1}^n\sum_{k=1}^{l-1}  \frac{1}{\sqrt{l-k}}\frac{1}{\sqrt{k}} \leq C_{\alpha,\beta}|z_1-z_3|^2.
\end{align*}
Repeating this step analogously for the other terms in~\eqref{eq_proof:tightness_5} finally gives with $|z_1-z_3|\leq 2K$ that
\[ T_1 \leq C_{\alpha,\beta}(K) |z_1-z_3|^{3/2}.\]\smallskip

\item[$\bullet\ \boldsymbol{T_2}$.] Applying Cauchy--Schwarz' inequality gives (using $|z_2-z_3|\leq |z_1-z_3|$)
\begin{align*}
\begin{split}
T_2 &\leq C_{\alpha,\beta} |z_1-z_3| \sqrt{\E_{\rho_0}\left[ B_n(z_1,z_2)^2\right]} \\
&\hspace{0.2cm} \cdot\E_{\rho_0}\left[ \left(\sum_{k=1}^n \1_{\{X_{(k-1)/n}\leq \rho_0 + z_1/n\leq X_{k/n}\leq \rho_0+z_2/n\}} + \1_{\{\rho_0 + z_1/n\leq X_{k/n}\leq \rho_0+z_2/n\leq X_{(k-1)/n}\}}\right)^2\right]^\frac12 \\
&\hspace{0.2cm} \leq C_{\alpha,\beta}(K) |z_1-z_3|^{3/2},
\end{split}
\end{align*}
where the last inequality uses the $L^2$-bound~\eqref{eq_proof:tightness_3} for $B_n(z_1,z_2)$ and Lemma~\ref{lemma:second_moment_sum_intervals} for the second factor.\smallskip

\item[$\bullet\ \boldsymbol{T_3}$.] This is done along the lines of $T_2$.\smallskip

\item[$\bullet\ \boldsymbol{T_4}$.] Using $z_1\leq z_2\leq z_3$, $|z_1-z_3|\leq 2K$, Cauchy--Schwarz' inequality and~\eqref{eq_proof:tightness_3}, 
\begin{align*}
&|z_1-z_2||z_2-z_3| \E_{\rho_0}\left[ B_n(z_1,z_2)B_n(z_2,z_3)\right]\\
&\hspace{1cm}\leq |z_1-z_3|^2 \sqrt{\E_{\rho_0}[B_n(z_1,z_2)^2] \E_{\rho_0}[B_n(z_2,z_3)^2]}\leq C_{\alpha,\beta}(K) |z_1-z_3|^{3/2}. 
\end{align*}
\end{itemize}
\end{proof}

%%%%%%%%%%%%%%%%%%%%%%%%%%%%%%%%%%%%%%%%%%%%%%%%%%%%%%%%%%%%%
%%                  The Bibliography                       %%
%%                                                         %%
%%  imsart-???.bst  will be used to                        %%
%%  create a .BBL file for submission.                     %%
%%                                                         %%
%%  Note that the displayed Bibliography will not          %%
%%  necessarily be rendered by Latex exactly as specified  %%
%%  in the online Instructions for Authors.                %%
%%                                                         %%
%%  MR numbers will be added by VTeX.                      %%
%%                                                         %%
%%  Use \cite{...} to cite references in text.             %%
%%                                                         %%
%%%%%%%%%%%%%%%%%%%%%%%%%%%%%%%%%%%%%%%%%%%%%%%%%%%%%%%%%%%%%

%% if your bibliography is in bibtex format, uncomment commands:
\bibliographystyleSM{imsart-nameyear} % Style BST file (imsart-number.bst or imsart-nameyear.bst)
\bibliographySM{Bibliography_Appendix}       % Bibliography file (usually '*.bib')

\end{document}